\DeclareFontFamily{OML}{rsfs}{\skewchar\font'177}
\DeclareFontShape{OML}{rsfs}{m}{n}{ <5> <6> rsfs5 <7> <8> <9>
	rsfs7 <10> <10.95> <12> <14.4> <17.28> <20.74> <24.88> rsfs10 }{}
\DeclareMathAlphabet{\mathfs}{OML}{rsfs}{m}{n}
\newcommand{\RNum}[1]{\uppercase\expandafter{\romannumeral #1\relax}}
\newcommand{\specificthanks}[1]{\@fnsymbol{#1}}
\DeclareFontFamily{U}{mathx}{}
\DeclareFontShape{U}{mathx}{m}{n}{<-> mathx10}{}
\DeclareSymbolFont{mathx}{U}{mathx}{m}{n}
\DeclareMathAccent{\widehat}{0}{mathx}{"70}
\DeclareMathAccent{\widecheck}{0}{mathx}{"71}
\newtheorem{singletheorem}{Theorem}
\newtheorem{theorem}{Theorem}[section]
\newtheorem{lemma}[theorem]{Lemma}
\newtheorem{proposition}[theorem]{Proposition}
\newtheorem{conjecture}[theorem]{Conjecture}
\theoremstyle{definition}
\newtheorem{definition}[theorem]{Definition}
\theoremstyle{remark}
\newtheorem{remark}[theorem]{Remark}
\numberwithin{equation}{section}
\newcounter{cnstcnt}
\newcommand{\cl}{%
	\refstepcounter{cnstcnt}%
	\ensuremath{c_{\thecnstcnt}}}
\newcommand{\cref}[1]{\ensuremath{c_{\ref{#1}}}}
\newcounter{newcnstcnt}
\newcommand{\Cl}{%
	\refstepcounter{newcnstcnt}%
	\ensuremath{C_{\thenewcnstcnt}}}
\newcommand{\Cref}[1]{\ensuremath{C_{\ref{#1}}}}
\definecolor{mygreen}{RGB}{65, 117, 5}
\definecolor{myorange}{RGB}{245, 166, 35}
\newcommand{\cev}[1]{\reflectbox{\ensuremath{\vec{\reflectbox{\ensuremath{#1}}}}}}
\begin{document}

	\title{Minimal harmonic measure on 2D lattices}

	\author{Zhenhao Cai$^1$}
	\address[Zhenhao Cai]{School of Mathematical Sciences, Peking University, Beijing, China}
	\email{caizhenhao@pku.edu.cn}
	\thanks{$^1$School of Mathematical Sciences, Peking University, Beijing, China}

	\author{Eviatar B. Procaccia$^2$}
	\address[Eviatar B. Procaccia]{Faculty of Data and Decision Sciences, Technion - Israel Institute of Technology, Haifa, Israel}
	\email{eviatarp@technion.ac.il}
	\thanks{$^2$Faculty of Data and Decision Sciences, Technion - Israel Institute of Technology, Haifa, Israel}

	\author{Yuan Zhang$^3$}
	\address[Yuan Zhang]{Center for Applied Statistics and School of Statistics, Renmin University of China, Beijing, China}
	\email{zhang\_probab@ruc.edu.cn}
	\thanks{$^3$Center for Applied Statistics and School of Statistics, Renmin University of China, Beijing, China}

	\maketitle

		\tableofcontents

	\begin{abstract}
		We study the harmonic measure (i.e. the limit of the hitting distribution of a simple random walk starting from a distant point) on three canonical two-dimensional lattices: the square lattice $\mathbb{Z}^2$, the triangular lattice $\mathscr{T}$ and the hexagonal lattice $\mathscr{H}$. In particular, for the least positive value of the harmonic measure of any $n$-point set, denoted by $\mathcal{M}_n(\mathscr{G})$, we prove in this paper that 
		\begin{equation*}
			[\lambda(\mathscr{G})]^{-n+c\sqrt{n}} \le	\mathcal{M}_n(\mathscr{G})\le  [\lambda(\mathscr{G})]^{-n+C\sqrt{n}},
		\end{equation*}
		where $\lambda(\mathbb{Z}^2)=(2+\sqrt{3})^2$, $\lambda(\mathscr{T})=3+2\sqrt{2}$ and $\lambda(\mathscr{H})=(\tfrac{3+\sqrt{5}}{2})^3$.
		Our results confirm a stronger version of the conjecture proposed by Calvert, Ganguly and Hammond (2023) which predicts the asymptotic of the exponent of $\mathcal{M}_n(\mathbb{Z}^2)$. Moreover, these estimates also significantly extend the findings in our previous paper with Kozma (2023) that $\mathcal{M}_n(\mathscr{G})$ decays exponentially for a large family of graphs $\mathscr{G}$ including $\mathscr{T}$, $\mathscr{H}$ and $\mathbb{Z}^d$ for all $d\ge 2$.

	\end{abstract}

	\section{Introduction}\label{section_intro}

In this paper, we study the least positive value of (discrete) harmonic measures (from infinity) on two-dimensional lattices, especially on the square lattice $\mathbb{Z}^2$, the triangular lattice $\mathscr{T}$ and the hexagonal lattice $\mathscr{H}$ (corresponding to the only three regular tessellations in $\mathbb{R}^2$; see e.g. \cite[Chapter 4]{ball1917mathematical}).

	%%%%%%%%%%%%%%%%%%%%%%%%%%%%%%%%%%%%%%%%%%%%%%%%%%%%%%%%%%%%%%%%%%%%%
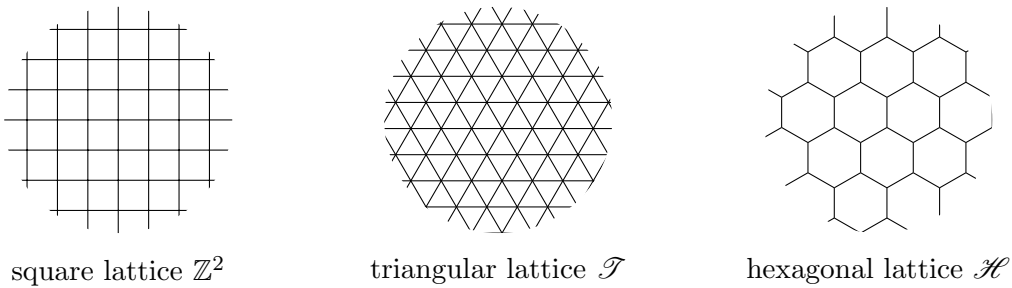
\begin{figure}[h!]		
	\begin{tikzpicture}[scale=0.5]
		
		% Square lattice

		%\fill[pattern=squares] (0,0) circle (3cm);
		\begin{scope}
			\clip (0,0) circle (3cm);
			\begin{scope}[scale= 0.8]
					\draw[] (-10,-10) grid (10,10);
			\end{scope}
		\end{scope}
			\begin{scope}
			\node at (0,-4) {\small{square lattice $\mathbb{Z}^2$}};
		\end{scope}

		% Triangular lattice

		\newcommand*\rows{30}
		\begin{scope}[xshift=10cm]
			\clip (0,0) circle (3cm);
			\begin{scope}[xshift=-5.5cm,yshift=-3cm,scale=0.8]
				\foreach \row in {0, 1, ...,\rows} {
					\draw ($\row*(0.5, {0.5*sqrt(3)})$) -- ($(\rows,0)+\row*(-0.5, {0.5*sqrt(3)})$);
					\draw ($\row*(1, 0)$) -- ($(\rows/2,{\rows/2*sqrt(3)})+\row*(0.5,{-0.5*sqrt(3)})$);
					\draw ($\row*(1, 0)$) -- ($(0,0)+\row*(0.5,{0.5*sqrt(3)})$);
				}	
			\end{scope}
		\end{scope}
		\begin{scope}[xshift=10cm]
			\node at (0,-4) {\small{triangular lattice $\mathscr{T}$}};
		\end{scope}

		% Hexagonal lattice
		
		\begin{scope}[xshift=20cm]
			\clip (0,0) circle (3cm);
			\begin{scope}[xshift=3cm,yshift=-5cm,scale=0.8]
				\begin{scope}[rotate=90]
					\foreach \i in {0,...,10} 
					\foreach \j in {0,...,10} {
						\foreach \a in {0,120,-120} \draw (3*\i,2*sin{60}*\j) -- +(\a:1);
						\foreach \a in {0,120,-120} \draw (3*\i+3*cos{60},2*sin{60}*\j+sin{60}) -- +(\a:1);}
				\end{scope}
			\end{scope}
		\end{scope}
		\begin{scope}[xshift=20cm]
			\node at (0,-4) {\small{hexagonal lattice $\mathscr{H}$}};
		\end{scope}
		
	\end{tikzpicture}
	\caption{Illustrations for $\mathbb{Z}^2$, $\mathscr{T}$ and $\mathscr{H}$. \label{fig:lattices}}
\end{figure}
%%%%%%%%%%%%%%%%%%%%%%%%%%%%%%%%%%%%%%%%%%%%%%%%%%%%%%%%%%%%%%%%%%%%%

	To facilitate understanding, we first review the definition
of harmonic measure. For any simple (i.e. each pair of vertices is connected by at most one edge, and no vertex is connected to itself by an edge), locally finite (i.e. the degree of each vertex is finite) and connected graph $\mathscr{G}=(\mathfs{V},\mathfs{E})$ (where \(\mathfs{V}\) and \(\mathfs{E}\) denote the sets of vertices and edges respectively), a random walk on $\mathscr{G}$ starting from $x\in \mathfs{V}$ is a discrete-time Markov process $\{S_n\}_{n\ge 0}$ with law $\mathbb{P}_x$ such that $\mathbb{P}_x(S_0=x)=1$ and 
\begin{equation}
	\mathbb{P}_x(S_{n+1}=z\mid S_n=y ) = [\mathrm{deg}(y) ]^{-1}\cdot \mathbbm{1}_{y\sim z}, \ \ \forall y,z\in \mathfs{V}\ \text{and}\ n\ge 0,
\end{equation}
where ``$y\sim z$'' means ``$\{y,z\}\in \mathfs{E}$'', and $\mathrm{deg}(y):= |\{v\in \mathfs{V}: v\sim y\}|$. In general, the harmonic measure of a finite set is the limit distribution of the first hitting position in this set by a random walk with a faraway starting point. Precisely, we assume that in $\mathfs{V}$ there exists a predetermined vertex, denoted by $\bm{0}$. For any finite $A\subset \mathfs{V}$ and $y\in A$, the harmonic measure of $A$ at $y$ is defined as 
\begin{equation}\label{harmonic measure}
	\mathbb{H}_A(y):= \lim\nolimits_{\| x\|\to \infty} \mathbb{P}_x(S_{\tau_A}=y \mid \tau_A<\infty),
\end{equation}
where $\|x \|$ is the graph distance between $x$ and $\bm{0}$, and $\tau_A$ is the first time when $\{S_n\}_{n\ge 0}$ hits $A$. Although the existence of the limit in (\ref{harmonic measure}) may fail for some graphs (readers may refer to \cite{boivin2013existence} and \cite[Section 10.7]{lyons2017probability} for related results), it is established for $\mathbb{Z}^d$ with $d\ge 2$ (see e.g. \cite[Theorem 2.1.3]{lawler2013intersections}), and the proof can be readily generalized to $\mathscr{T}$ and $\mathscr{H}$. Thus, the harmonic measure is well-defined on the graphs that we focus on in this paper, namely, $\mathbb{Z}^2$, $\mathscr{T}$ and $\mathscr{H}$.

In probability theory and statistical mechanics, the harmonic measure has been widely studied due to the abundance of models driven by it. In particular, the extremal values of harmonic measures have been found to be of special significance in these studies. In the study of Diffusion Limited Aggregation (DLA) \cite{witten1981diffusion,witten1983diffusion} and Stationary DLA \cite{mu2022scaling,procaccia2020stationary}, a discrete version of the Beurling circular projection theorem \cite{beurling1933etudes} concerning the maximum of harmonic measures played an important role in the derivation of upper bounds on the growth rate (see \cite{benjamini2017upper,kesten1987hitting, kesten1987long,kesten1990upper,lawler2013intersections,lawler2004beurling,procaccia2019stationary}). Precisely, the discrete Beurling estimate establishes that $\max_{x\in A}\mathbb{H}_A(x)\le Cn^{-\frac{1}{2}}$ for all connected sets $A\subset\mathbb{Z}^2$ of cardinality $n$. In fact, this bound is sharp up to a constant factor, since the harmonic measure at the endpoint of a line segment of length $n$ is of the order $n^{-\frac{1}{2}}$. However, it remains an open problem to prove that this line segment is the unique maximizer among all connected $n$-point sets, or even a maximizer at all (see Conjecture \ref{conj_existence} for a relevant question). In addition to extremal values,  the typical values of harmonic measures of connected sets were analyzed in \cite{lawler1993discrete} as a discrete version of Makarov's Theorem \cite{makarov1985distortion}. These results were recently used in \cite{losev2023long} to estimate the growth rate of the Dielectric Breakdown Model \cite{niemeyer1984fractal}. Excellent accounts for related topics can be found in \cite{lawler1994random,olegovic2021diffusion}.

The continuous harmonic measure in $\mathbb{R}^2$, which is an analogue of the discrete harmonic measure where the random walk is replaced by a Brownian motion, has also been heavily used in models of statistical mechanics due to its strong connection with the powerful theory of complex conformal functions. Notable models include Hastings-Levitov Aggregation \cite{berger2022growth,johansson2012scaling,norris2012hastings, silvestri2017fluctuation} and Aggregate Loewner Evolution (ALE) \cite{sola2019one}. Among these models, the stationary Hastings-Levitov$(0)$ model is the only one in the DLA class (where particles of approximately equal size attach to the aggregate according to the harmonic measure or its analogue) for which an exact growth rate has been established (see \cite{berger2022growth}). Mainly, arms of the aggregate that reach height $n$ typically contain an order of $n^{3/2}$ particles (see also \cite{procaccia2021dimension}, which includes computer simulations and further discussions). Notably, this result matches the aforementioned upper bound in \cite{kesten1987long} for the growth rate of DLA.

Recently, in order to bound the collapse time in the Harmonic Activation and Transport model (HAT), \cite{calvert2023existence,Calvert2021CollapseAD} studied the least positive value of harmonic measures of general $n$-point sets, which can be disconnected or even spread out. Particularly, it was proved in \cite[Theorem 1.9]{Calvert2021CollapseAD} that $\mathcal{M}_n(\mathbb{Z}^2)\ge e^{-Cn\log(n)}$, where
\begin{equation}\label{def_Mn}
	\mathcal{M}_n(\mathscr{G}):= \inf\big\{\mathbb{H}_A(x):A\subset \mathfs{V}, |A|=n,x\in A,\mathbb{H}_A(x)>0 \big\} 
\end{equation}
is the least positive value of the harmonic measure that a vertex can have in a set of cardinality $n$ on the graph $\mathscr{G}=(\mathfs{V},\mathfs{E})$; moreover, if restricting to connected sets in $\mathbb{Z}^2$, then this lower bound for $\mathcal{M}_n(\mathbb{Z}^2)$ can be improved to $e^{-Cn}$. Additionally, in \cite{Calvert2021CollapseAD} it was conjectured that the exponentially decaying lower bound for $\mathcal{M}_n(\mathbb{Z}^2)$ can be established without the requirement of connectivity. Later, this conjecture was solved in \cite{psi} by Kozma and the authors of this paper. Precisely, in \cite[Theorem 1]{psi} they established the vertex-removal stability for the harmonic measure on $\mathbb{Z}^2$ (i.e. it is feasible to remove some vertex while changing the harmonic measure by a uniformly bounded factor) and then employed it to prove $e^{-Cn}\le \mathcal{M}_n(\mathbb{Z}^2)\le e^{-cn}$ without the connectivity condition (see \cite[Theorem 2]{psi}). Moreover, in the case of $\mathbb{Z}^d$ for $d\ge 3$, \cite[Theorem 3]{psi} also proved $e^{-Cn}\le \mathcal{M}_n(\mathbb{Z}^d)\le e^{-cn}$, despite the vertex-removal stability no longer holding there. Furthermore, as stated in \cite[Remark 3.8]{psi}, the proof of the vertex-removal stability on $\mathbb{Z}^2$ can be generalized to a large class of planar graphs, where the invariance principle holds and the probability for the random walk to surround each face is uniformly bounded away from $0$. Therefore, since $\mathscr{T}$ and $\mathscr{H}$ both satisfy these properties, one has  
\begin{equation}
	e^{-Cn}\le \mathcal{M}_n(\mathscr{T})\le e^{-cn}\ \ \text{and}\ \  e^{-Cn}\le \mathcal{M}_n(\mathscr{H})\le e^{-cn}.
\end{equation}

For any graph $\mathscr{G}$ where $\mathcal{M}_n(\mathscr{G})$ decays exponentially, a natural question is: 
\begin{center}
	\textit{What is the exact decay rate of $\mathcal{M}_n(\mathscr{G})$?}
\end{center}
In the case of $\mathbb{Z}^2$, the asymptotic of the exponent of $\mathcal{M}_n(\mathbb{Z}^2)$ was conjectured by Calvert, Ganguly and Hammond through the following inspiring construction. Precisely, in \cite[Example 1.6]{Calvert2021CollapseAD} they proposed an increasing sequence of sets $\{D_n^{\mathbb{Z}^2}\}_{n\ge 2}$ with $\bm{0}:=(0,0)\in D_n^{\mathbb{Z}^2}$ and $|D_n^{\mathbb{Z}^2}|=n$ named ``square spiral'' (i.e. the set consisting of black and red dots in Figure \ref{fig:spiral_Z2}), and proved that 
\begin{equation}\label{hm_spiral}
	\mathbb{H}_{D_n^{\mathbb{Z}^2}}(\bm{0}) = (2+\sqrt{3})^{-2n+o(n)}.
\end{equation}
Furthermore, they also pointed out that in this example, almost every additional vertex lengthens the shortest path from the outermost boundary to the origin $\bm{0}$ by two steps, which intuitively represents the most efficient way. In light of this, they conjectured that $D_n^{\mathbb{Z}^2}$ approximately achieves the least positive value of the harmonic measure of any $n$-point set (see \cite[Conjecture 1.5]{Calvert2021CollapseAD}). I.e., 
\begin{equation}\label{new_1.6}
	\mathcal{M}_n(\mathbb{Z}^2)= (2+\sqrt{3})^{-2n+o(n)}.
\end{equation}
The main result of this paper gives an affirmative answer to a stronger version of this conjecture, and extends it to $\mathscr{T}$ and $\mathscr{H}$:
\begin{singletheorem}\label{theorem1.1}
	There exist constants $\Cl\label{thm1_1}>\cl\label{thm1_2}>0$ such that for $\mathscr{G}\in \{\mathbb{Z}^2,\mathscr{T}, \mathscr{H} \}$ and any sufficiently large $n$, 
	\begin{equation}\label{M_n}
		[\lambda(\mathscr{G})]^{-n+\cref{thm1_2}\sqrt{n}} \le	\mathcal{M}_n(\mathscr{G})\le  [\lambda(\mathscr{G})]^{-n+\Cref{thm1_1}\sqrt{n}},
	\end{equation}
	where $\lambda(\mathbb{Z}^2)=(2+\sqrt{3})^2$, $\lambda(\mathscr{T})=3+2\sqrt{2}$ and $\lambda(\mathscr{H})=(\tfrac{3+\sqrt{5}}{2})^3$.
\end{singletheorem}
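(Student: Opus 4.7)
The two bounds require rather different techniques, so I would treat them separately. For the upper bound, I would construct an explicit spiral-shaped set $D_n^{\mathscr{G}}$ on each lattice, generalising the square spiral of Calvert, Ganguly and Hammond (whose $\mathbb{Z}^2$ construction already realises the leading order $(2+\sqrt{3})^{-2n}$ as shown in \eqref{hm_spiral}). Such a spiral occupies a disc of radius $O(\sqrt{n})$, and each additional vertex forces the walker to advance one further unit along the narrow corridor between consecutive turns. The harmonic measure $\mathbb{H}_{D_n^{\mathscr{G}}}(\bm{0})$ can be read off from a transfer-matrix analysis of the walker along this corridor: the dominant eigenvalue of the corresponding local operator turns out to be $[\lambda(\mathscr{G})]^{-1/2}$, where the three different values of $\lambda$ reflect the local combinatorics of the four-, six-, and three-valent corridors respectively. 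The $\sqrt{n}$ correction absorbs the boundary contributions from the outermost loop of the spiral (where the walker first enters) and from the short tail near $\bm{0}$.

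\textbf{The hard direction is the lower bound,} which must rule out every $n$-point configuration beating the spiral. The key is to sharpen the vertex-removal stability of \cite{psi}: the original statement produces only the crude bound $\mathcal{M}_n(\mathscr{G})\ge e^{-Cn}$ with an unspecified constant, so one must quantify precisely how much removing a vertex changes $\mathbb{H}_A(x)$. I would proceed in two stages. First, a geometric reduction reorganises $A$ into a family of roughly nested circuits around the base point, after discarding $O(\sqrt{n})$ "cheap" vertices (isolated points, or pieces of $A$ not separating $x$ from infinity). Second, a sharp local estimate controls the conditional probability that the walker threads each circuit: by localising to a corridor of bounded width and diagonalising the corresponding transfer operator, one obtains a per-vertex loss of exactly $[\lambda(\mathscr{G})]^{-1/p}$, where $p$ is the period of the corridor specific to each lattice. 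Multiplying these contributions across all circuits yields the matching lower bound, with the $\sqrt{n}$ slack absorbing the discarded cheap vertices and the end-effects of the spectral approximation.

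\textbf{Main obstacle.} The principal difficulty is that minimisers need not globally resemble a clean spiral --- they may combine thin corridors, isolated clusters, and degenerate fragments in complicated ways, and may be disconnected. One therefore needs a notion of ``effective depth'' of each vertex in $A$ that is robust under such geometric noise, together with a combinatorial identity linking the sum of effective depths to $n - O(\sqrt{n})$. Extracting the specific constants $\lambda(\mathbb{Z}^2) = (2+\sqrt{3})^2$, $\lambda(\mathscr{T}) = 3+2\sqrt{2}$, and $\lambda(\mathscr{H}) = (\tfrac{3+\sqrt{5}}{2})^3$ from a local spectral computation on each lattice --- and, critically, verifying that the upper and lower bound computations match to sub-exponential order --- is itself a delicate alignment between the spiral construction and the extremal local eigenvalue problem.
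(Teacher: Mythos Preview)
Your upper-bound plan is essentially the paper's: construct the spiral $D_n^{\mathscr{G}}$, note that the walker is forced along a one-dimensional tunnel on which it has exactly two non-absorbing neighbours, and solve the resulting linear recursion (your ``transfer-matrix''). A careful count of the tunnel length gives $l_n^{\mathscr{G}}\ge \nu(\mathscr{G})n-C\sqrt n$ with $\nu(\mathbb{Z}^2)=2$, $\nu(\mathscr{T})=1$, $\nu(\mathscr{H})=3$, and $\lambda(\mathscr{G})=\widehat\lambda(\mathscr{G})^{\nu(\mathscr{G})}$, which is where those three constants come from.

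Your lower-bound strategy, however, has a genuine gap. The plan to ``reorganise $A$ into a family of roughly nested circuits around the base point after discarding $O(\sqrt n)$ cheap vertices'' does not survive contact with general $A\in\mathcal{A}_n$. Such $A$ may be disconnected and arbitrarily spread out; there is no a priori nested-circuit structure to extract, and discarding only $O(\sqrt n)$ vertices cannot in general produce one. (For instance, $A$ could consist of $\Theta(n)$ widely separated singletons together with a small cluster containing $\bm 0$; each singleton cuts the harmonic measure roughly in half, so you cannot afford to throw away more than $O(\sqrt n)$ of them, yet they have nothing to do with circuits.) Your notion of ``effective depth'' is correspondingly undefined, and there is no mechanism in your sketch that would force the sum of depths to equal $n-O(\sqrt n)$.

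The paper's route is different in kind. It does not try to recognise structure inside $A$; instead it \emph{removes} structure in a controlled way and then \emph{builds} an explicit path for the walker. Concretely: (i) a quantitative removal lemma (Lemma~\ref{lemma_remove_distant}) shows that deleting any ``distant'' subset of $A$ increases $\mathbb{H}_A(\bm 0)$ by at most a factor $\approx 2$; iterating this yields a ``packed'' set $A_{\mathcal D}$ of polynomial diameter at a cost of only $(4e^{1/2e})^{\,n-|A_{\mathcal D}|}$, which is strictly below $\lambda(\mathscr{G})^{\,n-|A_{\mathcal D}|}$. (ii) Large interlocked clusters are removed, each at cost exactly $[\lambda(\mathscr{G})]^{|D|-c\sqrt{|D|}}$ (Lemma~\ref{lemma_remove_interlock}). (iii) A pigeonhole over $\sim\sqrt n$ angular sectors locates one containing at most $O(\sqrt n)$ vertices of $A$, which are then removed. (iv) Finally the walker is steered from infinity through this vacant sector to $\partial_{\infty,*}^{\mathrm o}$ of the cluster containing $\bm 0$, and then around that cluster along its \emph{surrounding loop}. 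The sharp constant $\lambda(\mathscr{G})$ enters only here, through a purely geometric bound (Proposition~\ref{lemma_length_bypass}): the surrounding loop of any $*$-connected $m$-point set has length at most $2[\nu(\mathscr{G})m+O(1)]$. Combined with the one-dimensional walk estimate (Lemma~\ref{lemma_move_along}), this gives bypass probability $\gtrsim[\lambda(\mathscr{G})]^{-m+c\sqrt m}$. There is no transfer-operator diagonalisation of corridors in $A$ itself; the spectral computation happens on a single auxiliary path that the proof constructs.
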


\begin{remark}\label{remark_s_s}
		Theorem \ref{theorem1.1} not only confirms the conjecture (\ref{new_1.6}), but also reveals that the $o(n)$ term in the exponent of (\ref{new_1.6}) is positive and of order $\sqrt{n}$. 
\end{remark}

\subsection{Existence of minimizing sets}

%%%%%%%%%%%%%%%%%%%%%%%%%%%%%%%%%%%%%%%
\begin{figure}[h!]
	\begin{tikzpicture}[scale=0.27]
		\draw[step=1cm,blue,thin, dotted] (-11,-9) grid (7,9);
		\node at (0,0) [circle,fill=red,inner sep=1.5pt]{}; \node at (-0.5,0.5) [scale=0.8]{$\bm{0}$};
		\node at (0,1) [circle,fill=black,inner sep=1.5pt]{};       \node at (0, -1)[circle,fill=black,inner sep=1.5pt]{} ;\node at (-1,1) [circle,fill=black,inner sep=1.5pt]{};\node at (-2,0) [circle,fill=black,inner sep=1.5pt]{};\node at (1,0) [circle,fill=black,inner sep=1.5pt]{};\node at (-1,-2) [circle,fill=black,inner sep=1.5pt]{};\node at (-2,-3) [circle,fill=black,inner sep=1.5pt]{};\node at (-3,-2) [circle,fill=black,inner sep=1.5pt]{};\node at (-4,-1) [circle,fill=black,inner sep=1.5pt]{};\node at (-5,0) [circle,fill=black,inner sep=1.5pt]{};\node at (-4,1) [circle,fill=black,inner sep=1.5pt]{};\node at (-3,2) [circle,fill=black,inner sep=1.5pt]{};\node at (-2,3) [circle,fill=black,inner sep=1.5pt]{};\node at (-1,4) [circle,fill=black,inner sep=1.5pt]{};\node at (0,4) [circle,fill=black,inner sep=1.5pt]{};\node at (1,3) [circle,fill=black,inner sep=1.5pt]{};\node at (2,2) [circle,fill=black,inner sep=1.5pt]{};\node at (3,1) [circle,fill=black,inner sep=1.5pt]{};\node at (4,0) [circle,fill=black,inner sep=1.5pt]{};\node at (3,-1) [circle,fill=black,inner sep=1.5pt]{};\node at (2,-2) [circle,fill=black,inner sep=1.5pt]{};\node at (1,-3) [circle,fill=black,inner sep=1.5pt]{};\node at (0,-4) [circle,fill=black,inner sep=1.5pt]{};\node at (-1,-5) [circle,fill=black,inner sep=1.5pt]{};\node at (-2,-6) [circle,fill=black,inner sep=1.5pt]{};\node at (-3,-5) [circle,fill=black,inner sep=1.5pt]{};\node at (-4,-4) [circle,fill=black,inner sep=1.5pt]{};\node at (-5,-3) [circle,fill=black,inner sep=1.5pt]{};\node at (-6,-2) [circle,fill=black,inner sep=1.5pt]{};\node at (-7,-1) [circle,fill=black,inner sep=1.5pt]{};
		\node at (-8,0) [circle,fill=black,inner sep=1.5pt]{}; \node at (-7,1) [circle,fill=black,inner sep=1.5pt]{};\node at (-6,2) [circle,fill=black,inner sep=1.5pt]{};
		\node at (-5,3) [circle,fill=black,inner sep=1.5pt]{};\node at (-4,4) [circle,fill=black,inner sep=1.5pt]{};\node at (-3,5) [circle,fill=black,inner sep=1.5pt]{};\node at (-2,6) [circle,fill=black,inner sep=1.5pt]{};\node at (-1,7) [circle,fill=black,inner sep=1.5pt]{};\node at (0,7) [circle,fill=black,inner sep=1.5pt]{};\node at (1,6) [circle,fill=black,inner sep=1.5pt]{};\node at (2,5) [circle,fill=black,inner sep=1.5pt]{};\node at (3,4) [circle,fill=black,inner sep=1.5pt]{};\node at (4,3) [circle,fill=black,inner sep=1.5pt]{};\node at (5,2) [circle,fill=black,inner sep=1.5pt]{};\node at (6,1) [circle,fill=black,inner sep=1.5pt]{};\node at (7,0) [circle,fill=black,inner sep=1.5pt]{};\node at (6,-1) [circle,fill=black,inner sep=1.5pt]{};\node at (5,-2) [circle,fill=black,inner sep=1.5pt]{};\node at (4,-3) [circle,fill=black,inner sep=1.5pt]{};\node at (3,-4) [circle,fill=black,inner sep=1.5pt]{};\node at (2,-5) [circle,fill=black,inner sep=1.5pt]{};\node at (1,-6) [circle,fill=black,inner sep=1.5pt]{};\node at (0,-7) [circle,fill=black,inner sep=1.5pt]{};\node at (-1,-8) [circle,fill=black,inner sep=1.5pt]{};\node at (-2,-9) [circle,fill=black,inner sep=1.5pt]{};\node at (-3,-8) [circle,fill=black,inner sep=1.5pt]{};\node at (-4,-7) [circle,fill=black,inner sep=1.5pt]{};
		\node at (-5,-6) [circle,fill=black,inner sep=1.5pt]{};\node at (-6,-5) [circle,fill=black,inner sep=1.5pt]{};\node at (-7,-4) [circle,fill=black,inner sep=1.5pt]{};\node at (-8,-3) [circle,fill=black,inner sep=1.5pt]{};\node at (-9,-2) [circle,fill=black,inner sep=1.5pt]{};\node at (-10,-1) [circle,fill=black,inner sep=1.5pt]{};
		\node at (-11,0) [circle,fill=black,inner sep=1.5pt]{};\node at (-10,1) [circle,fill=black,inner sep=1.5pt]{};\node at (-9,2) [circle,fill=black,inner sep=1.5pt]{};\node at (-8,3) [circle,fill=black,inner sep=1.5pt]{};\node at (-7,4) [circle,fill=black,inner sep=1.5pt]{};\node at (-6,5) [circle,fill=black,inner sep=1.5pt]{};\node at (-5,6) [circle,fill=black,inner sep=1.5pt]{};\node at (-4,7) [circle,fill=black,inner sep=1.5pt]{};\node at (-3,8) [circle,fill=black,inner sep=1.5pt]{};\node at (-2,9) [circle,fill=black,inner sep=1.5pt]{};
%		\node at (-1,10) [circle,fill=black,inner sep=1.5pt]{};\node at (0,10) [circle,fill=black,inner sep=1.5pt]{};

		%holes
		\node at (-10,0) [circle,fill=black,inner sep=1.5pt]{};
		\node at (-7,0) [circle,fill=black,inner sep=1.5pt]{};
		\node at (-4,0) [circle,fill=black,inner sep=1.5pt]{};
		\node at (3,0) [circle,fill=black,inner sep=1.5pt]{};
		\node at (6,0) [circle,fill=black,inner sep=1.5pt]{};
		\node at (0,3) [circle,fill=black,inner sep=1.5pt]{};
		\node at (-1,3) [circle,fill=black,inner sep=1.5pt]{};
		\node at (0,6) [circle,fill=black,inner sep=1.5pt]{};
		\node at (-1,6) [circle,fill=black,inner sep=1.5pt]{};
		\node at (-2,-2) [circle,fill=black,inner sep=1.5pt]{};
		\node at (-2,-5) [circle,fill=black,inner sep=1.5pt]{};
%		\node at (-2,-8) [circle,fill=black,inner sep=1.5pt]{};
%		\node at (0,9) [circle,fill=black,inner sep=1.5pt]{};
%		\node at (-1,9) [circle,fill=black,inner sep=1.5pt]{};
%		\node at (1,9) [circle,fill=black,inner sep=1.5pt]{};
%		\node at (2,8) [circle,fill=black,inner sep=1.5pt]{};
%		\node at (3,7) [circle,fill=black,inner sep=1.5pt]{};
%		\node at (4,6) [circle,fill=black,inner sep=1.5pt]{};
		\node at (-2,-8) [circle,fill=black,inner sep=1.5pt]{};

		%curve
%		\draw [pink,very thick] (4,5) -- (3,5);
%		\draw [pink,very thick] (3,6) -- (3,5);
%		\draw [pink,very thick] (3,6) -- (2,6);
%		\draw [pink,very thick] (2,6) -- (2,7);
%		\draw [pink,very thick] (1,7) -- (2,7);
%		\draw [pink,very thick] (1,8) -- (1,7);
		\draw [pink,very thick] (-1,8) -- (-2,8);
		\draw [pink,very thick] (-2,8) -- (-2,7);
		\draw [pink,very thick] (-2,7) -- (-3,7);
		\draw [pink,very thick] (-3,7) -- (-3,6);
		\draw [pink,very thick] (-3,6) -- (-4,6);
		\draw [pink,very thick] (-4,6) -- (-4,5);
		\draw [pink,very thick] (-4,5) -- (-5,5);
		\draw [pink,very thick] (-5,5) -- (-5,4);
		\draw [pink,very thick] (-5,4) -- (-6,4);
		\draw [pink,very thick] (-6,4) -- (-6,3);
		\draw [pink,very thick] (-6,3) -- (-7,3);
		\draw [pink,very thick] (-7,3) -- (-7,2);
		\draw [pink,very thick] (-7,2) -- (-8,2);
		\draw [pink,very thick] (-8,2) -- (-8,1);
		\draw [pink,very thick] (-8,1) -- (-9,1);
		\draw [pink,very thick] (-9,1) -- (-9,-1);
		\draw [pink,very thick] (-9,-1) -- (-8,-1);
		\draw [pink,very thick] (-8,-1) -- (-8,-2);
		\draw [pink,very thick] (-8,-2) -- (-7,-2);
		\draw [pink,very thick] (-7,-2) -- (-7,-3);
		\draw [pink,very thick] (-7,-3) -- (-6,-3);
		\draw [pink,very thick] (-6,-3) -- (-6,-4);
		\draw [pink,very thick] (-6,-4) -- (-5,-4);
		\draw [pink,very thick] (-5,-4) -- (-5,-5);
		\draw [pink,very thick] (-5,-5) -- (-4,-5);
		\draw [pink,very thick] (-4,-5) -- (-4,-6);
		\draw [pink,very thick] (-4,-6) -- (-3,-6);
		\draw [pink,very thick] (-3,-6) -- (-3,-7);
		\draw [pink,very thick] (-3,-7) -- (-1,-7);
		\draw [pink,very thick] (-1,-7) -- (-1,-6);
		\draw [pink,very thick] (-1,-6) -- (0,-6);
		\draw [pink,very thick] (0,-6) -- (0,-5);
		\draw [pink,very thick] (0,-5) -- (1,-5);
		\draw [pink,very thick] (1,-5) -- (1,-4);
		\draw [pink,very thick] (1,-4) -- (2,-4);
		\draw [pink,very thick] (2,-4) -- (2,-3);
		\draw [pink,very thick] (2,-3) -- (3,-3);
		\draw [pink,very thick] (3,-3) -- (3,-2);
		\draw [pink,very thick] (3,-2) -- (4,-2);
		\draw [pink,very thick] (4,-2) -- (4,-1);
		\draw [pink,very thick] (4,-1) -- (5,-1);
		\draw [pink,very thick] (5,-1) -- (5,1);
		\draw [pink,very thick] (5,1) -- (4,1);
		\draw [pink,very thick] (4,1) -- (4,2);
		\draw [pink,very thick] (4,2) -- (3,2);
		\draw [pink,very thick] (3,2) -- (3,3);
		\draw [pink,very thick] (3,3) -- (2,3);
		\draw [pink,very thick] (2,3) -- (2,4);
		\draw [pink,very thick] (2,4) -- (1,4);
		\draw [pink,very thick] (1,4) -- (1,5);
		\draw [pink,very thick] (1,5) -- (0,5);
		\draw [pink,very thick] (0,5) -- (-2,5);
		\draw [pink,very thick] (-2,5) -- (-2,4);
		\draw [pink,very thick] (-2,4) -- (-3,4);
		\draw [pink,very thick] (-3,4) -- (-3,3);
		\draw [pink,very thick] (-3,3) -- (-4,3);
		\draw [pink,very thick] (-4,3) -- (-4,2);
		\draw [pink,very thick] (-4,2) -- (-5,2);
		\draw [pink,very thick] (-5,2) -- (-5,1);
		\draw [pink,very thick] (-5,1) -- (-6,1);
		\draw [pink,very thick] (-6,1) -- (-6,0);
		\draw [pink,very thick] (-6,0) -- (-6,-1);
		\draw [pink,very thick] (-6,-1) -- (-5,-1);
		\draw [pink,very thick] (-5,-1) -- (-5,-2);
		\draw [pink,very thick] (-5,-2) -- (-4,-2);
		\draw [pink,very thick] (-4,-2) -- (-4,-3);
		\draw [pink,very thick] (-4,-3) -- (-3,-3);
		\draw [pink,very thick] (-3,-3) -- (-3,-4);
		\draw [pink,very thick] (-3,-4) -- (-2,-4);
		\draw [pink,very thick] (-2,-4) -- (-1,-4);
		\draw [pink,very thick] (-1,-4) -- (-1,-3);
		\draw [pink,very thick] (-1,-3) -- (-0,-3);
		\draw [pink,very thick] (0,-3) -- (0,-2);
		\draw [pink,very thick] (0,-2) -- (1,-2);
		\draw [pink,very thick] (1,-2) -- (1,-1);
		\draw [pink,very thick] (1,-1) -- (2,-1);
		\draw [pink,very thick] (2,-1) -- (2,1);
		\draw [pink,very thick] (2,1) -- (1,1);
		\draw [pink,very thick] (1,1) -- (1,2);
		\draw [pink,very thick] (1,2) -- (-2,2);
		\draw [pink,very thick] (-2,2) -- (-2,1);
		\draw [pink,very thick] (-2,1) -- (-3,1);
		\draw [pink,very thick] (-3,1) -- (-3,-1);
		\draw [pink,very thick] (-3,-1) -- (-1,-1);
		\draw [pink,very thick] (-1,-1) -- (-1,0);
		\draw [pink,very thick] (-1,0) -- (0,0);
		
		%entrance
		\draw [->] (1.5,7.5) to (-0.5,7.9);
		\node at (2.5,7.2) []{$v_n^{\mathbb{Z}^2}$};
		
	\end{tikzpicture}
	\caption{(\cite[Figure 3]{Calvert2021CollapseAD}) An illustration for the square spiral $D_n^{\mathbb{Z}^2}$  \label{fig:spiral_Z2}}
\end{figure}
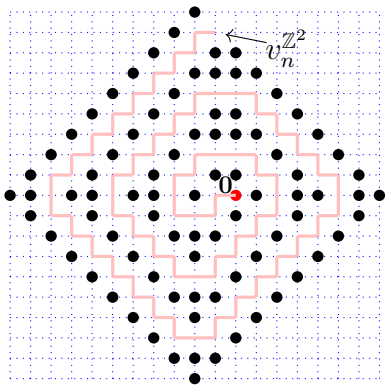
%%%%%%%%%%%%%%%%%%%%%%%%%%%%%%%%%%%%%%%

Although Theorem \ref{theorem1.1} gives a fairly precise estimate for $\mathcal{M}_n(\mathscr{G})$ with $\mathscr{G}\in \{\mathbb{Z}^2, \mathscr{T}, \mathscr{H}\}$, it is open to show that the minimum in $\mathcal{M}_n(\mathscr{G})$ can be achieved by some specific $n$-point set. Naturally, a more profound question is to examine the uniqueness or even to identify the minimizing set.

\begin{conjecture}\label{conj_existence}
	Let $\mathscr{G}\in \{\mathbb{Z}^2,\mathscr{T}, \mathscr{H} \}$. For all $n\ge 1$, there exists a set $M\subset \mathfs{V}$ satisfying ${\bf 0}\in M$, $|M|=n$ and $\mathbb{H}_M({\bm 0})=\mathcal{M}_n(\mathscr{G})$. 
\end{conjecture}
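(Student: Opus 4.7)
The plan is to establish Conjecture \ref{conj_existence} via a compactness argument: show that any minimizing sequence of $n$-point sets containing $\bm 0$ has, after translation, uniformly bounded diameter, so that the infimum in (\ref{def_Mn}) reduces to a minimum over the finite collection of $n$-element subsets of a fixed ball. Concretely, the aim is to prove a diameter bound: there exists $R_n<\infty$ such that every set $A$ with $\bm 0\in A$, $|A|=n$ and $\mathrm{diam}(A)>R_n$ satisfies $\mathbb{H}_A(\bm 0)>\mathcal{M}_n(\mathscr{G})$. Granted this, the minimum is taken over a finite family of competitors and is attained. On $\mathscr{H}$, which has two vertex orbits, the argument is applied separately to each orbit.

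For the diameter bound I would partition any such $A$ as $A=A_{\mathrm{near}}\sqcup A_{\mathrm{far}}$ with $\bm 0\in A_{\mathrm{near}}\subset B(\bm 0,r)$ and $A_{\mathrm{far}}\cap B(\bm 0,D)=\emptyset$ for scales $r\ll D$. A two-scale analysis of planar harmonic measure based on the potential kernel asymptotic $a(x)\sim \tfrac{2}{\pi}\log|x|$ (cf.\ \cite[Chapter 4]{lawler2013intersections}) produces
\[
	\mathbb{H}_A(\bm 0)\ \ge\ q\cdot \mathbb{H}_{A_{\mathrm{near}}}(\bm 0),
\]
where $q\in(0,1]$ is the asymptotic probability that the walk from infinity hits $A_{\mathrm{near}}$ before $A_{\mathrm{far}}$; two-cluster symmetry at infinity together with a log-capacity comparison yields $q\ge 1/(k+1)$ with $k:=|A_{\mathrm{far}}|$. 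Combining with the lower bound $\mathbb{H}_{A_{\mathrm{near}}}(\bm 0)\ge \lambda(\mathscr{G})^{-(n-k)+c\sqrt{n-k}}$ from Theorem \ref{theorem1.1} and comparing to the upper bound $\mathcal{M}_n(\mathscr{G})\le \lambda(\mathscr{G})^{-n+C\sqrt n}$, this strictly exceeds $\mathcal{M}_n(\mathscr{G})$ once $k\gtrsim (C-c)\sqrt n$, which disposes of all configurations with many escaped points.

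The delicate case is $1\le k\lesssim \sqrt n$. Here I would run a global exchange argument. Setting $A_{\mathrm{near}}':=A\setminus A_{\mathrm{far}}$, the factorization above gives $\mathbb{H}_{A_{\mathrm{near}}'}(\bm 0)\le (k+1)\mathbb{H}_A(\bm 0)$. One then adjoins to $A_{\mathrm{near}}'$ new vertices $z_1',\dots,z_k'$ placed sequentially around the outermost shielding layer by a local analogue of the spiral construction underlying the upper bound in Theorem \ref{theorem1.1}, so that each addition multiplies $\mathbb{H}_\cdot(\bm 0)$ by at most $\lambda(\mathscr{G})^{-1+o(1)}$. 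The resulting set $A'=A_{\mathrm{near}}'\cup\{z_1',\dots,z_k'\}$ satisfies $|A'|=n$, $\mathrm{diam}(A')\le R_n$ and
\[
	\mathbb{H}_{A'}(\bm 0)\ \le\ (k+1)\,\lambda(\mathscr{G})^{-k(1+o(1))}\,\mathbb{H}_A(\bm 0)\ <\ \mathbb{H}_A(\bm 0),
\]
where the last inequality uses $\lambda(\mathscr{G})\ge 3+2\sqrt 2>2$ to force $(k+1)\lambda(\mathscr{G})^{-k}<1$ for every $k\ge 1$ once $n$ is large enough to absorb the $o(1)$.

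The principal obstacle is the local substitution: producing vertices $z_1',\dots,z_k'$ whose successive insertion into an \emph{arbitrary} near configuration $A_{\mathrm{near}}'$ reduces $\mathbb{H}_\cdot(\bm 0)$ by the sharp per-step rate $\lambda(\mathscr{G})^{-1+o(1)}$. The upper-bound construction of Theorem \ref{theorem1.1} supplies such additions for the explicit spiral sets $D_n^{\mathscr{G}}$ but not for arbitrary shapes. Establishing a set-agnostic, local ``one-step improvement'' lemma---effectively a pointwise version of the upper-bound recursion, quantifying the gain from an optimal single vertex addition to any near-minimizing configuration---appears to demand a sharp analysis of the discrete equilibrium measure on boundaries of general planar sets that goes beyond the global, construction-based proof of Theorem \ref{theorem1.1}, and seems to be the main reason Conjecture \ref{conj_existence} has so far resisted proof. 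A possible complementary route is to upgrade the vertex-removal stability of \cite[Theorem 1]{psi} to a \emph{monotone-swap} stability (adding one vertex while removing another while controlling the multiplicative change), which would remove the need for a sharp per-step constant and instead replace it by a uniform bound weaker than $\lambda(\mathscr{G})$.
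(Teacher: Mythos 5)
The paper does not prove Conjecture \ref{conj_existence}: it is stated as an open problem, with the text preceding it noting explicitly that ``it is open to show that the minimum in $\mathcal{M}_n(\mathscr{G})$ can be achieved by some specific $n$-point set.'' What the paper does establish, in the paragraph following the conjecture statement, is only a weaker partial result: the existence of a minimizing $n$-point set along a sequence of integers $\{n_k\}$ with $n_{k+1}\le n_k+C\sqrt{n_k}$. That partial argument is short and goes in a quite different direction from yours. If no minimizer exists at cardinality $n$, then any minimizing sequence of $n$-point sets has diameter tending to infinity, and one may split off a subset whose distance to its complement diverges; the $L\to\infty$ limit of Lemma \ref{lemma_remove_distant} then gives $2\mathcal{M}_n(\mathscr{G})\ge\mathcal{M}_{n-1}(\mathscr{G})$. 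Iterating this over a window $[n,n+C\sqrt n]$ and comparing against both bounds of Theorem \ref{theorem1.1}, using that $\lambda(\mathscr{G})>2$, forces a contradiction unless some $n_\dagger$ in the window admits a minimizer.

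Your proposal is more ambitious than anything the paper achieves, and you are candid about where it breaks down. Steps 1--3 (near/far partition, the splitting probability $q$, and the disposal of $k\gtrsim\sqrt n$ far points) are reasonable in outline, though the clean bound $q\ge 1/(k+1)$ needs more justification than ``two-cluster symmetry at infinity'': what Lemma \ref{lemma_remove_distant} yields is a factor close to $2$ for removing one well-separated packed subset, so iterating over at most $k$ of them gives a $2^{-O(k)}$-type bound, which is weaker than $1/(k+1)$ but still suffices for step 3. The genuine obstruction is precisely where you locate it. In the range $1\le k\lesssim\sqrt n$, your exchange argument requires a ``one-step addition'' lemma: for an \emph{arbitrary} near configuration, one can adjoin a vertex multiplying $\mathbb{H}_{\cdot}(\bm 0)$ by at most $\lambda(\mathscr{G})^{-1+o(1)}$. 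Theorem \ref{theorem1.1} delivers this per-vertex rate globally for the explicit spiral constructions $D_n^{\mathscr{G}}$, but nothing in the paper gives such a local, configuration-agnostic improvement; this is exactly the ``more careful consideration of the influence that a vertex near $\bm 0$ has'' that the paper says global existence would require. Your suggested alternative of upgrading the vertex-removal stability of \cite[Theorem 1]{psi} to a monotone-swap stability is also not available in the paper. One minor correction: the hexagonal lattice $\mathscr{H}$ is vertex-transitive (its automorphism group, not merely its translation subgroup, acts transitively on vertices, which is how the paper's identity $\mathbb{H}_A(y)=\mathbb{H}_{A-y}(\bm 0)$ should be read), so there is no need to treat two vertex orbits separately.
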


In an upcoming work \cite{yam2024}, it is proved that for the non-planar lattices $\mathbb{Z}^d$ with $d>2$, the minimizing set for harmonic measures exists, even though the exact decay rate of $\mathcal{M}_n(\mathbb{Z}^d)$ for $d>2$ is not yet known. In fact, the proof in \cite{yam2024} for this existence highly relies on the transience of the graph, which indicates that an extremely faraway vertex has almost no influence on the harmonic measure at $\bm{0}$. This property can exclude all sets with excessively large diameters from the collection of candidates for the minimizing set and thus ensures its existence. However, for the two-dimensional lattices considered in this paper, i.e. $\mathscr{G}\in \{\mathbb{Z}^2,\mathscr{T}, \mathscr{H}\}$, it is not a priori clear that a connected set would always be preferable to a sparse set for achieving the minimum. This is because, due to the recurrence of such $\mathscr{G}$, adding a faraway vertex to a set containing $\bm{0}$ will reduce the harmonic measure at ${\bm 0}$ by a non-negligible constant factor (approximately $1/2$ by symmetry).

An intriguing conclusion from Theorem \ref{theorem1.1} is that for $\mathscr{G}\in \{\mathbb{Z}^2,\mathscr{T}, \mathscr{H}\}$, there exists an increasing sequence of integers $\{n_k\}_{k\ge1}$ with $n_{k+1}\le n_k+C\sqrt{n_k}$ such that every $\mathcal{M}_{n_k}(\mathscr{G})$ can be achieved by some $n_k$-point set $M_{n_k}$. To see this, suppose that there is no minimizing set of cardinality $n$, then we can find a sequence of $n$-point sets $\{M_{n}^{(m)}\}_{m\ge 1}$ with increasing diameters such that $\lim_{m\to \infty}\mathbb{H}_{M_{n}^{(m)}}({\bf 0})=\mathcal{M}_n(\mathscr{G})$. A moment of thought shows that for each $m\ge 1$, it is feasible to find a subset of $M_{n}^{(m)}\setminus \{\bm{0}\}$ whose distance to its complement in $M_{n}^{(m)}$ converges to infinity as $m$ increases. By the recurrence and symmetry of $\mathscr{G}$, removing such a distant subset will approximately double the harmonic measure at $\bm{0}$ (see Lemma \ref{lemma_remove_distant}), which implies $2\mathcal{M}_n(\mathscr{G})\ge \mathcal{M}_{n-1}(\mathscr{G})$. Combined with Theorem \ref{theorem1.1}, this yields that in the interval $[n,n+C\sqrt{n}]$ (where $C$ is a sufficiently large constant), there is at least one integer $n_\dagger$ such that the minimizing set for $\mathcal{M}_{n_\dagger}(\mathscr{G})$ exists (otherwise, one has $2^{C\sqrt{n}}\mathcal{M}_{n+C\sqrt{n}}(\mathscr{G})\ge \mathcal{M}_{n}(\mathscr{G})$, which together with $\lambda(\mathscr{G})>2$ causes a contradiction to Theorem \ref{theorem1.1}). In conclusion, the existence stated in Conjeture \ref{conj_existence} holds at least for a sequence of integers increasing with a limited speed. For global existence, it requires a more careful consideration of the influence that a vertex near $\bm{0}$ has on the harmonic measure at $\bm{0}$.

\textbf{Equivalent definition of $\mathcal{M}_n$.} Unless otherwise stated, throughout this paper we assume that $\mathscr{G}\in \{\mathbb{Z}^2,\mathscr{T}, \mathscr{H}\}$, and denote its vertex set and edge set by $\mathfs{V}$ and $\mathfs{E}$ respectively. Let $\bm{0}$ represent the origin of $\mathscr{G}$. Since $\mathscr{G}$ is vertex-transitive, 
\begin{equation}
	\mathbb{H}_A(y)= \mathbb{H}_{A-y}(\bm{0}), \ \ \forall A\subset \mathfs{V}\ \text{and}\ y\in A,
\end{equation}
where $A-y:= \{z-y: z\in A\}$. Thus, $\mathcal{M}_n$ in (\ref{def_Mn}) can be
equivalently defined as 
\begin{equation}\label{new_M_n}
	\mathcal{M}_n(\mathscr{G})= \inf\nolimits_{A\in \mathcal{A}_n(\mathscr{G})} \mathbb{H}_A(\bm{0}),
\end{equation}
where $\mathcal{A}_n(\mathscr{G}):= \{A\subset \mathfs{V}: |A|=n, \mathbb{H}_A(\bm{0})>0\}$.

\textbf{Statements about constants.} We use notations $C,C',c,c',...$ to represent local constants that vary depending on the context. Moreover, we employ the numbered notations $C_1,C_2,c_1,c_2,...$ to designate global constants, which stay unchanged throughout the paper. For the sake of clarity, we assign the uppercase letter $C$ (potentially with subscripts or superscripts) to denote large constants, while the lowercase letter $c$ is used to denote small ones.

\textbf{Notations for up-to-constant bounds.} For functions $f,g:\mathbb{R}\to \mathbb{R}$, we say $f$ is bounded from below (resp. above) by $g$ up to a constant if there exists $c>0$ (resp. $C>0$) such that $f\ge cg$ (resp. $f\le Cg$), which is written as $f\gtrsim g$ (resp. $f\lesssim g$). In addition, we denote $f\asymp g$ in the case when $g\lesssim f\lesssim g$.

\section{Proof outline of Theorem \ref{theorem1.1}}\label{subsection_outline}

According to (\ref{new_M_n}), to establish the upper bounds in Theorem \ref{theorem1.1}, it suffices to find a family of sets $A_n\in \mathcal{A}_n(\mathscr{G})$ such that $\mathbb{H}_{A_n}(\bm{0})\le [\lambda(\mathscr{G})]^{-n+C\sqrt{n}}$ for $n\ge 1$. For $\mathscr{G}=\mathbb{Z}^2$, we choose $A_n$ as the square spiral $D_n^{\mathbb{Z}^2}$ (see Figure \ref{fig:spiral_Z2}). Referring to (\ref{new_1.6}), it was already proved in \cite{Calvert2021CollapseAD} that $\mathbb{H}_{D_n^{\mathbb{Z}^2}}(\bm{0})\le [\lambda(\mathscr{G})]^{-n+o(n)}$, which is almost sufficient for the upper bounds in Theorem \ref{theorem1.1}, albeit with insufficient control over the second leading term $o(n)$ in the exponent. As demonstrated in Section \ref{section1.2.1}, through a careful computation (see Lemma \ref{lemma_length_tunnel}) on the length of the tunnel (i.e. the pink path in Figure \ref{fig:spiral_Z2}), we improve the previous upper bound for $D_n^{\mathbb{Z}^2}$ to $\mathbb{H}_{D_n^{\mathbb{Z}^2}}(\bm{0})\le [\lambda(\mathscr{G})]^{-n+C\sqrt{N}}$ and thus obtain the upper bound in Theorem \ref{theorem1.1} for $\mathscr{G}=\mathbb{Z}^2$. Furthermore, for the case when $\mathscr{G}\in \{\mathscr{T},\mathscr{H}\}$, we construct the analogues $D_n^{\mathscr{T}}$ and $D_n^{\mathscr{H}}$ (see Figure \ref{fig:spirals}) of the square spiral $D_n^{\mathbb{Z}^2}$. By applying similar arguments as in the case $\mathscr{G}=\mathbb{Z}^2$, we establish that $\mathbb{H}_{D_n^{\mathscr{G}}}(\bm{0})\le [\lambda(\mathscr{G})]^{-n+C\sqrt{N}}$ for $\mathscr{G}\in \{\mathscr{T},\mathscr{H}\}$, thereby concluding the upper bounds in Theorem \ref{theorem1.1}.

As for the lower bounds in Theorem \ref{theorem1.1}, their proofs are significantly more challenging and occupy the majority of this paper. Unlike bounding $\mathcal{M}_n(\mathscr{G})$ from above, exploring some specific examples in $\mathcal{A}_n(\mathscr{G})$ is no longer sufficient here. Instead, it requires to confirm that $[\lambda(\mathscr{G})]^{-n+c\sqrt{n}}$ is a uniform lower bound for harmonic measures at $\bm{0}$ of all sets in $\mathcal{A}_n(\mathscr{G})$, which are not necessarily connected and can be arbitrarily spread out. However, in two-dimensional lattices, it is indeed not the case that scattered vertices (i.e. vertices that are far away from the others) can be easily ignored in the computation of harmonic measures. To see this, consider the scenario where $A\in \mathcal{A}_n(\mathscr{G})$ ($\mathscr{G}\in \{\mathbb{Z}^2,\mathscr{T},\mathscr{H}\}$) consists of a giant set $A'$ and a single vertex $x$ such that $A'$ and $x$ are far away from each other. Due to the recurrence and translation invariance of $\mathscr{G}$, the harmonic measure of $A$ at $x$ is approximately $1/2$ (see e.g. \cite[Section 2.5]{lawler2013intersections}). In other words, adding $x$ to $A'$ will roughly decrease the harmonic measure of $A'$ by a factor of $1/2$. This phenomenon indicates that in the analysis of the harmonic measure of a large set, there is a significantly strong correlation between different clusters (we employ ``cluster'' as a synonym of ``connected component'', unless stated otherwise) within the set, which undoubtedly increases the difficulty of estimation. Actually, in our previous work with Kozma \cite{psi}, which proved that $e^{-Cn}\le \mathcal{M}_n(\mathscr{G})\le e^{-cn}$, the same challenge has been encountered. The way we solved it is to remove a vertex (or an entire cluster) from the set and then estimate how such a removal influences (or more accurately, increases) the harmonic measure at some fixed vertex. Inspired by this approach, in the first step of our proof for the lower bounds in Theorem \ref{theorem1.1}, we aim to reduce a general set (which may be very spread out) to a denser set by some proper removals, thereby simplifying the subsequent estimates.

\noindent (Warning: for clarity, the arguments in this section may slightly differ from those in the formal proofs presented in later sections.)

%Notably, unlike the main result in \cite{psi} (i.e. the exponential decay of $\mathcal{M}_n(\mathscr{G})$), Theorem \ref{theorem1.1} includes a delicate control on the decay rate and thus inherently requires a quantitative version of the previous removal argument in \cite{psi}. 

\vspace*{0.05cm}

\textbf{Remove all distant subsets} (achieved by Step 1 in Section \ref{section_outline}). For any $A\in \mathcal{A}_n(\mathscr{G})$, we consider the ``distant subset'' whose distance to its complement in $A$ is significantly larger than its diameter. By a quantitative removal argument inspired by \cite{psi}, we show that removing a distant subset $D$ with $\bm{0}\notin D$ from $A$ will increase the harmonic measure at $\bm{0}$ by at most a factor of $2+\epsilon$ for some $\epsilon>0$ (i.e. $\frac{\mathbb{H}_{A\setminus D}(\bm{0})}{\mathbb{H}_{A}(\bm{0})}\le 2+\epsilon$), which is much smaller than the corresponding quantity expected by Theorem \ref{theorem1.1} for a subset of the same cardinality as $D$, namely, $[\lambda(\mathscr{G})]^{|D|+o(|D|)}$ (see Lemma \ref{lemma_remove_distant}). We remove distant subsets from $A$ repeatedly until the set remains unchanged (note that each removal may generate new distant subsets).

\vspace*{0.05cm}

After applying the removal scheme in the first step, we end up with a set $A'$ with no distant subset, which will be dubbed as a ``packed set" (see Definition \ref{def_sparse}). Intuitively, the absence of distant subset indeed limits the sparsity of $A'$ by requiring all subsets of $A'$ to stay close to each other in pairs. In Lemma \ref{lemma_packed3}, we reflect such a restriction on pairwise distances into an inductive proof, which shows that that the diameter of any packed $A'$ is at most $|A'|^C$. As a result, in the subsequent proof it suffices to estimate the harmonic measure of a set $A'\subset B(n^C)$, where $B(n^C)$ is the Euclidean ball of radius $n^C$ centered at $\bm{0}$.

	\vspace*{0.05cm}

\textbf{Remove the clusters in a specific angle} (achieved by Steps 2 and 3 in Section \ref{section_outline}). We aim to create a vacant corridor in the box $B(n^C)$ through the removal argument so that the random walk starting from a faraway vertex can easily reach a small box $B(c\sqrt{n})$ without hitting $A'$. Precisely, we divide the annulus $B(n^C)\setminus B(c\sqrt{n}) $ into $C'\sqrt{n}$ equal sectors and find the one such that the clusters in $A'$ intersecting it have the smallest total cardinality, which is $O(\sqrt{n})$ according to the pigeonhole principle. (Actually, to avoid duplicate counting during the implementation of the pigeonhole principle, we need to ensure that every cluster in $A'$ cannot intersect too many sectors at the same time. As shown in Section \ref{section_outline}, we achieve this by removing all large clusters from $A'$ in advance.) Then we obtain $A''$ by removing all clusters that intersect this chosen sector from $A'$, and derive $\frac{\mathbb{H}_{A''}(\bm{0})}{\mathbb{H}_{A'}(\bm{0})}\le e^{O(\sqrt{n})}$ using the technique in \cite[Section 3.3]{psi} (where each removed vertex costs a constant factor on the upper bound). Note that the factor $e^{O(\sqrt{n})}$ can be swallowed by the second leading term in the exponent of our desired bound.

\vspace*{0.05cm}

A key ingredient of the remaining proof is to estimate the probability of bypassing a given cluster for a random walk (see Lemma \ref{lemma_prob_surrounding}). In other words, for any $m\in \mathbb{N}^+$, we aim to compute the smallest probability that a random walk, starting from an arbitrary vertex $x$ in the outer boundary of a connected $m$-point set $D$, reaches another predetermined vertex $y$ in the outer boundary before hitting $D$. A natural way to derive such a bound is to construct a proper path connecting $x$ and $y$ within the outer boundary of $D$, and then force the random walk to move in a ``one-dimensional" manner along this path. In fact, the construction of such a proper bypass intrinsically depends on the geometry of the graph. Specifically, in Section \ref{section_length_bypass} we establish an upper bound for the length of a circuit within the outer boundary of a set (see Proposition \ref{lemma_length_bypass} and Lemma \ref{lemma_exist_sl}). This estimate can be viewed as a quantitative extension of earlier results on boundary connectivity (see e.g. \cite[Lemma 2.1]{deuschel1996surface}, \cite[Lemma 2.23]{kesten1984aspects} and \cite{timar2013boundary}), and is interesting in its own right as a basic property of the graph.

	\vspace*{0.05cm}

	\textbf{Reach the origin through the vacant sector} (achieved by Step 4 in Section \ref{section_outline}). To derive a lower bound for $\mathbb{H}_{A''}(\bm{0})$ (recall that $A''$ is the set constructed in the previous step with a vacant sector), we force the random walk starting from a faraway vertex to first hit the outer arc of the vacant sector (with probability $c'n^{-\frac{1}{2}}$ by symmetry) and then reach $B(c\sqrt{n})$  through the vacant sector (with probability at least $n^{-C''}$; see (\ref{ineq_I2})). Next, let the random walk move along the geodesic to $\bm{0}$ (whose length is $O(\sqrt{n})$) while bypassing every encountered cluster of $A''$ in turn (see (\ref{ineq_I3}) and (\ref{ineq_I4})). This happens with probability at least $[\lambda(\mathscr{G})]^{-|A''|+c''\sqrt{|A''|}}$ by the estimate for the bypass probability presented in the last paragraph.

	\vspace*{0.05cm}

	In conclusion, combining these estimates, we obtain the desired low bound for the harmonic measure at $\bm{0}$ for all $A\in \mathcal{A}_n(\mathscr{G})$ and thus conclude Theorem \ref{theorem1.1}.

\subsection{Organization of the paper}

In Section \ref{section_notation}, we fix some necessary notations and review some useful results. The upper bounds in Theorem \ref{theorem1.1} are established in Section \ref{section1.2.1}. In Section \ref{section_outline}, we present a conditional proof for the lower bounds in Theorem \ref{theorem1.1} assuming several technical lemmas, namely, Lemmas \ref{lemma_remove_distant}, \ref{lemma_packed3}, \ref{lemma_remove_interlock} and \ref{lemma_prob_surrounding}. We establish Lemma \ref{lemma_packed3} in Section \ref{section_diameter} to bound the diameters of packed sets. In Section \ref{section_length_bypass}, we first present Proposition \ref{lemma_length_bypass}, which is a crucial geometric property concerning the bypass of a cluster, and then use it to derive Lemma \ref{lemma_prob_surrounding}. Finally, we confirm Lemmas \ref{lemma_remove_distant} and \ref{lemma_remove_interlock} in Section \ref{section_qra}, thereby concluding Theorem \ref{theorem1.1}.

	\section{Preliminaries}\label{section_notation}

To facilitate the exposition, in this section we collect some necessary
notations and useful results for graphs and random walks.

\subsection{Basic notations and properties for graphs} 
\begin{itemize}
	
	\item  \textbf{Degree.} We denote by $\mathrm{deg}(\mathscr{G})$ the degree of each vertex on $\mathscr{G}$ (especially, $\mathrm{deg}(\mathbb{Z}^2)=4$, $\mathrm{deg}(\mathscr{T})=6$ and $\mathrm{deg}(\mathscr{H})=3$).

	\item \textbf{Oriented edge.} For any $x\sim y\in \mathfs{V}$, let $\vec{e}_{x,y}$ be the oriented edge starting from $x$ and ending at $y$. Let $\vec{\mathfs{E}}$ be the collection of all oriented edges.

	\item  \textbf{Path.} A path $\eta$ on $\mathscr{G}$ is a sequence of vertices $(x_0,...,x_n)$ such that $x_i\sim x_{i+1}$ for all $0\le i\le n-1$. The length of $\eta$ is $\mathbf{L}(\eta):=n$, and the range of $\eta$ in $\mathfs{V}$ (resp. in $\vec{\mathfs{E}}$) is $\mathbf{R}^{\mathrm{v}}(\eta):=\{x_i\}_{i=0}^{n}$ (resp. $\mathbf{R}^{\mathrm{e}}(\eta):=\big\{\vec{e}_{x_i,x_{i+1}}\big\}_{i=0}^{n-1}$). We also write $\eta(i):=x_i$ for $0\le i\le n$. For convenience, let $\eta(-1):=x_n$ be the last vertex of $\eta$. Note that $(x)$ is a path of length $0$, whose range in $\mathfs{V}$ (resp. in $\vec{\mathfs{E}}$) is $\{x\}$ (resp. $\emptyset$).

	\item  \textbf{Sub-path.} For any path $\eta$ and $0\le t_1\le t_2\le \mathbf{L}(\eta)$, we define the sub-path $\eta[t_1,t_2]$ of $\eta$ as the path of length $t_2-t_1$ satisfying
	\begin{equation*}
		\eta[t_1,t_2](s)= \eta(t_1+s),\ \forall\ 0\le s\le t_2-t_1.
	\end{equation*}
	For completeness, we set $\eta[t,t']=\emptyset$ when $t>t'$ or $\{t,t'\}\not \subset [0, \mathbf{L}(\eta)]$.

	\item  \textbf{Reversed path.} For any path $\eta$, the reversed path of $\eta$ is defined as 
	\begin{equation*}
		\cev{\eta}:=\eta(\mathbf{L}(\eta)-s),\ \forall\ 0\le s\le  \mathbf{L}(\eta). 
	\end{equation*}

	\item  \textbf{Concatenation.} For any paths $\eta_1,\eta_2$ with $\eta_1(-1)=\eta_2(0)$, define the concatenation of $\eta_1$ and $\eta_2$ as $\eta_1\circ \eta_2:=(\eta_1(0),...,\eta_1(-1),\eta_2(1),...,\eta(-1))$. To avoid confusion, we set $\eta \circ \eta' =\eta' \circ \eta = \eta$ when $\eta'=\emptyset$.

	\item  \textbf{Edge circuit.} An edge circuit is a path $\eta$ such that $\eta(0)=\eta(-1)$ and
	$$
	\vec{e}_{\eta(i),\eta(i+1)}\neq \vec{e}_{\eta(j),\eta(j+1)},  \ \forall 0\le i<j \le \mathbf{L}(\eta)-1.
	$$
	Note that $\mathbf{L}(\eta)=|\mathbf{R}^{\mathrm{e}}(\eta)|$ for any edge circuit $\eta$.

	\item  \textbf{Graph distance.} For any $A_1,A_2,F\subset \mathfs{V}$, the graph distance between $A_1$ and $A_2$ in $F$ is $\mathbf{d}^{F}(A_1,A_2):= \inf\{n\ge 0:\exists\ \text{path}\ \eta\ \text{such that}\ \eta(0)\in A_1,\eta(-1)\in A_2,\mathbf{R}^{\mathrm{v}}(\eta)\subset F,\mathbf{L}(\eta)=n\}$ (set $\inf \emptyset =\infty$ for completeness). When $F=\mathfs{V}$, we abbreviate $\mathbf{d}(A_1,A_2):=\mathbf{d}^{\mathfs{V}}(A_1,A_2)$. Moreover, when $A_i= \{x\}$ for some $i\in \{1,2\}$ and $x\in \mathfs{V}$, we may omit the braces.

	\item  \textbf{Connectivity.} For any $A,F\subset \mathfs{V}$, we say $A$ is connected in $F$ if $$\mathbf{d}^{A\cap F}(x_1,x_2)<\infty, \ \ \forall x_1,x_2\in A.$$
	When $F=\mathfs{V}$, we omit the phrase ``in $F$''. As a default, $\emptyset$ is connected.

	\item  \textbf{Diameter.} For any $A\subset \mathfs{V}$, the diameter of $A$ is defined as 
	$$\mathrm{diam}(A):= \max\nolimits_{x_1,x_2\in A} \mathbf{d}(x_1,x_2). $$

	\item  \textbf{Cluster.} For any $A\subset \mathfs{V}$ and non-empty, connected $A'\subset A$, we say $A'$ is a cluster of $A$ if $\mathbf{d}^A(x,A')=\infty$ for all $x\in A\setminus A'$.

	\item \textbf{Complement and two specific clusters.} For any $A\subset \mathfs{V}$, we denote its complement by $A^c:= \mathfs{V}\setminus A$. When $A$ is finite, $A^c$ contains a unique infinite cluster, which we denote by $A_{\infty}^c$. We also denote by $A^c_{\bm{0}}$ the cluster of $A^c\cup \{\bm{0}\}$ containing $\bm{0}$.

	\item  \textbf{Neighborhood.} For any $x\in \mathfs{V}$, the neighborhood of $x$ is $N(x):=\{y\in \mathfs{V}:y\sim x\}$. For any finite $A \subset \mathfs{V}$ containing $x$, we define the neighborhood (resp. exterior neighborhood, $\bm{0}$-exposed neighborhood) of $x$ outside $A$ as $N^A(x):=N(x)\cap A^c$ (resp. $N^A_{\infty}(x):=N(x)\cap A^c_{\infty}$, $N^A_{\bm{0}}(x):=N(x)\cap A^c_{\bm{0}}$).

	\item  \textbf{Boundary.} For any finite $A\subset \mathfs{V}$, the outer (resp. inner) boundary of $A$ is $\partial^{\mathrm{o}}A:= \cup_{x \in A  }N^A(x)$ (resp. $\partial^{\mathrm{i}}A:= \{x\in A: N^A(x)\neq \emptyset\}$). The exterior outer (resp. inner) boundary of $A$ is defined as $\partial_{\infty}^{\mathrm{o}}A:= \cup_{x \in A  }N^A_{\infty}(x)$ (resp. $\partial_{\infty}^{\mathrm{i}}A:= \{x\in A: N^A_{\infty}(x)\neq \emptyset\}$). We define the $\bm{0}$-exposed inner boundary of $A$ as $\partial_{\bm{0}}^{\mathrm{i}}A:= \{x\in A: N^A_{\bm{0}}(x)\neq \emptyset\}$.

	\item  \textbf{Planar Embedding.} Let $\mathbf{I}:\mathscr{G}\to \mathbb{R}^2$ be the canonical embedding where the distance between the endpoints of every edge equals $1$ (see Figure \ref{fig:lattices}). For any edge $e=\{x,y\}\in \mathfs{E}$, $\mathbf{I}(e)$ (i.e. the image of $e$ under $\mathbf{I}$) is defined as the line segment on $\mathbb{R}^2$ with endpoints $\mathbf{I}(x)$ and $\mathbf{I}(y)$. For any path $\eta$, $\mathbf{I}(\eta)$ (i.e. the image of $\eta$ under $\mathbf{I}$) is defined as the broken line on $\mathbb{R}^2$ connecting $\eta(i)$ for $0\le i\le \mathbf{L}(\eta)$ successively.

	\item \textbf{Balls.} For $x,y\in \mathfs{V}$, we denote the Euclidean distance between $\mathbf{I}(x)$ and $\mathbf{I}(y)$ by $|x-y|$. For $x\in \mathfs{V}$ and $R\ge 0$, let $B_x(R):= \{y\in \mathfs{V}: | x-y |\le R\}$ (resp. $\mathbf{B}_x(R):= \{y\in \mathfs{V}: \mathbf{d}(x,y)\le R\}$) be the ball of the Euclidean distance (resp. graph distance) with center $x$ and radius $R$. When $x=\{\bm{0}\}$, we may omit the subscript and denote $B(R):= B_{\bm{0}}(R)$, $\mathbf{B}(R):= \mathbf{B}_{\bm{0}}(R)$. There exist $\Cl\label{ball_1}(\mathscr{G})> 1>\cl\label{ball_2}(\mathscr{G})>0$ such that
	\begin{equation}\label{ineq_ball}
		B_x(\cref{ball_2}R)	\subset 	\mathbf{B}_x(R) \subset 	B_x(\Cref{ball_1}R),\ \ \forall  x\in \mathfs{V}\ \text{and} \ R>0. 
	\end{equation}

	\item   \textbf{Face.} $\mathbb{R}^2\setminus \cup_{e\in \mathfs{E}} \mathbf{I}(e)$ is composed of countably many regions. Each region is called a face. As shown in Figure \ref{fig:lattices}, for $\mathbb{Z}^2$ (resp. $\mathscr{T}$, $\mathscr{H}$), every face is a square (resp. triangle, hexagon). For any face $\mathcal{S}$, let $\mathbf{v}(\mathcal{S})$ be the collection of vertices surrounding $\mathcal{S}$. We also denote by $\vec{\mathbf{e}}(\mathcal{S})$ the collection of oriented edges $\vec{e}_{x,y}$ with $x,y\in \mathbf{v}(\mathcal{S})$ such that $y$ is the next vertex of $x$ within $\mathbf{v}(\mathcal{S})$ in the clockwise direction.

Let $\xi(\mathscr{G})$ be the number of edges surrounding a face of $\mathscr{G}$. As shown in Figure \ref{fig:lattices}, one has $\xi(\mathbb{Z}^2)=4$, $\xi(\mathscr{T})=3$ and $\xi(\mathscr{H})=6$. Since each face of $\mathscr{G}$ is a regular polygon, its interior angle $\theta(\mathscr{G})$ equals $\frac{\pi[\xi(\mathscr{G})-2]}{\xi(\mathscr{G})}$. Meanwhile, since each vertex is covered by exactly $\mathrm{deg}(\mathscr{G})$ faces, one has $\theta(\mathscr{G})=\frac{2\pi}{\mathrm{deg}(\mathscr{G})}$. Combining these two equalities for $\theta(\mathscr{G})$, we get 
     \begin{equation}\label{final_3.2}
     	[\xi(\mathscr{G})-2]\cdot [\mathrm{deg}(\mathscr{G})-2]=4.
     \end{equation}
     Notably, (\ref{final_3.2}) implies that $\mathbb{Z}^2$, $\mathscr{T}$ and $\mathscr{H}$ correspond to the only three regular tessellations in $\mathbb{R}^2$.

	\item  \textbf{$*$-Adjacency.} For any $x,y\in \mathfs{V}$, we say $x$ and $y$ are $*$-adjacent (written as $x\sim_* y$) if there exists a face $\mathcal{S}$ such that $x,y\in \mathbf{v}(\mathcal{S})$. By replacing ``$\sim$'' with ``$\sim_*$'', we get the definitions of $*$-paths, $*$-connected sets, $*$-clusters, $*$-neighborhoods ($N_*$ and $N_*^A$) and $*$-boundaries ($\partial_*^{\mathrm{o}}$ and $\partial_*^{\mathrm{i}}$).

	Note that $\mathbf{d}(x,y)\le 3$ holds for all $\mathscr{G}\in \{\mathbb{Z}^2,\mathscr{T},\mathscr{H}\}$ and all $x\sim_*y\in \mathfs{V}$. Therefore, for any $*$-connected $A\subset \mathfs{V}$, we have 
	\begin{equation}\label{ineq_2.4}
		\mathrm{diam}(A)\le 3|A|. 
	\end{equation}

	\begin{lemma}\label{new7.1}
    For any $*$-connected $A\subset \mathfs{V}$ and $x\in A$ with $|A|\ge 2$, 
	\begin{equation}
		N_{\infty}^{A}(x)\le \mathrm{deg}(\mathscr{G})-\mathbbm{1}_{\mathscr{G}=\mathscr{T}}.
	\end{equation}
\end{lemma}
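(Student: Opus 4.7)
The claim is (modulo what appears to be a missing cardinality bar) a bound on $|N_\infty^A(x)|$, and it splits naturally into the case $\mathscr{G}\in\{\mathbb{Z}^2,\mathscr{H}\}$ (where $\mathbbm{1}_{\mathscr{G}=\mathscr{T}}=0$) and the case $\mathscr{G}=\mathscr{T}$. For the first case the bound is immediate: since $N_\infty^A(x)\subseteq N(x)$ and every vertex has exactly $\mathrm{deg}(\mathscr{G})$ graph-neighbors, we get $|N_\infty^A(x)|\le|N(x)|=\mathrm{deg}(\mathscr{G})$ without using the $*$-connectedness of $A$ or the hypothesis $|A|\ge 2$ at all. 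So the only content is the case $\mathscr{G}=\mathscr{T}$, where we must additionally peel off one unit, i.e.\ produce at least one neighbor of $x$ that lies \emph{outside} the infinite component $A_\infty^c$.

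The key observation for $\mathscr{T}$ is that each face is a triangle, and the three vertices of a triangle are pairwise connected by graph-edges in $\mathscr{T}$. Consequently $x\sim_* y$ on $\mathscr{T}$ is equivalent to $x\sim y$: the relations $\sim$ and $\sim_*$ coincide, and therefore so do the notions of connectedness and $*$-connectedness. Given a $*$-connected $A$ with $|A|\ge 2$, pick any $y\in A\setminus\{x\}$; by $*$-connectedness (hence connectedness) of $A$, there is a path in $A$ from $x$ to $y$, whose second vertex $y'\in A$ satisfies $y'\sim x$, i.e.\ $y'\in N(x)\cap A$. In particular $y'\notin A_\infty^c$, so at least one of the $\mathrm{deg}(\mathscr{T})=6$ neighbors of $x$ is removed from $N_\infty^A(x)$, yielding $|N_\infty^A(x)|\le 5=\mathrm{deg}(\mathscr{T})-1$. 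There is no real obstacle here beyond noticing the face-triangle coincidence that collapses $\sim_*$ to $\sim$ on $\mathscr{T}$; the reason $\mathbb{Z}^2$ and $\mathscr{H}$ do \emph{not} enjoy the stronger bound is exactly that their faces (squares, hexagons) contain $*$-adjacent but non-adjacent vertex pairs, so a $*$-neighbor of $x$ in $A$ need not be a graph-neighbor of $x$.
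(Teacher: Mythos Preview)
Your proof is correct and follows essentially the same approach as the paper: both split into the trivial case $\mathscr{G}\in\{\mathbb{Z}^2,\mathscr{H}\}$ (using $N_\infty^A(x)\subset N(x)$) and the case $\mathscr{G}=\mathscr{T}$, where the coincidence of $\sim$ and $\sim_*$ on $\mathscr{T}$ together with $*$-connectedness and $|A|\ge 2$ produces a vertex $v\in A$ with $v\sim x$, removing one neighbor from the count. The only cosmetic difference is that the paper invokes the existence of $v$ directly from $*$-connectedness, while you route through a path argument to exhibit it.
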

	\begin{proof}
	When $\mathscr{G}\in \{\mathbb{Z}^2,\mathscr{H}\}$, (\ref{new7.1}) directly follows from facts that $N_{\infty}^{A}(x)\subset N(x)$ and $|N(x)|=\mathrm{deg}(\mathscr{G})$. When $\mathscr{G}=\mathscr{T}$, since $A$ is $*$-connected and $|A|\ge 2$, there exists $v\in A$ such that $v\sim x$ (on $\mathscr{T}$, ``$v\sim x$'' and ``$v\sim_* x$'' are equivalent). This implies that $|N_{\infty}^{A}(x)|\le |N(x)\setminus \{v\}|\le \mathrm{deg}(\mathscr{G})-1$. 
\end{proof}

	\item   \textbf{Exterior $*$-neighborhood and $*$-boundary.} For any $A\subset \mathfs{V}$ and $x\in A$, we denote $N_{\infty,*}^A(x):=N_{*}(x) \cap A_{\infty}^c$. Let $\partial_{\infty,*}^{\mathrm{o}}A:= \cup_{x \in A  }N^A_{\infty,*}(x)$.

		The following lemma directly follows from \cite[Theorems 3 and 4]{timar2013boundary}. 
	
	\begin{lemma}\label{lemma2.2}
	For any $*$-connected $A\subset \mathfs{V}$, one has 
	\begin{equation*}
		(1)\ \partial_{\infty}^{\mathrm{i}} A\ \text{is}\ *\text{-connected}; \ \ (2)\ \partial_{\infty,*}^{\mathrm{o}}A\ \text{is}\ \text{connected}.
	\end{equation*}
	\end{lemma}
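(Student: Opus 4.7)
The plan is to treat this as the standard planar topological duality between $\sim$-connectivity and $*$-connectivity, and either invoke Timar's theorems directly or re-derive them from a Jordan-curve argument tailored to the three lattices. Since $\mathbb{Z}^2$, $\mathscr{T}$ and $\mathscr{H}$ are planar graphs whose $*$-adjacency is exactly the canonical match relation ``share a face'', they fit into the framework of Timar (2013); the cleanest route is to verify this correspondence and quote Theorems~3 and~4 there, which give (1) and (2) verbatim.

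For a self-contained argument, the approach I would follow is a topological thickening. Given $*$-connected finite $A$ (the only case used), I would form the compact set $\widetilde{A}\subset \mathbb{R}^2$ by taking the union of $\mathbf{I}(A)$, the embedded edges of $\mathscr{G}$ with both endpoints in $A$, and every closed face of $\mathscr{G}$ whose vertex set meets $A$. A single $*$-step in $A$ stays inside one closed face, so $*$-connectivity of $A$ forces $\widetilde{A}$ to be path-connected in $\mathbb{R}^2$. The complement $\mathbb{R}^2\setminus \widetilde{A}$ then has a unique unbounded component $U$, and its topological frontier $\gamma$ is a finite union of Jordan arcs glued at cut points. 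I would cyclically parametrize $\gamma$ and extract two interleaved sequences as $\gamma$ is traversed: the lattice vertices of $A$ touched by $\gamma$ (to be identified with $\partial_\infty^{\mathrm{i}}A$) and the lattice vertices in $A_\infty^c$ lying immediately outside $\gamma$ (to be identified with $\partial_{\infty,*}^{\mathrm{o}}A$). The local geometry of each face of $\mathscr{G}$ then forces consecutive vertices in the first sequence to share a face, hence to be $*$-adjacent (proving (1)), and consecutive vertices in the second sequence to share an edge of $\mathscr{G}$, hence to be $\sim$-adjacent (proving (2)).

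The main obstacle I expect is the bookkeeping at cut points of $\widetilde{A}$, where $\gamma$ revisits the same lattice vertex several times and the notion of ``consecutive'' in each boundary sequence must be defined delicately so that parts (1) and (2) go through simultaneously. In particular, a $*$-path of $A$ can pinch off an interior face, and one has to be careful that vertices in $\partial_\infty^{\mathrm{i}}A$ adjacent along $\gamma$ really are $*$-adjacent in $\mathscr{G}$ rather than merely close in the embedding. This is precisely the bookkeeping resolved in Timar's dual-edge contour construction, so in the actual write-up the sensible choice is to invoke \cite[Theorems 3 and 4]{timar2013boundary} after noting that our three lattices satisfy his match-graph hypothesis, rather than reproducing the combinatorial argument by hand.
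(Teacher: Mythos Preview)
Your proposal is correct and matches the paper exactly: the paper gives no independent proof but simply states that the lemma follows directly from \cite[Theorems 3 and 4]{timar2013boundary}, which is precisely the route you recommend after verifying that the $*$-adjacency on $\mathbb{Z}^2$, $\mathscr{T}$, $\mathscr{H}$ coincides with Timar's match relation. Your additional self-contained topological sketch is a bonus the paper does not attempt.
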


	Note that $|N_*(z)|\le 12$ for all $\mathscr{G}\in \{\mathbb{Z}^2,\mathscr{T},\mathscr{H}\}$ and all $z\in \mathfs{V}$. Therefore, for any $*$-connected $A\subset \mathfs{V}$ and $x,y\in\partial_{\infty,*}^{\mathrm{o}} A$, by Item (2) of Lemma \ref{lemma2.2},
	\begin{equation}\label{final_3.5_boundary}
		\mathbf{d}^{\partial_{\infty,*}^{\mathrm{o}} A}(x,y)\le |\partial_{\infty,*}^{\mathrm{o}} A| \le \sum\nolimits_{z\in A} |N_*(z) |\le 12|A|. 
	\end{equation}

\end{itemize}

\subsection{Properties of the random walk}

In this subsection, we will cite some lemmas from \cite{psi,lawler2010random,popov2021two} concerning random walks. As stated by the remark in \cite[Page 10]{lawler2010random}, although these lemmas are presented and proved for $\mathbb{Z}^d$ ($d\ge 2$), it is not hard to extend them to other regular lattices. Therefore, here we just give the statements for $\mathscr{G}\in \{\mathbb{Z}^2, \mathscr{T},\mathscr{H}\}$ without repeating the proofs.

\textbf{Stopping times and Green's functions.} Recall that the law of the random walk $\{S_n\}_{n\ge 0}$ starting from $x\in \mathfs{V}$ is denoted by $\mathbb{P}_x$. Let $\mathbb{E}_x$ be the expectation under $\mathbb{P}_x$. For any non-empty $A\subset \mathfs{V}$, we also denote $\tau_A:=\inf\{n\ge 0:S_n\in A\}$ and $\tau_A^+:=\inf\{n\ge 1:S_n\in A\}$. When $A=\{y\}$ for some $y\in \mathfs{V}$, we may omit the braces. The Green's function for a non-empty set $A\subset \mathfs{V}$ is defined as 
\begin{equation}
	G_A(x,y):=\mathbb{E}_x\Big(\sum\nolimits_{0\le i\le \tau_A} \mathbbm{1}_{S_i=y} \Big), \ \ \forall x,y\in A^c. 
\end{equation}

\begin{lemma}[{\cite[Proposition 6.3.5]{lawler2010random}}]\label{lemma_green}
	There exists $\Cl\label{asym_green}(\mathscr{G})>0$ such that for any $R\ge 1$ and $x\in B(R)$, 
	\begin{equation}
		G_{\partial^{\mathrm{i}}B(R)}(\bm{0},x) = \Cref{asym_green}(\mathscr{G})\big[\ln(R)-\ln(|x|\vee 1)\big]+ O((|x|\vee 1)^{-1}).  
	\end{equation}
\end{lemma}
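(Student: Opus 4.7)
The plan is to follow the classical potential-kernel method, adapting Lawler's treatment of $\mathbb{Z}^2$ to each of the lattices $\mathbb{Z}^2$, $\mathscr{T}$, and $\mathscr{H}$. The main ingredients are (i) the potential kernel and its asymptotic expansion, (ii) a martingale identity relating $G_A(\cdot,\bm 0)$ to the potential kernel, and (iii) a deterministic boundary estimate.

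First, I would introduce the potential kernel $a:\mathfs{V}\to \mathbb{R}$ of the simple random walk on $\mathscr{G}$, given formally by
\begin{equation*}
a(x):=\sum\nolimits_{n\ge 0}\big[\mathbb{P}_{\bm 0}(S_n=\bm 0)-\mathbb{P}_{\bm 0}(S_n=x)\big],
\end{equation*}
and establish $a(\bm 0)=0$, $(I-P)a=-\delta_{\bm 0}$ (where $P$ is the one-step transition operator), together with the asymptotic expansion
\begin{equation*}
a(x) = \Cref{asym_green}(\mathscr{G})\ln|x| + \kappa(\mathscr{G}) + O(|x|^{-2}) \ \text{as}\ |x|\to\infty.
\end{equation*}
This is the most substantive step. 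On $\mathbb{Z}^2$ the derivation is the classical Fourier/local-CLT computation reproduced in \cite[Chapter 4]{lawler2010random}. The same argument transfers to $\mathscr{T}$ and $\mathscr{H}$ since in all three cases the step distribution has mean zero, finite nondegenerate covariance, and a characteristic function vanishing only at finitely many points of the Brillouin zone; on $\mathscr{H}$ one additionally passes to the two-step walk (and separates the two sublattices) to circumvent bipartiteness. The constant $\Cref{asym_green}(\mathscr{G})$ is then determined by the determinant of the covariance matrix of a single step and the density of vertices per unit area.

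Second, the identity $(I-P)a=-\delta_{\bm 0}$ makes $M_n := a(S_n)-a(S_0) - \sum_{k=0}^{n-1}\mathbbm{1}_{S_k=\bm 0}$ a martingale under $\mathbb{P}_x$ for any $x\in \mathfs{V}$. With $A:=\partial^{\mathrm{i}}B(R)$, applying optional stopping at $\tau_A$ (integrability follows from the crude bound $|a(S_{n\wedge \tau_A})|\lesssim \ln R$ supplied by the expansion) yields
\begin{equation*}
G_A(x,\bm 0) = \mathbb{E}_x\big[a(S_{\tau_A})\big]-a(x).
\end{equation*}
Since $\mathscr{G}$ has constant degree $\mathrm{deg}(\mathscr{G})$, the walk is reversible with respect to counting measure, so $G_A(\bm 0, x) = G_A(x, \bm 0)$.

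Third, because $S_{\tau_A}\in \partial^{\mathrm{i}}B(R)$ has a neighbor outside $B(R)$, one has $|S_{\tau_A}|\in (R-1,R]$ deterministically, so the expansion of $a$ gives $a(S_{\tau_A}) = \Cref{asym_green}(\mathscr{G})\ln R + \kappa(\mathscr{G}) + O(R^{-1})$ pointwise and hence in expectation. Substituting this together with the expansion of $a(x)$ into the martingale identity, the constants $\kappa(\mathscr{G})$ cancel for $x\neq \bm 0$, and the total error $O(R^{-1})+O(|x|^{-2})$ is dominated by $O((|x|\vee 1)^{-1})$ using $|x|\le R$; at $x=\bm 0$, where $a(\bm 0)=0$ rather than the asymptotic constant, the resulting $O(1)$ discrepancy is absorbed by the same error term since $|x|\vee 1=1$ there. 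This produces the stated formula. The main obstacle is the derivation of the potential-kernel asymptotic on $\mathscr{T}$ and $\mathscr{H}$; once in hand, the remaining manipulations reduce to optional stopping and deterministic boundary bounds.
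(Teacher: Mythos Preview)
The paper does not supply its own proof of this lemma; it simply cites \cite[Proposition 6.3.5]{lawler2010random} and remarks that the argument extends routinely to $\mathscr{T}$ and $\mathscr{H}$. Your sketch is exactly that standard potential-kernel/optional-stopping argument from the cited reference, and it is correct.
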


\begin{lemma}[{\cite[Proposition 6.4.1]{lawler2010random}}]\label{lemma_hit}
	For $R_2>R_1>0$ and $x\in B(R_2)\setminus B(R_1)$, 
	\begin{equation}
		\mathbb{P}_x\big( \tau_{B(R_1)\cup \partial^{\mathrm{i}}B(R_2)} = \tau_{\partial^{\mathrm{i}}B(R_2)} \big) =  \frac{\ln(|x|)-\ln(R_1)+ O(R_1^{-1})}{\ln(R_2)-\ln(R_1)}.  
	\end{equation}
\end{lemma}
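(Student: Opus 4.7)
The plan is to reduce the hitting-probability estimate to the Green's-function asymptotic of Lemma \ref{lemma_green} via optional stopping applied to a single well-chosen harmonic function. Write $\tau := \tau_{B(R_1)\cup \partial^{\mathrm{i}} B(R_2)}$ and $p(x) := \mathbb{P}_x(S_\tau \in \partial^{\mathrm{i}} B(R_2))$; the goal is to show $p(x) = [\ln|x| - \ln R_1 + O(R_1^{-1})]/[\ln R_2 - \ln R_1]$. The natural test function is $h(y) := G_{\partial^{\mathrm{i}} B(R_2)}(\bm{0}, y)$. By the symmetry $G_A(u,v)=G_A(v,u)$ (immediate from the reversibility of the simple random walk on the regular graph $\mathscr{G}$ with respect to counting measure), $h$ is harmonic in $y$ on $B(R_2) \setminus (\{\bm{0}\} \cup \partial^{\mathrm{i}} B(R_2))$ and vanishes on $\partial^{\mathrm{i}} B(R_2)$. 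In particular $h$ is harmonic on the annulus $B(R_2)\setminus(B(R_1)\cup \partial^{\mathrm{i}} B(R_2))$, and by Lemma \ref{lemma_green} it satisfies $h(y) = \Cref{asym_green}(\ln R_2 - \ln|y|) + O(|y|^{-1})$ uniformly there.

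Next, I would apply optional stopping to the bounded martingale $\{h(S_{n\wedge\tau})\}_{n\ge 0}$; this is legitimate since $\mathscr{G}$ is recurrent so $\tau<\infty$ almost surely, and $h$ is bounded on $B(R_2)$. Using that $h\equiv 0$ on $\partial^{\mathrm{i}} B(R_2)$ gives
\begin{equation*}
h(x) \;=\; (1-p(x))\,\mathbb{E}_x\bigl[h(S_\tau) \bigm| S_\tau \in B(R_1)\bigr].
\end{equation*}
Because consecutive positions of the walk differ by a single edge, on the event $\{S_\tau \in B(R_1)\}$ the walk has just crossed into the ball from outside, so $|S_\tau| \in (R_1 - 1, R_1]$. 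Plugging this into the asymptotic for $h$ and Taylor-expanding the logarithm yields $h(S_\tau) = \Cref{asym_green}(\ln R_2 - \ln R_1) + O(R_1^{-1})$ on this event.

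Substituting back and using $|x|\ge R_1$ to absorb $O(|x|^{-1})$ into $O(R_1^{-1})$, I obtain
\begin{equation*}
p(x) \;=\; \frac{\ln|x|-\ln R_1 + O(R_1^{-1})}{\ln R_2 - \ln R_1 + O(R_1^{-1})},
\end{equation*}
and the form stated in the lemma follows by folding the denominator error into the numerator using $\ln|x| - \ln R_1 \le \ln R_2 - \ln R_1$. The only place that calls for care is this last algebraic step: in the non-degenerate regime where $\ln R_2 - \ln R_1$ is bounded below, the manipulation is immediate, while in the degenerate regime $\ln R_2 - \ln R_1 = O(R_1^{-1})$ the claim reduces to the trivial assertion $p(x) = O(1)$ which holds for any probability. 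Apart from this bookkeeping and the (routine) verification that $h$ is harmonic in its second argument via reversibility, I expect no serious obstacle; the argument is essentially a translation of Lemma \ref{lemma_green} through the optional stopping theorem.
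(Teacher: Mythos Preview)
Your argument is correct and is essentially the standard route (the one in \cite[Proposition 6.4.1]{lawler2010random}, which the paper cites). The paper does not give its own proof of this lemma; it simply quotes the result and remarks that the $\mathbb{Z}^2$ proof carries over to $\mathscr{T}$ and $\mathscr{H}$. Your choice of the test function $h(y)=G_{\partial^{\mathrm i}B(R_2)}(\bm 0,y)$, the appeal to reversibility for harmonicity in the second variable, and the optional-stopping computation are exactly how that reference proceeds; the bookkeeping in the degenerate regime $\ln R_2-\ln R_1=O(R_1^{-1})$ is also handled correctly.
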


\begin{lemma}[{\cite[Theorem 3.17]{popov2021two}}]\label{lemma_2srw_hm}
	For any $R\ge 1$, $A\subset B(R)$ and $x\in [B(3R)]^c$, 
	\begin{equation}
		\mathbb{P}_x(\tau_A=\tau_y)= \mathbb{H}_A(y)\cdot \bigg[ 1+O\bigg( \frac{R}{\mathbf{d}(x,A)}\bigg) \bigg].
	\end{equation}
\end{lemma}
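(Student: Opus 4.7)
The plan is to reduce the assertion to a comparison between the hitting distribution on $A$ from a point on an intermediate sphere and the limiting harmonic measure, via the strong Markov property. Concretely, set $d:=\mathbf{d}(x,A)$ and, assuming $d\ge 4R$ (otherwise the error term is of constant order and the bound is trivial after adjusting constants), pick an intermediate radius $r$ with $2R\le r\le d/2$, say $r=d/2$. Let $T:=\tau_{\partial^{\mathrm{i}}B(r)}$. Since $A\subset B(R)\subset B(r)$ and $x\notin B(r)$, any trajectory from $x$ must cross $\partial^{\mathrm{i}}B(r)$ before entering $A$, so the strong Markov property gives
$$
\mathbb{P}_x(\tau_A=\tau_y)\;=\;\sum_{z\in\partial^{\mathrm{i}}B(r)}\mathbb{P}_x(S_T=z)\,\mathbb{P}_z(\tau_A=\tau_y).
$$

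The heart of the proof is then to prove the uniform-in-$z$ estimate
$$
\mathbb{P}_z(\tau_A=\tau_y)\;=\;\mathbb{H}_A(y)\cdot\bigl(1+O(R/r)\bigr)\qquad\text{for all }z\in\partial^{\mathrm{i}}B(r).
$$
I would establish this via a Harnack / coupling argument. Set $h_y(z):=\mathbb{P}_z(\tau_A=\tau_y)$; it is discrete-harmonic on $A^c$, bounded by $\mathbb{H}_A(y)(1+o(1))$ for $|z|$ large, and satisfies $h_y(z)\to\mathbb{H}_A(y)$ as $|z|\to\infty$. To control its oscillation on $\partial^{\mathrm{i}}B(r)$, I would couple two walks started from distinct $z_1,z_2\in\partial^{\mathrm{i}}B(r)$: by Lemma \ref{lemma_hit}, each walk escapes the dyadic annulus $B(r)\setminus B(2R)$ outwards with probability $\asymp 1$ at every visit to $\partial^{\mathrm{i}}B(r)$, so within $O(\log(r/R))$ excursions one can synchronize the two trajectories before either enters $B(R)$, except on an event of probability $O(R/r)$. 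After synchronization the conditional laws of $S_{\tau_A}$ agree, whence $|h_y(z_1)-h_y(z_2)|=O(R/r)\,\mathbb{H}_A(y)$. Letting $|z_2|\to\infty$ turns this into the desired comparison with $\mathbb{H}_A(y)$. As an alternative, one can represent $h_y$ through the $2$D potential kernel and the equilibrium measure of $A$, and read the error term off the Green's-function asymptotics in Lemma \ref{lemma_green}.

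Substituting the uniform estimate back into the strong-Markov decomposition and using that the $\mathbb{P}_x(S_T=\cdot)$ weights sum to one gives
$$
\mathbb{P}_x(\tau_A=\tau_y)\;=\;\mathbb{H}_A(y)\bigl(1+O(R/r)\bigr)\sum_{z\in\partial^{\mathrm{i}}B(r)}\mathbb{P}_x(S_T=z)\;=\;\mathbb{H}_A(y)\bigl(1+O(R/d)\bigr),
$$
which, since $r\asymp d$, is the claim.

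The main obstacle is the uniform estimate on $\partial^{\mathrm{i}}B(r)$. Recurrence of the planar walk makes the coupling delicate: the two trajectories cannot simply be held away from $B(R)$ forever, and one must quantify that they meet before the first hit of $A$ with probability $1-O(R/r)$. This is exactly where Lemma \ref{lemma_hit} is indispensable, as it controls the probability that an excursion from the mid-range sphere reaches a yet larger sphere before re-entering the inner ball, which is the geometric building block for iterating excursions until the two walks couple. Transferring the coupling argument from $\mathbb{Z}^2$ to $\mathscr{T}$ and $\mathscr{H}$ is straightforward given the remark following Lemma \ref{lemma_2srw_hm}'s citation, as only the rotational near-symmetry and the invariance principle on these lattices are used.
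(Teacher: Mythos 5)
The paper does not prove this lemma: it is cited from Popov's book (Theorem 3.17), and, as the paper notes just before the cited block of lemmas, these statements are quoted without proof because their arguments extend routinely from $\mathbb{Z}^2$ to $\mathscr{T}$ and $\mathscr{H}$. There is therefore no in-paper proof to compare against; I will assess the sketch on its own terms.

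Your skeleton — strong Markov on an intermediate sphere $\partial^{\mathrm{i}}B(r)$ with $r\asymp d$, reducing everything to a uniform estimate of $h_y(z):=\mathbb{P}_z(\tau_A=\tau_y)$ over $z\in\partial^{\mathrm{i}}B(r)$ — is right. However, the main argument you offer for that uniform estimate, the iterated-excursion coupling driven by Lemma \ref{lemma_hit}, does not reach the claimed rate $O(R/r)$, and this is a genuine gap. Lemma \ref{lemma_hit} only controls whether an excursion from $\partial^{\mathrm{i}}B(r)$ escapes outward or re-enters $B(2R)$; it says nothing about where the excursion lands, which is what any coupling needs. Quantitatively, the rates do not balance: on each visit to $\partial^{\mathrm{i}}B(r)$ the walk enters $B(2R)$ with probability of order $1/\ln(r/R)$ (Lemma \ref{lemma_hit}), while a pathwise coupling of two walks at separation $\asymp r$ closes at best geometrically over excursions, so one needs on the order of $\ln(r/R)$ excursions for the failure probability to drop to $(R/r)^{c}$ — by which point the walk has already hit $B(2R)$ with probability bounded away from zero. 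Optimizing the trade-off yields an oscillation bound on the scale $1/\ln(r/R)$, not the polynomial rate $R/r$. In fact the $O(R/r)$ rate is a Poisson-kernel (dipole-decay) estimate for the hitting distribution on $\partial^{\mathrm{i}}B(cR)$, which is precisely a special case ($A=B(cR)$) of the very lemma you are proving, so invoking it to drive the coupling would be circular. A secondary gap: converting a coupling failure probability $p$ into a \emph{multiplicative} bound $p\,\mathbb{H}_A(y)$ requires $h_y\lesssim\mathbb{H}_A(y)$ at the point where the coupling is abandoned, which means stopping the coupling on $\partial^{\mathrm{i}}B(cR)$ and invoking Harnack there; your sketch does not make this step.

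The alternative you mention in a single sentence — last-exit decomposition (Lemma \ref{lemma_last-exit}) plus the Green's-function asymptotics of Lemma \ref{lemma_green} — is the standard and correct route, and is essentially Popov's. There the $z$-dependence enters only through Green's-function values $G_{\cdot}(z,v)$ over $v$ in a bounded neighbourhood of $A$, and Lemma \ref{lemma_green} shows these vary by $O(R/|z|)$ over $v\in B(CR)$; this factors out, leaving a quantity whose $|z|\to\infty$ limit is by definition $\mathbb{H}_A(y)$. If you want a self-contained proof, develop this second approach rather than the coupling.
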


\begin{lemma}[{\cite[Lemma 2.4]{psi}}]\label{lemma_last-exit}
	For any $A_1\subset A_2\subset \mathscr{G}$, $x\in A_2\setminus A_1$ and $y\in A_1$, 
	\begin{equation}
		\mathbb{P}_{x}\left(\tau_{A_1}= \tau_{y} \right) = \sum\nolimits_{v\in A_2\setminus A_1} G_{A_1}(x,v) \mathbb{P}_v\big(\tau^+_{A_2}=\tau_y \big). 
	\end{equation}
\end{lemma}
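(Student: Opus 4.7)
The identity is a classical last-exit decomposition, and my plan is to partition the event $\{\tau_{A_1} = \tau_y\}$ according to the last time the walk visits $A_2 \setminus A_1$ before hitting $A_1$. Since $S_0 = x \in A_2 \setminus A_1$ and $\tau_{A_1} < \infty$ on this event, the index $L := \max\{k < \tau_{A_1} : S_k \in A_2 \setminus A_1\}$ is almost surely well-defined and the location $S_L$ takes values in $A_2 \setminus A_1$.

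The main step is to fix $n \ge 0$ and $v \in A_2 \setminus A_1$ and consider the event
\begin{equation*}
    E_{n,v} := \big\{S_n = v,\ \tau_{A_1} > n,\ S_k \notin A_2 \setminus A_1 \text{ for } n < k < \tau_{A_1},\ \tau_{A_1} = \tau_y\big\}.
\end{equation*}
Applying the strong Markov property at time $n$ factors the probability as
\begin{equation*}
    \mathbb{P}_x(E_{n,v}) = \mathbb{P}_x\big(S_n = v,\, \tau_{A_1} > n\big) \cdot \mathbb{P}_v\big(S_k \notin A_2 \setminus A_1 \text{ for } 1 \le k < \tau_{A_1},\ \tau_{A_1} = \tau_y\big).
\end{equation*}
The second factor is exactly $\mathbb{P}_v(\tau^+_{A_2} = \tau_y)$: under the event inside it, the shifted walk started at $v$ stays in $A_2^c$ for $1 \le k < \tau_{A_1}$ (since it avoids both $A_1$ and $A_2 \setminus A_1$), and at time $\tau_{A_1}$ it lies at $y \in A_1 \subset A_2$; hence the first return time to $A_2$ coincides with the first hitting time of $y$. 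Conversely, any trajectory realising $\{\tau^+_{A_2} = \tau_y\}$ fulfills the description.

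To conclude, I would sum over $n \ge 0$ and recognise the Green's function via
\begin{equation*}
    \sum\nolimits_{n \ge 0} \mathbb{P}_x(S_n = v,\, \tau_{A_1} > n) = \mathbb{E}_x\bigg(\sum\nolimits_{0 \le i \le \tau_{A_1}} \mathbbm{1}_{S_i = v}\bigg) = G_{A_1}(x, v),
\end{equation*}
the inclusion of the endpoint $i = \tau_{A_1}$ being free because $v \notin A_1$. Summing then over $v \in A_2 \setminus A_1$ uses the disjoint union $\{\tau_{A_1} = \tau_y\} = \bigsqcup_{n, v} E_{n, v}$, which is the deterministic fact that on every path in $\{\tau_{A_1} = \tau_y\}$ the index $L$ exists uniquely, so $(L, S_L)$ selects a single pair $(n, v)$. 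I expect no analytic obstacle; the lemma is essentially a bookkeeping consequence of the strong Markov property, and the only point meriting a line of justification is this partition claim.
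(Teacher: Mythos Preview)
Your proof is correct. The paper does not supply its own proof of this lemma; it simply cites the result from \cite[Lemma 2.4]{psi}, so there is nothing to compare. The last-exit decomposition you carry out is exactly the standard argument behind such identities, and every step is justified: the pair $(L,S_L)$ gives a genuine partition of $\{\tau_{A_1}=\tau_y\}$, the Markov property at the deterministic time $n$ factors $\mathbb{P}_x(E_{n,v})$ as claimed, the identification of the second factor with $\mathbb{P}_v(\tau^+_{A_2}=\tau_y)$ is correct in both directions, and the endpoint adjustment for $G_{A_1}(x,v)$ is handled. One cosmetic point: you invoke the ``strong Markov property at time $n$'', but since $n$ is deterministic this is just the ordinary Markov property.
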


As shown in the next lemma, the harmonic measure of a ball is comparable to the uniform distribution on its inner boundary. 

\begin{lemma}\label{lemma_uniform}
	There exists constants  $\Cl\label{compare_uniform1}(\mathscr{G})>1>\cl\label{compare_uniform2}(\mathscr{G})>0$ such that for any $R>0$ and $y\in \partial^{\mathrm{i}}B(R)$, 
	\begin{equation}\label{new_2.11}
		\cref{compare_uniform2} R^{-1}	 \le \mathbb{H}_{B(R)}(y) \le \Cref{compare_uniform1} R^{-1}. 
	\end{equation}
\end{lemma}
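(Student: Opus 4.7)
The plan is to prove the two inequalities in (\ref{new_2.11}) by separate arguments, after first observing that the case of bounded $R$ is trivial and thus that we may assume $R$ is sufficiently large (so that $|\partial^{\mathrm{i}} B(R)| \asymp R$).

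For the upper bound, I would directly invoke the discrete Beurling projection estimate, which gives $\mathbb{H}_A(y) \le C|A|^{-1/2}$ for every finite connected set $A$ and $y \in A$. Since $B(R)$ is connected with $|B(R)| \asymp R^2$, this yields $\mathbb{H}_{B(R)}(y) \le \Cref{compare_uniform1}/R$ directly.

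For the lower bound, I would fix $x_0$ with $|x_0| \in [10R, 20R]$ and apply Lemma \ref{lemma_2srw_hm} to $A = B(R)$, reducing the task to showing $\mathbb{P}_{x_0}(\tau_{B(R)} = \tau_y) \gtrsim R^{-1}$ uniformly over $y \in \partial^{\mathrm{i}} B(R)$. Two steps then follow. First, by recurrence, $\sum_{y \in \partial^{\mathrm{i}} B(R)} \mathbb{P}_{x_0}(\tau_{B(R)} = \tau_y) = 1$; combined with $|\partial^{\mathrm{i}} B(R)| \asymp R$, the pigeonhole principle produces some $y^{*} \in \partial^{\mathrm{i}} B(R)$ with $\mathbb{P}_{x_0}(\tau_{B(R)} = \tau_{y^{*}}) \ge c/R$. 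Second, to propagate this bound to every boundary vertex, I observe that for each fixed $y$ the map $z \mapsto h_y(z) := \mathbb{P}_z(\tau_{B(R)} = \tau_y)$ is bounded and discrete-harmonic on $B(R)^c$; applying Harnack chains in the annulus $\{z : 2R \le |z| \le 5R\}$ compares the values of $h_y$ and $h_{y'}$ at intermediate starting points, showing $h_y(x_0) \asymp h_{y'}(x_0)$ uniformly for $y, y' \in \partial^{\mathrm{i}} B(R)$, whence the $c/R$ lower bound at $y^{*}$ transfers to every $y$.

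The principal obstacle will be the uniform Harnack-type comparison across pairs of boundary vertices of $B(R)$, with constants independent of $R$ and of the choice of $y, y'$. A clean route is via the invariance principle: the random walk started at $x_0$ hits $\partial^{\mathrm{i}} B(R)$ with a distribution close to the uniform exit distribution of planar Brownian motion on a disc, the latter being uniform on the boundary by conformal invariance and thus assigning mass $\asymp 1/R$ to each boundary vertex. An elementary alternative chains local Harnack inequalities in the annulus $\{z : 2R \le |z| \le 5R\}$ together with the discrete dihedral symmetries of $\mathscr{G}$ that stabilize $B(R)$ in order to link boundary vertices lying in different symmetry orbits.
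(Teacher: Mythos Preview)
Your upper bound via the Beurling estimate is correct and in fact shorter than what the paper does; the paper does not separate the two directions but proves both at once by showing that all $\mathbb{P}_x(\tau_{B(R)}=\tau_y)$ are mutually comparable.

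The lower bound, however, has a genuine gap. Harnack's inequality applied to $h_y(\cdot)=\mathbb{P}_{\cdot}(\tau_{B(R)}=\tau_y)$ compares $h_y(z_1)$ with $h_y(z_2)$ for different starting points $z_1,z_2$ in the annulus; it says nothing about $h_y(x_0)$ versus $h_{y'}(x_0)$ for two different boundary targets $y,y'$. Your proposed fixes do not close this. The dihedral group fixing $\bm{0}$ has order at most $12$, so symmetry links only $O(1)$ boundary vertices together, leaving $\asymp R$ orbits uncompared. The invariance principle gives only weak convergence of the hitting distribution, i.e.\ convergence of probabilities on fixed arcs of the limiting circle; individual boundary vertices correspond to arcs of angular width $\asymp R^{-1}$, and weak convergence yields no lower bound at that scale without an additional local-CLT type input that you do not invoke.

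The paper's route avoids this entirely by time reversal. From a fixed $x\in\partial^{\mathrm{i}}B(4R)$ one writes, via the last-exit decomposition (Lemma~\ref{lemma_last-exit}) and path reversal,
\[
\mathbb{P}_x\big(\tau_{B(R)}=\tau_y\big)=G_{B(R)}(x,x)\,\mathbb{P}_y\big(\tau_{B(R)}^+>\tau_x\big),
\]
and then, applying the strong Markov property at $\tau_{\partial^{\mathrm{i}}B(2R)}$ and Harnack (now correctly, to the single function $\mathbb{P}_{\cdot}(\tau_x<\tau_{B(R)})$), factors this as a quantity independent of $y$ times the escape probability $\mathbb{P}_y(\tau_{B(R)}^+>\tau_{\partial^{\mathrm{i}}B(2R)})$. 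The latter is shown to be $\asymp R^{-1}$ directly from the radial estimate of Lemma~\ref{lemma_hit}. This yields $\mathbb{P}_x(\tau_{B(R)}=\tau_y)\asymp\mathbb{P}_x(\tau_{B(R)}=\tau_{y'})$ for all $y,y'$, and since these sum to $1$ over $|\partial^{\mathrm{i}}B(R)|\asymp R$ vertices, each is $\asymp R^{-1}$. The missing idea in your plan is precisely this reversal step, which converts the hard comparison across boundary targets into an escape-probability estimate that is uniform in $y$ for geometric reasons.
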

\begin{proof}
	Without loss of generality, we assume that $R$ is sufficiently large. Arbitrarily take $x\in \partial^{\mathrm{i}}B(4R)$. For any $y\in \partial^{\mathrm{i}}B(R)$, by Lemma \ref{lemma_last-exit} (with $A_1=B(R)$ and $A_2=B(R)\cup \{x\}$), one has 
	\begin{equation}\label{new_2.12_1}
		\mathbb{P}_x\big(\tau_{B(R)}=\tau_{y}\big)= G_{B(R)}(x,x)   \mathbb{P}_x\big(\tau_{B(R)}=\tau_{y}<\tau_x^+\big). 
	\end{equation}
	In addition, by reversing the random walk, 
	\begin{equation}\label{new_2.12_2}
		\mathbb{P}_x\big(\tau_{B(R)}=\tau_{y}<\tau_x^+\big) = \mathbb{P}_y\big(\tau_{B(R)}^+>\tau_x\big). 
	\end{equation}
	For the probability on the right-hand side, by the strong Markov property, 
	\begin{equation}\label{new_2.13}
		\mathbb{P}_y\big(\tau_{B(R)}^+>\tau_x\big) = \sum\nolimits_{w\in \partial^{\mathrm{i}}B(2R)} \mathbb{P}_y\big(\tau_{B(R)}^+> \tau_{\partial^{\mathrm{i}} B(2R)}=\tau_w\big) \mathbb{P}_w\big(\tau_x<\tau_{B(R)} \big). 
	\end{equation}
	Moreover, by the discrete Harnack's inequality (see e.g. \cite[Theorem 6.3.9]{lawler2010random}), for an arbitrarily fixed $w_{\dagger}\in \partial^{\mathrm{i}}B(2R)$, 
	\begin{equation*}
		\mathbb{P}_w\big(\tau_x<\tau_{B(R)} \big) \asymp \mathbb{P}_{w_\dagger}\big(\tau_x<\tau_{B(R)} \big),\ \ \forall w\in \partial^{\mathrm{i}}B(2R). 
	\end{equation*}
	Combined with (\ref{new_2.12_1})-(\ref{new_2.13}), it implies 
	\begin{equation}\label{new_2.14}
		\mathbb{P}_x\big(\tau_{B(R)}=\tau_{y}\big) \asymp G_{B(R)}(x,x)  \mathbb{P}_{w_\dagger}\big(\tau_x<\tau_{B(R)}\big)\mathbb{P}_y\big(\tau_{B(R)}^+> \tau_{\partial^{\mathrm{i}} B(2R)}\big). 
	\end{equation}

	We claim that for any $y\in \partial^{\mathrm{i}}B(R)$, 
	\begin{equation}\label{new_2.15}
		\mathbb{P}_y\big(\tau_{B(R)}^+> \tau_{\partial^{\mathrm{i}} B(2R)}\big)  \asymp  R^{-1}. 
	\end{equation}
	In fact, by the strong Markov property, $\mathbb{P}_y\big(\tau_{B(R)}^+> \tau_{\partial^{\mathrm{i}} B(2R)}\big)$ equals
	\begin{equation}\label{new_2.18}
		\begin{split}
			\sum\nolimits_{z\in \partial^{\mathrm{i}} B(R+100)} \mathbb{P}_y\big(\tau_{B(R)}^+> \tau_{\partial^{\mathrm{i}} B(R+100)}= \tau_z \big)  	\mathbb{P}_z\big(\tau_{B(R)}> \tau_{\partial^{\mathrm{i}} B(2R)}\big).
		\end{split}	
	\end{equation}
	Moreover, by Lemma \ref{lemma_hit}, one has 
	\begin{equation}\label{new_2.17}
		\mathbb{P}_{z}\big(\tau_{B(R)}> \tau_{\partial^{\mathrm{i}} B(2R)} \big) \asymp R^{-1}, \ \ \forall z\in  \partial^{\mathrm{i}} B(R+100). 
	\end{equation} 
	On the one hand, it follows from (\ref{new_2.18}) and (\ref{new_2.17}) that 
	\begin{equation}\label{new_2.19}
		\mathbb{P}_y\big(\tau_{B(R)}^+> \tau_{\partial^{\mathrm{i}} B(2R)}\big) \le \max_{z\in \partial^{\mathrm{i}} B(R+100)}\mathbb{P}_z\big(\tau_{B(R)}> \tau_{\partial^{\mathrm{i}} B(2R)}\big) \lesssim R^{-1}. 
	\end{equation}
	On the other hand, since the random walk from $y$ may hit $\partial^{\mathrm{i}} B(R+100)$ before returning $B(R)$ within $O(1)$ steps, we have 
	\begin{equation}\label{new_2.20}
		\mathbb{P}_y\big(\tau_{B(R)}^+> \tau_{\partial^{\mathrm{i}} B(2R)}\big) \gtrsim \min_{z\in \partial^{\mathrm{i}} B(R+100)}\mathbb{P}_z\big(\tau_{B(R)}> \tau_{\partial^{\mathrm{i}} B(2R)}\big)  \gtrsim  R^{-1}. 
	\end{equation}
	By (\ref{new_2.19}) and (\ref{new_2.20}), we conclude the claim (\ref{new_2.15}).

	Combining (\ref{new_2.14}) and (\ref{new_2.15}), one has 
	\begin{equation*}
		\mathbb{P}_x\big(\tau_{B(R)}=\tau_{y}\big) \asymp \mathbb{P}_x\big( \tau_{B(R)}=\tau_{y'}\big), \ \ \forall y,y'\in \partial^{\mathrm{i}}B(R), 
	\end{equation*}
	which implies that 
	\begin{equation}\label{new_2.16}
		\mathbb{P}_x\big(\tau_{B(R)}=\tau_{y}\big)\asymp |\partial^{\mathrm{i}}B(R)|^{-1} \asymp  R^{-1}, \ \ \forall y \in \partial^{\mathrm{i}}B(R). 
	\end{equation}
	By (\ref{new_2.16}) and Lemma \ref{lemma_2srw_hm}, we obtain the desired bound (\ref{new_2.11}).
\end{proof}

\subsection{Estimates for traversing probabilities}

In the proof of Theorem \ref{theorem1.1}, we need to estimate the probability that a random walk moves along a tunnel (e.g. $D_n^{\mathbb{Z}^2}$ in Figure \ref{fig:spiral_Z2}). To achieve these estimates, we present the following inequality for the number sequences with specific recursive structure. 
\begin{lemma}\label{lemma_number_sequence}
	Let $\{i_1,...,i_k\}$ be an increasing sequence of integers in $[1,m]$ ($m\in \mathbb{N}^+$). Let $a>2$ and $b>1$. Assume that $\{q_i\}_{i=0}^{m+1}$ satisfies the following conditions:
	\begin{enumerate}
		\item $q_0=1$ and $0\le q_i\le 1$ for all $1\le i\le m+1$;

		\item When $1\le i\le m$ and $i\notin \{i_1,...,i_k\}$, $q_i\ge  a^{-1}(q_{i-1}+q_{i+1})$;

		\item When $1\le i\le m$ and $i\in \{i_1,...,i_k\}$, $q_i\ge   a^{-1}b(q_{i-1}+q_{i+1})$.

	\end{enumerate}
	Let $\alpha=\alpha(a):= \frac{a+\sqrt{a^2-4}}{2}$. Then we have 
	\begin{equation}
		q_{m}\ge  b^{k}(\alpha-1)\alpha^{-m-1}. 
	\end{equation}	
\end{lemma}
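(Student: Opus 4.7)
The plan is to compare the sequence $(q_i)$ with an explicit subsolution $(\phi_i)$ that simultaneously carries the geometric decay $\alpha^{-i}$ and the multiplicative boost $b^k$ accumulated from the $k$ special indices, and then to upgrade this to a pointwise inequality $q \ge \phi$ via a weighted maximum principle; the desired lower bound on $q_m$ will follow by evaluating $\phi_m$.

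First I would set up the test function. Let
\begin{equation*}
h_i := \frac{\alpha^{-i} - \alpha^{i - 2(m+1)}}{1 - \alpha^{-2(m+1)}}, \qquad 0 \le i \le m+1,
\end{equation*}
the unique solution of the homogeneous recursion $h_{i-1} + h_{i+1} = a\,h_i$ with $h_0 = 1$ and $h_{m+1} = 0$ (this uses $\alpha + \alpha^{-1} = a$). Writing $k_i := |\{j \in \{i_1,\ldots,i_k\} : j \le i\}|$, set
\begin{equation*}
\phi_i := b^{k_i}\, h_i, \qquad 0 \le i \le m+1,
\end{equation*}
so that $\phi_0 = 1$, $\phi_{m+1} = 0$, and $\phi_m = b^k h_m$. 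A short case analysis, distinguishing whether $i$ and $i+1$ are special, verifies the reversed inequality
\begin{equation*}
\phi_i \le a^{-1}\, b_i\,(\phi_{i-1} + \phi_{i+1}), \qquad 1 \le i \le m,
\end{equation*}
with $b_i = b$ at special $i$ and $b_i = 1$ otherwise; the point is that the jump of $b^{k_i}$ across a special index combines with the harmonicity of $h$ to produce an excess factor of the form $(b-1) h_{i\pm 1} \ge 0$ or $(b^2 - 1)h_{i+1} \ge 0$, which keeps the inequality in the right direction because $b > 1$ and $h > 0$ on the interior.

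The final step is to pass from $\phi$ being a subsolution to the pointwise bound $q_m \ge \phi_m$. I would consider the ratio $\psi_i := q_i/\phi_i$ on $\{0, 1, \ldots, m\}$ (where $\phi_i > 0$); dividing the super-solution inequality for $q$ by $\phi_i$ and then invoking the sub-solution inequality for $\phi$ gives, for each $1 \le i \le m-1$,
\begin{equation*}
\psi_i \ge \frac{\phi_{i-1}\psi_{i-1} + \phi_{i+1}\psi_{i+1}}{\phi_{i-1} + \phi_{i+1}},
\end{equation*}
so $\psi$ is super-harmonic with respect to the weights $(\phi_i)$. At the right end $i = m$ the contribution from $\phi_{m+1} = 0$ combined with $q_{m+1} \ge 0$ produces the one-sided inequality $\psi_m \ge \psi_{m-1}$. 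A standard propagation argument then forces any interior minimizer of $\psi$ to drag its value all the way to $i = 0$, where $\psi_0 = 1$; hence $\psi \ge 1$ on $\{0, \ldots, m\}$, so
\begin{equation*}
q_m \ge \phi_m = b^k h_m \ge b^k (\alpha - \alpha^{-1})\,\alpha^{-m-1} \ge b^k(\alpha-1)\,\alpha^{-m-1},
\end{equation*}
using $1 - \alpha^{-2(m+1)} \le 1$ in the denominator and $\alpha - \alpha^{-1} \ge \alpha - 1$ for $\alpha > 1$.

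The hard part will be verifying the sub-solution property of $\phi$ cleanly across the four cases determined by whether $i$ and $i+1$ are special, because the exponent $k_i$ only changes at special positions and must interact correctly with the harmonic profile $h$; related to this, one must resist the temptation to invoke a textbook discrete maximum principle directly on $(q-\phi)$, since the coefficient $2b/a$ can exceed $1$. Passing to the ratio $\psi = q/\phi$ bypasses this difficulty by producing a genuinely weighted super-harmonic inequality whose max principle does not require any restriction on $b$. A secondary subtlety is the boundary at $i = m$, where $\phi_{m+1} = 0$ forces one to work on $\{0, \ldots, m\}$ with a Neumann-type inequality $\psi_m \ge \psi_{m-1}$ rather than a Dirichlet condition at $m+1$.
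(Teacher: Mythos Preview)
Your proof is correct, but it takes a genuinely different route from the paper's.

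The paper absorbs the special-index boost into the \emph{sequence} rather than into a comparison function: it sets $\widetilde{q}_i := b^{-k_i} q_i$ (your $k_i$ is their $\mu_i$), checks in two lines that $\widetilde{q}_i \ge a^{-1}(\widetilde{q}_{i-1} + \widetilde{q}_{i+1})$ for \emph{all} $1 \le i \le m$, and then factors this as $\widetilde{q}_i - \alpha \widetilde{q}_{i-1} \ge \alpha(\widetilde{q}_{i+1} - \alpha \widetilde{q}_i)$; a single telescope plus the boundary values $\widetilde{q}_0 = 1$, $\widetilde{q}_1 \le 1$, $\widetilde{q}_{m+1} \ge 0$ gives $\widetilde{q}_m \ge (\alpha-1)\alpha^{-m-1}$, hence $q_m = b^k \widetilde{q}_m$ has the claimed bound. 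No subsolution, no maximum principle, no case analysis. Your approach is dual: instead of dividing $q$ by $b^{k_i}$, you multiply the harmonic profile $h_i$ by $b^{k_i}$ to build $\phi$, and then run a weighted maximum principle on $q/\phi$. This is more machinery but also more conceptual; it would survive in settings where the recursion does not factor so cleanly (e.g., variable coefficients $a_i$), whereas the telescope trick is tied to the constant-coefficient case. The paper's argument, on the other hand, is short enough that no reader will complain, and it sidesteps exactly the ``hard part'' you flagged (the four-case subsolution check and the boundary fiddling at $i = m$).
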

\begin{proof}
	For each $0\le i\le m+1$, we denote 
	\begin{equation*}
		\mu_i:= \big| \big\{ j: 1\le j\le i\ \text{and} \ j\in \{i_1,...,i_k\}  \big\}  \big|.
	\end{equation*}
	Note that $\mu_0=0$ and $\mu_m=k$. Let $\widetilde{q}_i:=b^{-\mu_i}q_i$. We claim that
	\begin{equation}\label{new_5.4}
		\widetilde{q}_i \ge a^{-1}\left(\widetilde{q}_{i-1}+\widetilde{q}_{i+1}\right), \ \forall 1\le i\le m. 
	\end{equation}
	In fact, if $i\notin \{i_1,...,i_k\}$, by Condition (2), $\mu_i=\mu_{i-1}$ and $b^{-\mu_{i}}\ge b^{-\mu_{i+1}}$, we have 
	\begin{equation*}
		\begin{split}
			\widetilde{q}_i = b^{-\mu_i} q_i \ge a^{-1}(b^{-\mu_{i-1}}q_{i-1}+b^{-\mu_{i+1}} q_{i+1})= a^{-1}(\widetilde{q}_{i-1}+\widetilde{q}_{i+1}). 
		\end{split}
	\end{equation*}
	Similarly, if $i \in \{i_1,...,i_k\}$, it follows from Condition (3) and $\mu_i=\mu_{i-1}+1$ that 
	\begin{equation*}
		\begin{split}
			\widetilde{q}_i  = b^{-\mu_i} q_i \ge &b^{-\mu_i}\cdot a^{-1}b(q_{i-1}+q_{i+1})\\
			\ge &a^{-1}(b^{-\mu_{i-1}}q_{i-1}+b^{-\mu_{i+1}} q_{i+1})=a^{-1}(\widetilde{q}_{i-1}+\widetilde{q}_{i+1}). 
		\end{split}
	\end{equation*}
	To sum up, we conclude the claim (\ref{new_5.4}).

	Recall that $\alpha= \frac{a+\sqrt{a^2-4}}{2}\in (1,a)$. (\ref{new_5.4}) implies that for any $1\le i\le m$, 
	\begin{equation}\label{new_2.23}
		\widetilde{q}_i- \alpha \widetilde{q}_{i-1} \ge \alpha (\widetilde{q}_{i+1}- \alpha \widetilde{q}_{i}). 
	\end{equation}
	Therefore, we have 
	\begin{equation}\label{new_2.24}
		\widetilde{q}_{1}- \alpha \widetilde{q}_{0} \ge 	\alpha ^{m}(\widetilde{q}_{m+1}- \alpha \widetilde{q}_{m}).  
	\end{equation}
	Combined with Condition (1), it gives
	\begin{equation}\label{new_5.5}
		\widetilde{q}_{m}\ge  \alpha^{-1}\big[\widetilde{q}_{m+1} -\alpha^{-m} (\widetilde{q}_{1}- \alpha \widetilde{q}_{0} )   \big] \ge (\alpha-1) \alpha^{-m-1}. 
	\end{equation}
	By (\ref{new_5.5}) and $\widetilde{q}_{m}=b^{-k}q_m$, we conclude this lemma.
\end{proof}

As a direct application of Lemma \ref{lemma_number_sequence}, we obtain the following estimate for the probability that a random walk moves along a given path. The analogue of this estimate for $\mathbb{Z}^d$ ($d\ge 2$) was presented in \cite[Lemma 4.3]{psi}.

\begin{lemma}\label{lemma_move_along}
	For any path $\eta$ on $\mathscr{G}$, we have 
	\begin{equation}\label{new_2.27}
		\mathbb{P}_{\eta(0)}\big( \tau_{\eta(-1)}<\tau_{\partial^{\mathrm{o}}\mathbf{R}^{\mathrm{v}}(\eta)}\big) \gtrsim \big[\widehat{\lambda}(\mathscr{G})\big]^{-\mathbf{L}(\eta)}, 
	\end{equation}
	where $\widehat{\lambda}(\mathbb{Z}^2)=2+\sqrt{3}$, $\widehat{\lambda}(\mathscr{T})=3+2\sqrt{2}$ and $\widehat{\lambda}(\mathscr{H})=\frac{3+\sqrt{5}}{2}$.  
\end{lemma}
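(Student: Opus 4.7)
The plan is to reduce the claim to a direct application of Lemma~\ref{lemma_number_sequence} via a one-step analysis of the random walk along $\eta$. Set $n:=\mathbf{L}(\eta)$ and define
$$p_i:=\mathbb{P}_{\eta(i)}\bigl(\tau_{\eta(-1)}<\tau_{\partial^{\mathrm{o}}\mathbf{R}^{\mathrm{v}}(\eta)}\bigr),\qquad 0\le i\le n,$$
so that $p_n=1$ and the goal is a lower bound on $p_0$. Before starting, I would first assume that $\eta$ is non-backtracking (i.e.\ $\eta(i-1)\neq \eta(i+1)$ for every internal $i$): any backtrack can be erased to produce a shorter path $\tilde\eta$ with the same endpoints and $\mathbf{R}^{\mathrm{v}}(\tilde\eta)\subseteq \mathbf{R}^{\mathrm{v}}(\eta)$, so that the analogous probability for $\tilde\eta$ is at most the one for $\eta$. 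Since $\widehat{\lambda}(\mathscr{G})>1$ and $\mathbf{L}(\tilde\eta)<\mathbf{L}(\eta)$, the desired bound for $\tilde\eta$ automatically yields the desired bound for $\eta$.

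Under the non-backtracking assumption, at every internal vertex $\eta(i)$ the two neighbors $\eta(i-1),\eta(i+1)$ are distinct and both lie in $\mathbf{R}^{\mathrm{v}}(\eta)$, while every other neighbor contributes a non-negative amount (if it lies in $\mathbf{R}^{\mathrm{v}}(\eta)$) or zero (if it lies in $\partial^{\mathrm{o}}\mathbf{R}^{\mathrm{v}}(\eta)$). Conditioning on the first step therefore yields
$$p_i\ge \frac{1}{\mathrm{deg}(\mathscr{G})}\bigl(p_{i-1}+p_{i+1}\bigr)\quad\text{for}\ 1\le i\le n-1,$$
together with $p_0\ge \mathrm{deg}(\mathscr{G})^{-1}p_1$ at $i=0$. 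I would then reverse the indexing by setting $q_i:=p_{n-i}$ for $0\le i\le n$ and $q_{n+1}:=0$. This gives $q_0=1$, $q_i\in[0,1]$ for each $i$, and the recursions above translate into
$$q_i\ge \frac{1}{\mathrm{deg}(\mathscr{G})}\bigl(q_{i-1}+q_{i+1}\bigr)\quad\text{for all}\ 1\le i\le n,$$
where the $i=n$ case uses $q_{n+1}=0$ together with the one-step bound $p_0\ge \mathrm{deg}(\mathscr{G})^{-1}p_1$.

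These are precisely the hypotheses of Lemma~\ref{lemma_number_sequence} with $a=\mathrm{deg}(\mathscr{G})$, $m=n$ and $k=0$; the lemma then yields $q_n\ge (\alpha-1)\alpha^{-n-1}$ with $\alpha=\tfrac{1}{2}\bigl(\mathrm{deg}(\mathscr{G})+\sqrt{\mathrm{deg}(\mathscr{G})^2-4}\bigr)$. A direct substitution of $\mathrm{deg}(\mathbb{Z}^2)=4$, $\mathrm{deg}(\mathscr{T})=6$ and $\mathrm{deg}(\mathscr{H})=3$ produces $\alpha=2+\sqrt{3}$, $\alpha=3+2\sqrt{2}$ and $\alpha=\tfrac{3+\sqrt{5}}{2}$ respectively, exactly matching $\widehat{\lambda}(\mathscr{G})$ in the statement, so since $q_n=p_0$ the desired lower bound $p_0\gtrsim [\widehat{\lambda}(\mathscr{G})]^{-\mathbf{L}(\eta)}$ follows. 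I do not foresee any serious obstacle in this plan; the only mild subtlety is the combinatorial reduction to non-backtracking paths, where one must verify that erasing a backtrack weakly decreases the probability of interest (which follows from $\mathbf{R}^{\mathrm{v}}(\tilde\eta)\subseteq\mathbf{R}^{\mathrm{v}}(\eta)$, since the event for $\tilde\eta$ then forces the walk to stay inside the smaller range).
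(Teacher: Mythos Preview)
Your proposal is correct and follows essentially the same approach as the paper: define the hitting probabilities along the path, verify the one-step recursion $q_i\ge\mathrm{deg}(\mathscr{G})^{-1}(q_{i-1}+q_{i+1})$, and apply Lemma~\ref{lemma_number_sequence} with $a=\mathrm{deg}(\mathscr{G})$ and $k=0$. Your preliminary reduction to non-backtracking paths is an extra (and welcome) bit of care that the paper omits---indeed the recursion can fail at an index with $\eta(i-1)=\eta(i+1)$---but otherwise the two arguments are identical.
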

\begin{proof}
	For $0\le i\le \mathbf{L}(\eta)$, we denote $q_i:=\mathbb{P}_{\eta(\mathbf{L}(\eta)-i)}\big( \tau_{\eta(-1)}<\tau_{\partial^{\mathrm{o}}\mathbf{R}^{\mathrm{v}}(\eta)} \big)$. Note that 
	\begin{equation*}
		q_{0}=\mathbb{P}_{\eta(-1)}\big( \tau_{\eta(-1)}<\tau_{\partial^{\mathrm{o}}\mathbf{R}^{\mathrm{v}}(\eta)} \big)=1\ \ \text{and}\ \  q_{\mathbf{L}(\eta)}=\mathbb{P}_{\eta(0)}\big( \tau_{\eta(-1)}<\tau_{\partial^{\mathrm{o}}\mathbf{R}^{\mathrm{v}}(\eta)} \big).
	\end{equation*}
	We also set $q_{\mathbf{L}(\eta)+1}:=0$. Then it directly follows that $\{q_i\}_{i=0}^{\mathbf{L}(\eta)+1}$ satisfies Condition (1) in Lemma \ref{lemma_number_sequence}. Moreover, by the Markov property, we have
	\begin{equation*}
		q_i\ge [\mathrm{deg}(\mathscr{G})]^{-1}(q_{i-1}+q_{i+1}), \ \ \forall 1\le i\le \mathbf{L}(\eta).
	\end{equation*} 
	I.e., Conditions (2) and (3) in Lemma \ref{lemma_number_sequence} hold with $a=\mathrm{deg}(\mathscr{G})$ and $k=0$. Thus, applying Lemma \ref{lemma_number_sequence} and noting that $\widehat{\lambda}(\mathscr{G})=\alpha(\mathrm{deg}(\mathscr{G}))$ for all $\mathscr{G}\in \{\mathbb{Z}^2, \mathscr{T},\mathscr{H} \} $, we conclude this lemma.
\end{proof}

As a supplement of Lemma \ref{lemma_number_sequence}, the subsequent lemma provides an upper bound for the number sequence with a similar structure as in Lemma \ref{lemma_number_sequence}.

\begin{lemma}\label{lemma_number_sequence2}
	For $a>2$ and $m\in \mathbb{N}^+$, if the number sequence $\{q_i\}_{i=0}^{m+1}$ satisfies the following conditions:
	\begin{enumerate}
		\item $q_0=1$, $0\le q_{m+1}\le (a-1)q_m$ and $0\le q_i\le 1$ for all $1\le i\le m$;

		\item $q_i=  a^{-1}(q_{i-1}+q_{i+1})$ for all $1\le i\le m$,

	\end{enumerate}
	then we have (recalling that $\alpha= \frac{a+\sqrt{a^2-4}}{2}$)
	\begin{equation}\label{final_3.25}
		q_{m}\le  (\alpha+1-a)^{-1}\alpha^{-m+1}. 
	\end{equation}	
\end{lemma}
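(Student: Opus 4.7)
The plan is to solve the recurrence explicitly. The equality in Condition (2) reads $q_{i+1} = aq_i - q_{i-1}$, a linear recurrence whose characteristic polynomial $x^2 - ax + 1$ factors as $(x-\alpha)(x-\alpha^{-1})$ since $\alpha + \alpha^{-1} = a$ and $\alpha\cdot\alpha^{-1} = 1$. Every solution therefore has the form $q_i = A\alpha^i + B\alpha^{-i}$, and the normalisation $q_0 = 1$ in Condition (1) forces $A + B = 1$. In particular $q_m = \alpha^{-m} + A(\alpha^m - \alpha^{-m})$ is a strictly increasing linear function of $A$ for $m\ge 1$, so the task reduces to producing a single upper bound on $A$ from the hypotheses.

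To extract such a bound I would use the upper inequality $q_{m+1} \le (a-1)q_m$ in Condition (1). Invoking $a = \alpha + \alpha^{-1}$, one checks the key identities $\alpha - (a-1) = (\alpha - 1)/\alpha$ and $(a-1) - \alpha^{-1} = \alpha - 1$, so plugging the explicit form into the inequality and cancelling a common factor $\alpha - 1$ collapses it to $A(\alpha^{2m-1} + 1) \le 1$, i.e.\ $A \le (\alpha^{2m-1} + 1)^{-1}$. The other hypotheses (the lower bound $q_{m+1}\ge 0$, and the pointwise constraints $0\le q_i\le 1$ for $1\le i\le m$) merely impose additional restrictions on $A$, but none of them is needed for the target: since $q_m$ is monotone increasing in $A$, any further restriction can only tighten the conclusion.

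Substituting the extremal $A = (\alpha^{2m-1}+1)^{-1}$ and simplifying yields
\begin{equation*}
q_m \;\le\; \alpha^{-m} + \frac{\alpha^m - \alpha^{-m}}{\alpha^{2m-1}+1} \;=\; \frac{\alpha^{m-1}(\alpha + 1)}{\alpha^{2m-1}+1}.
\end{equation*}
To recognise this as the claimed bound I would observe $\alpha + 1 - a = 1 - \alpha^{-1} = (\alpha-1)/\alpha$, so that the target $(\alpha+1-a)^{-1}\alpha^{-m+1}$ equals $\alpha^{2-m}/(\alpha-1)$; the remaining comparison reduces after cross-multiplication and the identity $(\alpha-1)(\alpha+1) = \alpha^2 - 1$ to the trivial $-\alpha^{m-1} \le \alpha^{2-m}$. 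I do not foresee a genuine obstacle: once the characteristic roots are identified the argument is routine linear-recurrence algebra anchored on $a = \alpha + \alpha^{-1}$. A telescoping presentation in the spirit of the proof of Lemma~\ref{lemma_number_sequence} is also available, by iterating the factorisations $q_{i+1} - \alpha q_i = \alpha^{-1}(q_i - \alpha q_{i-1})$ and $q_{i+1} - \alpha^{-1} q_i = \alpha(q_i - \alpha^{-1} q_{i-1})$ backwards from $i = m$ and combining them, but the explicit-solution route feels cleaner here because Condition (2) holds with equality rather than inequality.
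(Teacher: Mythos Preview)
Your proof is correct. It differs from the paper's argument in route, though both rest on the same characteristic factorisation $a=\alpha+\alpha^{-1}$.

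The paper telescopes directly: from Condition (2) one has $q_i-\alpha q_{i-1}=\alpha(q_{i+1}-\alpha q_i)$, hence $q_1-\alpha q_0=\alpha^m(q_{m+1}-\alpha q_m)$. Bounding the left side below by $-\alpha$ (using $q_0=1$ and $q_1\ge 0$) and the right side above via $q_{m+1}\le(a-1)q_m$ gives $-\alpha\le\alpha^m(a-1-\alpha)q_m$; dividing by the negative factor $a-1-\alpha$ yields the claim in one line. Your approach instead writes $q_i=A\alpha^i+(1-A)\alpha^{-i}$, uses only $q_{m+1}\le(a-1)q_m$ to pin down $A\le(\alpha^{2m-1}+1)^{-1}$, and then verifies algebraically that the resulting $q_m$ sits below the target. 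Your intermediate bound $q_m\le\alpha^{m-1}(\alpha+1)/(\alpha^{2m-1}+1)$ is in fact sharper than the paper's (you never invoke $q_1\ge 0$), at the cost of a few more lines of manipulation. The paper's telescoping is shorter and matches its treatment of Lemma~\ref{lemma_number_sequence}; your explicit-solution method makes the role of the boundary condition at $i=m+1$ more transparent and shows which hypotheses are actually needed.
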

\begin{proof}
	For any $1\le i\le m$, similar to (\ref{new_2.23}) and (\ref{new_2.24}), Condition (2) implies that $q_i- \alpha q_{i-1} = \alpha (q_{i+1}- \alpha q_{i})$. Combined with Condition (1), it yields
	\begin{equation}\label{final_3.26}
	- \alpha \le 	q_{1}- \alpha q_{0} = 	\alpha ^{m}(q_{m+1}- \alpha q_{m}) \le \alpha ^{m}(a-1-\alpha)q_m,
	\end{equation}
	This implies the desired bound (\ref{final_3.25}) since $a-1-\alpha=\frac{1}{2}(a-2-\sqrt{a^2-4})<0$.
\end{proof}

	\section{Proof of the upper bounds in Theorem \ref{theorem1.1}}\label{section1.2.1}

In this section, we establish the upper bounds in Theorem \ref{theorem1.1} through the analysis of harmonic measures of the square spiral and its analogues.

Recall the square spiral $D_n^{\mathbb{Z}^2}$ in Figure \ref{fig:spiral_Z2}. Inspired by $D_n^{\mathbb{Z}^2}$, we construct its analogues for $\mathscr{T}$ and $\mathscr{H}$, which we denote by $\{D_n^{\mathscr{T}}\}_{n\ge 2}$ and $\{D_n^{\mathscr{H}}\}_{n\ge 2}$ respectively, as shown in Figure \ref{fig:spirals}. Note that $|D_n^{\mathscr{T}}|=|D_n^{\mathscr{H}}|=n$. It directly follows from (\ref{new_M_n}) that $\mathcal{M}_n(\mathscr{G})\le  \mathbb{H}_{D_n^{\mathscr{G}}}(\bm{0})$ for all $\mathscr{G}\in \{\mathbb{Z}^2,\mathscr{T},\mathscr{H} \}$. Thus, for the upper bounds in Theorem \ref{theorem1.1}, it suffices to prove that for all sufficiently large $n$,
\begin{equation}\label{new_3.1}
	\mathbb{H}_{D_n^{\mathscr{G}}}(\bm{0}) \le   [\lambda (\mathscr{G})]^{-n+C\sqrt{n}}. 
\end{equation}

\include*{tikz_other_spirals}

According to Figures \ref{fig:spiral_Z2} and \ref{fig:spirals}, here are some useful observations on the random walk that starts from a faraway vertex and first hits $D_n^{\mathscr{G}}$ at $\bm{0}$: 
\begin{enumerate}
	\item[(a)]  It must cross a winding tunnel $\eta_n^{\mathscr{G}}$ (see pink paths in Figures \ref{fig:spiral_Z2} and \ref{fig:spirals}), which starts from $v_n^{\mathscr{G}}$ and ends at $\bm{0}$);

	\item[(b)]  When it is passing through the tunnel $\eta_n^{\mathscr{G}}$, at each position there are exactly two choices for the next step, namely going forward or backward along $\eta_n^{\mathscr{G}}$.

\end{enumerate}
By Observation (a) we have 
\begin{equation}\label{new_3_2}
	\mathbb{H}_{D_n^{\mathscr{G}}}(\bm{0}) \le  \mathbb{P}_{v_n^{\mathscr{G}}}\big(\tau_{D_n^{\mathscr{G}}}=\tau_{\bm{0}}\big).
\end{equation}
In fact, the probability $\mathbb{P}_{v_n^{\mathscr{G}}}\big(\tau_{D_n^{\mathbb{Z}^2}}=\tau_{\bm{0}}\big)$ can be estimated using Lemma \ref{lemma_number_sequence2} as follows. For $0\le i\le l_n^{\mathscr{G}}:= \mathbf{L}(\eta_n^{\mathscr{G}})$, we define $q_i:=\mathbb{P}_{\eta_n^{\mathscr{G}}(l_n^{\mathscr{G}}-i)}\big(\tau_{D_n^{\mathscr{G}}}=\tau_{\bm{0}}\big)$. Let 
\begin{equation}
	q_{l_n^{\mathscr{G}}+1}:=\sum\nolimits_{x\in [D_n^{\mathscr{G}}\cup \mathbf{R}^{\mathrm{v}}(\eta_n^{\mathscr{G}})]^c_{\infty}: x\sim v_n^{\mathscr{G}}}  \mathbb{P}_{x}\big(\tau_{D_n^{\mathscr{G}}}=\tau_{\bm{0}}\big).
\end{equation}
Note that $\{x\in [D_n^{\mathscr{G}}\cup \mathbf{R}^{\mathrm{v}}(\eta_n^{\mathscr{G}})]_{\infty}^c: x\sim v_n^{\mathscr{G}} \}\subset N(x)\setminus \{\eta_n^{\mathscr{G}}(1)\}$. Therefore, since a random walk starting from any vertex in $[D_n^{\mathscr{G}}\cup \mathbf{R}^{\mathrm{v}}(\eta_n^{\mathscr{G}})]_{\infty}^c$ must reach $v_n^{\mathscr{G}}$ before hitting $\bm{0}$ (as in Observation (a)), one has $q_{l_n^{\mathscr{G}}+1}\le [\mathrm{deg}(\mathscr{G})-1]q_{l_n^{\mathscr{G}}}$. Meanwhile, it follows from the Markov property and the definition of $q_{l_n^{\mathscr{G}}+1}$ that 
\begin{equation}
	q_{l_n^{\mathscr{G}}}=  [\mathrm{deg}(\mathscr{G})]^{-1}(q_{l_n^{\mathscr{G}}-1}+q_{l_n^{\mathscr{G}}+1}).
	\end{equation}
Moreover, by the Markov property and Observation (b), we have 
\begin{equation}
	q_i =  [\mathrm{deg}(\mathscr{G})]^{-1}(q_{i-1}+q_{i+1}), \ \ \forall 1\le i\le l_n^{\mathscr{G}}-1.
\end{equation}
In conclusion, the number sequence $\{q_i\}_{i=0}^{l_n^{\mathscr{G}}+1}$ satisfies all conditions in Lemma \ref{lemma_number_sequence2} (with $a=\mathrm{deg}(\mathscr{G})$) and thus (recalling that $\widehat{\lambda}(\mathscr{G})=\alpha(\mathrm{deg}(\mathscr{G}))$),
\begin{equation}
	q_{l_n^{\mathscr{G}}}= \mathbb{P}_{v_n^{\mathscr{G}}}\big(\tau_{D_n^{\mathscr{G}}}=\tau_{\bm{0}}\big) \lesssim  [\widehat{\lambda}(\mathscr{G})]^{-l_n^{\mathscr{G}}}. 
\end{equation}
Combined with (\ref{new_3_2}), this implies that 
\begin{equation}\label{new_3.5}
	\mathbb{H}_{D_n^{\mathscr{G}}}(\bm{0}) \lesssim  [\widehat{\lambda}(\mathscr{G})]^{-l_n^{\mathscr{G}}}. 
\end{equation}
To obtain (\ref{new_3.1}), it remains to establish a lower bound for $l_n^{\mathscr{G}}$.

\begin{lemma}\label{lemma_length_tunnel}
	There exists $C(\mathscr{G})>1$ such that for all sufficiently large $n$,  
	\begin{equation}\label{ln0}
		l_n^{\mathscr{G}}\ge  \nu(\mathscr{G}) n- C\sqrt{n},
	\end{equation}
	where $\nu(\mathscr{G}):=\frac{\mathrm{deg}(\mathscr{G})-2\cdot \mathbbm{1}_{\mathscr{G}=\mathscr{T}}}{\mathrm{deg}(\mathscr{G})-2}$, i.e., $\nu(\mathbb{Z}^2)=2$, $\nu(\mathscr{T})=1$ and $\nu(\mathscr{H})=3$.  
\end{lemma}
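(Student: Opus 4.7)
The plan is to establish \eqref{ln0} through a layer-by-layer analysis of the spiral $D_n^{\mathscr{G}}$, combined with a local counting argument that converts wall vertices into tunnel edges at the correct rate $\nu(\mathscr{G})$.

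First, I would decompose the spiral into concentric shells. By construction in Figures \ref{fig:spiral_Z2} and \ref{fig:spirals}, the wall vertices of $D_n^{\mathscr{G}}$ split into $K$ nested polygonal shells, where shell number $k$ carries $\Theta(k)$ vertices and sits at graph-distance $\Theta(k)$ from $\bm{0}$. Thus $n = \sum_{k=1}^{K} \Theta(k) = \Theta(K^2)$, so $K = \Theta(\sqrt{n})$, and the tunnel $\eta_n^{\mathscr{G}}$ splits correspondingly into $K$ winding sections, with section $k$ running in the gap between shell $k$ and shell $k+1$.

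Second, I would establish that, locally, a straight stretch of $m$ consecutive wall vertices on a shell forces the adjacent tunnel section to contain $\nu(\mathscr{G}) m - O(1)$ edges. The three cases arise from the face/degree geometry of $\mathscr{G}$: on $\mathbb{Z}^2$, each wall vertex has two lattice neighbors off the wall, one on each side, and the tunnel visits both sides of the wall (one on the way inward and one on the way outward), producing roughly $2m$ tunnel edges alongside the stretch and yielding $\nu(\mathbb{Z}^2)=2$; on $\mathscr{H}$, each wall vertex has three off-wall neighbors owing to the hexagonal face structure and the low vertex-degree, giving $\nu(\mathscr{H})=3$; on $\mathscr{T}$, the presence of triangular faces forces the wall to be two vertices thick in order to block the random walk (a single row would leak through the triangular face shared by two consecutive wall vertices), which halves the naive ratio $\mathrm{deg}(\mathscr{T})-2 = 4$ down to $\nu(\mathscr{T})=1$. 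The combinatorial identity
\begin{equation*}
\nu(\mathscr{G}) \;=\; \frac{\mathrm{deg}(\mathscr{G}) - 2\,\mathbbm{1}_{\mathscr{G}=\mathscr{T}}}{\mathrm{deg}(\mathscr{G}) - 2}
\end{equation*}
records exactly these three counts.

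Third, I would sum the local counts over the $K$ shells. Each shell contributes at least $\nu(\mathscr{G}) \cdot (\text{wall vertices of that shell}) - O(1)$ to $l_n^{\mathscr{G}}$, where the $O(1)$ term absorbs the contributions from the finitely many corners where the spiral turns (for $\mathbb{Z}^2$ these are the four corners of a rectangular shell, etc.). Summing and collecting the per-shell corner error over the $K=\Theta(\sqrt n)$ shells gives
\begin{equation*}
l_n^{\mathscr{G}} \;\ge\; \nu(\mathscr{G})\cdot\bigl(\text{total wall vertices}\bigr) - O(K) \;=\; \nu(\mathscr{G})\, n - O(\sqrt{n}),
\end{equation*}
where I have used that the total wall count is $n$ up to an $O(1)$ correction accounting for the innermost and outermost partially-completed shells.

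The main obstacle is the second step: carefully verifying the local ratio $\nu(\mathscr{G})$ for each lattice, especially at corners of the spiral and at the junctions between shells where the tunnel transitions from section $k$ to section $k+1$. A useful simplification is that only a lower bound on $l_n^{\mathscr{G}}$ is sought, so any ambiguous or boundary segments of total length $O(\sqrt{n})$ (the partially completed outermost shell, the transition at $v_n^{\mathscr{G}}$, and the innermost shell surrounding $\bm{0}$) can simply be discarded, with the loss absorbed into the $C\sqrt{n}$ term in \eqref{ln0}.
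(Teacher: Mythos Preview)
Your second step rests on an incorrect picture of $D_n^{\mathscr{G}}$. On $\mathbb{Z}^2$ and $\mathscr{H}$ the wall is \emph{not} a graph-connected path: consecutive wall vertices are only $*$-adjacent, so a typical wall vertex has \emph{all} $\mathrm{deg}(\mathscr{G})$ of its neighbors on the tunnel (the paper's proof states this explicitly). Your ``two off-wall neighbors per wall vertex'' model for $\mathbb{Z}^2$ describes a different, less efficient spiral for which the tunnel--to--wall ratio is only $1$, and your argument recovers $2$ only by double-counting: the tunnel segment between two consecutive wall-shells gets attributed once to each flanking shell. Likewise on $\mathscr{T}$ the wall is a single graph-path (two wall neighbors and four tunnel neighbors per vertex), not ``two vertices thick''; the correction $-2\cdot\mathbbm{1}_{\mathscr{G}=\mathscr{T}}$ in the formula for $\nu$ records precisely those two wall-neighbors, not a doubled thickness.

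The paper replaces the shell decomposition by a single global double count. With $\mathfrak{T}_n^{\mathscr{G}}:=\{(x,y):x\in\mathbf{R}^{\mathrm{v}}(\eta_n^{\mathscr{G}}),\ y\in D_n^{\mathscr{G}},\ x\sim y\}$, counting from the tunnel side gives $|\mathfrak{T}_n^{\mathscr{G}}|\le[\mathrm{deg}(\mathscr{G})-2]\,l_n^{\mathscr{G}}+O(1)$, since every interior tunnel vertex has exactly $\mathrm{deg}(\mathscr{G})-2$ wall neighbors (this is Observation~(b)). Counting from the wall side gives $|\mathfrak{T}_n^{\mathscr{G}}|\ge[\mathrm{deg}(\mathscr{G})-2\cdot\mathbbm{1}_{\mathscr{G}=\mathscr{T}}]\,(n-C\sqrt n)$, since all but the $O(\sqrt n)$ outermost-layer and corner vertices of $D_n^{\mathscr{G}}$ have the full complement of tunnel neighbors. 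Comparing the two bounds yields \eqref{ln0} immediately. Your shell-by-shell strategy could be made to work once the correct local picture is in hand, but it would then amount to carrying out this same double count piecewise and summing.
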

\begin{proof}
	Let $\mathfrak{T}_{n}^{\mathscr{G}}$ be the collection of all tuples $(x,y)$ satisfying $x\in \mathbf{R}^{\mathrm{v}}(\eta_{n}^{\mathscr{G}})$, $y\in D_n^{\mathscr{G}}$ and $x\sim y$. As shown in Figures \ref{fig:spiral_Z2} and \ref{fig:spirals}, for each $1\le i\le l_n^{\mathscr{G}}-1$, $\eta_{n}^{\mathscr{G}}(i)$ has exactly $\mathrm{deg}(\mathscr{G})-2$ neighbors in $D_n^{\mathscr{G}}$. Moreover, each of $\eta_{n}^{\mathscr{G}}(0)$ and $\eta_{n}^{\mathscr{G}}(-1)$ has no more than $\mathrm{deg}(\mathscr{G})$ neighbors in $D_n^{\mathscr{G}}$. These observations imply that  
	\begin{equation}\label{ln1}
		|\mathfrak{T}_{n}^{\mathscr{G}}|\le [\mathrm{deg}(\mathscr{G})-2](l_n^{\mathscr{G}}-1) +2\mathrm{deg}(\mathscr{G}).  
	\end{equation}

	Now we estimate $|\mathfrak{T}_{n}^{\mathscr{G}}|$ in another way. Referring to Figures \ref{fig:spiral_Z2} and \ref{fig:spirals}, the majority of vertices in $D_n^{\mathscr{G}}$ have exactly $\mathrm{deg}(\mathscr{G})-2\cdot \mathbbm{1}_{\mathscr{G}=\mathscr{T}}$ neighbors that are contained in $\mathbf{R}^{\mathrm{v}}(\eta_{n}^{\mathscr{G}})$, with the exception of vertices in the outermost layer and the corner vertices in $D_n^{\mathscr{G}}$. Moreover, the total number of these exceptional vertices is $O(\sqrt{n})$ since $\mathrm{diam}(D_n^{\mathscr{G}})$ is of order $\sqrt{n}$. As a result, we have (recalling that $|D_n^{\mathscr{G}}|=n$)
	\begin{equation}
		\begin{split}
			|\mathfrak{T}_{n}^{\mathscr{G}}|\ge & \big[\mathrm{deg}(\mathscr{G})-2\cdot \mathbbm{1}_{\mathscr{G}=\mathscr{T}}\big](|D_n^{\mathscr{G}}|-C\sqrt{n})\\
			\ge & \big[\mathrm{deg}(\mathscr{G})-2\cdot \mathbbm{1}_{\mathscr{G}=\mathscr{T}}\big]n-C'\sqrt{n}. 
		\end{split}
	\end{equation}
	Combined with (\ref{ln1}), it concludes this lemma. 
\end{proof}

By (\ref{new_3.5}) and Lemma \ref{lemma_length_tunnel}, we obtain (\ref{new_3.1}): for any sufficiently large $n\ge 1$, 
\begin{equation}
	\mathbb{H}_{D_n^{\mathscr{G}}}(\bm{0}) \le C \big[\widehat{\lambda}(\mathscr{G})\big]^{-\nu(\mathscr{G})n+C'\sqrt{n}}\le  [\lambda(\mathscr{G})]^{-n+C''\sqrt{n}}, 
\end{equation}
where in the second inequality we used the fact that $\lambda(\mathscr{G})= \big[\widehat{\lambda}(\mathscr{G})\big]^{\nu(\mathscr{G})}$ for all $\mathscr{G}\in \{\mathbb{Z}^2, \mathscr{T}, \mathscr{H}\}$. Recalling that (\ref{new_3.1}) is sufficient for the upper bounds in Theorem \ref{theorem1.1}, we complete the proof.         \qed

\section{Proof of the lower bounds in Theorem \ref{theorem1.1}}\label{section_outline}

In this section, we present a conditional proof for the lower bounds
in Theorem \ref{theorem1.1} assuming several technical lemmas, which will be established in later sections. Unless otherwise stated, we assume that $n$ is a sufficiently large integer. As described in Section \ref{subsection_outline}, our proof is conducted in the following steps.

\textbf{Step 1: remove all distant subsets.} In the first step, we aim to remove the subset of a set $A$ for which the distance between this subset and its complement in $A$ is significantly larger than its diameter. The following lemma, as a quantitative vesion of the removal argument in \cite{psi}, provides an upper bound for the increase of the harmonic measure after removing such a distant subset.

We denote $\mathcal{A}(\mathscr{G}):= \{A\subset \mathfs{V}: |A|\ge 2, \mathbb{H}_A(\bm{0})>0\}$.

\begin{lemma}\label{lemma_remove_distant}
	For any $\epsilon>0$, there exists $\Cl\label{remove_distant}(\mathscr{G},\epsilon)\ge 10$ such that for any $L\ge \Cref{remove_distant}$, $A\in \mathcal{A}(\mathscr{G})$ and $D\subsetneq A$ satisfying $\bm{0}\notin D$ and $\mathbf{d}(D,A\setminus D)\ge  L[\mathrm{diam}(D)\vee 1]$, 
	\begin{equation}\label{ineq_remove_distant}
		\frac{\mathbb{H}_{A\setminus D}(\bm{0})}{\mathbb{H}_{A}(\bm{0})} \le  (2+\epsilon)\cdot \frac{\ln(L)+\ln(\mathrm{diam}(D)\vee 1)}{\ln(L)}.
	\end{equation}
\end{lemma}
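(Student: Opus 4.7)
Denote $R_1:=\mathrm{diam}(D)\vee 1$. The proof starts from the identity
$$\mathbb{H}_{A\setminus D}(\bm{0}) = \mathbb{H}_A(\bm{0}) + \lim_{\|x\|\to\infty}\mathbb{P}_x\bigl(\tau_D<\tau_{A\setminus D},\,\tau_{A\setminus D}=\tau_{\bm{0}}\bigr),$$
obtained by partitioning the event $\{\tau_{A\setminus D}=\tau_{\bm{0}}\}$ according to whether the walk visits $D$ before reaching $A\setminus D$. Applying the strong Markov property at $\tau_D$ and introducing the subprobability measure $\mu(v):=\lim_{\|x\|\to\infty}\mathbb{P}_x(\tau_D<\tau_{A\setminus D},\,S_{\tau_D}=v)$ on $D$ (whose limit exists by the 2D potential theory underlying Lemmas \ref{lemma_green}--\ref{lemma_2srw_hm}), the extra term equals $\sum_{v\in D}\mu(v)\,\mathbb{P}_v(\tau_{A\setminus D}=\tau_{\bm{0}})$, so the lemma reduces to bounding this sum sharply in terms of $\mathbb{H}_A(\bm{0})$.

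\textbf{Key estimates.} For each $v\in D$, because $B_v(LR_1)$ is disjoint from $A\setminus D$, a walk from $v$ must first escape $B_v(LR_1)$; and upon reaching a very large ball $B_v(R)$ (with $R\to\infty$ eventually), its distribution on $\partial^{\mathrm{i}}B_v(R)$ is asymptotically proportional to the harmonic measure on that circle. Chaining Lemma \ref{lemma_hit} over the nested annuli $B_v(R_1)\subset B_v(LR_1)\subset B_v(R)$ and invoking the discrete Harnack inequality yields that this proportion is within $1+O(\ln R_1/\ln L)$ of uniform; Lemma \ref{lemma_2srw_hm} then shows that from such a distribution on $\partial^{\mathrm{i}}B_v(R)$, the hitting distribution on $A\setminus D$ is approximately $\mathbb{H}_{A\setminus D}$. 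Combining these,
$$\mathbb{P}_v(\tau_{A\setminus D}=\tau_{\bm{0}}) \le \bigl(1+\tfrac{\epsilon}{10}\bigr)\cdot\tfrac{\ln L+\ln R_1}{\ln L}\cdot\mathbb{H}_{A\setminus D}(\bm{0}).$$

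\textbf{Closing the argument.} A parallel analysis of $\mu(D)=\sum_v\mu(v)$, now partitioning the walk from infinity according to whether it hits $D$ or $A\setminus D$ first after entering a large ball around $\bm{0}$, gives a matching bound which, when combined with the previous display and the basic identity, produces a comparison of the form $\mathbb{H}_{A\setminus D}(\bm{0})-\mathbb{H}_A(\bm{0})\le\bigl[(2+\epsilon)\tfrac{\ln L+\ln R_1}{\ln L}-1\bigr]\mathbb{H}_A(\bm{0})$; this is algebraically equivalent to (\ref{ineq_remove_distant}). The constant $2+\epsilon$ reflects the fact that removing a \emph{single} distant vertex from $A$ approximately doubles the harmonic measure in a recurrent planar setting, while the extra logarithmic factor accounts for $D$ being potentially large.

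\textbf{Main obstacle.} The chief technical difficulty is the logarithmic estimate for $\mathbb{P}_v(\tau_{A\setminus D}=\tau_{\bm{0}})$, because $A\setminus D$ can be arbitrarily spread out and Lemma \ref{lemma_2srw_hm} does not apply directly from $v$. The two-step escape -- first out of $B_v(LR_1)$ and then out to scale $R\to\infty$ -- is lossy precisely because $D$ occupies only the innermost ring $B_v(R_1)$, leaving the narrow annular gap $B_v(LR_1)\setminus B_v(R_1)$ through which Harnack controls the hitting distribution up to the factor $1+O(\ln R_1/\ln L)$. Tracking all error terms uniformly over admissible configurations of $A$ and $D$, and choosing $\Cref{remove_distant}(\mathscr{G},\epsilon)$ sufficiently large to absorb them into $\epsilon$, completes the proof.
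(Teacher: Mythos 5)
Your decomposition $\mathbb{H}_{A\setminus D}(\bm{0})=\mathbb{H}_A(\bm{0})+\sum_{v\in D}\mu(v)\,\mathbb{P}_v(\tau_{A\setminus D}=\tau_{\bm{0}})$ is correct, and the route is genuinely different from the paper's, which passes through the Green's-function comparison lemmas of \cite{psi} (Lemmas \ref{lemma_psi_3.5}--\ref{lemma_psi_3.6}), the quantitative Harnack estimate (Lemma \ref{lemma_green_2}), and a lower bound on the escape probability $\mathbb{P}_{v_1'}(\tau_{A\setminus D}<\tau_D)$. But as written your proof has two genuine gaps.

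The ``Key estimate'' $\mathbb{P}_v(\tau_{A\setminus D}=\tau_{\bm{0}})\le(1+\tfrac{\epsilon}{10})\tfrac{\ln L+\ln R_1}{\ln L}\,\mathbb{H}_{A\setminus D}(\bm{0})$ is false. Take $D=\{d\}$ (so $R_1=1$ and your factor is $\approx 1$) with $\mathbf{d}(d,\bm{0})=L$, and let $A\setminus D=\{\bm{0},z\}$ with $z$ at distance much larger than $L$ from both $d$ and $\bm{0}$. Then $\mathbb{H}_{A\setminus D}(\bm{0})=1/2$ exactly by symmetry, yet $\mathbb{P}_d(\tau_{\{\bm{0},z\}}=\tau_{\bm{0}})$ is close to $1$ because $d$ is far closer to $\bm{0}$ than to $z$. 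The ``two-step escape'' only accounts for paths that actually reach $\partial^{\mathrm{i}}B_v(R)$ before touching $A\setminus D$; when $A\setminus D$ is spread over many scales the walk from $v$ typically hits it well before equilibrating, and the hitting distribution seen from the scale $\mathbf{d}(v,A\setminus D)\asymp LR_1$ is not comparable to $\mathbb{H}_{A\setminus D}$ --- Lemma \ref{lemma_2srw_hm} requires the starting point to be far compared to the diameter of the \emph{entire} target, which here is unbounded, exactly the issue you flag but do not overcome.

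Second, even granting some correct bound $\mathbb{P}_v(\tau_{A\setminus D}=\tau_{\bm{0}})\le\theta\,\mathbb{H}_{A\setminus D}(\bm{0})$, the identity only gives $(1-\mu(D)\theta)\,\mathbb{H}_{A\setminus D}(\bm{0})\le\mathbb{H}_A(\bm{0})$, which is vacuous once $\mu(D)\theta\ge 1$. When $D$ has diameter $R_1$ and $A\setminus D$ is a single point, $\mu(D)$ is of order $\tfrac{\ln L+\ln R_1}{2\ln L+\ln R_1}$; with your $\theta=(1+\tfrac{\epsilon}{10})\tfrac{\ln L+\ln R_1}{\ln L}$ the product exceeds $1$ as soon as $R_1\gg L$ and the argument produces nothing. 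The logarithmic factor really belongs in a sharp upper bound for $\mu(D)$ (with $\theta=1+o(1)$), and proving that sharp upper bound is the actual content of the lemma --- it is exactly what the escape-probability computation leading to (\ref{4.19}) accomplishes in the paper, and it cannot be dispatched as a ``parallel analysis'' without carrying it out.
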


We postpone the proof of Lemma \ref{lemma_remove_distant} to Section \ref{section_qra_distant}. Note that the limit of Inequality (\ref{ineq_remove_distant}) as $L\to \infty$, which states that removing a subset that is extremely far away from its complement will at most double the harmonic measure, directly follows from the recurrence and translation invariance of $\mathscr{G}$. In \cite[Section 2.5]{lawler2013intersections}, this computation was discussed in detail and was further used to construct an extremely spread-out set in $\mathcal{A}_n(\mathbb{Z}^2)$ whose harmonic measure at $\bm{0}$ is $2^{-n+o(n)}$. See also \cite[Equation (1.12)]{psi} for more discussions on this example.

With the help of Lemma \ref{lemma_remove_distant}, we can remove a distant subset during the computation of the harmonic measure. To formulate such a removal process, we define the concepts of packed sets and sparse sets as follows. Let $\Cl\label{delicate_green}(\mathscr{G}):= \Cref{remove_distant}(\mathscr{G},0.1)$.

\begin{definition}\label{def_sparse}
	For any $A\subset \mathfs{V}$, we say $A$ is packed if $\mathfs{D}(A)=\emptyset$, where  
	\begin{equation}\label{def_mathfs_D}
		\mathfs{D}(A) := \big\{\text{non-empty}\ D\subsetneq A:\bm{0}\notin D,\mathbf{d}(D,A\setminus D)> \Cref{delicate_green}[\mathrm{diam}(D)\vee 1]\big\}.
	\end{equation}
	We also say $A$ is sparse if $\mathfs{D}(A)\neq \emptyset$.
\end{definition}

%Note that in (\ref{def_mathfs_D}) $D$ is not necessarily a $*$-cluster, and that $\mathfs{D}(A)=\emptyset$ if $A$ is $*$-connected. In what follows, we present two properties of packed sets and sparse sets that are very useful for the subsequent construction. 

In what follows, we present two useful properties of packed sets and sparse sets.

\begin{lemma}\label{lemma_packed0}
	For any sparse $A\subset \mathfs{V}$, there exists a packed set $D\in \mathfs{D}(A)$. 
\end{lemma}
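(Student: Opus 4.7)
The plan is to exhibit a packed $D \in \mathfs{D}(A)$ by a simple minimality argument: choose $D \in \mathfs{D}(A)$ whose cardinality is smallest among all elements of $\mathfs{D}(A)$, and verify that such a minimizer must itself be packed. Since $A$ is finite and $\mathfs{D}(A) \neq \emptyset$ by hypothesis, such a minimizer exists.

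The key step is to argue by contradiction. Suppose $\mathfs{D}(D) \neq \emptyset$, so there is a non-empty $D' \subsetneq D$ with $\bm{0} \notin D'$ (automatic from $\bm{0} \notin D$) and $\mathbf{d}(D', D \setminus D') > \Cref{delicate_green}[\mathrm{diam}(D') \vee 1]$. I would then show $D' \in \mathfs{D}(A)$, which would contradict the minimality of $|D|$ since $|D'| < |D|$. The containments $D' \subsetneq D \subsetneq A$ give $D' \subsetneq A$ and $D' \neq \emptyset$, so the only nontrivial condition to verify is the distance bound $\mathbf{d}(D', A \setminus D') > \Cref{delicate_green}[\mathrm{diam}(D') \vee 1]$.

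For this, I use the decomposition $A \setminus D' = (A \setminus D) \cup (D \setminus D')$, which yields
\begin{equation*}
\mathbf{d}(D', A \setminus D') \;=\; \min\bigl\{\mathbf{d}(D', A \setminus D),\; \mathbf{d}(D', D \setminus D')\bigr\}.
\end{equation*}
The second term already exceeds $\Cref{delicate_green}[\mathrm{diam}(D') \vee 1]$ by the assumption $D' \in \mathfs{D}(D)$. For the first, since $D' \subset D$ one has $\mathbf{d}(D', A \setminus D) \geq \mathbf{d}(D, A \setminus D) > \Cref{delicate_green}[\mathrm{diam}(D) \vee 1] \geq \Cref{delicate_green}[\mathrm{diam}(D') \vee 1]$, where the last inequality uses the monotonicity of diameter under inclusion. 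Combining these gives the required bound and completes the contradiction.

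There is no serious obstacle here: the argument is a one-shot minimality plus an inclusion-chain manipulation of distances and diameters. The only point that demands a moment of care is making sure the constant $\Cref{delicate_green}$ in the definition of $\mathfs{D}(\cdot)$ is the same in both $\mathfs{D}(A)$ and $\mathfs{D}(D)$ (which it is by Definition \ref{def_sparse}), so that the inequalities can be chained directly without any loss of constants.
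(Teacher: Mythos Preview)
Your proof is correct and rests on the same core observation as the paper's: if $D \in \mathfs{D}(A)$ and $D' \in \mathfs{D}(D)$, then $D' \in \mathfs{D}(A)$, established via the decomposition $A \setminus D' = (A \setminus D) \cup (D \setminus D')$ and monotonicity of diameter. The only cosmetic difference is that the paper frames this as an induction on $|A|$ (starting from $|A|=2$ and descending through a chain of sparse subsets), whereas you pick a minimal-cardinality $D \in \mathfs{D}(A)$ directly; both arguments terminate for the same reason and verify the same inequalities.
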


\begin{proof}
	We prove this lemma by induction. When $|A|=2$, say $A=\{x_1,x_2\}$ with $\mathbf{d}(x_1,x_2)> \Cref{delicate_green}$, we only need to take $D=\{x_1\}$. For any integer $k\ge 3$, assume that this lemma holds for all sparse $A'$ with $|A'|<k$. For any sparse $A$ with $|A|=k$, by definition there exists $D'\in \mathfs{D}(A)$. If $D'$ is packed, then we take $D=D'$. Otherwise, since $|D'|<|A|=k$, by the inductive hypotheses, there exists a packed set $D''\in \mathfs{D}(D')$. Therefore, we have $\bm{0}\notin D''$ and 
	\begin{equation}\label{ineq_3.6}
		\mathbf{d}\left( D'',D' \setminus D''\right) > \Cref{delicate_green}[\mathrm{diam}(D'')\vee 1]. 
	\end{equation}
	Moreover, since $D''\subset D'$ and $D'\in \mathfs{D}(A)$, one has 
	\begin{equation}\label{ineq_3.7}
		\mathbf{d}\left( D'',A \setminus D'\right)\ge 	\mathbf{d}\left( D',A \setminus D'\right) > \Cref{delicate_green}[\mathrm{diam}(D')\vee 1]\ge \Cref{delicate_green}[\mathrm{diam}(D'')\vee 1]. 
	\end{equation}
	Combining (\ref{ineq_3.6}) and (\ref{ineq_3.7}), we get 
	\begin{equation*}
		\begin{split}
			\mathbf{d}\left( D'',A \setminus D''\right)=& \min \{\mathbf{d}\left( D'',D' \setminus D''\right),\mathbf{d}\left( D'',A \setminus D'\right)  \} \\
			> &\Cref{delicate_green}[\mathrm{diam}(D'')\vee 1], 
		\end{split}
	\end{equation*}
	which implies that $D''\in \mathfs{D}(A)$. Thus, we only need to take $D=D''$. Now we complete the induction and conclude this lemma.
\end{proof}

The next lemma shows that the diameter of a packed set is at most a polynomial function of its cardinality. We postpone its proof to Section \ref{section_diameter}.

\begin{lemma}\label{lemma_packed3}
	Let $\Cl\label{const_packed}:= \log_2(\Cref{delicate_green}+2)$. For any packed $A\subset \mathfs{V}$, we have 
	\begin{enumerate}
		\item $\mathrm{diam}(A)\le (\Cref{delicate_green}^2+\Cref{delicate_green})|A|^{\Cref{const_packed}}$;

		\item In addition, if $\bm{0}\notin A$, then $\mathrm{diam}(A)\le \Cref{delicate_green}|A|^{\Cref{const_packed}}$.  
		
	\end{enumerate}

\end{lemma}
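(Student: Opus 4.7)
The proof will proceed by strong induction on $|A|$, establishing part (2) first and then deducing part (1). The exponent $\Cref{const_packed}=\log_2(\Cref{delicate_green}+2)$ is tuned precisely so that the divide-and-conquer recurrence
\[
T(n) \le (1+\Cref{delicate_green})\,T\bigl(\lfloor n/2 \rfloor\bigr) + T\bigl(\lceil n/2 \rceil\bigr)
\]
admits the solution $T(n) \le \Cref{delicate_green}\,n^{\Cref{const_packed}}$, because $(1+\Cref{delicate_green})+1=\Cref{delicate_green}+2=2^{\Cref{const_packed}}$.

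For part (2), the base cases $|A|\in\{1,2\}$ are immediate (the former since $\mathrm{diam}(A)=0$, the latter by applying packedness with $D$ a single vertex, which forces $\mathrm{diam}(A)\le\Cref{delicate_green}$). For the inductive step with $|A|=n\ge 3$, I would partition $A = D \sqcup (A\setminus D)$ with $|D| = \lfloor n/2 \rfloor$. Since $\bm{0}\notin A$ forces $\bm{0}\notin D$, the packedness of $A$ applied to $D$ yields $\mathbf{d}(D, A\setminus D) \le \Cref{delicate_green}[\mathrm{diam}(D)\vee 1]$, and the triangle inequality gives
\[
\mathrm{diam}(A) \le (1+\Cref{delicate_green})\,\mathrm{diam}(D) + \mathrm{diam}(A\setminus D) + \Cref{delicate_green}.
\]
If the halves were themselves packed, the inductive hypothesis combined with the identity $2^{\Cref{const_packed}}=\Cref{delicate_green}+2$ would close the recursion and deliver the target bound $\mathrm{diam}(A)\le\Cref{delicate_green}\,|A|^{\Cref{const_packed}}$.

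For part (1), assuming (2), I would treat the origin separately: applying the packedness of $A$ with the subset $D = A\setminus\{\bm{0}\}$ (which trivially satisfies $\bm{0}\notin D$) produces $\mathbf{d}(\bm{0},A\setminus\{\bm{0}\}) \le \Cref{delicate_green}[\mathrm{diam}(A\setminus\{\bm{0}\})\vee 1]$. Combining with $\mathrm{diam}(A) \le \mathbf{d}(\bm{0},A\setminus\{\bm{0}\})+\mathrm{diam}(A\setminus\{\bm{0}\})$ and invoking part (2) on $A\setminus\{\bm{0}\}$ upgrades the prefactor from $\Cref{delicate_green}$ to $\Cref{delicate_green}(1+\Cref{delicate_green})=\Cref{delicate_green}^2+\Cref{delicate_green}$, matching the claim.

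The main obstacle I anticipate is that subsets of a packed set need not themselves be packed: the vertex witnessing the packedness bound for a subset $D' \subsetneq D$ inside $A$ may lie in $A\setminus D$, which is unavailable once we restrict to $D$. Hence both halves in the recursion of (2), and the set $A\setminus\{\bm{0}\}$ in (1), could be sparse, invalidating a direct inductive application. Overcoming this will likely require either strengthening the inductive statement so that it tracks arbitrary subsets of packed sets — possibly via iterated application of Lemma \ref{lemma_packed0} to extract packed distant sub-witnesses and account for their contribution separately — or else making a geometric choice of the partition (for instance $D = A\cap\mathbf{B}_{x_0}(\rho)$ for the smallest $\rho$ with $|A\cap\mathbf{B}_{x_0}(\rho)|\ge\lfloor|A|/2\rfloor$) and showing that such a ball-based split inherits enough structure from $A$ to push the induction through.
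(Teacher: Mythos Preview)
You have correctly identified the central obstacle---that subsets of a packed set need not be packed---but your proposal does not actually resolve it; both suggested fixes remain speculative. The paper's argument proceeds exactly through this gap, and its resolution is the substantive content of Section~\ref{section_diameter}.

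The key device is the notion of a \emph{prior} subset (Definition~\ref{def_prior}): a maximal packed proper subset $D\subsetneq A$ which either contains $\bm{0}$ or has diameter at least that of every other packed proper subset. The crucial Lemma~\ref{lemma_packed2} then shows that if $A$ is packed and $D$ is prior in $A$ with $\bm{0}\notin A\setminus D$, then $A\setminus D$ is also packed. The proof is nontrivial: it uses maximality of $D$ (via Lemma~\ref{lemma_packed1}) to rule out any packed $F\subsetneq A\setminus D$ sitting too close to $D$, and then argues by contradiction---iteratively extracting prior subsets from $A\setminus D$ and using the resulting chain of distance inequalities to produce a packed set strictly containing $D$.

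With Lemma~\ref{lemma_packed2} in hand, the induction for part~(2) runs by taking $D$ prior in $A$; both $D$ and $A\setminus D$ are then packed. Note that one has no control over $|D|$, so your ``split in half'' recurrence is not available. Instead, since $\bm{0}\notin A$, packedness applied to \emph{each} of $D$ and $A\setminus D$ gives
\[
\mathbf{d}(D,A\setminus D)\le \Cref{delicate_green}\min\bigl\{\mathrm{diam}(D)\vee 1,\ \mathrm{diam}(A\setminus D)\vee 1\bigr\},
\]
and one maximizes $(k-t)^{\Cref{const_packed}}+(\Cref{delicate_green}+1)t^{\Cref{const_packed}}$ over $t\in[1,k/2]$; the maximum occurs at $t=k/2$, where the value is $(\Cref{delicate_green}+2)(k/2)^{\Cref{const_packed}}=k^{\Cref{const_packed}}$.

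For part~(1), your idea of peeling off $\bm{0}$ and applying part~(2) to $A\setminus\{\bm{0}\}$ fails for exactly the reason you note: $A\setminus\{\bm{0}\}$ need not be packed. The paper instead takes $D$ to be a maximal packed subset of $A$ containing $\bm{0}$; this $D$ is automatically prior, so Lemma~\ref{lemma_packed2} makes $A\setminus D$ packed with $\bm{0}\notin A\setminus D$, and part~(2) applies to it while a separate induction (on part~(1) itself) handles $D$.
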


We define the removal mapping $\mathfrak{R}:\mathcal{A}(\mathscr{G}) \to \mathcal{A}(\mathscr{G})$ as follows. For $A\in\mathcal{A}(\mathscr{G})$,   

\begin{enumerate}
	\item[(a)]  If $A$ is packed, then let $\mathfrak{R}(A)=A$;

	\item[(b)]  Otherwise, take a packed set $D_{\dagger}\in \mathfs{D}(A)$ (in a predetermined manner) and let $\mathfrak{R}(A)=A\setminus D_{\dagger}$, where the existence of $D_{\dagger}$ is ensured by Lemma \ref{lemma_packed0}. 	
	
\end{enumerate}
For $i\in \mathbb{N}^+$, let $\mathfrak{R}^{(i)}$ be the $i$-th interation of $\mathfrak{R}$. For any $A\in \mathcal{A}(\mathscr{G})$, we define 
\begin{equation}\label{def_AD}
	A_{\mathcal{D}}:= \mathfrak{R}^{(|A|-1)}(A)
\end{equation}
as the set derived from $A$ after $|A|-1$ removals under $\mathfrak{R}$. Note that $A_{\mathcal{D}}$ is packed. In fact, for any $A'\subset A$, $\mathfrak{R}(A')$ either equals to $A'$ (i.e. $A'$ is packed), or satisfies $|\mathfrak{R}(A')| \le |A'|-1$ (i.e. at least one vertex is removed by $\mathfrak{R}$). Therefore, after $|A|-1$ removals under $\mathfrak{R}$, the set $A_{\mathcal{D}}$ already becomes packed.

Let $m_{\mathcal{D}}=m_{\mathcal{D}}(A)$ be the minimal integer in $[0,|A|-1]$ such that $\mathfrak{R}^{(m_{\mathcal{D}})}(A)=A_{\mathcal{D}}$. Note that $m_{\mathcal{D}}=0$ if $A$ is packed. When $m_{\mathcal{D}}\ge 1$, for each $1\le i\le m_{\mathcal{D}}$, since $D_i=D_i(A):=\mathfrak{R}^{(i-1)}(A)\setminus \mathfrak{R}^{(i)}(A)$ is packed and does not contain $\bm{0}$ (recall Case (b) in the definition of $\mathfrak{R}$), by Item (2) of Lemma \ref{lemma_packed3} we have
\begin{equation}\label{final_5.6}
	\mathrm{diam}(D_i)\le \Cref{delicate_green}|D_i|^{\Cref{const_packed}}. 
\end{equation}
Combined with Lemma \ref{lemma_remove_distant} (taking $\epsilon=0.1$), it implies that 
\begin{equation}\label{final_5.7_add}
		\frac{\mathbb{H}_{\mathfrak{R}^{(i)}(A)}(\bm{0})}{\mathbb{H}_{\mathfrak{R}^{(i-1)}(A)}(\bm{0})} \le  2.1\bigg[1+ \frac{\ln(\Cref{delicate_green})+ \Cref{const_packed}\ln(|D_i|)}{2\ln(\Cref{delicate_green})+ \Cref{const_packed}\ln(|D_i|)} \bigg]\le 2.1\bigg[\frac{3}{2}+ \frac{ \Cref{const_packed}\ln(|D_i|)}{2\ln(\Cref{delicate_green})}\bigg]. 
\end{equation}
Recall that $\Cref{const_packed}=\log_2(\Cref{delicate_green}+2)$ and $\Cref{delicate_green}\ge 10$. Therefore, since the function $f(a)=\frac{\ln(a+2)}{\ln(a)}$ for $a\in (1,\infty)$ is decreasing, one has $\frac{\Cref{const_packed}}{\ln(\Cref{delicate_green})} \le \frac{\ln(12)}{\ln(2)\ln(10)}<1.6$. Thus, it follows from (\ref{final_5.7_add}) that 
\begin{equation}\label{3.8}
\frac{\mathbb{H}_{\mathfrak{R}^{(i)}(A)}(\bm{0})}{\mathbb{H}_{\mathfrak{R}^{(i-1)}(A)}(\bm{0})} \le	4 \bigg[ 1+ \frac{1}{2}\ln(|D_i|) \bigg]\le 4 |D_i|^{\frac{1}{2}}, 
\end{equation}
where in the last inequality we used the fact that $1+\frac{1}{2}\ln(a)\le a^{\frac{1}{2}}$ for all $a\ge 1$. Note that $m_{\mathcal{D}}\le \sum_{i=1}^{m_{\mathcal{D}}}|D_i|=|A|-|A_{\mathcal{D}}|$. By the AM-GM inequality, we have 
\begin{equation}\label{3.9}
	\prod\nolimits_{1\le i\le m_{\mathcal{D}}} |D_i|\le \Big(\frac{|A|-|A_{\mathcal{D}}|}{m_{\mathcal{D}}}\Big)^{m_{\mathcal{D}}} \le e^{\frac{1}{e}(|A|-|A_{\mathcal{D}}|)}, 
\end{equation}
where in the second inequality we used the fact that $(\frac{b}{a})^a\le e^{\frac{b}{e}}$ for all $a,b>0$. By (\ref{3.8}), (\ref{3.9}) and $m_{\mathcal{D}} \le |A|-|A_{\mathcal{D}}|$, we get 
\begin{equation}\label{3.10}
	\begin{split}
		\frac{\mathbb{H}_{A_{\mathcal{D}}}(\bm{0})}{\mathbb{H}_{A}(\bm{0})} = \prod\nolimits_{1\le i\le m_{\mathcal{D}}} \frac{\mathbb{H}_{\mathfrak{R}^{(i)}(A)}(\bm{0})}{\mathbb{H}_{\mathfrak{R}^{(i-1)}(A)}(\bm{0})} \le (4e^{\frac{1}{2e}})^{|A|-|A_{\mathcal{D}}|}. 
	\end{split}
\end{equation}

% by Lemma \ref{lemma_remove_distant} (with $\epsilon=0.1$) and (\ref{final_5.6}), we obtain 
%\begin{equation}\label{final_5.7}
%	\frac{\Cref{const_packed}}{\ln(\Cref{delicate_green})} \le \frac{\ln(12)}{\ln(2)\ln(10)}<1.6.
%\end{equation}
%\begin{equation}\label{3.8}
%\begin{split}
%	\frac{\mathbb{H}_{\mathfrak{R}^{(i)}(A)}(\bm{0})}{\mathbb{H}_{\mathfrak{R}^{(i-1)}(A)}(\bm{0})} \le & 2.1\bigg[1+ \frac{\ln(\mathrm{diam}(D_i)\vee 1)}{\ln(\Cref{delicate_green})+\ln(\mathrm{diam}(D_i)\vee 1)} \bigg]  \\
%	\le &  2.1\bigg[1+ \frac{\ln(\Cref{delicate_green})+ \Cref{const_packed}\ln(|D_i|)}{2\ln(\Cref{delicate_green})+ \Cref{const_packed}\ln(|D_i|)} \bigg]  \\
%	\le &   2.1\bigg[\frac{3}{2}+ \frac{ \Cref{const_packed}\ln(|D_i|)}{2\ln(\Cref{delicate_green})}\bigg]   
%	<  ,
%\end{split}	
%\end{equation}

\textbf{Step 2: remove all large interlocked clusters.} In this step, we aim to remove all large clusters from the set $A_{\mathcal{D}}$ (recall $A_{\mathcal{D}}$ in (\ref{def_AD})) to facilitate the subsequent estimation. Like in Step 1, this step also involves a quantitative removal argument. Moreover, in order to limit the number of times we use this removal argument (note that such a restriction is necessary because we will lose a constant factor on the upper bound whenever the removal argument is applied), we need to merge some specific clusters and consider them as a whole in the implementation of the removal argument. To achieve this, we first introduce the concept of interlocked clusters, which is defined as the union of several $*$-clusters whose exterior outer boundaries overlap in a specific way.

\begin{itemize}
	
	\item   (interlocked sets)  For any $D_1,D_2\subset \mathfs{V}$ that are not $*$-adjacent, we say $D_1$ and $D_2$ are interlocked (denoted by $D_1 \leftrightsquigarrow D_2$) if one of the following conditions holds (see Figures \ref{fig:interlock_Z2} and \ref{fig:interlock_TH} for illustrations):

	\begin{enumerate}
		\item[Condition (1):]  There exists $x\in (D_1\cup D_2)^c_{\infty}$ such that $|N(x)\setminus (D_1\cup D_2)|=2$ and $N(x)\cap D_i\neq \emptyset$ for $i\in \{1,2\}$. If this holds, we denote $x\in \mathcal{I}^{1}(D_1,D_2)$.

		\item[Condition (2):]   There exists $x_1,x_2\in (D_1\cup D_2)^c_{\infty}$ with $x_1\sim x_2$ such that for each $i\in \{1,2\}$, $|N(x_i)\setminus D_i|=2$ and $N(x_i)\cap D_{3-i}=\emptyset$. If this holds, we denote $\{x_1,x_2\} \in \mathcal{I}^{2}(D_1,D_2)$.

	\end{enumerate}

	\item  (interlocking vertex) For any $D_1,D_2\subset \mathfs{V}$ that are interlocked, we denote 
	\begin{equation*}
		\begin{split}
			\mathcal{I}(D_1,D_2):=&\big\{ x\in (D_1\cup D_2)^c_{\infty}: x\in \mathcal{I}^{1}(D_1,D_2)\big\}\\
			&\cup \big\{ x\in (D_1\cup D_2)^c_{\infty}:  \exists x'\in (D_1\cup D_2)^c_{\infty}\ \text{with}\ \{x,x'\}\in \mathcal{I}^{2}(D_1,D_2) \big\}.
		\end{split}
	\end{equation*}

	\item (interlocked cluster) For any $A\subset \mathfs{V}$ and two $*$-clusters $\mathcal{C}_*,\mathcal{C}_*'$ of $A$, we say $\mathcal{C}_*$ and $\mathcal{C}_*'$ are in the same interlocked class if $\mathcal{C}_*=\mathcal{C}_*'$ or there exists a sequence of $*$-clusters $\mathcal{C}_*^1,...,\mathcal{C}_*^k$ of $A$ such that $\mathcal{C}_*^1=\mathcal{C}_*$, $\mathcal{C}_*^k=\mathcal{C}_*'$ and $\mathcal{C}_*^{i} \leftrightsquigarrow \mathcal{C}_*^{i+1}$ for all $1\le i\le k-1$. An interlocked cluster of $A$ is the union of all $*$-clusters of $A$ in one interlocked class. We denote by $\mathfrak{I}_A$ the collection of all interlocked clusters of $A$. 
	
\end{itemize}

\begin{remark}[properties of the interlocking vertex]\label{remark_interlock} \hfill 

	(i) For any $D_1,D_2\subset \mathfs{V}$ with $D_1 \leftrightsquigarrow D_2$, the local configurations on $\mathcal{I}(D_1,D_2)$ are highly limited. As shown in Figure \ref{fig:interlock_Z2} (where the red and blue dots belong to $D_1$ and $D_2$ respectively, and endpoints of solid line segments lie in $\cap_{i=1}^{2}\partial^{\mathrm{o}}_{\infty,*}D_i$), for each case (either $x\in \mathcal{I}^1(D_1,D_2)$ or $\{x_1,x_2\}\in \mathcal{I}^2(D_1,D_2)$), there is a unique configuration on $\mathbb{Z}^2$ (allowing for rotation if needed).

	\begin{figure}[h!]

	\begin{tikzpicture}
		
		\begin{scope}[xshift=5cm]
				\draw[step=1cm,blue,thin, dotted] (-1,-1) grid (2,1);
			
			\node at (0,0) [circle,fill=black,inner sep=1.5pt]{};
			%\node at (0,1) [circle,fill=black,inner sep=1.5pt]{};
			\node at (0,-1) [circle,fill=red,inner sep=2pt]{};
			\node at (-1,0) [circle,fill=red,inner sep=2pt]{};
			\node at (1,0) [circle,fill=black,inner sep=1.5pt]{};
			%\node at (1,-1) [circle,fill=black,inner sep=1.5pt]{};
			\node at (2,0) [circle,fill=blue,inner sep=2pt]{};
			\node at (1,1) [circle,fill=blue,inner sep=2pt]{};

			\draw [black,very thick] (0,0) -- (0,1);
			\draw [black,very thick] (0,0) -- (1,0);
			\draw [black,very thick] (1,0) -- (1,-1);
			
			\node at (0.3,-0.3) []{$x_1$};
			\node at (1.3,-0.3) []{$x_2$};
			\node at (-1,-1) []{\color{red}$D_1$};
			\node at (2,1) []{\color{blue}$D_2$};

			\node at (0.5,-1.7) {\small{$\{x_1,x_2\}\in \mathcal{I}^2(D_1,D_2)$ on $\mathbb{Z}^2$}};

		\end{scope}
	
	\begin{scope}

	\draw[step=1cm,blue,thin, dotted] (-1,-1) grid (1,1);
	
	\node at (0,0) [circle,fill=black,inner sep=1.5pt]{};
	%\node at (0,1) [circle,fill=black,inner sep=1.5pt]{};
	%\node at (0,-1) [circle,fill=black,inner sep=1.5pt]{};
	\node at (-1,0) [circle,fill=red,inner sep=2pt]{};
	\node at (1,0) [circle,fill=blue,inner sep=2pt]{};
	
	\draw [black,very thick] (0,0) -- (0,1);
	\draw [black,very thick] (0,0) -- (0,-1);
	
	\node at (0.3,-0.3) []{$x$};
		\node at (-1.3,-0.3) []{\color{red}$D_1$};
	\node at (1.3,-0.3) []{\color{blue}$D_2$};

		\node at (0,-1.7) {\small{$x\in \mathcal{I}^1(D_1,D_2)$ on $\mathbb{Z}^2$}};
\end{scope}

	\end{tikzpicture}
	
	\caption{Illustrations for $D_1\leftrightsquigarrow D_2$ on $\mathbb{Z}^2$ \label{fig:interlock_Z2}}
\end{figure}
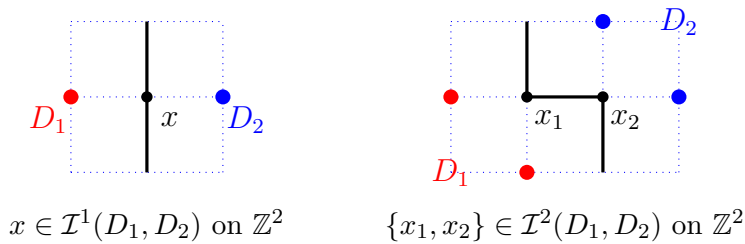

	For $\mathscr{T}$, only Condition (1) can hold. In fact, for any adjacent vertices $x_1\sim x_2$ on $\mathscr{T}$, the structure of $\mathscr{T}$ implies that there are exactly two vertices, denoted by $y_1$ and $y_2$, that are adjacent to both $x_1$ and $x_2$. Therefore, if $\{x_1,x_2\}\in \mathcal{I}^2(D_1,D_2)$ on $\mathscr{T}$, since $|N(x_1)\setminus D_1|=2$ and $x_2\in  N(x_1)\setminus D_1$, we know that at least one vertex of $\{y_1,y_2\}$, say $y_j$ (where $j\in \{1,2\}$), is contained in $D_1$. However, $y_j\sim x_2$ and $y_j\in D_1$ together cause a contradiction to the condition that $N(x_2)\cap D_1=\emptyset$. Thus, Condition (2) cannot hold on $\mathscr{T}$.

	For $\mathscr{H}$, only Condition (2) can be valid. To see this, if $x\in \mathcal{I}^1(D_1,D_2)$ on $\mathscr{H}$, \
	\begin{equation}
		N(x)=|N(x)\setminus (D_1\cup D_2)|+\sum\nolimits_{i\in \{1,2\}}|N(x)\cap D_i|\ge 2+1+1=4,
	\end{equation}
	which is incompatible to the fact that $\mathrm{deg}(\mathscr{H})=3$. All possible configurations for $x\in \mathcal{I}^1(D_1,D_2)$ on $\mathscr{T}$ and $\{x_1,x_2\}\in \mathcal{I}^2(D_1,D_2)$ on $\mathscr{H}$ are given in Figure \ref{fig:interlock_TH}.

	\begin{figure}[h!]

	\begin{tikzpicture}
		\newcommand*\sqthree{1.73205}
		
		\begin{scope}[xshift=-5cm]
			\node at (0,0) [circle,fill=black,inner sep=1.5pt]{};
			\node at (0,0) [circle,fill=black,inner sep=1.5pt]{};
			\node at (-1,0) [circle,fill=red,inner sep=2pt]{};
			\node at (-0.5,-0.5*\sqthree) [circle,fill=red,inner sep=2pt]{};
			\node at (1,0) [circle,fill=blue,inner sep=2pt]{};
			\node at (0.5,0.5*\sqthree) [circle,fill=blue,inner sep=2pt]{};

			\draw[blue,thin, dotted] (0,0) -- (1,0);
			\draw[blue,thin, dotted] (0,0) -- (-1,0);
			\draw[blue,thin, dotted] (0,0) -- (0.5,0.5*\sqthree);
			\draw[blue,thin, dotted] (0,0) -- (-0.5,0.5*\sqthree);
			\draw[blue,thin, dotted] (0,0) -- (-0.5,-0.5*\sqthree);
			\draw[blue,thin, dotted] (0,0) -- (0.5,-0.5*\sqthree);
			\draw[blue,thin, dotted] (1,0) -- (0.5,-0.5*\sqthree);
			\draw[blue,thin, dotted] (1,0) -- (0.5,0.5*\sqthree);
			\draw[blue,thin, dotted] (-1,0) -- (-0.5,-0.5*\sqthree);
			\draw[blue,thin, dotted] (-1,0) -- (-0.5,0.5*\sqthree);
			\draw[blue,thin, dotted] (-0.5,0.5*\sqthree) -- (0.5,0.5*\sqthree);
			\draw[blue,thin, dotted] (-0.5,-0.5*\sqthree) -- (0.5,-0.5*\sqthree);

			\draw [black,very thick] (0,0) -- (-0.5,0.5*\sqthree);	
			\draw [black,very thick] (0,0) -- (0.5,-0.5*\sqthree);

			\node at (-0.25,-0.25) []{$x$};
			\node at (-1,-0.8) []{\color{red}$D_1$};
			\node at (1,0.8) []{\color{blue}$D_2$};

			\node at (2.5, -2.3+0.25*\sqthree) {\small{two scenarios for $x\in \mathcal{I}^1(D_1,D_2)$ on $\mathscr{T}$}};
			
		\end{scope}

			\begin{scope}
				\node at (0,0) [circle,fill=black,inner sep=1.5pt]{};
				\node at (-1,0) [circle,fill=red,inner sep=2pt]{};
				\node at (-0.5,-0.5*\sqthree) [circle,fill=red,inner sep=2pt]{};
				\node at (0.5,-0.5*\sqthree) [circle,fill=red,inner sep=2pt]{};
				\node at (0.5,0.5*\sqthree) [circle,fill=blue,inner sep=2pt]{};

			\draw[blue,thin, dotted] (0,0) -- (1,0);
			\draw[blue,thin, dotted] (0,0) -- (-1,0);
			\draw[blue,thin, dotted] (0,0) -- (0.5,0.5*\sqthree);
			\draw[blue,thin, dotted] (0,0) -- (-0.5,0.5*\sqthree);
			\draw[blue,thin, dotted] (0,0) -- (-0.5,-0.5*\sqthree);
			\draw[blue,thin, dotted] (0,0) -- (0.5,-0.5*\sqthree);
			\draw[blue,thin, dotted] (1,0) -- (0.5,-0.5*\sqthree);
			\draw[blue,thin, dotted] (1,0) -- (0.5,0.5*\sqthree);
			\draw[blue,thin, dotted] (-1,0) -- (-0.5,-0.5*\sqthree);
			\draw[blue,thin, dotted] (-1,0) -- (-0.5,0.5*\sqthree);
			\draw[blue,thin, dotted] (-0.5,0.5*\sqthree) -- (0.5,0.5*\sqthree);
			\draw[blue,thin, dotted] (-0.5,-0.5*\sqthree) -- (0.5,-0.5*\sqthree);

			\draw [black,very thick] (0,0) -- (-0.5,0.5*\sqthree);	
			\draw [black,very thick] (0,0) -- (1,0);

			\node at (-0.25,-0.25) []{$x$};
			\node at (-1,-0.8) []{\color{red}$D_1$};
			\node at (1,0.8) []{\color{blue}$D_2$};

		\end{scope}

			\begin{scope}[xshift=5cm,yshift=0.25*\sqthree cm]
			\node at (0.5,-0.5*\sqthree) [circle,fill=black,inner sep=1.5pt]{};
			\node at (1,0) [circle,fill=black,inner sep=1.5pt]{};
			\node at (-0.5,-0.5*\sqthree) [circle,fill=red,inner sep=2pt]{};
			\node at (2,0) [circle,fill=blue,inner sep=2pt]{};
			
%			\draw[blue,thin, dotted] (0,0) -- (1,0);
%			\draw[blue,thin, dotted] (0,0) -- (-1,0);
%			\draw[blue,thin, dotted] (0,0) -- (0.5,0.5*\sqthree);
%			\draw[blue,thin, dotted] (0,0) -- (-0.5,0.5*\sqthree);
%			\draw[blue,thin, dotted] (0,0) -- (-0.5,-0.5*\sqthree);
%			\draw[blue,thin, dotted] (0,0) -- (0.5,-0.5*\sqthree);
			\draw[blue,thin, dotted] (1,0) -- (0.5,-0.5*\sqthree);
			\draw[blue,thin, dotted] (1,0) -- (0.5,0.5*\sqthree);
			\draw[blue,thin, dotted] (-1,0) -- (-0.5,-0.5*\sqthree);
			\draw[blue,thin, dotted] (-1,0) -- (-0.5,0.5*\sqthree);
			\draw[blue,thin, dotted] (-0.5,0.5*\sqthree) -- (0.5,0.5*\sqthree);
			\draw[blue,thin, dotted] (-0.5,-0.5*\sqthree) -- (0.5,-0.5*\sqthree);
			\draw[blue,thin, dotted] (1,0) -- (2,0);
			\draw[blue,thin, dotted] (2,0) -- (2.5,-0.5*\sqthree);
			\draw[blue,thin, dotted] (2,-\sqthree) -- (2.5,-0.5*\sqthree);
			\draw[blue,thin, dotted] (2,-\sqthree) -- (1,-\sqthree);
			\draw[blue,thin, dotted] (0.5,-0.5*\sqthree) -- (1,-\sqthree);

			\draw [black,very thick] (1,0) -- (0.5,0.5*\sqthree);	
			\draw [black,very thick] (1,0) -- (0.5,-0.5*\sqthree);	
			\draw [black,very thick] (1,-\sqthree) -- (0.5,-0.5*\sqthree);

			\node at (0.6,0) []{$x_2$};
			\node at (0.9,-0.5*\sqthree) []{$x_1$};
			\node at (-0.5,-1.2) []{\color{red}$D_1$};
			\node at (2,0.3) []{\color{blue}$D_2$};

			\node at (1,-2.3) {\small{$\{x_1,x_2\}\in \mathcal{I}^2(D_1,D_2)$ on $\mathscr{H}$}};
		\end{scope}

	\end{tikzpicture}
	
	\caption{Illustrations for $D_1\leftrightsquigarrow D_2$ on $\mathscr{T}$ and $\mathscr{H}$ \label{fig:interlock_TH}}
\end{figure}
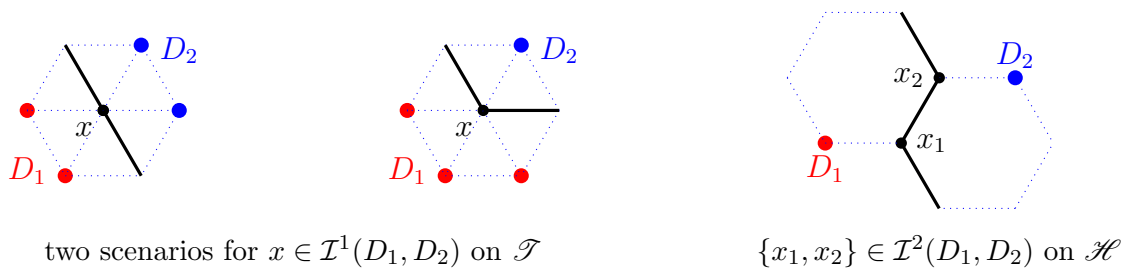

	(ii) As shown in Figures \ref{fig:interlock_Z2} and \ref{fig:interlock_TH}, when $D_1\leftrightsquigarrow D_2$ holds, there are at least $\kappa(\mathscr{G})$ edges (i.e. solid line segments in Figures \ref{fig:interlock_Z2} and \ref{fig:interlock_TH}) that have both endpoints in $\cap_{i=1}^{2}\partial^{\mathrm{o}}_{\infty,*}D_i$ and intersect $\mathcal{I}(D_1,D_2)$, where $\kappa(\mathbb{Z}^2)=\kappa(\mathscr{T}):=2$ and $\kappa(\mathscr{H}):=3$.

	(iii) Assume that $A$ is an interlocked cluster, and $D_1,D_2$ are two $*$-clusters of $A$ with $D_1\leftrightsquigarrow D_2$. For each $x\in \mathcal{I}(D_1,D_2)$, since any vertex in $N(x)\setminus (D_1\cup D_2)$ is $*$-adjacent to both $D_1$ and $D_2$, we have $N(x)\cap A= N(x)\cap (D_1\cup D_2)$ (otherwise, there exists $y\in [N(x)\cap A]\setminus (D_1\cup D_2)$ that is $*$-adjacent to both $D_1$ and $D_2$, which implies that $D_1$ and $D_2$ are included in the same $*$-cluster of $A$ and thus causes a contradiction). As a result, for any $*$-clusters $\{\mathcal{C}_j\}_{j=1}^{4}$ of $A$, 
	\begin{equation*}
		\mathcal{I}(\mathcal{C}_1,\mathcal{C}_2)\cap \mathcal{I}(\mathcal{C}_3,\mathcal{C}_4) \neq \emptyset \iff \{\mathcal{C}_1,\mathcal{C}_2\}=\{\mathcal{C}_3,\mathcal{C}_4\}.
	\end{equation*}

\end{remark}

The subsequent lemma presents a crucial estimate on the increase of the harmonic measure after removing a sufficiently large interlocked cluster. Recall the notation $\lambda(\mathscr{G})$ in Theorem \ref{theorem1.1}.

\begin{lemma}\label{lemma_remove_interlock}
	There exist $\Cl\label{remove_interlock1}(\mathscr{G}),\cl\label{remove_interlock2}(\mathscr{G})>0$ such that for any $A\in \mathcal{A}(\mathscr{G})$, $D\in \mathfrak{I}_A$ with $\bm{0}\notin D$ and $|D|\ge \Cref{remove_interlock1}$,
	\begin{equation}\label{ineq_remove_interlock}
		\frac{\mathbb{H}_{A\setminus D}(\bm{0})}{\mathbb{H}_{A}(\bm{0})} \le  \big[\lambda(\mathscr{G})\big]^{|D|-\cref{remove_interlock2}\sqrt{|D|}}. 
	\end{equation}
\end{lemma}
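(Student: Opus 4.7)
The strategy is to reduce the global comparison of harmonic measures to a local bypass estimate around the interlocked cluster $D$, for which Lemma~\ref{lemma_prob_surrounding} is the key input. By Lemma~\ref{lemma_2srw_hm}, for a random walk started from a vertex $x$ sufficiently far from $A$, the ratio $\mathbb{H}_{A\setminus D}(\bm{0})/\mathbb{H}_A(\bm{0})$ equals $\mathbb{P}_x(\tau_{A\setminus D} = \tau_{\bm{0}})/\mathbb{P}_x(\tau_A = \tau_{\bm{0}})$ up to a factor $1 + o(1)$. Since the exterior outer boundary $\partial^{\mathrm{o}}_{\infty,*} D$ is connected by Lemma~\ref{lemma2.2}, any walk from $x$ reaching $D$ must first enter $\partial^{\mathrm{o}}_{\infty,*} D$. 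Applying the strong Markov property at the first time $\sigma$ the walk hits $(\partial^{\mathrm{o}}_{\infty,*} D)\cup (A\setminus D)$, walks with $S_\sigma = \bm{0}$ contribute identically to numerator and denominator, while walks with $S_\sigma \in (A\setminus D)\setminus\{\bm{0}\}$ contribute to neither. Hence the comparison reduces to proving, uniformly in $v \in \partial^{\mathrm{o}}_{\infty,*} D$, that
\begin{equation*}
\mathbb{P}_v(\tau_A = \tau_{\bm{0}}) \;\ge\; [\lambda(\mathscr{G})]^{-|D| + c\sqrt{|D|}} \, \mathbb{P}_v(\tau_{A\setminus D} = \tau_{\bm{0}}).
\end{equation*}

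To establish this local comparison, I would force the walk from $v$ to execute a two-phase strategy: first, bypass $D$ along $\partial^{\mathrm{o}}_{\infty,*} D$ to reach a vertex $u$ lying on the side of $D$ nearest $\bm{0}$; second, proceed from $u$ to $\bm{0}$ while avoiding $A$. For the bypass phase, Lemma~\ref{lemma_prob_surrounding} supplies a probability of at least $[\lambda(\mathscr{G})]^{-|D| + c\sqrt{|D|}}$. The underlying mechanism is to exhibit a circuit in $\partial^{\mathrm{o}}_{\infty,*} D$ of length at most $\nu(\mathscr{G})|D| - c'\sqrt{|D|}$ via Proposition~\ref{lemma_length_bypass} (mirroring the bound of Lemma~\ref{lemma_length_tunnel} for the spiral) and traverse it using Lemma~\ref{lemma_move_along}, which contributes a cost of $[\widehat{\lambda}(\mathscr{G})]^{-L}$ per length-$L$ path inside its outer boundary. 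The identity $\lambda(\mathscr{G}) = [\widehat{\lambda}(\mathscr{G})]^{\nu(\mathscr{G})}$ then yields the claimed factor.

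For the second phase, choosing $u$ close to $\bm{0}$ in the metric induced on $\partial^{\mathrm{o}}_{\infty,*} D$ and invoking a Harnack-type comparison shows that $\mathbb{P}_u(\tau_A = \tau_{\bm{0}})$ is bounded below by a constant times $\mathbb{P}_v(\tau_{A\setminus D} = \tau_{\bm{0}})$: from $u$ the walk approaches $\bm{0}$ through a region essentially unaffected by $D$, so avoiding $A$ is asymptotically as easy as avoiding $A\setminus D$ from $v$. The $O(1)$ loss in this comparison is absorbed into the $c\sqrt{|D|}$ correction once $|D|$ is sufficiently large, which is precisely the purpose of the hypothesis $|D| \ge \Cref{remove_interlock1}$. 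Combining the two phases completes the proof of the local inequality and hence of the lemma.

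The main obstacle will be managing the geometry of the interlocked cluster. The set $D$ is a union of several $*$-clusters fused at interlocking vertices whose local configurations are enumerated in Remark~\ref{remark_interlock}, so the topology of $\partial^{\mathrm{o}}_{\infty,*} D$ is correspondingly intricate. Deriving the precise circuit-length estimate $\nu(\mathscr{G})|D| - c\sqrt{|D|}$ — which is the source of the improvement over the constant-per-vertex removal factor of~\cite{psi} — is the technical heart of the argument and requires the detailed analysis of Section~\ref{section_length_bypass}; ensuring that the bypass circuit simultaneously navigates every $*$-cluster of $D$ is where the interlocked structure plays its essential role.
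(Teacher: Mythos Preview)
Your first-phase reduction via the strong Markov property at $\sigma=\tau_{(\partial^{\mathrm o}_{\infty,*}D)\cup(A\setminus D)}$ is correct, and identifying Lemma~\ref{lemma_prob_surrounding} as the source of the factor $[\lambda(\mathscr G)]^{-|D|+c\sqrt{|D|}}$ is exactly right. The genuine gap is the second phase: the claim that for a well-chosen $u\in\partial^{\mathrm o}_{\infty,*}D$ one has $\mathbb{P}_u(\tau_A=\tau_{\bm 0})\gtrsim \mathbb{P}_v(\tau_{A\setminus D}=\tau_{\bm 0})$ with an $O(1)$---or even polynomial-in-$|D|$---loss is \emph{false} in general. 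Take $A=\{\bm 0\}\cup D$ with $D$ a $*$-connected set at distance $M\gg|D|$ from $\bm 0$ (so that $D\in\mathfrak{I}_A$); then $\mathbb{P}_v(\tau_{A\setminus D}=\tau_{\bm 0})=\mathbb{P}_v(\tau_{\bm 0}<\infty)=1$ for every $v$ by recurrence, whereas $\max_{u\in\partial^{\mathrm o}_{\infty,*}D}\mathbb{P}_u(\tau_{\bm 0}<\tau_D)\to 0$ as $M\to\infty$, at a rate governed by $M$ and not by $|D|$. No Harnack inequality can rescue this, since $u$ sits at distance $O(1)$ from $D$ and there is no ball of useful radius around $u$ on which the relevant function is harmonic.

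The paper closes this gap by two devices you did not anticipate. First, it replaces $A$ by an auxiliary $A^+$ obtained by adjoining, if necessary, a single vertex $y_\diamond$ at distance $\asymp|D|^2$ from $D$; this forces $A^+\setminus D$ to contain a point at controlled distance from $D$, so that the relevant Green's functions are bounded by $O(\ln|D|)$ rather than by an uncontrolled function of $\mathbf{d}(D,A\setminus D)$. Second, rather than comparing hitting probabilities of $\bm 0$ directly, it applies the removal framework of~\cite{psi} (Lemmas~\ref{lemma_psi_3.5}--\ref{lemma_psi_3.6}) to reduce the harmonic-measure ratio to a \emph{local} Green's-function ratio $G_{A^+\setminus D}(v_1,v_2)/G_{A^+}(v_1,v_2)$, where $v_1,v_2$ lie on the boundary of a bounded ``well'' $F\supset D$. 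It is this local quantity that is then controlled by Lemma~\ref{lemma_prob_surrounding} together with a $|D|\ln|D|$ correction, which is absorbed into the $\sqrt{|D|}$ term. Your bypass phase is the right core; what is missing is a mechanism that prevents the global distance $\mathbf{d}(D,A\setminus D)$ from contaminating the estimate.
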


The main component of the proof of Lemma \ref{lemma_remove_interlock} is to compute the probability of bypassing an interlocked cluster for a random walk, which highly relies on estimating the length of the circuit surrounding a given $*$-cluster. These estimates will be detailed in Section \ref{section_length_bypass}. By taking the main result of Section \ref{section_length_bypass} (i.e.\ Proposition \ref{lemma_length_bypass}) as an input and employing the framework of the removal argument in \cite{psi}, we establish Lemma \ref{lemma_remove_interlock} in Section \ref{section_qra_interlocked}. Notably, the concept of interlocked clusters plays a crucial role in deriving the second leading term $-\cref{remove_interlock2}\sqrt{|D|}$ of the exponent on the right-hand side of (\ref{ineq_remove_interlock}).

For any $A\in \mathcal{A}(\mathscr{G})$, we recall $A_{\mathcal{D}}$ in (\ref{def_AD}), and enumerate all interlocked clusters $D\in \mathfrak{I}_{A_\mathcal{D}}$ with $\bm{0}\in D$ and $|D|\ge \Cref{remove_interlock1}(\mathscr{G})$ as $D_i'$ for $1\le i\le k$. Then we define 
\begin{equation}\label{final_5.13}
	A_\mathcal{E} := A_{\mathcal{D}} \setminus \cup_{i=1}^{k} D_i'.
\end{equation}
Note that for each $1\le i\le k$, $D_i'$ is an interlocked cluster of $A_{\mathcal{D}}\setminus \cup_{j=1}^{i-1} D_i'$. Thus, by applying Lemma \ref{lemma_remove_interlock} repeatedly, we have 
\begin{equation}\label{remove_interlock}
	\begin{split}
		\mathbb{H}_{A_{\mathcal{D}}}(\bm{0})\ge & \mathbb{H}_{A_\mathcal{E}}(\bm{0})\cdot [\lambda(\mathscr{G})]^{\sum_{i=1}^k -|D_i'|+\cref{remove_interlock2}\sqrt{|D_i'|}}\\
		\ge &\mathbb{H}_{A_\mathcal{E}}(\bm{0})\cdot [\lambda(\mathscr{G})]^{-(|A_{\mathcal{D}}|-|A_\mathcal{E}|)+\cref{remove_interlock2}\sqrt{|A_{\mathcal{D}}|-|A_\mathcal{E}|}}, 
	\end{split}
\end{equation}
where in the second line we used $\sum_{i=1}^{k}|D_i'|=|A_{\mathcal{D}}|-|A_\mathcal{E}|$ and the inequality that 
\begin{equation}\label{ineq_square_root}
	\sum\nolimits_{1\le i\le k}x_i^{\frac{1}{2}}\ge \Big( \sum\nolimits_{1\le i\le k}x_i\Big)^{\frac{1}{2}},\ \ \forall x_1,...,x_k\ge 0.
\end{equation}

Let $\cl\label{const_4.2}(\mathscr{G}):=1- \frac{\mathrm{ln}\big(4e^{\frac{1}{2e}}\big)}{\ln(\lambda(\mathscr{G}))}$. Note that $\cref{const_4.2}\in (0,1)$ since $\lambda(\mathscr{G})\ge 3+2\sqrt{2}>4 e^{\frac{1}{2e}}$. Thus, by (\ref{3.10}) and (\ref{remove_interlock}), we have 
\begin{equation}\label{ineq_3.11}
	\mathbb{H}_{A}(\bm{0})\ge \mathbb{H}_{A_\mathcal{E}}(\bm{0})\cdot [\lambda(\mathscr{G})]^{-(|A|-|A_\mathcal{E}|)+\cref{remove_interlock2}\sqrt{|A_\mathcal{D}|-|A_\mathcal{E}|}+\cref{const_4.2}(|A|-|A_\mathcal{D}|) }. 
\end{equation}

\textbf{Step 3: remove the $*$-clusters in a specific angle.} The goal of this step is to remove all $*$-clusters of $A_{\mathcal{E}}$ that intersect a chosen sector. For any $A\in \mathcal{A}_n(\mathscr{G})$, we denote by $A_{(\bm{0})}$ the interlocked cluster of $A_{\mathcal{D}}$ that contains $\bm{0}$. Note that $A_{(\bm{0})}\subset A_{\mathcal{E}}$. Let $\mathfrak{C}_A$ be the collection of $*$-clusters of $A_{\mathcal{E}}\setminus A_{(\bm{0})}$. In fact, we have $|\mathcal{C}_*|< \Cref{remove_interlock1}$ for all $\mathcal{C}_*\in \mathfrak{C}_A$; otherwise (i.e. $|\mathcal{C}_*|\ge  \Cref{remove_interlock1}$), the interlocked cluster containing $\mathcal{C}_*$ must have been removed in Step 2. Thus, it follows from (\ref{final_3.5_boundary}) that 
\begin{equation}\label{final_5.17_bypass}
	\mathbf{d}^{\partial_{\infty,*}^{\mathrm{o}} \mathcal{C}_*}(x,y) \le 12|\mathcal{C}_*|<12\Cref{remove_interlock1}, \ \ \forall x,y\in  \partial_{\infty,*}^{\mathrm{o}}\mathcal{C}_*. 
\end{equation}

Recalling that $A_{\mathcal{D}}$ is packed, by Item (1) of Lemma \ref{lemma_packed3} and (\ref{ineq_ball}) we have 
\begin{equation}\label{new_revision_5.13}
	A_{\mathcal{E}} \subset A_{\mathcal{D}}  \subset  B(\Cref{const_A_D}n^{\Cref{const_packed}}),
\end{equation}
where $ \Cl\label{const_A_D}:= \Cref{ball_1}(\Cref{delicate_green}^2+\Cref{delicate_green})$. Take a constant $\cl\label{ball_inner_length}(\mathscr{G})>0$ such that 
\begin{equation}\label{ineq_3.8}
	\mathbf{d}(\bm{0},z)\le \tfrac{\cref{remove_interlock2}\land \cref{prob_surrounding}}{100\Cref{remove_interlock1}}\cdot \sqrt{n}, \ \ \forall z\in \partial^{\mathrm{i}} B(2\cref{ball_inner_length}\sqrt{n}),
\end{equation}
where $\cref{prob_surrounding}(\mathscr{G})>0$ will be determined later in Lemma \ref{lemma_prob_surrounding}.

We decompose the annulus $\Theta_n:=B(\Cref{const_A_D}n^{\Cref{const_packed}})\setminus B(\cref{ball_inner_length}\sqrt{n})$ as follows: for any $\delta>0$ and $0 \le  l\le 2\lfloor \delta\sqrt{n}\rfloor-1$, we define 
\begin{equation*}
	\Theta_{n,\delta}^{l}:= \Big\{x \in \Theta_n:-\pi+ \tfrac{l\pi}{\lfloor \delta\sqrt{n}\rfloor}< \mathrm{Arg}(x) \le -\pi+ \tfrac{(l+1)\pi}{\lfloor \delta\sqrt{n}\rfloor} \Big\}, 
\end{equation*}
where $\mathrm{Arg}(x)$ is the main argument of $x$. Let $ \cl\label{ball_section}(\mathscr{G})>0$ be a sufficiently small constant such that for any $0\le k_1<k_2\le \lfloor \cref{ball_section}\sqrt{n}\rfloor-1$, 
\begin{equation}\label{distance_theta}
	\mathbf{d}(\Theta_{n,\cref{ball_section}}^{2k_1},\Theta_{n,\cref{ball_section}}^{2k_2})> 3\Cref{remove_interlock1}. 
\end{equation}
We abbreviate $\Theta_{n,\cref{ball_section}}^{l}$ as $\Theta_n^{l}$. For every $\mathcal{C}_*\in \mathfrak{C}_A$, since $|\mathcal{C}_*|\le \Cref{remove_interlock1}$, it follows from (\ref{ineq_2.4}) that $\mathrm{diam}(\mathcal{C}_*)\le 3\Cref{remove_interlock1}$. Thus, by (\ref{distance_theta}), $\mathcal{C}_*$ can intersect at most one of $\Theta_n^{2k}$ for $0\le k\le \lfloor \cref{ball_section}\sqrt{n}\rfloor-1$ (this is why we need Step 2 to remove all large clusters). Consequently, by the pigeonhole principle, there exists $k_\dagger\in [0,\lfloor \cref{ball_section}\sqrt{n}\rfloor -1]$ such that (letting $\mathfrak{C}_A^{\dagger}:= \{\mathcal{C}_*\in \mathfrak{C}_A:\mathcal{C}_*\cap \Theta_n^{2k_\dagger}\neq \emptyset\}$)
\begin{equation}\label{final_5.20}
	\sum\nolimits_{\mathcal{C}_*\in \mathfrak{C}_A^{\dagger}} |\mathcal{C}_*| \le (\lfloor \cref{ball_section}\sqrt{n}\rfloor)^{-1}(|A_{\mathcal{E}}|-|A_{(\bm{0})}|).
\end{equation}

In \cite[Section 3.3]{psi}, it was proved that for any $A\in \mathcal{A}(\mathbb{Z}^2)$, one can always remove a selected vertex in any $*$-cluster $\mathcal{C}_*$ of $A$ with $\bm{0}\notin \mathcal{C}_*$ while increasing the harmonic measure at $\bm{0}$ by a uniformly bounded factor. Moreover, as stated in \cite[Remark 3.8]{psi}, this result can be generalized to $\mathscr{T}$ and $\mathscr{H}$. As a consequence, by removing all vertices of $*$-clusters in $\mathfrak{C}_A^{\dagger}$ one by one and applying (\ref{final_5.20}), we obtain that there exists $\Cl\label{remove_one_vertex}(\mathscr{G})>0$ such that (letting $A_{\mathcal{E}}^{\dagger}:=A_{\mathcal{E}}\setminus \cup_{\mathcal{C}_*\in \mathfrak{C}_A^{\dagger}}\mathcal{C}_*$)
\begin{equation}\label{ineq_AD-ADD}
	\mathbb{H}_{A_{\mathcal{E}}}(\bm{0})\ge \mathbb{H}_{A_{\mathcal{E}}^{\dagger}}(\bm{0})\cdot e^{-\Cref{remove_one_vertex}\cdot \frac{|A_{\mathcal{E}}|-|A_{(\bm{0})}|}{\lfloor \cref{ball_section}\sqrt{n}\rfloor}}. 
\end{equation}

\textbf{Step 4: Conclusion.} For a random walk with a faraway starting point, there is a possible passage to first hit $A_{\mathcal{E}}^{\dagger}$ at $\bm{0}$, which consists of the following three parts:

\begin{enumerate}
	\item  The random walk first hits $B(2\Cref{const_A_D}n^{\Cref{const_packed}})$ at some $w_1\in \Phi_n^{2k_{\dagger}}$, where 
	$$\Phi_n^{l}:= \Big\{x \in \partial^{\mathrm{i}}B(2\Cref{const_A_D}n^{\Cref{const_packed}}):-\pi+ \tfrac{(l+\frac{1}{3})\pi}{\lfloor \cref{ball_section}\sqrt{n}\rfloor}< \mathrm{Arg}(x) \le -\pi+ \tfrac{(l+\frac{2}{3})\pi}{\lfloor \cref{ball_section}\sqrt{n}\rfloor} \Big\}$$
	for $0\le l\le 2\lfloor \cref{ball_section}\sqrt{n}\rfloor-1$. Note that $A_{\mathcal{E}}^{\dagger}\subset A_{\mathcal{E}}\subset B(\Cref{const_A_D}n^{\Cref{const_packed}})$ by (\ref{new_revision_5.13}).

	\item The random walk starts from $w_1$ and hits some $w_2\in B(2\cref{ball_inner_length}\sqrt{n})\cup \partial_{\infty,*}^{\mathrm{o}} A_{(\bm{0})}$ before crossing the ray $\zeta^{2k_{\dagger}}$ or $\zeta^{2k_{\dagger}+1}$, where $\zeta^0:= \{x=(-a,0):a>0\}$, 
	$$
	\zeta^{l}:= \Big\{x\in \mathbb{R}^2: \mathrm{Arg}(x)=-\pi+ \tfrac{l\pi}{\lfloor \cref{ball_section}\sqrt{n}\rfloor}  \Big\},\ \ \forall 1\le l\le 2\lfloor \cref{ball_section}\sqrt{n}\rfloor.
	$$
	Note that every $*$-cluster of $A_{\mathcal{E}}^{\dagger}$ intersecting $\Theta_n^{2k_\dagger}$ must be contained in $A_{(\bm{0})}$.

	\item  According to the position of $w_2$, we have the following two subcases (see Figure \ref{fig:proof_thm1} for an illustration, where the pink and red regions represent the $*$-clusters in $A_{(\bm{0})}$ and $A_{\mathcal{E}}^{\dagger}\setminus A_{(\bm{0})}$ respectively). 
	\begin{itemize}

		\item[(a)] When $w_2\in [\partial^{\mathrm{i}} B(2\cref{ball_inner_length}\sqrt{n})]\setminus [\partial_{\infty,*}^{\mathrm{o}} A_{(\bm{0})}]$ (which is contained in $A_\infty^c$ since $[A_{\mathcal{E}}^{\dagger}\setminus A_{(\bm{0})}]\cap \Theta_n^{2k_\dagger}=\emptyset$), let the random walk start from $w_2$, move along a specific path $\eta_{\dagger}$ from $w_2$ to $\partial_{\infty,*}^{\mathrm{o}} A_{(\bm{0})}$ with $\mathbf{R}^{\mathrm{v}}(\eta_{\dagger})\subset A_{\infty}^c$, and then first hit $A_{\mathcal{E}}^{\dagger}$ at $\bm{0}$. The path $\eta_{\dagger}$ is constructed as follows. By (\ref{ineq_3.8}), there exists a path $\eta_{\diamond}$ from $w_2$ to $\bm{0}$ such that $\mathbf{L}(\eta_{\diamond})\le \tfrac{\cref{remove_interlock2}\land \cref{prob_surrounding}}{100\Cref{remove_interlock1}}\cdot \sqrt{n}$ (e.g.\ the geodesic from $w_2$ to $\bm{0}$). Let $t_{\diamond}$ be the first time when $\eta_{\diamond}$ hits $\partial_{\infty,*}^{\mathrm{o}} A_{(\bm{0})}$. We define a sequence of hitting times and exit times recursively as follows. We first set $t_0^+:=0$. For each $j\ge 1$, suppose that we already construct $0=t_0^+\le ... \le t_{j-1}^{+}$. Then consider the hitting time
		\begin{equation}
			t_{j}^{-}:= \min\big\{\text{integer}\ t\in (t_{j-1}^{+},t_{\diamond}):\eta_{\diamond}(t)\in A_{\mathcal{E}}^{\dagger}\setminus A_{(\bm{0})} \big\},
		\end{equation}
		where we set $\min \emptyset :=\infty $ for completeness. If $t_{j}^-=\infty$, we stop the construction, and define $j_{\diamond}:=j$ and $t_{j_{\diamond}}^{-}:=t_{\diamond}$. Otherwise, let $\mathcal{C}_*^{j}$ be the $*$-cluster of $A_{\mathcal{E}}^{\dagger}\setminus A_{(\bm{0})}$ such that $\eta_{\diamond}(t_{j}^{-})\in \mathcal{C}_*^{j}$, and define the exit time
		\begin{equation}
			t_{j}^{+}:= \min \big\{\text{integer}\ t\in [t_{j}^{-},t_{\diamond}]: \mathbf{R}^{\mathrm{v}}(\eta_{\diamond}[t+1,t_{\diamond}]) \cap \mathcal{C}_*^{j} = \emptyset \big\}.
		\end{equation}
	 Since $\eta_{\diamond}(t_0^+)$($=w_2\in A_{\infty}^c$) and $\eta_{\diamond}(t^-_{j_{\diamond}})$($=\eta_{\diamond}(t_\diamond) \in  A_{\infty}^c$) are both contained in $\cap_{j=1}^{j_{\diamond}-1}(\mathcal{C}_*^{j})_{\infty}^c$, it follows from the construction that 
	 \begin{equation*}
	 	  \eta_{\diamond}( t_{j}^{-}-1),\eta_{\diamond}( t_{j}^{+}+1) \in \partial^{\mathrm{o}}_{\infty}\mathcal{C}_*^{j}, \ \ \forall 1\le j\le j_{\diamond}-1.
	 	  	 \end{equation*}
		Thus, by (\ref{final_5.17_bypass}), for each $1\le j\le j_{\diamond}-1$ there exits a path $\eta^j_{\dagger}$ from $\eta_{\diamond}( t_{j}^{-}-1)$ to $\eta_{\diamond}( t_{j}^{+}+1)$ such that $\mathbf{R}^{\mathrm{v}}(\eta^j_{\dagger})\subset \partial^{\mathrm{o}}_{\infty,*}\mathcal{C}_*^{j}$ and $\mathbf{L}(\eta^j_{\dagger})\le 12\Cref{remove_interlock1}$. The path $\eta_\dagger$ is defined as 
		\begin{equation}\label{final_5.24_dagger}
			\eta_{\dagger}:=\eta_{\diamond}[0,t_{1}^{-}-1]\circ \eta^1_{\dagger} \circ \eta_{\diamond}[t_{1}^{+}+1,t_{2}^{-}-1]\circ ... \circ  \eta^{j_{\diamond}-1}_{\dagger} \circ \eta_{\diamond}[t_{j_{\diamond}-1}^{+}+1,t_{\diamond}].
		\end{equation}

		\item[(b)] When $w_2\in \partial_{\infty,*}^{\mathrm{o}} A_{(\bm{0})}$, let the random walk start from $w_2$ and then first hits $A_{\mathcal{E}}^{\dagger}$ at $\bm{0}$. 	For convenience, we set $\eta_{\dagger}=(w_2)$ in this case.

	\end{itemize}

\end{enumerate} 
Consequently, by the strong Markov property, we get 
\begin{equation}\label{new_311}
	\mathbb{H}_{A_{\mathcal{E}}^{\dagger}}(\bm{0}) \ge \prod\nolimits_{j\in \{1,2,3,4\}}\mathbb{I}_j,
\end{equation}
where (letting $\tau^{(k_{\dagger})}$ be the first time when the random walk crosses $\zeta^{2k_{\dagger}}$ or $\zeta^{2k_{\dagger}+1}$)
\begin{equation*}
	\begin{split}
		&\mathbb{I}_1:= \sum\nolimits_{w \in \Phi_n^{2k_{\dagger}}}  \mathbb{H}_{B(2\Cref{const_A_D}n^{\Cref{const_packed}})}(w), \\
		&\mathbb{I}_2:=\min \Big\{  \mathbb{P}_{w_1}\big( \tau_{B(2\cref{ball_inner_length}\sqrt{n})\cup \partial_{\infty,*}^{\mathrm{o}} A_{(\bm{0})}} < \tau^{(k_{\dagger})} \big) :w_1 \in \Phi_n^{2k_{\dagger}} \Big\}, \\
		&\mathbb{I}_3:=\min \Big\{ \mathbb{P}_{w_2} \big( \tau_{\eta_{\dagger}(-1)}<\tau_{\partial^{\mathrm{o}}\mathbf{R}^{\mathrm{v}}(\eta_{\dagger})}\big)  : w_2\in  \big[\partial^{\mathrm{i}}B(2\cref{ball_inner_length}\sqrt{n})\big]\cup \big[\partial_{\infty,*}^{\mathrm{o}} A_{(\bm{0})}\big]  \Big\},\\
		&	\mathbb{I}_4:= \min \Big\{ \mathbb{P}_{w_3} \big(\tau_{A_{\mathcal{E}}^{\dagger}} =  \tau_{\bm{0}}\big): w_3\in  \partial_{\infty,*}^{\mathrm{o}} A_{(\bm{0})} \Big\}.
	\end{split}
\end{equation*}

\begin{figure}[h!]
 \centering
  \includegraphics[width=0.9\textwidth]{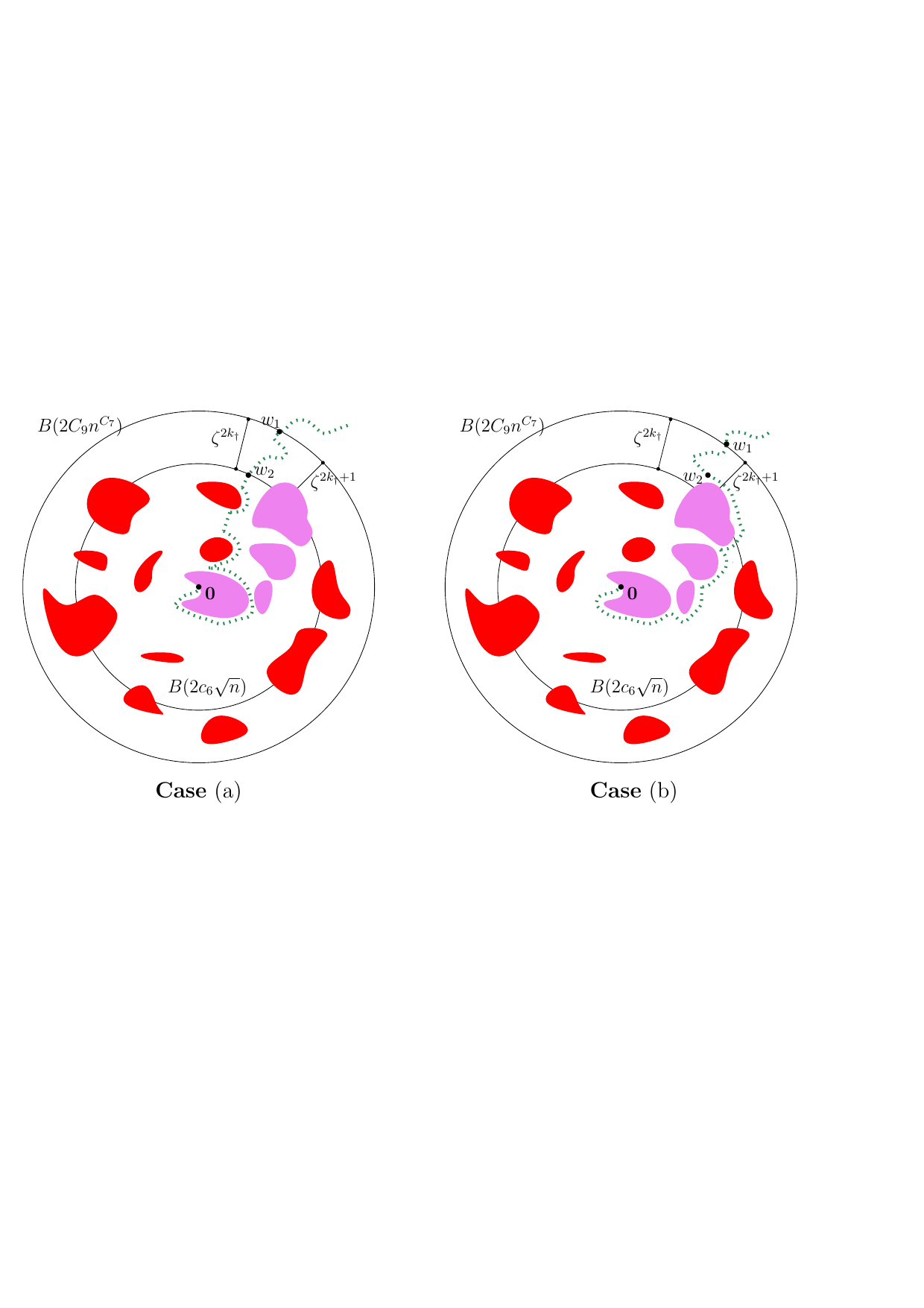}
 \caption{Illustrations for Part (3) of the passage  \label{fig:proof_thm1}}
 \end{figure}

For $\mathbb{I}_1$, by Lemma \ref{lemma_uniform} and the fact that $\frac{|\Phi_n^{2k_{\dagger}}|}{|\partial^{\mathrm{i}}B(2\Cref{const_A_D}n^{\Cref{const_packed}})|}\asymp n^{-\frac{1}{2}}$, we have $\mathbb{I}_1 \asymp n^{-\frac{1}{2}}$.

For $\mathbb{I}_2$, we denote $R_j=2^j \cref{ball_inner_length}\sqrt{n}$ and let $j_{\diamond}$ be the unique integer such that $R_{j_\diamond}\in \left(\tfrac{3}{4}\Cref{const_A_D}n^{\Cref{const_packed}},\tfrac{3}{2}\Cref{const_A_D}n^{\Cref{const_packed}}  \right]$. Note that $j_\diamond\asymp \ln(n)$. For $1 \le j\le j_{\diamond}$, let
$$\Theta_n^{2k_{\dagger},j}:= 	 \Big\{x \in B(R_{j+2})\setminus B(R_{j}):-\pi+ \tfrac{2k_{\dagger}\pi}{\lfloor \cref{ball_section}\sqrt{n}\rfloor}< \mathrm{Arg}(x) \le -\pi+ \tfrac{(2k_{\dagger}+1)\pi}{\lfloor \cref{ball_section}\sqrt{n}\rfloor} \Big\},$$
$$\Phi_n^{2k_{\dagger},j}:= \Big\{x \in \partial^{\mathrm{i}}B(R_j):-\pi+ \tfrac{(2k_{\dagger}+\frac{1}{3})\pi}{\lfloor \cref{ball_section}\sqrt{n}\rfloor}< \mathrm{Arg}(x) \le -\pi+ \tfrac{(2k_{\dagger}+\frac{2}{3})\pi}{\lfloor \cref{ball_section}\sqrt{n}\rfloor} \Big\}.$$
By the strong Markov property and the invariance principle, 
\begin{equation}\label{ineq_I2}
	\begin{split}
		\mathbb{I}_2 \ge & \min \Big\{\mathbb{P}_{w}\Big(  \tau_{[\Theta_n^{2k_{\dagger},j_{\diamond}}]^c}=\tau_{\Phi_n^{2k_{\dagger},j_{\diamond}}}\Big) : w\in \Phi_n^{2k_{\dagger}} \Big\}  \\
		&\cdot \prod\nolimits_{1\le j\le j_\diamond-1} \min \Big\{\mathbb{P}_{w}\Big(  \tau_{[\Theta_n^{2k_{\dagger},j}]^c}=\tau_{\Phi_n^{2k_{\dagger},j}}\Big) : w\in \Phi_n^{2k_{\dagger},j+1} \Big\}\\
		\ge & e^{-Cj_{\diamond}}\ge  n^{-C'}.
	\end{split}
\end{equation}

For $\mathbb{I}_3$, in Case (a), it follows from (\ref{final_5.24_dagger}) that 
\begin{equation}\label{final_5.26_dagger}
	\mathbf{L}(\eta_{\dagger})\le  \mathbf{L}(\eta_{\diamond})+\sum\nolimits_{1\le j\le j_{\diamond}-1} \mathbf{L}(\eta^j_{\dagger}). 
\end{equation}
Recall that $\mathbf{L}(\eta_{\diamond})\le \tfrac{\cref{remove_interlock2}\land \cref{prob_surrounding}}{100\Cref{remove_interlock1}}\cdot \sqrt{n}$ and $\mathbf{L}(\eta^j_{\dagger})\le 12\Cref{remove_interlock1}$ for all $1\le j\le j_\diamond-1$. Moreover, we have $j_\diamond-1 \le \mathbf{L}(\eta_{\diamond})$ since $t_{j_\diamond}=t_{\diamond}\le \mathbf{L}(\eta_{\diamond})$ and $t_{j}^{-}\ge t_{j-1}^++1$ for all $1\le j\le j_{\diamond}-1$. Combining these estimates with (\ref{final_5.26_dagger}), we have
\begin{equation}\label{final_5.27_dagger}
	\mathbf{L}(\eta_{\dagger})\le (12\Cref{remove_interlock1}+1)\cdot \tfrac{\cref{remove_interlock2}\land \cref{prob_surrounding}}{100\Cref{remove_interlock1}}\cdot \sqrt{n} \le \tfrac{\cref{remove_interlock2}\land \cref{prob_surrounding}}{4}\cdot \sqrt{n}. 
\end{equation}
Note that (\ref{final_5.27_dagger}) holds trivially in Case (b). Therefore, by Lemma \ref{lemma_move_along}, $\widehat{\lambda}(\mathscr{G})\le \lambda(\mathscr{G})$ and (\ref{final_5.27_dagger}), we obtain
\begin{equation}\label{ineq_I3}
	\mathbb{I}_3\gtrsim \big[\lambda(\mathscr{G})\big]^{-\tfrac{\cref{remove_interlock2}\land \cref{prob_surrounding}}{4}\cdot \sqrt{n}}.
\end{equation}

For $\mathbb{I}_4$, we need the following lemma, which provides an lower bound for the probability of bypassing an interlocked cluster. Recall that $\mathfrak{I}_A$ is the collection of all interlocked clusters of $A$.

\begin{lemma}\label{lemma_prob_surrounding}
	There exists a constant $\cl\label{prob_surrounding}(\mathscr{G})>0$ such that for any $A\subset \mathfs{V}$, $D\in \mathfrak{I}_A$ and $x,y\in \partial^{\mathrm{o}}_{\infty,*} D$, 
	\begin{equation}
		\mathbb{P}_x\left(\tau_{y}< \tau_A \right) \gtrsim  \big[\lambda(\mathscr{G})\big]^{-|D|+\cref{prob_surrounding}\sqrt{|D|}}. 
	\end{equation}
\end{lemma}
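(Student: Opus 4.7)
The plan is to force the random walk to follow a deterministic path $\eta$ from $x$ to $y$ lying entirely in $\partial^{\mathrm{o}}_{\infty,*} D\subset A^c$, and then apply Lemma \ref{lemma_move_along}. Since $\lambda(\mathscr{G})=[\widehat{\lambda}(\mathscr{G})]^{\nu(\mathscr{G})}$, it suffices to exhibit such an $\eta$ with $\mathbf{L}(\eta)\le \nu(\mathscr{G})|D|-c\sqrt{|D|}$ for some $c>0$; indeed, the event $\{\tau_{\eta(-1)}<\tau_{\partial^{\mathrm{o}}\mathbf{R}^{\mathrm{v}}(\eta)}\}$ provided by Lemma \ref{lemma_move_along} automatically implies $\{\tau_y<\tau_A\}$, because $\mathbf{R}^{\mathrm{v}}(\eta)\subset \partial^{\mathrm{o}}_{\infty,*}D$ is disjoint from $A$.

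To construct $\eta$, I first decompose $D$ into its constituent $*$-clusters $\mathcal{C}_*^{(1)},\ldots,\mathcal{C}_*^{(m)}$, which by the definition of $\mathfrak{I}_A$ are connected via the interlocking relation $\leftrightsquigarrow$. For each $i$, Proposition \ref{lemma_length_bypass} together with Lemma \ref{lemma_exist_sl} yields a circuit $\gamma_i$ in $\partial^{\mathrm{o}}_{\infty,*}\mathcal{C}_*^{(i)}$ of length at most $\nu(\mathscr{G})|\mathcal{C}_*^{(i)}|-c_0\sqrt{|\mathcal{C}_*^{(i)}|}$. I then splice these circuits along a spanning tree of the interlocking graph on $\{\mathcal{C}_*^{(i)}\}_{i=1}^m$: by Remark \ref{remark_interlock}(iii), each interlocking vertex belongs simultaneously to $\partial^{\mathrm{o}}_{\infty,*}\mathcal{C}_*^{(i)}$ and $\partial^{\mathrm{o}}_{\infty,*}\mathcal{C}_*^{(j)}$, hence to $\partial^{\mathrm{o}}_{\infty,*}D$, and so serves as a common gluing point; the constrained local structure pictured in Figures \ref{fig:interlock_Z2}--\ref{fig:interlock_TH} shows that each splice can be carried out with only a constant number of extra edges per interlocking connection.

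Combining the per-cluster estimates with the splicing cost and using $\sum_i\sqrt{|\mathcal{C}_*^{(i)}|}\ge \sqrt{|D|}$ from (\ref{ineq_square_root}), I obtain a closed walk $\Gamma$ inside $\partial^{\mathrm{o}}_{\infty,*}D$ of total length at most $\nu(\mathscr{G})|D|-c_1\sqrt{|D|}$. Since $x,y\in \partial^{\mathrm{o}}_{\infty,*}D$ and $\Gamma$ traverses the whole exterior boundary of $D$, I can reroute $\Gamma$ into a path $\eta$ from $x$ to $y$ with $\mathbf{R}^{\mathrm{v}}(\eta)\subset \mathbf{R}^{\mathrm{v}}(\Gamma)\subset \partial^{\mathrm{o}}_{\infty,*}D$ and $\mathbf{L}(\eta)\le \mathbf{L}(\Gamma)$. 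Plugging into Lemma \ref{lemma_move_along} then gives
\[
\mathbb{P}_x(\tau_y<\tau_A)\ \ge\ \mathbb{P}_x\big(\tau_{\eta(-1)}<\tau_{\partial^{\mathrm{o}}\mathbf{R}^{\mathrm{v}}(\eta)}\big)\ \gtrsim\ [\widehat{\lambda}(\mathscr{G})]^{-\mathbf{L}(\eta)}\ \ge\ [\lambda(\mathscr{G})]^{-|D|+c\sqrt{|D|}}.
\]

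The main technical obstacle is the splicing step. When $D$ consists of many small $*$-clusters (so that $m$ is large compared with $\sqrt{|D|}$), the total splicing overhead is $O(m)$ and naively threatens to swamp the $\sqrt{|D|}$ savings. The secondary term $-\cref{remove_interlock2}\sqrt{|D|}$ therefore depends delicately on the fact that each interlocking connection forces at least $\kappa(\mathscr{G})$ specific edges of shared structure (Remark \ref{remark_interlock}(ii)); exploiting these guaranteed edges, the splice overhead can be charged against local circuit savings at each interlocking site, and the bound on $\mathbf{L}(\Gamma)$ survives in the required form. The bulk of the work is thus hidden inside Proposition \ref{lemma_length_bypass} and the careful bookkeeping of the splicing, rather than in the probabilistic step, which is a direct application of Lemma \ref{lemma_move_along}.
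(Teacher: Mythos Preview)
Your argument has a genuine gap: you misread what Proposition~\ref{lemma_length_bypass} and Lemma~\ref{lemma_exist_sl} actually deliver. Lemma~\ref{lemma_exist_sl} gives a surrounding loop of length at most $2[\nu(\mathscr{G})|\mathcal{C}_*^{(i)}|+\kappa(\mathscr{G})]$, with \emph{no} $\sqrt{|\mathcal{C}_*^{(i)}|}$ saving; the remark following Definition~\ref{def_sl} states explicitly that this bound is sharp. Proposition~\ref{lemma_length_bypass} does not bound $\mathbf{L}(\eta)$ either: it bounds the quantity $\mathbf{X}_A(\mathbf{R}^{\mathrm{e}}(\eta))=|\mathbf{R}^{\mathrm{e}}(\eta)|-c_\ddagger\cdot|\{\vec{e}_{x,y}\in\mathbf{R}^{\mathrm{e}}(\eta):x\text{ is free in }A\}|$, and the entire $\sqrt{|D|}$ saving there comes from the second term, not the first. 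Consequently, your claimed inequality $\mathbf{L}(\eta)\le \nu(\mathscr{G})|D|-c\sqrt{|D|}$ is simply false in general. A concrete counterexample on $\mathscr{T}$: let $D$ be a straight line segment of $n$ points (a single $*$-cluster, hence an interlocked cluster when $A=D$). Any path in $\partial^{\mathrm{o}}_{\infty,*}D$ between boundary vertices at opposite ends has length $\ge n-O(1)=\nu(\mathscr{T})|D|-O(1)$, so no path of length $\le |D|-c\sqrt{|D|}$ exists, and Lemma~\ref{lemma_move_along} alone cannot yield the required bound.

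The paper's proof does \emph{not} rely on Lemma~\ref{lemma_move_along}. It takes the path $\eta$ from Proposition~\ref{lemma_length_bypass} and applies the sharper Lemma~\ref{lemma_number_sequence} with $a=\mathrm{deg}(\mathscr{G})$ and $b=\tfrac{[\mathrm{deg}(\mathscr{G})]^2}{[\mathrm{deg}(\mathscr{G})]^2-1}$: at each of the $k$ free vertices $\eta(i_j)$ the walk can step to a third neighbor $w_j\notin A\cup\{\eta(i_j-1),\eta(i_j+1)\}$ and return, so $q_i\ge \tfrac{1}{\mathrm{deg}(\mathscr{G})^2}q_i+\tfrac{1}{\mathrm{deg}(\mathscr{G})}(q_{i-1}+q_{i+1})$, producing an extra factor $b^k$. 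This yields $\mathbb{P}_x(\tau_y<\tau_A)\gtrsim[\widehat{\lambda}(\mathscr{G})]^{-\mathbf{X}_A(\mathbf{R}^{\mathrm{e}}(\eta))}$, and now the $\mathbf{X}_A$ bound from Proposition~\ref{lemma_length_bypass} finishes. The source of the $\sqrt{|D|}$ in that proposition is isoperimetry (equation~(\ref{lower_bound_C_ddagger})) together with Lemma~\ref{lemma_source_free}, which shows that vertices on the outermost layer of the path are free in $A$; the latter crucially uses that $D$ is a \emph{maximal} interlocked cluster, since otherwise $A\setminus D$ could block those neighbors. Your splicing discussion, while geometrically reasonable for the length bound $|\mathbf{R}^{\mathrm{e}}(\eta_\ddagger)|\le 2[\nu(\mathscr{G})|\cup_i\mathcal{C}_*^i|+\kappa(\mathscr{G})]$ in (\ref{7.11}), misses this free-vertex mechanism entirely.
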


The proof of Lemma \ref{lemma_prob_surrounding} will be presented in Section \ref{section_length_bypass}. By Lemma \ref{lemma_prob_surrounding} we have 
\begin{equation}\label{ineq_I4}
	\mathbb{I}_4\gtrsim \big[\lambda(\mathscr{G})\big]^{-|A_{(\bm{0})}|+\cref{prob_surrounding}\sqrt{|A_{(\bm{0})}|}}.
\end{equation} 
Thus, combining (\ref{new_311}) and the aforementioned estimates for $\{\mathbb{I}_j\}_{j=1}^4$, we obtain 
\begin{equation}\label{ineq_ADD}
	\mathbb{H}_{A_{\mathcal{E}}^{\dagger}}(\bm{0}) \gtrsim  n^{-\frac{1}{2}-C'} \big[\lambda(\mathscr{G})\big]^{-\tfrac{\cref{remove_interlock2}\land \cref{prob_surrounding}}{4}\cdot \sqrt{n}-|A_{(\bm{0})}|+\cref{prob_surrounding}\sqrt{|A_{(\bm{0})}|}}. 
\end{equation}

For $A\in \mathcal{A}_n(\mathscr{G})$, we denote $n_1:=|A|-|A_{\mathcal{D}}|$, $n_2:=|A_{\mathcal{D}}|-|A_{\mathcal{E}}|$, $n_3:=|A_{\mathcal{E}}|-|A_{(\bm{0})}|$ and $n_4:=|A_{(\bm{0})}|$. Note that $n=\sum_{i=1}^{4}n_i$. By (\ref{ineq_3.11}), (\ref{ineq_AD-ADD}) and (\ref{ineq_ADD}), 
\begin{equation}\label{new_3.15}
	\begin{split}
		\mathbb{H}_A(\bm{0})\ge&  [\lambda(\mathscr{G})]^{-(n_1+n_2+n_4)+\cref{remove_interlock2}\sqrt{n_2}+\cref{const_4.2}n_1+\cref{prob_surrounding}\sqrt{n_4}-\tfrac{\cref{remove_interlock2}\land \cref{prob_surrounding}}{4}\cdot \sqrt{n}}\cdot e^{- \frac{\Cref{remove_one_vertex} n_3}{\lfloor \cref{ball_section}\sqrt{n}\rfloor}} \cdot n^{-\frac{1}{2}-C'}\\
		\ge &[\lambda(\mathscr{G})]^{-n-\tfrac{\cref{remove_interlock2}\land \cref{prob_surrounding}}{2}\cdot \sqrt{n}+(\cref{remove_interlock2}\land \cref{prob_surrounding})\cdot (\sqrt{n_1}+\sqrt{n_4})+\cref{const_4.2}( n_2+n_3)},
	\end{split}
\end{equation}
where in the second line we used the fact that for all large enough $n$,
\begin{equation*}
	\frac{\Cref{remove_one_vertex} n_3}{\lfloor \cref{ball_section}\sqrt{n}\rfloor} \le  (1-\cref{const_4.2}) n_3\ \ \text{and} \ \ n^{-\frac{1}{2}-C'}\ge [\lambda(\mathscr{G})]^{-\tfrac{\cref{remove_interlock2}\land \cref{prob_surrounding}}{4}\cdot \sqrt{n}}.  
\end{equation*}
Noting that $\sqrt{n_1}+\sqrt{n_4}\ge \sqrt{n_1+n_4}=\sqrt{n-(n_2+n_3)}$, we derive from (\ref{new_3.15}) that 
\begin{equation}\label{new_3.16}
	\begin{split}
		\mathbb{H}_A(\bm{0})
		\ge   [\lambda(\mathscr{G})]^{-n-\tfrac{\cref{remove_interlock2}\land \cref{prob_surrounding}}{2}\cdot \sqrt{n}+(\cref{remove_interlock2}\land \cref{prob_surrounding}) \cdot \sqrt{n-(n_2+n_3)}+\cref{const_4.2}( n_2+n_3)}. 
	\end{split}
\end{equation}
Since the function $f(x)=(\cref{remove_interlock2}\land \cref{prob_surrounding})\cdot \sqrt{n-x}+\cref{const_4.2}x$ for $x\in [0,n]$ takes the minimum at $x=0$ (for all sufficiently large $n$), one has 
\begin{equation}\label{new_3.17}
	(\cref{remove_interlock2}\land \cref{prob_surrounding})\cdot \sqrt{n-(n_2+n_3)}+\cref{const_4.2}( n_2+n_3) \ge (\cref{remove_interlock2}\land \cref{prob_surrounding})\cdot \sqrt{n}. 
\end{equation}
Thus, by (\ref{new_3.16}) and (\ref{new_3.17}) we get the lower bounds in Theorem \ref{theorem1.1} with $\cref{thm1_2}=\tfrac{\cref{remove_interlock2}\land \cref{prob_surrounding}}{2}$:
\begin{equation}
	\mathbb{H}_A(\bm{0})\ge [\lambda(\mathscr{G})]^{-n+\tfrac{\cref{remove_interlock2}\land \cref{prob_surrounding}}{2}\cdot \sqrt{n}}.       \pushQED{\qed} \qedhere
	\popQED
\end{equation}

To sum up, for the lower bounds in Theorem \ref{theorem1.1}, it suffices to prove Lemmas \ref{lemma_remove_distant}, \ref{lemma_packed3}, \ref{lemma_remove_interlock} and \ref{lemma_prob_surrounding}. The remainder of this paper is organized as follows. We prove Lemma \ref{lemma_packed3} in Section \ref{section_diameter}. Subsequently, Lemma \ref{lemma_prob_surrounding} is established in Section \ref{section_length_bypass}. Finally, we confirm Lemmas \ref{lemma_remove_distant} and \ref{lemma_remove_interlock} in Section \ref{section_qra}.

	\section{Diameter of the packed set}\label{section_diameter}

This section is devoted to the proof of Lemma \ref{lemma_packed3}, which shows that the diameter of a packed set is bounded by a polynomial function of its cardinality (recalling the definition of packed sets in Definition \ref{def_sparse}). This proof is based on induction (with respect to the cardinality of the set). Precisely, for any packed set $A$, we intend to find a packed proper subset $D\subsetneq A$ such that $A\setminus D$ is also packed. Note that the diameters of $D$ and $A\setminus D$ are both bounded according to the inductive hypothesis. Moreover, since $A$ is packed, the distance between $D$ and $A\setminus D$ is bounded by the diameters of both $D$ and $A\setminus D$ up to a constant factor. These estimates together imply the desired bound for the diameter of $A$.

As mentioned above, the key is to prove the existence of a packed subset $D\subsetneq A$ such that $A\setminus D$ is also packed. In light of this, we introduce the so-called ``prior subsets'' as follows, which is later proved to satisfy the desired property for $D$.

\begin{definition}\label{def_prior}
	(1) For any $D\subsetneq A\subset \mathfs{V} $, we say $D$ is a maximal packed (proper) subset of $A$ if $D$ is packed and for any $\widetilde{D}\subsetneq A$ with $D\subsetneq \widetilde{D}$, $\widetilde{D}$ is sparse.

	\noindent (2) For any $D\subsetneq A\subset \mathfs{V} $, we say $D$ is prior in $A$ if $D$ is a maximal packed subset of $A$ and satisfies either $\bm{0}\in D$ or $\mathrm{diam}(D)\ge \mathrm{diam}(\hat{D})$ for all packed $\hat{D}\subsetneq A$.

\end{definition}

Note that in any $A\subset \mathfs{V}$ with $|A|\ge 2$, there exists a packed proper subset (using Lemma \ref{lemma_packed0}). Consequently, there also exists a prior subset in $A$. Note that $A$ might have more than one prior subset.

\begin{lemma}\label{lemma_packed2}
	For any packed $A\subset \mathfs{V}$ and $D\subsetneq A$ with $\bm{0}\notin A\setminus D$, if $D$ is prior in $A$, then the following holds:
	\begin{enumerate}
		\item  For any packed $F\subsetneq A\setminus D$, $\mathbf{d}(F,D)> \Cref{delicate_green}[\mathrm{diam}(F)\vee 1]$.

		\item  $A\setminus D$ is packed.
	\end{enumerate} 
\end{lemma}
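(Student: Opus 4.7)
The plan is to prove (1) by contradiction and then deduce (2) from (1) via Lemma \ref{lemma_packed0}. For (1), suppose toward a contradiction that some packed $F \subsetneq A \setminus D$ satisfies $\mathbf{d}(F, D) \le \Cref{delicate_green}[\mathrm{diam}(F) \vee 1]$; the trivial case $F = \emptyset$ is vacuous since $\mathbf{d}(\emptyset, D) = \infty$, so $F$ may be taken nonempty. Set $D^{*} := D \cup F$, so that $D \subsetneq D^{*} \subsetneq A$ (using $F \ne \emptyset$ and $F \subsetneq A \setminus D$). If $D^{*}$ is shown to be packed, this directly contradicts the maximality part of the hypothesis that $D$ is prior, and (1) follows.

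To verify that $D^{*}$ is packed, I would suppose $E \in \mathfs{D}(D^{*})$ exists and split into cases on $E_D := E \cap D$ and $E_F := E \cap F$. The monotonicities $\mathbf{d}(E_D, D \setminus E_D) \ge \mathbf{d}(E, D^{*} \setminus E)$ and $\mathrm{diam}(E_D) \le \mathrm{diam}(E)$ show that whenever $E_D$ is a nonempty proper subset of $D$, one would have $E_D \in \mathfs{D}(D)$, contradicting that $D$ is packed; the symmetric argument rules out any nonempty proper $E_F \subsetneq F$. The configuration $D \subseteq E$ and $F \subseteq E$ forces $E = D^{*}$, which is impossible, and the case $E = F$ is ruled out immediately by our standing hypothesis since then $\mathbf{d}(E, D^{*} \setminus E) = \mathbf{d}(F, D)$. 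The genuinely delicate case is $E = D$ with $E \cap F = \emptyset$: this forces $\bm{0} \notin D$ (as $\bm{0} \notin E$), which in turn activates the diameter-maximality clause of the prior condition and yields $\mathrm{diam}(D) \ge \mathrm{diam}(F)$; combined with the standing hypothesis this produces $\mathbf{d}(D, F) \le \Cref{delicate_green}[\mathrm{diam}(D) \vee 1]$, again incompatible with $E \in \mathfs{D}(D^{*})$. This last case is the main obstacle, and it is precisely why strengthening maximal packed to prior is necessary.

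For (2), I would assume $G \in \mathfs{D}(A \setminus D)$ toward a contradiction and extract a packed representative $G^{\flat}$: take $G^{\flat} := G$ if $G$ is already packed, otherwise invoke Lemma \ref{lemma_packed0} on $G$ to produce a packed $G^{\flat} \in \mathfs{D}(G)$. A short distance estimate using $\mathbf{d}(G^{\flat}, (A \setminus D) \setminus G) \ge \mathbf{d}(G, (A \setminus D) \setminus G)$ together with $\mathrm{diam}(G^{\flat}) \le \mathrm{diam}(G)$ shows that $G^{\flat} \in \mathfs{D}(A \setminus D)$ in the sparse case as well. Applying (1) to the packed set $G^{\flat} \subsetneq A \setminus D$ then gives $\mathbf{d}(G^{\flat}, D) > \Cref{delicate_green}[\mathrm{diam}(G^{\flat}) \vee 1]$. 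On the other hand, $A$ being packed, applied to $G^{\flat} \subsetneq A$ with $\bm{0} \notin G^{\flat}$, forces $\mathbf{d}(G^{\flat}, A \setminus G^{\flat}) \le \Cref{delicate_green}[\mathrm{diam}(G^{\flat}) \vee 1]$. Decomposing $A \setminus G^{\flat} = D \cup ((A \setminus D) \setminus G^{\flat})$ and noting that both $\mathbf{d}(G^{\flat}, D)$ and $\mathbf{d}(G^{\flat}, (A \setminus D) \setminus G^{\flat})$ strictly exceed $\Cref{delicate_green}[\mathrm{diam}(G^{\flat}) \vee 1]$ yields the desired contradiction.
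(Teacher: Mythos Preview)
Your proof of Item~(1) is correct and follows essentially the same strategy as the paper: assume such an $F$ exists, show $D\cup F$ is packed by exhausting the cases for a putative element of $\mathfs{D}(D\cup F)$, and contradict the maximality of $D$. Your case organization (splitting on whether $E\cap D$ and $E\cap F$ are empty, proper, or full) is slightly cleaner than the paper's, but the substance is identical.

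Your proof of Item~(2), however, is genuinely different from---and simpler than---the paper's. The paper argues by recursively peeling off prior subsets $F_0=D,F_1,\dots,F_k$ from $A$, establishes four observations~(a)--(d) about the resulting distances, and finally invokes the auxiliary Lemma~\ref{lemma_packed1} to show that $F_{k-1}\cup F_k$ is packed, contradicting the stopping rule of the construction. You bypass all of this: you extract (via Lemma~\ref{lemma_packed0}) a single packed $G^{\flat}\in\mathfs{D}(A\setminus D)$, apply Item~(1) to it, and observe that the packedness of $A$ then forces a contradiction, since the distance from $G^{\flat}$ to both pieces $D$ and $(A\setminus D)\setminus G^{\flat}$ of $A\setminus G^{\flat}$ strictly exceeds $\Cref{delicate_green}[\mathrm{diam}(G^{\flat})\vee1]$. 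This is a more transparent derivation of (2) from (1) and does not use Lemma~\ref{lemma_packed1} at all. One small point: in verifying $G^{\flat}\in\mathfs{D}(A\setminus D)$ in the sparse case, you only mention the distance from $G^{\flat}$ to $(A\setminus D)\setminus G$; you also need $\mathbf{d}(G^{\flat},G\setminus G^{\flat})>\Cref{delicate_green}[\mathrm{diam}(G^{\flat})\vee1]$, but this is immediate from $G^{\flat}\in\mathfs{D}(G)$. (Alternatively, you could skip the two-case split and apply Lemma~\ref{lemma_packed0} directly to the sparse set $A\setminus D$ to obtain a packed $G^{\flat}\in\mathfs{D}(A\setminus D)$ in one stroke.)
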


Before proving Lemma \ref{lemma_packed2}, we need the following lemma as preparation.

\begin{lemma}\label{lemma_packed1}
	For any disjoint $D_1,D_2\subset \mathfs{V}$, if $D_1$ and $D_2$ are packed and satisfy $\mathbf{d}(D_1,D_2)\le \Cref{delicate_green}[\mathrm{diam}(D_i)\vee 1]$ for $i\in \{1,2\}$ with $\bm{0}\notin D_i$, then $D_1\cup D_2$ is packed.
\end{lemma}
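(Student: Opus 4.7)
\textbf{Proof proposal for Lemma \ref{lemma_packed1}.} The plan is to argue by contradiction and perform a case analysis on how a hypothetical distant subset of $D_1\cup D_2$ can intersect each $D_i$. Suppose $D_1\cup D_2$ is sparse; then there exists $D\in \mathfs{D}(D_1\cup D_2)$, so $D$ is a non-empty proper subset of $D_1\cup D_2$ with $\bm{0}\notin D$ and
\begin{equation*}
    \mathbf{d}\bigl(D,(D_1\cup D_2)\setminus D\bigr) > \Cref{delicate_green}\bigl[\mathrm{diam}(D)\vee 1\bigr].
\end{equation*}
Set $E_i := D\cap D_i$ for $i\in\{1,2\}$, so $D=E_1\cup E_2$.

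The first (and main) step is to rule out the possibility that some $E_i$ is a non-empty proper subset of $D_i$. If for instance $\emptyset\neq E_1 \subsetneq D_1$, then $D_1\setminus E_1\subset (D_1\cup D_2)\setminus D$, and since $\mathrm{diam}(E_1)\le \mathrm{diam}(D)$ and $\bm{0}\notin E_1$, the displayed inequality gives
\begin{equation*}
    \mathbf{d}(E_1,D_1\setminus E_1) \ge \mathbf{d}\bigl(D,(D_1\cup D_2)\setminus D\bigr) > \Cref{delicate_green}\bigl[\mathrm{diam}(E_1)\vee 1\bigr],
\end{equation*}
which places $E_1$ in $\mathfs{D}(D_1)$ and contradicts the hypothesis that $D_1$ is packed. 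The symmetric argument handles $E_2$. Hence each $E_i$ equals either $\emptyset$ or $D_i$.

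The second step disposes of the remaining combinations. The cases $E_1=E_2=\emptyset$ and $E_1=D_1$, $E_2=D_2$ are immediately excluded by $D\neq\emptyset$ and $D\subsetneq D_1\cup D_2$ respectively. By symmetry it remains to treat $E_1=D_1$, $E_2=\emptyset$, so that $D=D_1$ and $(D_1\cup D_2)\setminus D=D_2$. Here $\bm{0}\notin D=D_1$, so the hypothesis of the lemma is applicable with $i=1$, yielding $\mathbf{d}(D_1,D_2)\le \Cref{delicate_green}[\mathrm{diam}(D_1)\vee 1]$, which directly contradicts the defining inequality of $D$. This contradiction closes the argument and shows $\mathfs{D}(D_1\cup D_2)=\emptyset$, i.e.\ $D_1\cup D_2$ is packed.

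There is no real obstacle: the proof is a bookkeeping case analysis, and the only delicate point is ensuring that the distance hypothesis of the lemma is invoked only when its prerequisite $\bm{0}\notin D_i$ is met. This matches perfectly, because the forbidden $E_i=D_i$ scenario only arises when $D_i$ itself equals $D$, in which case $\bm{0}\notin D$ forces $\bm{0}\notin D_i$ and the hypothesis applies.
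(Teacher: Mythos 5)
Your proof is correct and is essentially the contrapositive form of the paper's argument: you assume $D\in\mathfs{D}(D_1\cup D_2)$ and derive a contradiction via the same case analysis on how $D$ intersects each $D_i$ (proper nonempty intersection uses packedness of $D_i$; full coincidence $D=D_i$ uses the distance hypothesis), whereas the paper directly verifies the distance bound for every candidate subset $F$. The substance and the delicate point — checking $\bm 0\notin D_i$ before invoking the hypothesis — are handled identically in both.
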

\begin{proof}
	Arbitrarily take $F\subsetneq D_1\cup D_2$ with $\bm{0}\notin F$. If $F=D_i$ for some $i\in \{1,2\}$, since $\mathbf{d}(D_1,D_2)\le \Cref{delicate_green}[\mathrm{diam}(D_i)\vee 1]$ one has 
	\begin{equation*}
		\mathbf{d}(F,D_1\cup D_2\setminus F)\le \Cref{delicate_green}[\mathrm{diam}(F)\vee 1]. 
	\end{equation*} 
	Otherwise, without loss of generality, assume that $F\cap D_1\neq \emptyset$ and $F\neq D_1$. When $D_1\setminus F\neq \emptyset$, since $D_1$ is packed, we have 
	\begin{equation*}
		\mathbf{d}(F,D_1\cup D_2\setminus F)\le \mathbf{d}(F\cap D_1,D_1\setminus F) \le  \Cref{delicate_green}[\mathrm{diam}(F\cap D_1)\vee 1] \le \Cref{delicate_green}[\mathrm{diam}(F)\vee 1]. 
	\end{equation*}
	When $D_1\setminus F= \emptyset$ and $F\neq D_1$ (i.e. $F\setminus D_1$ is a non-empty proper subset of $D_2$), since $D_2$ is packed, one has 
	\begin{equation*}
		\mathbf{d}(F,D_1\cup D_2\setminus F)= 	\mathbf{d}(F\setminus D_1 ,D_2\setminus F)\le \Cref{delicate_green}[\mathrm{diam}(F\setminus D_1)\vee 1] \le \Cref{delicate_green}[\mathrm{diam}(F)\vee 1]. 
	\end{equation*}
	In conclusion, $D_1\cup D_2$ is packed.
\end{proof}

Now we are ready to prove Lemma \ref{lemma_packed2}.

\begin{proof}[Proof of Lemma \ref{lemma_packed2}]
	 We prove Item (1) by contradiction. Assume that there is a packed $F_{\diamond}\subsetneq A\setminus D$ such that $\mathbf{d}(F_{\diamond},D)\le \Cref{delicate_green}[\mathrm{diam}(F_\diamond)\vee 1]$. For any $K\subsetneq D\cup F_{\diamond}$ with $\bm{0}\notin K$, we have  
	\begin{itemize}

		\item If $K\subsetneq F_{\diamond}$, since $F_{\diamond}$ is packed, one has 
		\begin{equation*}
			\mathbf{d}(K,F_{\diamond}\cup D\setminus K) \le	\mathbf{d}(K,F_{\diamond}\setminus K)\le  \Cref{delicate_green}[\mathrm{diam}(K)\vee 1]. 
		\end{equation*}

		\item If $K=F_{\diamond}$, since $\mathbf{d}(F_{\diamond}, D )\le \Cref{delicate_green}[\mathrm{diam}(F_{\diamond})\vee 1]$ we have 
		$$\mathbf{d}(K,F_{\diamond}\cup D\setminus K)\le \Cref{delicate_green}[\mathrm{diam}(K)\vee 1].$$

		\item If $K\cap D\neq \emptyset$ and $K\neq D$, since $D$ is packed, one has 
		\begin{equation*}
			\mathbf{d}(K,F_{\diamond}\cup D\setminus K) \le \mathbf{d}(K\cap D, D\setminus K)\le \Cref{delicate_green}[\mathrm{diam}(K\cap D)\vee 1] \le \Cref{delicate_green}[\mathrm{diam}(K)\vee 1].
		\end{equation*}

		\item  If $K=D$ and $\bm{0}\notin D$, by the priority of $D$, we have $\mathrm{diam}(F_\diamond)\le \mathrm{diam}(D)=\mathrm{diam}(K)$ and therefore, 
		\begin{equation*}
			\mathbf{d}(K,F_{\diamond}\cup D\setminus K)= \mathbf{d}(D,F_{\diamond})\le \Cref{delicate_green}[\mathrm{diam}(F_\diamond)\vee 1] \le \Cref{delicate_green}[\mathrm{diam}(K)\vee 1]. 
		\end{equation*}

	\end{itemize}
	Thus, $D\cup F_{\diamond}$ is packed, which is contradictory to the maximality of $D$.

	 We also prove Item (2) by contradiction. Assume that $A\setminus D$ is sparse. We decompose $A$ into a sequence of subsets $\{F_i\}_{i=0}^k$, which are defined recursively as follows. We set $F_0=D$. For each $i\ge 1$, suppose that we already get $F_0,...,F_{i-1}$. Then in Step $i$, $F_i$ is defined according to the following cases: 
	\begin{itemize}
		\item  If $A\setminus \cup_{j=0}^{i-1}F_j$ is packed, we set $F_i=A\setminus \cup_{j=0}^{i-1}F_j$ and stop the construction;

		\item  Otherwise, we take $F_i$ as a prior subset in $A\setminus \cup_{j=0}^{i-1}F_j$ (in a predetermined manner).

	\end{itemize}
	Here are some observations on $\{F_i\}_{i=0}^k$: 
	\begin{enumerate}
		\item[(a)]  $A=\cup_{i=0}^{k}F_i$; for any $0\le i\le k-1$, $F_i$ is prior in $\cup_{j=i}^{k} F_j$; $F_k$ is packed. These are straightforward by the construction.

		\item[(b)]   $k\ge 2$. In fact, since $A\setminus D$ is sparse, the construction for $\{F_i\}_{i=0}^k$ cannot stop in Step $1$, which implies that $k\ge 2$.

		\item[(c)]   For any $0\le i_1<i_2 \le k$ such that $i_2\le k-1$, or $i_2=k$ and $i_1\le k-2$,  
		\begin{equation}\label{new3.5}
			\mathbf{d}(F_{i_1},F_{i_2}) > \Cref{delicate_green}[\mathrm{diam}(F_{i_2})\vee 1].  
		\end{equation}
		To see this, note that $F_{i_1}$ is prior in $\cup_{j=i_1}^{k}F_j$, and $F_{i_2}\subsetneq \cup_{j=i_1+1}^{k}F_j$ is packed (by Observation (a)). Thus, by applying Item (1) (with $A=\cup_{j=i_1}^{k}F_j$ and $D=F_{i_1}$), we get (\ref{new3.5}).

		\item[(d)]  $\mathbf{d}(F_{k-1},F_{k}) \le  \Cref{delicate_green}[\mathrm{diam}(F_{k-1})\vee 1]$. In fact, by Observation (c), one has 
		\begin{equation*}
			\mathbf{d}(F_{k-1},\cup_{i=0}^{k-2} F_{i}) > \Cref{delicate_green}[\mathrm{diam}(F_{k-1})\vee 1].
		\end{equation*}
		Combined with the fact that (noting that $A$ is packed)
		$$\mathbf{d}(F_{k-1},\cup_{i=0}^{k-2} F_{i} \cup F_{k})=\mathbf{d}(F_{k-1},A\setminus F_{k-1})\le  \Cref{delicate_green}[\mathrm{diam}(F_{k-1})\vee 1],$$
		it yields $\mathbf{d}(F_{k-1},F_{k}) \le  \Cref{delicate_green}[\mathrm{diam}(F_{k-1})\vee 1]$.

	\end{enumerate}
	
	By Observation (c), we have 
	\begin{equation}\label{new_3.6}
		\mathbf{d}(\cup_{i=0}^{k-2}F_{i},F_{k}) > \Cref{delicate_green}[\mathrm{diam}(F_{k})\vee 1].   
	\end{equation}
	Note that in (\ref{new_3.6}) we need $k\ge 2$ (by Obvervation (b)) to ensure that $\cup_{i=0}^{k-2}F_{i}\neq \emptyset$. Moreover, since $A$ is packed, by Observation (a) one has 
	\begin{equation}\label{new_3.7}
		\mathbf{d}(\cup_{i=0}^{k-1}F_{i},F_{k})= \mathbf{d}(A\setminus F_{k},F_{k}) \le  \Cref{delicate_green}[\mathrm{diam}(F_{k})\vee 1]. 
	\end{equation}
	Combining (\ref{new_3.6}) and (\ref{new_3.7}), we obtain 
	\begin{equation}\label{new3.9}
		\mathbf{d}(F_{k-1},F_{k}) \le  \Cref{delicate_green}[\mathrm{diam}(F_{k})\vee 1].
	\end{equation}
	Thus, by Lemma \ref{lemma_packed1}, (\ref{new3.9}) and Observation (d), we know that $F_{k-1}\cup F_{k}$ is packed. However, when $A\setminus \cup_{i=0}^{k-2}F_i = F_{k-1}\cup F_{k}$ is packed, the construction will stop in Step $k-1$, which is a contradiction. To sum up, we prove that $A\setminus D$ is packed and complete the proof.
\end{proof}

%With the help of Lemma \ref{lemma_packed2}, now we implement the inductive proof for Lemma \ref{lemma_packed3} as stated at the beginning of this section.

Based on Lemma \ref{lemma_packed2}, now we implement the inductive proof for Lemma \ref{lemma_packed3}.

\begin{proof}[Proof of Lemma \ref{lemma_packed3}]
	We first use induction to prove Item (2). I.e., for any packed $A\subset \mathfs{V}$ with $\bm{0}\notin A$, one has 
	\begin{equation}\label{new3.11}
		\mathrm{diam}(A)\le \Cref{delicate_green}|A|^{\Cref{const_packed}}.
	\end{equation}
	When $|A|=1$, (\ref{new3.11}) holds since $\mathrm{diam}(A)=0$. For $k\ge 2$, assume that (\ref{new3.11}) is valid for all $A\subset \mathfs{V}$ with $|A|\le k-1$. For $A\subset \mathfs{V}$ with $|A|=k$, we arbitrarily take a prior subset $D\subsetneq A$. By Lemma \ref{lemma_packed2}, $A\setminus D$ is packed. Thus, by the inductive hypothesis and the fact that 
	$\mathbf{d}(D, A\setminus D)\le \min\{\Cref{delicate_green}[\mathrm{diam}(D)\vee 1],\Cref{delicate_green}[\mathrm{diam}(A\setminus D)\vee 1]\}$ (since $A$ is packed and $\bm{0}\notin A$), we get 
	\begin{equation}\label{new3.12}
		\begin{split}
			\mathrm{diam}(A)\le& \mathrm{diam}(D)+\mathrm{diam}(A\setminus D)+\mathbf{d}(D,A\setminus D)\\
			\le &\Cref{delicate_green}|D|^{\Cref{const_packed}}+ \Cref{delicate_green}(k-|D|)^{\Cref{const_packed}}+\Cref{delicate_green}^2  \min\{|D|^{\Cref{const_packed}}, (k-|D|)^{\Cref{const_packed}} \}\\
			\le & \Cref{delicate_green} \max \big\{(k-t)^{\Cref{const_packed}}+(\Cref{delicate_green}+1)t^{\Cref{const_packed}}:t\in [1,\tfrac{k}{2}] \big\}.
		\end{split}
	\end{equation}
	Since $f(t)=(k-t)^{\Cref{const_packed}}+(\Cref{delicate_green}+1)t^{\Cref{const_packed}}$ for $t\in [1,\tfrac{k}{2}]$ takes the maximum at $t=\tfrac{k}{2}$, we know that the right-hand side of (\ref{new3.12}) is at most (recall that $\Cref{const_packed}=\log_2(\Cref{delicate_green}+2)$)
	\begin{equation}
		\Cref{delicate_green}(\Cref{delicate_green}+2)\big(\tfrac{k}{2}\big)^{\Cref{const_packed}}=\Cref{delicate_green}k^{\Cref{const_packed}}.
	\end{equation}
	Thus, we complete the induction and confirm (\ref{new3.11}).

	Now we prove Item (1) using induction. Since (\ref{new3.11}) is established, we only need to focus on the case when $\bm{0}\in A$. For $k\ge 2$, assume that $\mathrm{diam}(A)\le (\Cref{delicate_green}^2+\Cref{delicate_green}) |A|^{\Cref{const_packed}}$ holds for all $A\subset \mathfs{V}$ with $|A|\le k-1$. For $A\subset \mathfs{V}$ with $|A|=k$, let $D$ be a maximal packed subset of $A$ that contains $\bm{0}$. Noting that $D$ is prior (recall Definition \ref{def_prior}) and applying Lemma \ref{lemma_packed2}, we know that $A\setminus D$ is packed. Therefore, by (\ref{new3.11}),
	\begin{equation}\label{ineq4.9}
		\mathrm{diam}(A\setminus D)\le  \Cref{delicate_green}(|A|-|D|)^{\Cref{const_packed}}. 
	\end{equation}
	Meanwhile, since $D$ is packed, by the inductive hypothesis one has
	\begin{equation}\label{ineq4.8}
		\mathrm{diam}(D)\le (\Cref{delicate_green}^2+\Cref{delicate_green}) |D|^{\Cref{const_packed}}. 
	\end{equation}
	Combining (\ref{ineq4.9}), (\ref{ineq4.8}) and the fact that $\mathbf{d}(D,A\setminus D)\le \Cref{delicate_green}[\mathrm{diam}(A\setminus D)\vee 1]$ (since $A$ is packed and $\bm{0}\notin A\setminus D$), we conclude Item (1):  
	\begin{equation*}
		\begin{split}
			\mathrm{diam}(A) \le &\mathrm{diam}(D)+  \mathbf{d}(D,A\setminus D)+\mathrm{diam}(A\setminus D) \\
			\le &\mathrm{diam}(D)+  (\Cref{delicate_green}+1)\mathrm{diam}(A\setminus D)    \\
			\le &(\Cref{delicate_green}^2+\Cref{delicate_green}) \left[ |D|^{\Cref{const_packed}}+(|A|-|D|)^{\Cref{const_packed}}\right] \le (\Cref{delicate_green}^2+\Cref{delicate_green}) |A|^{\Cref{const_packed}}.  \qedhere
		\end{split}
	\end{equation*}
\end{proof}

	\section{Construction of an efficient bypass}\label{section_length_bypass}

In this section, we aim to establish Lemma \ref{lemma_prob_surrounding}. The key is to show that between each pair of vertices on the exterior outer $*$-boundary of an interlocked cluster of a set $A$, it is feasible to construct a bypass (i.e. a path in $A^c$) with the following properties: (i) its length is bounded by a specific threshold; (ii) as the length approaches this threshold, the bypass includes sufficiently many positions that have an extra neighbor in $A^c$ besides the previous and next steps within the bypass. To be precise, we first introduce the definition of free vertices, which are candidates for the positions with an extra neighbor.

\begin{definition}[free vertex]\label{def_free_v}
	For any $A\subset \mathfs{V}$ and $z\in A^c$, we say $z$ is a free vertex (in $A$) if $|N(z)\setminus A|\ge 3$.
\end{definition}

In the subsequent proof, we mainly focus on the free vertices on the exterior outer $*$-boundary. To formulate the condition for a desired bypass, we define the following quantity. For $A\subset \mathfs{V}$ and a collection $\mathcal{E}$ of oriented edges in $A^c$, let 
\begin{equation}\label{statistic_X}
	\mathbf{X}_A(\mathcal{E}):= \big|\mathcal{E}\big| -c_{\ddagger}\cdot  \big| \big\{ \vec{e}_{x,y}\in\mathcal{E}: x\ \text{is free in}\ A  \big\}\big|,
\end{equation}
where $c_{\ddagger}=c_{\ddagger}(\mathscr{G}):= \log_{\widehat{\lambda}(\mathscr{G})}(\frac{[\mathrm{deg}(\mathscr{G})]^2}{[\mathrm{deg}(\mathscr{G})]^2-1})$ (recalling $\widehat{\lambda}(\mathscr{G})$ in Lemma \ref{lemma_move_along}). Note that $\mathbf{X}_A(\mathcal{E})$ is increasing with respect to $\mathcal{E}$ since $c_{\ddagger}\in(0,1)$.	Recall that $\mathfrak{I}_A$ denotes the collection of all interlocked clusters of $A$, and that $\nu(\mathscr{G}):=\frac{\mathrm{deg}(\mathscr{G})-2\cdot \mathbbm{1}_{\mathscr{G}=\mathscr{T}}}{\mathrm{deg}(\mathscr{G})-2}$ (i.e. $\nu(\mathbb{Z}^2)=2$, $\nu(\mathscr{T})=1$ and $\nu(\mathscr{H})=3$; see Lemma \ref{lemma_length_tunnel}). Now we present the main result of this section as follows:

\begin{proposition}\label{lemma_length_bypass}
	There exists $\cl\label{length_bypass}(\mathscr{G})>0$ such that for any $A\subset \mathfs{V}$, $D\in \mathfrak{I}_A$ and $x,y\in \partial_{\infty,*}^{\mathrm{o}} D$, there exists a path $\eta$ satisfying $\eta(0)=x$, $\eta(-1)=y$, $\mathbf{R}^{\mathrm{v}}(\eta)\subset A^c$ and $\mathbf{X}_A(\mathbf{R}^{\mathrm{e}}(\eta))\le \nu(\mathscr{G}) |D|- \cref{length_bypass}\sqrt{|D|}+ 4 $. 
\end{proposition}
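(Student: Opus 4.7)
\textbf{Proof proposal for Proposition \ref{lemma_length_bypass}.}

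The plan is to build the bypass $\eta$ by walking along an edge circuit inside $\partial_{\infty,*}^{\mathrm{o}} D$, using Item (2) of Lemma \ref{lemma2.2} to know this boundary is connected. First I would handle the case of a single $*$-cluster $\mathcal{C}_*$ of $D$: tracing the planar dual around $\mathcal{C}_*$ produces an edge circuit $\gamma_{\mathcal{C}_*}$ that visits each oriented edge $\vec e_{u,v}$ with $u\in \partial_{\infty,*}^{\mathrm{o}} \mathcal{C}_*$ and $v\in \mathcal{C}_*$ at most once (this is standard contour construction on planar lattices). A double-counting of such incidences, together with Lemma \ref{new7.1} which bounds $|N_\infty^{\mathcal{C}_*}(v)|$ by $\mathrm{deg}(\mathscr{G})-\mathbbm{1}_{\mathscr{G}=\mathscr{T}}$ for interior boundary vertices, together with the face-degree identity \eqref{final_3.2}, should give $\mathbf{L}(\gamma_{\mathcal{C}_*})\le \nu(\mathscr{G})|\mathcal{C}_*|+O(1)$; the $\nu(\mathscr{G})$ factor appears exactly because each vertex of $\mathcal{C}_*$ is counted $\nu(\mathscr{G})$ times on the contour (this matches the tunnel accounting in Lemma \ref{lemma_length_tunnel}).

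Next I would assemble the contours for the various $*$-clusters of $D$ into a single circuit around $D$ by splicing across the interlocking vertices. Using the classification of $\mathcal{I}(\mathcal{C}_1,\mathcal{C}_2)$ in Remark \ref{remark_interlock}(i) and the disjointness property in Remark \ref{remark_interlock}(iii), each interlocking pair contributes at most $O(1)$ extra edges to join the two individual contours. Since the interlocking graph on the $*$-clusters of $D$ is connected by definition of an interlocked cluster, I would obtain a single edge circuit $\gamma_D\subset A^c$ whose range contains all of $\partial_{\infty,*}^{\mathrm{o}} D$ and whose length satisfies
\begin{equation*}
  \mathbf{L}(\gamma_D)\ \le\ \nu(\mathscr{G})|D|+O(\text{number of interlocking pairs}).
\end{equation*}
The path $\eta$ is then taken to be the shorter of the two arcs of $\gamma_D$ joining $x$ and $y$; the extra $+4$ in the target bound is the slack needed to prepend and append the at-most-two steps hopping onto this circuit from the actual $x,y\in\partial_{\infty,*}^{\mathrm{o}} D$.

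To reach the $-\cref{length_bypass}\sqrt{|D|}$ correction I would bound the free-vertex count $F$ along $\eta$ from below by $c\sqrt{|D|}$ and then use the definition \eqref{statistic_X} to convert $F$ into a $c_\ddagger F$ saving in $\mathbf{X}_A$. I would split on diameter. If $\mathrm{diam}(D)\le K\sqrt{|D|}$ for a suitable constant $K$, then by the polynomial-in-radius bound on the number of boundary edges in a ball, the entire circuit $\gamma_D$ has length $\lesssim \sqrt{|D|}$, and the trivial bound on $\eta$ already beats $\nu(\mathscr{G})|D|-\cref{length_bypass}\sqrt{|D|}$. Otherwise $\mathrm{diam}(D)\gtrsim \sqrt{|D|}$ and I would use a discrete isoperimetric/sectoring argument: project $D$ onto a principal direction over a length $\gtrsim \sqrt{|D|}$, and on each unit slice pick a ``rightmost'' exposed vertex. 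These rightmost vertices on $\partial^{\mathrm{o}}_{\infty,*}D$ are forced to have at least three neighbors in $A^c$ (the step forward, plus the two side positions which lie outside $D$ by maximality in the slice), so they are free in the sense of Definition \ref{def_free_v}, giving at least $c\sqrt{|D|}$ distinct free vertices on $\gamma_D$; at least a constant fraction lie on the selected subarc $\eta$ by choosing the longer arc if needed, or otherwise taking advantage of the short arc case directly.

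The main obstacle will be the isoperimetric step producing $c\sqrt{|D|}$ free vertices on $\eta$. The interlocked structure of $D$ means the $*$-boundary can be quite intricate (the pictures in Figures \ref{fig:interlock_Z2}--\ref{fig:interlock_TH} show how two $*$-clusters can be glued through tight two-vertex channels), so simply looking at one $*$-cluster at a time does not immediately yield the global lower bound on free vertices of $D$ itself; one must check that interlocking vertices neither destroy the freeness at nearby boundary vertices nor force the ``rightmost'' vertex in a slice to have high $D$-degree. A secondary difficulty is keeping the constant in front of $\sqrt{|D|}$ uniform across the three lattices, which forces the case-by-case verification on $\mathbb{Z}^2$, $\mathscr{T}$ and $\mathscr{H}$ alluded to by the split in $\nu(\mathscr{G})$ and Lemma \ref{new7.1}.
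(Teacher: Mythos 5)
Your overall blueprint (contour the $*$-clusters and splice them at the interlocking vertices, then keep only one of the two arcs from $x$ to $y$, and harvest free vertices from an isoperimetric argument) is the same as the paper's, and some of the accounting is right in spirit. However there are two genuine gaps, one in each of your two cases.

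The small-diameter case is based on a false claim. You assert that if $\mathrm{diam}(D)\le K\sqrt{|D|}$ then the full circuit $\gamma_D$ has length $\lesssim \sqrt{|D|}$. But $\gamma_D$ ranges over $\partial_{\infty,*}^{\mathrm{o}}D$, not over the boundary of a ball containing $D$, and $|\partial_{\infty,*}^{\mathrm{o}}D|$ is not controlled by $\mathrm{diam}(D)$: a comb-shaped (or spiral-shaped) $*$-connected set of diameter of order $\sqrt{|D|}$ has exterior outer boundary of order $|D|$, and indeed the surrounding-loop length bound in Definition \ref{def_sl} is genuinely linear in $|\mathcal{C}_*|$ (and is shown to be sharp). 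So the trivial bound does not ``already beat'' the target in this regime, and a real argument is still needed. The paper avoids any split on $\mathrm{diam}(D)$: it passes to the filled-in hull $\mathcal{C}_\ddagger$ of the circuit and uses the 2D isoperimetric inequality $|\partial_\infty^{\mathrm{i}}\mathcal{C}_\ddagger|\gtrsim\sqrt{|\mathcal{C}_\ddagger|}$, which holds regardless of shape.

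The large-diameter case has the gap you half-acknowledge but do not close. Your ``rightmost vertex in each slice'' has three neighbors outside $D$, but Definition \ref{def_free_v} requires three neighbors outside $A$, and $A\setminus D$ can sit flush against $\partial_{\infty,*}^{\mathrm{o}}D$ and kill the freeness. The resolution in the paper is not a slicing/projection argument at all: it is Lemma \ref{lemma_source_free}, which shows that for an oriented edge $\vec e_{w,v}$ on the circuit with $w\in\partial_\infty^{\mathrm{i}}\mathcal{C}_\ddagger$, either $w$ or $v$ is free, or else $w$ touches the unswept part $D\setminus\cup_i\mathcal{C}_*^i$; the proof of that lemma crucially uses the maximality of $D$ as an interlocked cluster (a non-free boundary vertex adjacent to both $D$ and $A\setminus D$ would witness a new interlock). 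Without that structural input, your slicing claim does not go through. Relatedly, you propose a single circuit whose range contains all of $\partial_{\infty,*}^{\mathrm{o}}D$; when the interlocking graph is a tree with branches off the $x$-to-$y$ path, such a circuit necessarily revisits splice points and the length/free-vertex accounting becomes delicate. The paper sidesteps this by first extracting (Lemma \ref{lemma7.6}) a minimal sequence of $*$-clusters linking $x$ to $y$ and only surrounding those, absorbing the ``skipped'' mass $|D\setminus\cup_i\mathcal{C}_*^i|$ into the final inequality. Finally, the selection between the two arcs should be done by minimizing $\mathbf{X}_A$ (which is additive over the two halves), not by picking the shorter arc; a shorter arc could carry fewer free vertices.
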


With the help of Proposition \ref{lemma_length_bypass}, the proof of Lemma \ref{lemma_prob_surrounding} is straightforward.

\begin{proof}[Proof of Lemma \ref{lemma_prob_surrounding} assuming Proposition \ref{lemma_length_bypass}]
	For $A\subset \mathfs{V}$, $D\in \mathfrak{I}_A$ and $x,y\in \partial_{\infty,*}^{\mathrm{o}} D$, let $\eta$ be the path found by Proposition \ref{lemma_length_bypass}. We assume that for $1\le i\le \mathbf{L}(\eta)-1$, $\eta(i)$ is free in $A$ if and only if $i\in \{i_1,...,i_k\}$. For each $1\le j\le k$, since $|N(\eta(i_j))\setminus A|\ge 3$ (by the definition of free vertices), there exists $w_{j}\in N(\eta(i_j)) \setminus (A\cup \{\eta(i_j-1),\eta(i_j+1) \})$. We denote 
	$$q_i:=\mathbb{P}_{\eta(\mathbf{L}(\eta)-i)}\left( \tau_{y}<\tau_{A}\right),\ \ \forall 0\le i\le \mathbf{L}(\eta).$$
	We also set $q_{\mathbf{L}(\eta)+1}=0$. In fact, the number sequence $\{q_i\}_{i=0}^{\mathbf{L}(\eta)+1}$ satisfying all conditions in Lemma \ref{lemma_number_sequence} with $a=\mathrm{deg}(\mathscr{G})$ and $b= \frac{[\mathrm{deg}(\mathscr{G})]^2}{[\mathrm{deg}(\mathscr{G})]^2-1}$: 
	\begin{enumerate} 	
		\item $q_0=1$ (since $\tau_{y}=0$), $q_{\mathbf{L}(\eta)+1}=0$ (by definition) and $0\le q_i\le 1$ for all $1\le i\le \mathbf{L}(\eta)$ (since each $q_i$ is a probability);

		\item When $1\le i\le \mathbf{L}(\eta)$ and $i\notin\{i_1,...,i_k\}$, $q_i\ge [\mathrm{deg}(\mathscr{G})]^{-1}(q_{i-1}+q_{i+1})$ (when the random walk is at $\eta(i)$, it can move forward to $\eta (i-1)$ or move backward to $\eta (i + 1)$);

		\item When $1\le i\le \mathbf{L}(\eta)$ and $i\in\{i_1,...,i_k\}$, $q_i\ge\frac{\mathrm{deg}(\mathscr{G})}{[\mathrm{deg}(\mathscr{G})]^2-1} (q_{i-1}+q_{i+1})$ (when the random walk is at some $\eta(i_j)$, the random walk not only can move forward or backward, but also can first move to $w_j$ and then move back $\eta(i_j)$, which implies that $q_i\ge [\mathrm{deg}(\mathscr{G})]^{-2}q_i+ [\mathrm{deg}(\mathscr{G})]^{-1}(q_{i-1}+q_{i+1})$).

	\end{enumerate}
	Note that $\widehat{\lambda}(\mathscr{G})= \alpha(\mathrm{deg}(\mathscr{G}))$. Thus, by Lemma \ref{lemma_number_sequence}, we obtain
	\begin{equation*}
		\mathbb{P}_x\left( \tau_{y}<\tau_{A}\right)= q_{\mathbf{L}(\eta)}  \gtrsim  \bigg( \frac{[\mathrm{deg}(\mathscr{G})]^2}{[\mathrm{deg}(\mathscr{G})]^2-1}\bigg)^k \Big[ \widehat{\lambda}(\mathscr{G}) \Big]^{-\mathbf{L}(\eta)} \gtrsim \Big[ \widehat{\lambda}(\mathscr{G})\Big] ^{-\mathbf{X}_A(\mathbf{R}^{\mathrm{e}}(\eta))}. 
	\end{equation*}
	Combined with $\mathbf{X}_A(\mathbf{R}^{\mathrm{e}}(\eta))\le \nu(\mathscr{G}) |D|- \cref{length_bypass}\sqrt{|D|}+ 4 $ (since $\eta$ satisfies the conditions in Proposition \ref{lemma_length_bypass}) and $[\widehat{\lambda}(\mathscr{G})]^{\nu(\mathscr{G})}=\lambda(\mathscr{G})$, it completes the proof.
\end{proof}

The rest of this section is devoted to the proof of Proposition \ref{lemma_length_bypass}, which highly relies on a tool named ``surrounding loop'', as we introduce in the next subsection.

\subsection{Surrounding loop}

We first present the definition of surrounding loops as follows. Recall that $\kappa(\mathbb{Z}^2)=\kappa(\mathscr{T})=2$ and $\kappa(\mathscr{H})=3$ in Item (ii) of Remark \ref{remark_interlock}.

\begin{definition}[surrounding loop]\label{def_sl}
	For any finite and $*$-connected $A\subset \mathfs{V}$, and any $x\in \partial_{\infty,*}^{\mathrm{o}}A$, we say an edge circuit $\eta$ with $\eta(0)=x$ is a surrounding loop of $A$ starting from $x$ if it satisfies the following conditions:
	\begin{enumerate}
		\item   $\mathbf{R}^{\mathrm{v}}(\eta)=\partial_{\infty,*}^{\mathrm{o}}A$;

		\item   For each $0\le i\le \mathbf{L}(\eta)-1$, there exists a face $\mathcal{S}$ such that $\vec{e}_{\eta(i),\eta(i+1)}\in \vec{\mathbf{e}}(\mathcal{S})$ and $\mathbf{v}(\mathcal{S})\cap A\neq \emptyset$;

		\item  $\mathbf{L}(\eta)\le 2[\nu(\mathscr{G}) |A|+ \kappa(\mathscr{G})]$.

	\end{enumerate}

\end{definition}

\begin{remark}
	Notably, Condition (3) in Definition \ref{def_sl} is sharp. In other words, for $\mathscr{G}\in \{\mathbb{Z}^2,\mathscr{T},\mathscr{H}\}$ and any integer $n\ge 1$, there exists a $*$-connected $A\subset \mathfs{V}$ with $|A|=n$ such that $\mathbf{L}(\eta)=2[\nu(\mathscr{G})n+ \kappa(\mathscr{G})]$ holds for every edge circuit $\eta$ satisfying Conditions (1) and (2) in Definition \ref{def_sl}. Illustrations for the case $n=4$ are given in Figure \ref{fig:surround_loop_sharp}. Precisely, for each set $A$ presented in Figure \ref{fig:surround_loop_sharp}, the corresponding red path $\eta$ is the unique edge circuit satisfying Conditions (1) and (2) in Definition \ref{def_sl}, and obeys the equality $\mathbf{L}(\eta)=2[\nu(\mathscr{G})|A|+ \kappa(\mathscr{G})]$. Similar examples can be constructed for other values of $n$. In fact, due to their optimality in terms of the length of the surrounding loop, the structures of these examples are employed in the construction of the spirals $D_n^{\mathscr{G}}$ for $\mathscr{G}\in \{\mathbb{Z}^2,\mathscr{T},\mathscr{H}\}$ (see Figures \ref{fig:spiral_Z2} and \ref{fig:spirals}).
\end{remark}

\include*{tikz_surrounding_loop_sharp}

The following lemma ensures the existence of surrounding loops.

\begin{lemma}\label{lemma_exist_sl}
	For any $*$-connected $A\subset \mathfs{V}$ and $x\in \partial_{\infty,*}^{\mathrm{o}}A$, there exists a surrounding loop of $A$ starting from $x$. 
\end{lemma}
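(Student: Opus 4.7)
The plan is to define a natural candidate for the surrounding loop via a ``right-hand-rule'' walk along the outer boundary of $A$ in the planar embedding, and then verify it satisfies all three conditions of Definition~\ref{def_sl}.

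Concretely, I would let $\vec{\mathcal{E}}^\star$ denote the collection of oriented edges $\vec{e}_{u,v}$ with $u,v\in\partial_{\infty,*}^{\mathrm{o}}A$ for which there is a face $\mathcal{S}=\mathcal{S}(\vec{e}_{u,v})$ (necessarily unique, namely the face lying clockwise of $\vec{e}_{u,v}$) satisfying $\vec{e}_{u,v}\in \vec{\mathbf{e}}(\mathcal{S})$ and $\mathbf{v}(\mathcal{S})\cap A\neq\emptyset$. First I would verify the local circuit structure: at each vertex $v\in\partial_{\infty,*}^{\mathrm{o}}A$, I order the faces incident to $v$ according to the cyclic order coming from the embedding $\mathbf{I}$. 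These faces split into maximal arcs of ``$A$-touching'' faces (those with $\mathbf{v}(\mathcal{S})\cap A\neq\emptyset$) interleaved with ``$A$-avoiding'' arcs. A short case analysis at the two endpoints of each $A$-touching arc produces a canonical bijection between the incoming edges of $\vec{\mathcal{E}}^\star$ at $v$ and its outgoing edges at $v$. This bijection defines a successor map, so $\vec{\mathcal{E}}^\star$ decomposes into a disjoint union of directed edge-circuits.

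Next I would show that this decomposition consists of a single circuit $\sigma$ whose vertex range is all of $\partial_{\infty,*}^{\mathrm{o}}A$. The key input is Lemma~\ref{lemma2.2}(2), which asserts that $\partial_{\infty,*}^{\mathrm{o}}A$ is connected; topologically, this reflects the fact that, if $\widetilde{A}\subset\mathbb{R}^2$ denotes the ``filled'' region of $A$ (i.e.\ $\mathbf{I}(A)$ together with every $\mathbf{I}(e)$ for $e\in\mathfs{E}$ with both endpoints in $A$ and every closed face of $\mathscr{G}$ whose vertex set lies in $A$), then the unique unbounded component of $\mathbb{R}^2\setminus\widetilde{A}$ is bounded by a single Jordan curve whose combinatorial trace through the vertices of $\partial_{\infty,*}^{\mathrm{o}}A$ coincides with the circuit assembled from $\vec{\mathcal{E}}^\star$. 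Cyclically shifting the starting vertex of this circuit to $x$ yields the candidate $\eta$, and Conditions~(1) and~(2) of Definition~\ref{def_sl} follow immediately.

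The remaining and most delicate step is the length bound in Condition~(3), $\mathbf{L}(\eta)=|\vec{\mathcal{E}}^\star|\le 2[\nu(\mathscr{G})|A|+\kappa(\mathscr{G})]$. I intend to count $|\vec{\mathcal{E}}^\star|$ face-by-face: a face $\mathcal{S}$ with $k_\mathcal{S}:=|\mathbf{v}(\mathcal{S})\cap A|$ satisfying $1\le k_\mathcal{S}<\xi(\mathscr{G})$, and with $m_\mathcal{S}$ maximal ``$A$-runs'' around its boundary cycle, contributes exactly $\xi(\mathscr{G})-k_\mathcal{S}-m_\mathcal{S}$ oriented edges to $\vec{\mathcal{E}}^\star$. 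Summing this over faces and invoking the double-count $\sum_\mathcal{S}k_\mathcal{S}=\mathrm{deg}(\mathscr{G})|A|-\xi(\mathscr{G})F_{\mathrm{full}}$ (with $F_{\mathrm{full}}$ the number of faces fully contained in $A$), together with the regular-tessellation identity $[\xi(\mathscr{G})-2][\mathrm{deg}(\mathscr{G})-2]=4$ from~(\ref{final_3.2}) and Euler's formula applied to the planar subgraph induced by $A\cup\partial_{\infty,*}^{\mathrm{o}}A$, should produce the stated bound. The main obstacle will be a precise control of $\sum_\mathcal{S}m_\mathcal{S}$, which depends on the detailed geometry of $A$; I expect this can be handled either by a clean induction on $|A|$ (adding one $*$-adjacent vertex at a time and showing the boundary grows by at most $2\nu(\mathscr{G})$, with a base case of a single vertex absorbing the additive constant $2\kappa(\mathscr{G})$), or by interpreting $\sum_\mathcal{S}m_\mathcal{S}$ topologically as essentially counting ``corners'' of $\widetilde{A}$ and controlling these via a Gauss--Bonnet-type identity for the planar complex $\widetilde{A}$.
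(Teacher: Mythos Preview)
Your right-hand-rule construction is a genuine alternative to the paper's proof, and the paper explicitly flags it in the Remark following Lemma~\ref{lemma_exist_sl} as a valid approach, noting only that ``this approach would make the proof more complicated''. The paper instead runs a single induction on $|A|$ that delivers all three conditions at once: it invokes Lemma~\ref{lemma_exist_marginal} (imported from \cite{psi}) to find a \emph{marginal} vertex $z$---one with $N_\infty^A(z)$ connected inside $N_{\infty,*}^A(z)$---such that $A':=A\setminus\{z\}$ remains $*$-connected, takes a surrounding loop $\eta'$ of $A'$ by hypothesis, and then reroutes $\eta'$ around $z$ via Lemmas~\ref{lemma7.3}--\ref{lemma_7.4}. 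Marginality is exactly what forces $\eta'$ to pass through $z$ along a single contiguous arc of faces, which makes the reroute well defined and lets the length increment be read off as at most $2\nu(\mathscr{G})$; the $|A|=1$ base case supplies the additive $2\kappa(\mathscr{G})$.

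Two points on your outline. First, the boundary of the unbounded face of your $\widetilde{A}$ need not be a \emph{simple} Jordan curve---it can have cut vertices---so the single-circuit step should be argued as a single closed edge-walk rather than via a Jordan-curve statement; this is doable but needs more than just the connectedness of $\partial_{\infty,*}^{\mathrm{o}}A$ from Lemma~\ref{lemma2.2}(2). Second, for Condition~(3) your option~(b) is essentially the paper's induction, but you are missing its key ingredient, Lemma~\ref{lemma_exist_marginal}: without choosing the removed vertex to be marginal, $\eta'$ may traverse $z$ along several disjoint arcs and the reroute/length computation is no longer the one-line estimate you anticipate. Your option~(a) is attractive, but the face-count $\xi(\mathscr{G})-k_\mathcal{S}-m_\mathcal{S}$ tacitly assumes every non-$A$ vertex of an $A$-touching face lies in $A_\infty^c$, which can fail once $A$ has holes, and the control of $\sum_\mathcal{S}m_\mathcal{S}$ is precisely where the hard geometry sits.
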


\begin{remark}[An operable way to construct a surrounding loop] 
	Although Lemma \ref{lemma_exist_sl} ensures that there exists at least one edge circuit which satisfies all conditions in Definition \ref{def_sl}, it remains unclear how it forms. In fact, we can construct a surrounding loop recursively in the following way. For any $*$-connected $A\subset \mathfs{V}$ and $x\in \partial_{\infty,*}^{\mathrm{o}}A$, we start from an oriented edge $\vec{e}_{x,y}$ satisfying $y\in \partial_{\infty,*}^{\mathrm{o}}A$ and Condition (2) in Definition \ref{def_sl} (i.e. there exists a face $\mathcal{S}$ such that $\vec{e}_{x,y}\in \vec{\mathbf{e}}(\mathcal{S})$ and $\mathbf{v}(\mathcal{S})\cap A\neq \emptyset$). Suppose that we already construct the first $k$ steps of the path, which we denote by $(v_{0},...,v_{k})$. For the next step, we enumerate the vertices in $N(v_{k})$ in the counter-clockwise direction as $w_1,...,w_{\mathrm{deg}(v_k)}$, where $w_1$ is the vertex following $v_{k-1}$ within $\mathbf{v}(\mathcal{S})$ in the counter-clockwise direction, and $w_{\mathrm{deg}(v_k)}=v_{k-1}$.	Let $i_{\dagger}$ be the smallest integer in $[1,\mathrm{deg}(v_k)]$ such that $w_{i_{\dagger}}\in \partial_{\infty,*}^{\mathrm{o}}A$. If $\vec{e}_{v_{k},w_{i_{\dagger}}}=\vec{e}_{v_{j},v_{j+1}}$ for some $0\le j\le k-1$, stop the construction and take $\eta=(v_{0},...,v_{k})$; otherwise, let $v_{k+1}:=w_{i_{\dagger}}$ and continue the construction. In fact, we can prove that this $\eta$ is an edge circuit satisfying all the conditions in Definition \ref{def_sl}, and hence is a surrounding loop (however, this approach would make the proof more complicated). See Figure \ref{fig:surround loop} for some examples of surrounding loops constructed in this way. Readers may refer to \cite{biskup2015isoperimetry} for a similar object called ``rightmost paths''.

	\end{remark}

\include*{tikz_surrounding_loop}

To prove Lemma \ref{lemma_exist_sl}, we need the so-called ``marginal vertex'', which is introduced in \cite{psi} and is defined as follows.

\begin{definition}[marginal vertex]
	For any $*$-connected $A\subset \mathfs{V}$ and $z\in A$, $z$ is a marginal vertex (of $A$) if $N_{\infty}^A(z)$ is connected in $N_{\infty,*}^A(z)$.
\end{definition}

Note that $z\in A$ is a marginal vertex of $A$ when $N_{\infty}^A(z)= \emptyset$ (since $\emptyset$ is considered connected). For $\mathbb{Z}^2$, \cite[Lemma 3.2] {psi} shows that any non-empty, $*$-connected $A$ contains at least one vertex $z$ such that $A\setminus \{z\}$ is still $*$-connected. Moreover, \cite[Lemma 3.3] {psi} proves that such a vertex $z$ is marginal. These results are extended to all planar graphs in \cite[Remark 3.8]{psi}. To sum up, we have

\begin{lemma}\label{lemma_exist_marginal}
	For any non-empty, $*$-connected $A\subset \mathfs{V}$, there exists a marginal vertex $z\in A$ such that $A\setminus \{z\}$ is $*$-connected. 
\end{lemma}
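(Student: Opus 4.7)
My plan is to prove Lemma \ref{lemma_exist_marginal} in two conceptually separate steps: first establish the existence of a removable vertex (a $z\in A$ such that $A\setminus\{z\}$ remains $*$-connected) via a spanning-tree argument, then show that every removable vertex is automatically marginal via a planar-topology argument. The base case $|A|=1$ is immediate: $A\setminus\{z\}=\emptyset$ is $*$-connected by convention, and the sole vertex $z$ is marginal because $N_\infty^A(z)=N(z)$ is ordinary-connected inside $N_{\infty,*}^A(z)=N_*(z)$. Indeed, in each of $\mathbb{Z}^2$, $\mathscr{T}$ and $\mathscr{H}$ the set $N_*(z)$ forms a single ordinary-adjacent cycle around $z$: consecutive $*$-neighbors of $z$ are at Euclidean distance one, as one checks directly from the regular tessellation relation \eqref{final_3.2} and the local cyclic arrangement of faces around $z$.

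For $|A|\ge 2$, the first step is short: take any spanning tree $T$ of the graph on $A$ whose edges record $*$-adjacency (which exists since $A$ is $*$-connected) and let $z$ be a leaf of $T$. Then $T\setminus\{z\}$ is still a spanning tree of $A\setminus\{z\}$ in the $*$-adjacency graph, so $A\setminus\{z\}$ is $*$-connected, giving removability. This step alone does not deliver marginality, because $z$ may well have additional $*$-neighbors in $A$ besides its unique parent in $T$, so in general $W:=N_*(z)\cap A$ can consist of several vertices.

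The second step, and the main obstacle, is to show that any removable $z$ is marginal. I would study the arcs obtained by removing $W$ from the cyclic arrangement of $N_*(z)$ around $z$. Each arc is an ordinary-adjacent path lying in $A^c$, hence sits entirely in a single connected component of $A^c$. The crucial claim is that at most one of these arcs lies in $A^c_\infty$; granted this, the unique exterior arc $\mathcal{C}_\infty$ contains every ordinary neighbor of $z$ in $A^c_\infty$ and is ordinary-adjacent-connected inside $N_{\infty,*}^A(z)$, so $N_\infty^A(z)$ is connected in $N_{\infty,*}^A(z)$ and $z$ is marginal; and if no arc lies in $A^c_\infty$, then $N_\infty^A(z)=\emptyset$ and marginality holds vacuously. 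To prove the claim I would argue by contradiction using the planar embedding $\mathbf{I}$: if two arcs $\mathcal{C}_1,\mathcal{C}_2\subset A^c_\infty$ both met $A^c_\infty$ from around $z$, I would draw a Jordan arc in $\mathbb{R}^2\setminus \mathbf{I}(A\setminus\{z\})$ joining an interior point of $\mathcal{C}_1$ to one of $\mathcal{C}_2$ through $A^c_\infty$, and close it by a short segment passing through $\mathbf{I}(z)$; the resulting Jordan curve is disjoint from $\mathbf{I}(A\setminus\{z\})$ but separates the two $W$-vertices bordering $\mathcal{C}_1$ and $\mathcal{C}_2$ on opposite sides into different components of its complement, contradicting the $*$-connectedness of $A\setminus\{z\}$. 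Making this Jordan-curve argument uniform across the three lattices is the delicate point of the proof; I would organize it using the cyclic listing of faces at $z$ and the relation \eqref{final_3.2}, identifying precisely which faces the closing segment must traverse to respect the $*$-adjacency structure.
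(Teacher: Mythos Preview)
Your two-step strategy---first produce a removable vertex via a spanning-tree leaf, then show that any removable vertex is marginal via a planar separation argument---is exactly the approach the paper invokes. The paper does not give an independent proof of Lemma~\ref{lemma_exist_marginal}; it simply cites \cite[Lemmas~3.2 and~3.3]{psi} for the two steps and \cite[Remark~3.8]{psi} for the extension to general planar lattices, which is precisely the decomposition you outline. Your sketch is correct and matches that reference; the Jordan-curve argument you describe for ``removable $\Rightarrow$ marginal'' is the standard one, and your identification of the cyclic structure of $N_*(z)$ and the role of the arcs in $N_*(z)\setminus W$ is accurate for all three lattices.
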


In what follows, we construct the surrounding loop recursively. Before showing the details of the construction, we first provide an overview as follows. For any $*$-connected $A\subset \mathfs{V}$ with $|A|\ge 2$, let $z$ be a marginal vertex found by Lemma \ref{lemma_exist_marginal}. Suppose that we already construct a surrounding loop $\eta'$ for $A\setminus \{z\}$. If $N_{\infty}^A(z)= \emptyset$, it follows from Definition \ref{def_sl} that $\eta'$ is also a surrounding loop of $A$. If $N_{\infty}^A(z)\neq \emptyset$, we first prove that $\eta'$ must intersect $z$. When $\eta'$ reaches the neighbor of $z$, instead of traversing $z$, we force it to bypass $z$, thereby obtaining a circuit surrounding $A$. Due to the marginality of $z$, we can show that such a modification for the surrounding loop indeed preserves all properties of surrounding loops.

Now we provide the details of the construction. Suppose that $A\subset \mathfs{V}$ ($|A|\ge 2$) is $*$-connected, and $z$ is a marginal vertex of $A$ such that $A\setminus \{z\}$ is $*$-connected (the existence of such $z$ is ensured by Lemma \ref{lemma_exist_marginal}). As described in the overview, we only need to consider the case  $N_{\infty}^{A}(z)\neq \emptyset$. We enumerate $N(z)$ in the clockwise direction as $\{z_i\}_{i=1}^{\mathrm{deg}(\mathscr{G})}$. For convenience, for $1\le i\le \mathrm{deg}(\mathscr{G})$, we denote $z_j:= z_i$ for $j\equiv i \pmod{\mathrm{deg}(\mathscr{G})}$. Since $z$ is a marginal vertex of $A$, there exist $k_1,k_2\in [1,2\mathrm{deg}(\mathscr{G})-2]$ with $0\le k_2-k_1\le \mathrm{deg}(\mathscr{G})-1$ such that the following holds:
\begin{enumerate}
	\item  $z_i\in \partial^{\mathrm{o}}_{\infty}A$ if and only if $k_1\le i\le k_2$;

	\item    For each $k_1\le i\le k_2-1$, there exist a path $\eta_z^i$ and a face $\mathcal{S}_i$ such that 
	\begin{equation}\label{7.1}
		\eta_z^i(0)=z_i,\ \eta_z^i(-1)=z_{i+1},\ z\in \mathbf{v}(\mathcal{S}_i),\ \mathbf{R}^{\mathrm{e}}(\eta_z^i)\subset \vec{\mathbf{e}}(\mathcal{S}_i)\ \text{and}\ \mathbf{R}^{\mathrm{v}}(\eta_z^i)\subset \partial^{\mathrm{o}}_{\infty,*}A.
	\end{equation}
	Intuitively speaking, $\eta_z^i$ is the path connecting $z_i$ and $z_{i+1}$ within the unique face that includes $\{z,z_i,z_{i+1}\}$.

	\item  Let $\mathcal{S}_{k_1-1}$ (resp. $\mathcal{S}_{k_2}$) be the unique face satisfying $\vec{e}_{z_{k_1},z}\in \vec{\mathbf{e}}(\mathcal{S}_{k_1-1})$ (resp. $\vec{e}_{z,z_{k_2}}\in \vec{\mathbf{e}}(\mathcal{S}_{k_2})$). Then $\mathbf{v}(\mathcal{S}_{k_1-1})$ and $\mathbf{v}(\mathcal{S}_{k_2})$ both intersect $A':=A\setminus \{z\}$.

\end{enumerate}
Without loss of generality, we assume $k_1=1$ and $k_2=k'$ for some $k'\in [1,\mathrm{deg}(\mathscr{G})]$.

 \begin{lemma}\label{lemma7.3}
	Keep the notations above. Then we have
\begin{enumerate}
	\item $\{z,z_1,z_{k'}\}\subset \partial_{\infty,*}^{\mathrm{o}}A'$;

	\item For any $v\in (\partial_{\infty,*}^{\mathrm{o}}A')\setminus (\{z\}\cup \partial_{\infty,*}^{\mathrm{o}}A)$, $v$ and $\partial_{\infty,*}^{\mathrm{o}}A$ are not connected by $(\partial_{\infty,*}^{\mathrm{o}}A')\setminus \{z\}$. 

\end{enumerate}
\end{lemma}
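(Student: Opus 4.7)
The plan is to derive both items directly from the definition of $\partial^{\mathrm{o}}_{\infty,*}$ together with the observation that $(A')^c = A^c \cup \{z\}$, so removing $z$ alters the complement by exactly one vertex. This key identity makes the statement essentially one about how a single-vertex deletion can only merge one finite ``hole'' of $A^c$ with $A^c_\infty$, and only through a neighborhood of $z$.

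For Item (1), I would verify membership in $(A')^c_\infty$ and $*$-adjacency to $A'$ separately for each of $z$, $z_1$, $z_{k'}$. The vertices $z_1$ and $z_{k'}$ already lie in $A^c_\infty \subset (A')^c_\infty$ by the choice $k_1 = 1$, $k_2 = k'$; and $z$ is adjacent to $z_1 \in A^c_\infty \subset (A')^c_\infty$, so any infinite self-avoiding path from $z_1$ in $A^c$ remains in $(A')^c$, giving $z \in (A')^c_\infty$. For the required $*$-adjacency to $A'$, I would invoke the two boundary faces: by hypothesis $\mathbf{v}(\mathcal{S}_{k_1-1}) \cap A' \neq \emptyset$ and $\mathbf{v}(\mathcal{S}_{k_2}) \cap A' \neq \emptyset$, and since $z, z_1 \in \mathbf{v}(\mathcal{S}_{k_1-1})$ while $z, z_{k'} \in \mathbf{v}(\mathcal{S}_{k_2})$, and vertices sharing a face are pairwise $*$-adjacent, each of $z, z_1, z_{k'}$ is $*$-adjacent to some element of $A'$. (For $z$ the $*$-connectedness of $A$ together with $|A|\ge 2$ already supplies a neighbor in $A'$.)

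The crucial observation for Item (2) is the following: any $v \in \partial^{\mathrm{o}}_{\infty,*} A'$ is, in particular, $*$-adjacent to some vertex of $A' \subset A$, so if we additionally had $v \in A^c_\infty$, then by the definition of $\partial^{\mathrm{o}}_{\infty,*} A$ we would conclude $v \in \partial^{\mathrm{o}}_{\infty,*} A$. Contrapositively, every $v \in (\partial^{\mathrm{o}}_{\infty,*} A') \setminus (\{z\} \cup \partial^{\mathrm{o}}_{\infty,*} A)$ satisfies $v \in (A')^c \setminus \{z\} = A^c$ but $v \notin A^c_\infty$, so $v$ sits in some finite cluster of $A^c$.

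I would then prove (2) by contradiction. Suppose there is a path $v = v_0 \sim v_1 \sim \cdots \sim v_m$ with $v_m \in \partial^{\mathrm{o}}_{\infty,*} A$, all $v_i \in (\partial^{\mathrm{o}}_{\infty,*} A') \setminus \{z\}$. The observation above gives $v_0 \notin A^c_\infty$, while $v_m \in A^c_\infty$, so there is a largest $i_\star \in \{0, \ldots, m-1\}$ with $v_{i_\star} \notin A^c_\infty$, and $v_{i_\star+1} \in A^c_\infty$. Since neither vertex equals $z$, both $v_{i_\star}$ and $v_{i_\star+1}$ belong to $A^c$, and being adjacent they lie in the same cluster of $A^c$; this contradicts $v_{i_\star} \notin A^c_\infty$ and $v_{i_\star+1} \in A^c_\infty$.

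I do not expect a real obstacle: the argument is essentially bookkeeping involving $(A')^c \setminus A^c = \{z\}$ and the two definitions of boundary. The only point requiring care is to keep the roles of $\sim$ and $\sim_*$ straight, using $*$-adjacency for membership in $\partial^{\mathrm{o}}_{\infty,*}$ but ordinary adjacency for the connectedness path in Item (2), where the contradiction arises from adjacent vertices of $A^c$ necessarily belonging to a common cluster.
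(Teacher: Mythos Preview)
Your proposal is correct and follows essentially the same approach as the paper. Both arguments use the identity $(A')^c\setminus\{z\}=A^c$ to reduce Item~(2) to a connectivity statement in $A^c$: the paper phrases it as ``the assumed path lies in $A^c$ and ends in $A^c_\infty$, hence $v\in A^c_\infty$'', while you locate the specific adjacent pair $v_{i_\star}\sim v_{i_\star+1}$ where cluster membership would have to jump; the content is the same, and your treatment of Item~(1) via the boundary faces $\mathcal{S}_{k_1-1},\mathcal{S}_{k_2}$ matches the paper's proof exactly.
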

\begin{proof}
	We first prove Item (1). Since $N_{\infty}^{A}(z)\neq \emptyset$, there exists $w\in A_{\infty}^c\subset (A')_{\infty}^c$ such that $w\sim z$,  which implies $z\in (A')_{\infty}^c$. Moreover, since $A$ is $*$-connected, there exists $y\in A'$ such that $y\sim_* z$, and thus $z\in \partial_{\infty,*}^{\mathrm{o}}A'$. For $z_1$, by Condition (3) for $\{z_i\}_{i=1}^{k'}$, we know that $\mathbf{v}(\mathcal{S}_{0})$ contains both $z_1$ and some vertex $v\in A'$. Therefore, since $z_1\sim_* v\in A'$ and $z_1\in (A')_{\infty}^c$ (which follows from $z_1\sim z$ and $z\in \partial_{\infty,*}^{\mathrm{o}}A'$), we get $z_1\in \partial_{\infty,*}^{\mathrm{o}}A'$. For the same reason, we also have $z_{k'}\in \partial_{\infty,*}^{\mathrm{o}}A'$.

	We prove Item (2) by contradiction. Suppose that $v\in (\partial_{\infty,*}^{\mathrm{o}}A')\setminus (\{z\}\cup \partial_{\infty,*}^{\mathrm{o}}A)$ and $\partial_{\infty,*}^{\mathrm{o}}A$ are connected by $(\partial_{\infty,*}^{\mathrm{o}}A')\setminus \{z\} \subset A^c$. Therefore, there exists a path within $A^c$ connecting $v$ and $\partial_{\infty,*}^{\mathrm{o}}A$. Thus, we have $v\in A_\infty^c$. Meanwhile, since $v\in \partial_{\infty,*}^{\mathrm{o}}A'$, we know that $v$ ($\neq z$) is $*$-adjacent to $A'$ and thus is also $*$-adjacent to $A$. Combined with $v\in A_\infty^c$, it yields that $v\in \partial_{\infty,*}^{\mathrm{o}}A$, which is contradictory to $v\in (\partial_{\infty,*}^{\mathrm{o}}A')\setminus (\{z\}\cup \partial_{\infty,*}^{\mathrm{o}}A)$.
\end{proof}

Suppose that $\eta'$ is a surrounding loop of $A'$. Since $\mathbf{R}^{\mathrm{v}}(\eta')= \partial_{\infty,*}^{\mathrm{o}}A'$ (by Condition (2) in Definition \ref{def_sl}) contains $z$, $z_1$ and $z_{k'}$ (by Item (1) of Lemma \ref{lemma7.3}), where $z_1,z_{k'}\in \partial_{\infty,*}^{\mathrm{o}}A$, we know that $\eta'$ intersects both $z$ and $\partial_{\infty,*}^{\mathrm{o}}A$.

\begin{lemma}\label{lemma_final_7.11}
	$\mathbf{R}^{\mathrm{e}}(\eta')$ contains an oriented edge $\vec{e}_{z,w}$ with $w\in \partial_{\infty,*}^{\mathrm{o}}A$.
\end{lemma}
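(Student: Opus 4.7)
The plan is to identify a segment of the circuit $\eta'$ that begins at $z$, reaches the vertex $z_1\in \partial_{\infty,*}^{\mathrm{o}}A$, and stays inside $\partial_{\infty,*}^{\mathrm{o}}A'\setminus\{z\}$ after its first step, and then to invoke Item~(2) of Lemma~\ref{lemma7.3} to force the very first step itself to land in $\partial_{\infty,*}^{\mathrm{o}}A$.

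First I set up the excursion. Since $z\in \partial_{\infty,*}^{\mathrm{o}}A'=\mathbf{R}^{\mathrm{v}}(\eta')$ by Item~(1) of Lemma~\ref{lemma7.3}, I may cyclically shift $\eta'$ so that $\eta'(0)=\eta'(\mathbf{L}(\eta'))=z$; this operation preserves $\mathbf{R}^{\mathrm{e}}(\eta')$. Because $z_1\in \partial_{\infty,*}^{\mathrm{o}}A\subset \partial_{\infty,*}^{\mathrm{o}}A'=\mathbf{R}^{\mathrm{v}}(\eta')$, there exists $t_0\in [1,\mathbf{L}(\eta')-1]$ with $\eta'(t_0)=z_1$. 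I define $s:=\max\{i\in [0,t_0-1]:\eta'(i)=z\}$ and $w_0:=\eta'(s+1)$, so that $\vec{e}_{z,w_0}\in \mathbf{R}^{\mathrm{e}}(\eta')$. By the maximality of $s$, no interior vertex of the sub-path $\eta'[s+1,t_0]$ equals $z$, so all its vertices lie in $\mathbf{R}^{\mathrm{v}}(\eta')\setminus\{z\}=\partial_{\infty,*}^{\mathrm{o}}A'\setminus\{z\}$; in particular, this sub-path realizes a connection of $w_0$ to $z_1\in \partial_{\infty,*}^{\mathrm{o}}A$ inside $\partial_{\infty,*}^{\mathrm{o}}A'\setminus\{z\}$.

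To conclude, I argue $w_0\in \partial_{\infty,*}^{\mathrm{o}}A$, which yields the required edge with $w:=w_0$. Note first that $w_0\neq z$, since $\vec{e}_{z,z}$ is not an edge of $\mathscr{G}$. If $w_0\notin \partial_{\infty,*}^{\mathrm{o}}A$, then $w_0\in (\partial_{\infty,*}^{\mathrm{o}}A')\setminus(\{z\}\cup \partial_{\infty,*}^{\mathrm{o}}A)$. But the sub-path $\eta'[s+1,t_0]$ connects $w_0$ to $z_1\in \partial_{\infty,*}^{\mathrm{o}}A$ through $(\partial_{\infty,*}^{\mathrm{o}}A')\setminus\{z\}$, directly contradicting Item~(2) of Lemma~\ref{lemma7.3}. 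Hence $w_0\in \partial_{\infty,*}^{\mathrm{o}}A$, as desired.

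The only real subtlety is arranging the indices so that the chosen sub-path genuinely avoids $z$ in its interior while still reaching $z_1$; this is precisely why the excursion must be taken between the \emph{last} visit of $\eta'$ to $z$ before time $t_0$ and the time $t_0$ itself. Once this indexing is in place, Lemma~\ref{lemma7.3}(2) closes the argument in a single line.
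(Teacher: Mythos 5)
Your argument is correct and rests on the same key input as the paper's proof, namely Item~(2) of Lemma~\ref{lemma7.3}. The paper argues by contradiction---assuming no edge $\vec{e}_{z,w}$ with $w\in\partial_{\infty,*}^{\mathrm{o}}A$ exists and then deducing that the circuit would have to contain an edge from $(\partial_{\infty,*}^{\mathrm{o}}A')\setminus(\{z\}\cup\partial_{\infty,*}^{\mathrm{o}}A)$ directly into $\partial_{\infty,*}^{\mathrm{o}}A$, contradicting Lemma~\ref{lemma7.3}(2)---while you make the argument constructive by pinpointing the last visit to $z$ before the circuit reaches $z_1$ and showing that the very next vertex must already lie in $\partial_{\infty,*}^{\mathrm{o}}A$. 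Both are essentially the same approach, and yours has the mild advantage of handling the degenerate case $\partial_{\infty,*}^{\mathrm{o}}A'=\{z\}\cup\partial_{\infty,*}^{\mathrm{o}}A$ uniformly rather than as a separate easy case.
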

\begin{proof}
	When $\partial_{\infty,*}^{\mathrm{o}}A'=\{z\}\cup \partial_{\infty,*}^{\mathrm{o}}A$, this lemma directly follows from $\mathbf{R}^{\mathrm{v}}(\eta')=\partial_{\infty,*}^{\mathrm{o}}A'$. When $\partial_{\infty,*}^{\mathrm{o}}A'\supsetneq\{z\}\cup \partial_{\infty,*}^{\mathrm{o}}A$, we prove this lemma by contradiction. Suppose that such $\vec{e}_{z,w}$ does not exist. Thus, since $\mathbf{R}^{\mathrm{v}}(\eta')$ is connected and intersects both $z$ and $\partial_{\infty,*}^{\mathrm{o}}A$, the edge circuit $\eta'$ must include an oriented edge $\vec{e}_{v,v'}$ such that $v\in (\partial_{\infty,*}^{\mathrm{o}}A')\setminus (\{z\}\cup \partial_{\infty,*}^{\mathrm{o}}A)$ and $v'\in \partial_{\infty,*}^{\mathrm{o}}A$. However, this implies that $v\in (\partial_{\infty,*}^{\mathrm{o}}A')\setminus (\{z\}\cup \partial_{\infty,*}^{\mathrm{o}}A)$ and $\partial_{\infty,*}^{\mathrm{o}}A$ are connected by $(\partial_{\infty,*}^{\mathrm{o}}A')\setminus \{z\}$, which is contradictory to Item (2) of Lemma \ref{lemma7.3}. Now we complete the proof.	\end{proof}

By Lemma \ref{lemma_final_7.11}, without loss of generality, we can further assume that $\eta'(0)=z$ and $\eta'(1)\in \partial_{\infty,*}^{\mathrm{o}}A$ (if not, we only need to apply a time-shift to $\eta'$ such that $\vec{e}_{\eta'(0),\eta'(1)}$ exactly equals the oriented edge $\vec{e}_{z,w}$ found by Lemma \ref{lemma_final_7.11}).

Recall the paths $\eta_z^{i}$ for $1\le i\le k'-1$ in (\ref{7.1}).

\begin{lemma}\label{lemma_7.4}
Keep the notations above. Let $t':= \{t\ge 1: \eta'(t)=z\}$ be the first time when $\eta'$ hits $z$ after the first step (where $t'\le \mathbf{L}(\eta')$ since $\eta'$ is an edge circuit). Then we have
	\begin{enumerate}
		\item  $\eta'(t'-1)=z_1$ and $\eta'(1)=z_{k'}$;

		\item $\eta'':=\eta'[1,t'-1]\circ \eta_z^{1}\circ ... \circ \eta_z^{k'-1}$ is an edge circuit;

		\item $\mathbf{R}^{\mathrm{v}}(\eta'')=\partial_{\infty,*}^{\mathrm{o}}A$;

		\item  $\mathbf{L}(\eta'')\le \mathbf{L}(\eta')+2\nu(\mathscr{G}) $. 
		
	\end{enumerate}  
\end{lemma}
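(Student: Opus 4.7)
The plan is to prove items (1), (2), (4), (3) in that order, using two structural facts throughout: each oriented edge of $\mathscr{G}$ lies in the clockwise boundary $\vec{\mathbf{e}}(\mathcal{S})$ of a unique face $\mathcal{S}$ (so that Condition (2) of Definition \ref{def_sl} pins down the witnessing face), and the setup gives $\mathbf{v}(\mathcal{S}_i) \cap A = \{z\}$ for every $i \in \{1,\ldots,k'-1\}$ (because $\mathbf{R}^{\mathrm{v}}(\eta_z^i) \subset \partial_{\infty,*}^{\mathrm{o}} A$) while both $\mathbf{v}(\mathcal{S}_0)$ and $\mathbf{v}(\mathcal{S}_{k'})$ meet $A'$ by Condition (3) of the setup.

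For Item (1), I would observe that if $\eta'(1) = z_i$ then $\vec{e}_{z,z_i} \in \vec{\mathbf{e}}(\mathcal{S}_i)$, and Condition (2) of Definition \ref{def_sl} forces $\mathbf{v}(\mathcal{S}_i) \cap A' \neq \emptyset$; this fails for $i \in \{1,\ldots,k'-1\}$, so combining with $\eta'(1) \in \partial_{\infty,*}^{\mathrm{o}} A \cap N(z) = \{z_1,\ldots,z_{k'}\}$ from Lemma \ref{lemma_final_7.11} yields $\eta'(1) = z_{k'}$; the symmetric analysis of $\vec{e}_{\eta'(t'-1),z} \in \vec{\mathbf{e}}(\mathcal{S}_{j-1})$ gives $\eta'(t'-1) = z_1$. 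Item (2) then follows: closure is immediate ($\eta''(0) = z_{k'} = \eta''(-1)$), and distinctness of oriented edges will be checked within $\eta'[1,t'-1]$ (circuit property), among the $\{\eta_z^i\}$ (clockwise boundaries of distinct faces are disjoint), and between the two families (any shared edge would sit in some $\vec{\mathbf{e}}(\mathcal{S}_i)$ with $\mathbf{v}(\mathcal{S}_i) \cap A' = \emptyset$, contradicting Condition (2) for $\eta'$).

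For Item (4), I would compute $\mathbf{L}(\eta'') = (t'-2) + (k'-1)(\xi(\mathscr{G})-2)$ and, using $t' \leq \mathbf{L}(\eta')$, reduce to verifying $(k'-1)(\xi(\mathscr{G})-2) \leq 2\nu(\mathscr{G}) + 2$ on each lattice. On $\mathbb{Z}^2$ and $\mathscr{H}$ the bound $k' \leq \mathrm{deg}(\mathscr{G})$ suffices, while on $\mathscr{T}$ the equivalence $\sim_* \equiv \sim$ combined with $*$-connectivity of $A$ and $|A| \geq 2$ forces a lattice neighbor of $z$ into $A$, giving the sharper $k' \leq \mathrm{deg}(\mathscr{T}) - 1 = 5$; the numerical check is then routine.

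Item (3) is the main obstacle, because $\partial_{\infty,*}^{\mathrm{o}} A$ and $\partial_{\infty,*}^{\mathrm{o}} A'$ genuinely differ in two ways: passing to $A'$ can merge finite components of $A^c$ into the infinite one, and vertices whose only $*$-neighbor in $A$ is $z$ drop out of $\partial_{\infty,*}^{\mathrm{o}} A'$. For $\mathbf{R}^{\mathrm{v}}(\eta'') \subseteq \partial_{\infty,*}^{\mathrm{o}} A$, the $\eta_z^i$-vertices are in $\partial_{\infty,*}^{\mathrm{o}} A$ by the setup, while for $\eta'[1,t'-1]$ I would use that any path in $A^c$ stays in one component of $A^c$, so starting at $z_{k'} \in A_\infty^c$ and avoiding $z$ keeps the subpath inside $A_\infty^c$, hence in $\partial_{\infty,*}^{\mathrm{o}} A$. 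For the reverse inclusion, take $v \in \partial_{\infty,*}^{\mathrm{o}} A$: if $v$ has a $*$-neighbor in $A'$ then $v \in \mathbf{R}^{\mathrm{v}}(\eta') \setminus \{z\}$, and I would show any visit after time $t'$ must sit inside a subpath trapped in a finite component of $A^c$ (since the outgoing edge at time $t'$ is $\vec{e}_{z,z_{i_2}}$ with $i_2 \in \{k'+1,\ldots,d\}$ by Item (1)'s analysis, placing $z_{i_2}$ in such a finite hole), contradicting $v \in A_\infty^c$, so $v$ is visited in $[1,t'-1]$; if the only $*$-neighbor of $v$ in $A$ is $z$, then $v$ lies in some face $\mathcal{S}$ of $z$ with $\mathbf{v}(\mathcal{S}) \cap A = \{z\}$, and a lattice-by-lattice exclusion of $\mathcal{S} \in \{\mathcal{S}_0, \mathcal{S}_{k'}\}$ (these intersect $A'$) and of $\mathcal{S}_i$ for $i \in \{k'+1,\ldots,d-1\}$ (an $A_\infty^c$-vertex adjacent to a finite-hole $z_j$ in the same face would force a third face-vertex into $A$, since adjacent $A^c$-vertices share a component) pins $\mathcal{S} = \mathcal{S}_i$ for some $i \in \{1,\ldots,k'-1\}$, so $v \in \mathbf{R}^{\mathrm{v}}(\eta_z^i)$.
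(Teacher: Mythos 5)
Your proposal is correct and follows essentially the same route as the paper: for Item (1) you identify the witnessing faces and use Condition~(2) of Definition~\ref{def_sl} to exclude interior $z_i$'s; for Item~(2) you split distinctness into the same three sub-checks; for Item~(4) you arrive at $\mathbf{L}(\eta'')=(t'-2)+(k'-1)(\xi(\mathscr{G})-2)$ and verify $(k'-1)(\xi-2)\le 2\nu(\mathscr{G})+2$, which is exactly what the paper gets (expressed through the identity $[\xi-2][\deg-\mathbbm{1}_{\mathscr{T}}-1]-2=2\nu$), using Lemma~\ref{new7.1} for the sharper $k'$ bound on~$\mathscr{T}$. For Item~(3) your case split is the same return-time argument the paper uses when $v$ has a $*$-neighbor in $A'$, but you additionally spell out the case where $z$ is the only $*$-neighbor of $v$ in $A$ (placing $v$ in some $\mathbf{R}^{\mathrm{v}}(\eta_z^i)$ by excluding the faces $\mathcal{S}_0,\mathcal{S}_{k'}$ and the ones behind $z$); the paper reduces to the inclusion (\ref{final_add_7.5}) without making this second case explicit, so your treatment is, if anything, the more complete one.
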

\begin{proof}
	For Item (1), since $\mathbf{R}^{\mathrm{v}}(\eta'[1,t'-1])$ is disjoint from $A'$ and does not contain $z$ (by the minimality of $t'$), we have $\mathbf{R}^{\mathrm{v}}(\eta'[1,t'-1])\subset A^c$. Moreover, $\mathbf{R}^{\mathrm{v}}(\eta'[1,t'-1])$ is connected and contains $\eta'(1) \in \partial_{\infty,*}^{\mathrm{o}}A$. As a result, one has $\mathbf{R}^{\mathrm{v}}(\eta'[1,t'-1])\subset \partial_{\infty,*}^{\mathrm{o}}A$ and hence, $\eta'(1),\eta'(t'-1)\in N_{\infty}^{A}(z)= \{z_i\}_{i=1}^{k'}$. For each $2\le i\le k'$, note that $\mathcal{S}_i$ defined in (\ref{7.1}) is exactly the unique face satisfying $\vec{e}_{z_i,z}\in \vec{\mathbf{e}}(\mathcal{S}_i)$. Therefore, since $\mathbf{v}(\mathcal{S}_i)=\mathbf{R}^{\mathrm{e}}(\eta_z^i)\cup \{z\}\subset (A')^c$, it follows from Condition (2) in Definition \ref{def_sl} that $\vec{e}_{z_i,z}\notin \mathbf{R}^{\mathrm{e}}(\eta')$, which implies that $\eta'(t'-1)\neq z_i$ for $2\le i\le k'$ (noting that $\vec{e}_{\eta'(t'-1),z}=\vec{e}_{\eta'(t'-1),\eta'(t')}\in \mathbf{R}^{\mathrm{e}}(\eta')$). Combined with $\eta'(t'-1)\in \{z_i\}_{i=1}^{k'}$, it yields $\eta'(t'-1)=z_1$. For the same reason, we can also show that $\vec{e}_{z,z_i}\notin \mathbf{R}^{\mathrm{e}}(\eta')$ for $1\le i\le k'-1$, and thus obtain $\eta'(1)=z_{k'}$.

	For Item (2), by Item (1) and (\ref{7.1}) one has 
	$$\eta''(0)=\eta'(1)=z_{k'}=\eta_z^{k'-1}(-1)=\eta''(-1).$$ 
	Moreover, since $\eta'$ is an edge circuit, $\eta'[1,t'-1]$ does not traverse an edge more than once. In addition, for any $1\le i\le k'-1$ and $\vec{e}\in \mathbf{R}^{\mathrm{e}}(\eta_z^{i})$, (\ref{7.1}) implies that $\mathcal{S}_i$ is the unique face satisfying $\vec{e}\in \vec{\mathbf{e}}(\mathcal{S}_i)$. Thus, since $\mathbf{v}(\mathcal{S}_i)\cap A'=\emptyset$ (recalling that $\mathbf{v}(\mathcal{S}_i)\cap A= \{z\}$), it follows from Condition (2) in Definition \ref{def_sl} that such $\vec{e}\notin \mathbf{R}^{\mathrm{e}}(\eta')$. As a result, $\eta'[1,t'-1]$ does not traverse any edge in $\mathbf{R}^{\mathrm{e}}(\eta_z^{1}\circ ... \circ \eta_z^{k'-1})$. Meanwhile, it directly follows from (\ref{7.1}) that $\eta_z^{1}\circ ... \circ \eta_z^{k'-1}$ does not traverse the same edge more than once. To sum up, we conclude that $\eta''$ is an 	edge circuit.

	Now we prove Item (3). Since $\mathbf{R}^{\mathrm{v}}(\eta'[0,t'-1])\subset \partial_{\infty,*}^{\mathrm{o}}A$ (recalling the proof of Item (1)) and $\cup_{i=1}\mathbf{R}^{\mathrm{v}}(\eta_z^i)\subset \partial^{\mathrm{o}}_{\infty,*}A $ (by (\ref{7.1})), we have $\mathbf{R}^{\mathrm{v}}(\eta'')\subset\partial_{\infty,*}^{\mathrm{o}}A$. Therefore, since $\mathbf{R}^{\mathrm{v}}(\eta')=\partial_{\infty,*}^{\mathrm{o}}A'$ (by Condition (1) in Definition \ref{def_sl}), it remains to show
		\begin{equation}\label{final_add_7.5}
		\mathbf{R}^{\mathrm{v}}(\eta')\setminus \mathbf{R}^{\mathrm{v}}(\eta'')\subset (\partial_{\infty,*}^{\mathrm{o}}A')\setminus (\partial_{\infty,*}^{\mathrm{o}}A).
	\end{equation}
	In fact, by the definition of $\eta''$, one has 
	\begin{equation}\label{final_add_7.6}
		\mathbf{R}^{\mathrm{v}}(\eta')\setminus \mathbf{R}^{\mathrm{v}}(\eta'') \subset \mathbf{R}^{\mathrm{v}}(\eta'[t',\mathbf{L}(\eta')]).
	\end{equation}
	We define a sequence of return times $\{\tau_j\}_{1\le j\le m}$ as follows. Let $\tau_0=t'$. For $j\ge 0$, if $\tau_{j}=\mathbf{L}(\eta')$, we stop the construction; otherwise, define $\tau_{j+1}:=\{t>\tau_{j}:\eta'(t)=z\}$. It directly follows from the construction that 
	\begin{equation}\label{final_add_7.7}
		\mathbf{R}^{\mathrm{v}}(\eta'[t',\mathbf{L}(\eta')]) = \cup_{j=0}^{m-1} \mathbf{R}^{\mathrm{v}}(\eta'[\tau_j,\tau_{j+1}]). 
	\end{equation} 
 For each $0\le j\le m-1$, one has $\eta'(\tau_j)=z$ and $\eta'(\tau_j+1)\neq z_{k'}$ (since $\eta'$ is an edge circuit and $\vec{e}_{\eta'(0),\eta'(1)}=\vec{e}_{z,z_{k'}}$). However, it is shown during the proof of Item (1) that $\vec{e}_{z,w}\in \mathbf{R}^{\mathrm{e}}(\eta')$ and $w\in \partial_{\infty,*}^{\mathrm{o}}A$ implies $w=z_{k'}$. Therefore, we have $\eta'(\tau_j+1)\notin \partial_{\infty,*}^{\mathrm{o}}A$. Thus, since $\mathbf{R}^{\mathrm{v}}(\eta'[\tau_j+1,\tau_{j+1}-1])\subset A^c$ is connected, one has $\mathbf{R}^{\mathrm{v}}(\eta'[\tau_j+1,\tau_{j+1}-1])\cap \partial_{\infty,*}^{\mathrm{o}}A=\emptyset$ for all $0\le j\le m-1$. As a result,  
 \begin{equation}\label{final_add_7.8}
 	\cup_{j=0}^{m-1} \mathbf{R}^{\mathrm{v}}(\eta'[\tau_j,\tau_{j+1}]) \subset (\partial_{\infty,*}^{\mathrm{o}}A')\setminus (\partial_{\infty,*}^{\mathrm{o}}A). 
 \end{equation} 
 Combining (\ref{final_add_7.6}), (\ref{final_add_7.7}) and (\ref{final_add_7.8}), we get (\ref{final_add_7.5}) and thus conclude Item (3).

For Item (4), it follows from the definition of $\eta''$ that 
	\begin{equation*}\label{7.5}
		\mathbf{L}(\eta'')\le \mathbf{L}(\eta')-2+ \sum\nolimits_{1\le i\le k'-1} \mathbf{L}(\eta_z^i).  
	\end{equation*} 
	Thus, since $k'=|N^A_{\infty}(z)|\le \mathrm{deg}(\mathscr{G})-\mathbbm{1}_{\mathscr{G}=\mathscr{T}}$ (by Lemma \ref{new7.1}) and $\mathbf{L}(\eta_z^i)\le \xi(\mathscr{G})-2$ for all $1\le i\le k'-1$ (recall that $\xi(\mathscr{G})$ be the number of edges surrounding a face), 
	\begin{equation}\label{final_7.8}
		\mathbf{L}(\eta'') \le \mathbf{L}(\eta')+ [\xi(\mathscr{G})-2]\cdot [\mathrm{deg}(\mathscr{G})-\mathbbm{1}_{\mathscr{G}=\mathscr{T}}-1]-2.
	\end{equation}
	Meanwhile, note that (\ref{final_3.2}) implies
	\begin{equation}\label{final_7.8_new}
	\begin{split}
			&[\xi(\mathscr{G})-2]\cdot [\mathrm{deg}(\mathscr{G})-\mathbbm{1}_{\mathscr{G}
			=\mathscr{T}}-1]-2\\
			=& \frac{4[\mathrm{deg}(\mathscr{G})-\mathbbm{1}_{\mathscr{G}=\mathscr{T}}-1]}{\mathrm{deg}(\mathscr{G})-2}-2 = 2\cdot \frac{\mathrm{deg}(\mathscr{G})-\mathbbm{1}_{\mathscr{G}=\mathscr{T}}}{\mathrm{deg}(\mathscr{G})-2}=2\nu(\mathscr{G}). 
	\end{split}
	\end{equation}
Combining (\ref{final_7.8}) and (\ref{final_7.8_new}), we conclude Item (4). 
\end{proof}

With these preparations, now we are ready to prove Lemma \ref{lemma_exist_sl}. 

\begin{proof}[Proof of Lemma \ref{lemma_exist_sl}]
We prove this lemma by induction (with respect to the cardinality of $A$). When $|A|=1$, without loss of generality, assume that $A=\{\bm{0}\}$. For any $x\sim_* \bm{0}$, we take $\eta$ as the edge circuit which starts from $x$ and surrounds $\bm{0}$ clockwise within $N_{*}(\bm{0})$. It is easy to check that $\eta$ satisfies all conditions for a surrounding loop in Definition \ref{def_sl}.

	For any $*$-connected $A$ with $|A|\ge 2$, by Lemma \ref{lemma_exist_marginal}, there exists a marginal vertex $z\in A$ such that $A'=A\setminus \{z\}$ is $*$-connected. By the inductive hypothesis, there exists a surrounding loop $\eta'$ of $A'$. When $N^A_{\infty}(z)=\emptyset$, $\eta'$ satisfies Condition (1) in Definition \ref{def_sl} for $A$ since $\mathbf{R}^{\mathrm{v}}(\eta')=\partial^{\mathrm{o}}_{\infty,*}A'=\partial^{\mathrm{o}}_{\infty,*}A$. Moreover, $N^A_{\infty}(z)=\emptyset$ also implies that $z$ is not $*$-adjacent to $\partial^{\mathrm{o}}_{\infty,*}A=\mathbf{R}^{\mathrm{v}}(\eta')$, and thus $\mathbf{v}(\mathcal{S})\cap A=\mathbf{v}(\mathcal{S})\cap A'$ for all face $\mathcal{S}$ with $\mathbf{v}(\mathcal{S})\cap \mathbf{R}^{\mathrm{v}}(\eta')\neq \emptyset$. As a result, $\eta'$ also satisfies Condition (2) in Definition \ref{def_sl} for $A$. Note that Condition (3) in Definition \ref{def_sl} for $A$ is weaker than the corresponding condition for $A'$. To sum up, $\eta'$ is a surrounding loop of $A$ in this case. When $N^A_{\infty}(z)\neq \emptyset$, recall the path $\eta''$ defined in Item (2) of Lemma \ref{lemma_7.4}. In fact, by Lemma \ref{lemma_7.4} and Definition \ref{def_sl}, we know that $\eta''$ is a surrounding loop of $A$. In conclusion, we construct a surrounding loop $\eta$ of $A$ in both cases. For any $x\in \partial_{\infty,*}^{\mathrm{o}}A$, since $\mathbf{R}^{\mathrm{v}}(\eta)=\partial_{\infty,*}^{\mathrm{o}}A$, there exists $j\in [0,\mathbf{L}(\eta)]$ such that $\eta(j)=x$. Noting that the path $\eta[j,\mathbf{L}(\eta)]\circ \eta[0,j]$ is a surrounding loop of $A$ starting from $x$ (since it has the same range and length as $\eta$), we complete the induction.  
\end{proof}

	\subsection{Proof of Proposition \ref{lemma_length_bypass}}

In this subsection, we use the surrounding loops of $*$-clusters in $A$ to compose the desired path in Proposition \ref{lemma_length_bypass}. To bound $\mathbf{X}_A$ for the range of this path, two properties are required: (i) its length is at most $\nu(\mathscr{G}) |A|+ \kappa(\mathscr{G})$; (ii) as the length approaches the threshold $\nu(\mathscr{G}) |A|+ \kappa(\mathscr{G})$, the path includes sufficiently many free vertices. For the first property, we use Condition (3) in Definition \ref{def_sl} for the lengths of surrounding loops. For the second one, we use the maximality of the interlocked cluster to ensure that there won't be too many overlaps between an interlocked cluster and the remaining $*$-clusters in $A$.

For any finite $A\subset \mathfs{V}$, let $D\in \mathfrak{I}_A$ be an arbitrary interlocked cluster of $A$. Then we arbitrarily take $x,y\in \partial_{\infty,*}^{\mathrm{o}}D$.

\begin{lemma}\label{lemma7.6}
	There exists a sequence of $*$-clusters of $D$, denoted by $\mathcal{C}_*^{i}$ for $1\le i\le m$, such that the following conditions hold:
	\begin{enumerate}
		\item    $x\in \partial_{\infty,*}^{\mathrm{o}}\mathcal{C}_*^{0}$ and $y \in \partial_{\infty,*}^{\mathrm{o}}\mathcal{C}_*^{m}$;

		\item   For any $1\le i_1< i_2\le m$, $\mathcal{C}_*^{i_1}   \leftrightsquigarrow \mathcal{C}_*^{i_2}$ holds if and only if $i_2=i_1+1$.

	\end{enumerate}
\end{lemma}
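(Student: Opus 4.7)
The plan is to introduce an auxiliary graph on the $*$-clusters of $A$ that together make up $D$, and to extract the sequence $\{\mathcal{C}_*^i\}$ as a geodesic in this graph. For the first ingredient, I would choose a starting cluster as follows: since $x\in \partial_{\infty,*}^{\mathrm{o}}D$, there exists $z\in D$ with $z\sim_* x$, and because $D$ is the union of its constituent $*$-clusters of $A$, there is a $*$-cluster $\mathcal{C}_*^1$ of $A$ contained in $D$ with $z\in \mathcal{C}_*^1$. The inclusion $\mathcal{C}_*^1\subset D$ forces $D_\infty^c \subset (\mathcal{C}_*^1)_\infty^c$, so from $x\in D_\infty^c$ together with $x\sim_* z\in \mathcal{C}_*^1$ I conclude $x\in \partial_{\infty,*}^{\mathrm{o}}\mathcal{C}_*^1$. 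By the same recipe I select an ending cluster $\mathcal{C}_*^m$ satisfying $y\in \partial_{\infty,*}^{\mathrm{o}}\mathcal{C}_*^m$.

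Next, I would define an abstract graph $\mathcal{H}_D$ whose vertex set is the collection of all $*$-clusters of $A$ contained in $D$, with an edge joining two such clusters precisely when they satisfy $\leftrightsquigarrow$. By the very definition of an interlocked class, any two $*$-clusters of $A$ lying inside $D$ are joined in $\mathcal{H}_D$ by some chain of pairwise interlocked clusters, so $\mathcal{H}_D$ is connected. Since $D$ is finite, $\mathcal{H}_D$ has only finitely many vertices, so a shortest path in $\mathcal{H}_D$ from $\mathcal{C}_*^1$ to $\mathcal{C}_*^m$ exists; I list its vertices in order as $\mathcal{C}_*^1,\mathcal{C}_*^2,\ldots,\mathcal{C}_*^m$.

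Both required conclusions now drop out. Condition (1) holds by the choice of endpoints. For Condition (2), consecutive clusters $\mathcal{C}_*^i,\mathcal{C}_*^{i+1}$ are joined by an edge of $\mathcal{H}_D$, hence satisfy $\leftrightsquigarrow$; conversely, if $\mathcal{C}_*^{i_1}\leftrightsquigarrow \mathcal{C}_*^{i_2}$ held for some $1\le i_1<i_2\le m$ with $i_2\ge i_1+2$, then replacing the intermediate stretch $\mathcal{C}_*^{i_1},\ldots,\mathcal{C}_*^{i_2}$ by the single edge $\mathcal{C}_*^{i_1}\to \mathcal{C}_*^{i_2}$ would produce a strictly shorter path from $\mathcal{C}_*^1$ to $\mathcal{C}_*^m$, contradicting the geodesic property.

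There is no substantial obstacle here; the argument is a standard combinatorial geodesic extraction. The only point that requires brief justification is the transfer of the exterior-outer-$*$-boundary condition from $D$ to a single constituent $*$-cluster in the first paragraph, which comes down to the monotonicity of the infinite component of the complement under set inclusion.
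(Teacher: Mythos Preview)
Your proof is correct and follows essentially the same approach as the paper. The paper first takes any chain of pairwise interlocked $*$-clusters connecting the two endpoint clusters (appealing directly to the definition of interlocked class) and then repeatedly deletes shortcut-interlockings until none remain; your geodesic extraction in the auxiliary graph $\mathcal{H}_D$ achieves the same outcome in one stroke. You also supply a justification for Condition~(1) (the monotonicity $D_\infty^c\subset(\mathcal{C}_*^1)_\infty^c$) that the paper leaves implicit.
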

\begin{proof}
	Since $D$ is an interlocked cluster, there exists a sequence of $*$-clusters of $D$ (say $\widetilde{\mathcal{C}}_*^{j}$ for $1\le j\le \widetilde{m}$) satisfying Condition (1) and a slightly weaker version of Condition (2) as follows: 
	\begin{equation}\label{7.8}
		\widetilde{\mathcal{C}}_*^{j}   \leftrightsquigarrow \widetilde{\mathcal{C}}_*^{j+1}\ \  \text{for all}\ 1\le j\le \widetilde{m}-1. 
	\end{equation}
	If Condition (2) fails for the sequence $\{\widetilde{\mathcal{C}}_*^{j}\}_{j=1}^{\widetilde{m}}$, then there must exist $1\le j_1< j_2\le \widetilde{m}$ with $j_2\ge j_1+2$ such that $\mathcal{C}_*^{j_1}   \leftrightsquigarrow \mathcal{C}_*^{j_2}$. However, in this case we can remove $\widetilde{\mathcal{C}}_*^{j}$ for $j_1+1\le j \le j_2-1$ from $\{\widetilde{\mathcal{C}}_*^{j}\}_{j=1}^{\widetilde{m}}$, and the new sequence still satisfies (\ref{7.8}). By repeating this removal until getting a sequence where Condition (2) holds true, we obtain the desired sequence $\{\mathcal{C}_*^{j}\}_{j=1}^{m}$.
\end{proof}

%In what follows    For ease of expression

Next, we construct an edge circuit $\eta_{\ddagger}$ intersecting $x$ and $y$, and composed of surrounding loops of $*$-clusters of $D$. For convenience, we first fix some notations. 
\begin{itemize}
	\item  By Lemma \ref{lemma_exist_sl}, for any $*$-cluster $\mathcal{C}_*$ of $D$, and any $z\in \partial_{\infty,*}^{\mathrm{o}}\mathcal{C}_*$, we take a surrounding loop (denoted by $\ell_{\mathcal{C}_*}^{z}$) of $\mathcal{C}_*$ starting from $z$. We also require that $\ell_{\mathcal{C}_*}^{z}$ for $z\in \partial_{\infty,*}^{\mathrm{o}}\mathcal{C}_*$ can be transformed into one another by a time shift.

	\item    Recall that $\mathcal{I}(\mathcal{C}_*,\mathcal{C}_*')$ is the collection of interlocking vertices between the $*$-clusters $\mathcal{C}_*$ and $\mathcal{C}_*'$ (see the definition above Remark \ref{remark_interlock}).

	\item    Let $\{\mathcal{C}_*^{i}\}_{i=1}^{m}$ be the sequence of $*$-clusters found by Lemma \ref{lemma7.6}.

\end{itemize}

The construction of $\eta_{\ddagger}$ takes the following three steps. An illustration for this construction can be found in Figure \ref{fig:surround_interlock_cluster}.

\textbf{Step 1:}  We define a sequence $\{(z^i,t^i)\}_{i=1}^{m}$ as follows ($z^i\in \partial_{\infty,*}^{\mathrm{o}}D$ and $t_i\in \mathbb{N}$).

When $m=1$, we take $z^1=x$, and let $t^1$ be the first time when $\ell_{\mathcal{C}_*^1}^{z^1}$ intersects $y$.

When $m \ge 2$, we construct this sequence recursively. We set $z^1=x$ and let $t^1$ be the first time when $\ell_{\mathcal{C}_*^1}^{z^1}$ intersects $\mathcal{I}(\mathcal{C}_*^{1},\mathcal{C}_*^{2})$ (see the red dots in Figure \ref{fig:surround_interlock_cluster}). For $2\le i\le m$, suppose that we already construct $(z^{i-1},t^{i-1})$ satisfying $z^{i-1}\in \partial_{\infty,*}^{\mathrm{o}}\mathcal{C}_*^{i-1}$ and $\ell_{\mathcal{C}_*^{i-1}}^{z^{i-1}}(t^{i-1})\in \mathcal{I}(\mathcal{C}_*^{i-1},\mathcal{C}_*^{i})$ (which hold for $(z^{1},t^{1})$). We take $z^{i}:=\ell_{\mathcal{C}_*^{i-1}}^{z^{i-1}}(t^{i-1}-1)$. Note that $z_i\in \partial_{\infty,*}^{\mathrm{o}} \mathcal{C}_*^{i}$ since $\ell_{\mathcal{C}_*^{i-1}}^{z^{i-1}}(t^{i-1})$ is an interlocking vertex between $\mathcal{C}_*^{i-1}$ and $\mathcal{C}_*^{i}$. Moreover, when $2\le i\le m-1$ (resp. $i=m$), let $t^i$ be the first time when $\ell_{\mathcal{C}_*^{i}}^{z^{i}}$ intersects $\mathcal{I}(\mathcal{C}_*^{i},\mathcal{C}_*^{i+1})$ (resp. $y$).

\textbf{Step 2:} Based on $\{(z^i,t^i)\}_{i=1}^{m}$, we define the path
\begin{equation*}
	\eta_{x\to y}:=\ell_{\mathcal{C}_*^{1}}^{z_1}[0,t^1-1]\circ \ell_{\mathcal{C}_*^{2}}^{z_2}[0,t^2-1]\circ ... \circ \ell_{\mathcal{C}_*^{m}}^{z_{m}}[0,t^{m}]. 
\end{equation*}
We also define $\eta_{y\to x}$ as the analogue of $\eta_{x\to y}$, obtained by exchanging the roles of $x$ and $y$, and replacing $\mathcal{C}_*^{i}$ with $\mathcal{C}_*^{m+1-i}$ for all $1\le i\le m$. See the green (resp. yellow) path in Figure \ref{fig:surround_interlock_cluster} for an illustration of $\eta_{x\to y}$ (resp. $\eta_{y\to x}$).

\textbf{Step 3:} Let $\eta_{\ddagger}:=\eta_{x\to y} \circ \eta_{y\to x}$.

\include*{interlocked}

\begin{lemma}\label{lemma_7.7}
	The path $\eta_{\ddagger}$ satisfies the following properties:
	\begin{enumerate}
		\item   $\eta_{\ddagger}$ is an edge circuit; 
		
		\item   $\mathbf{R}^{\mathrm{e}}(\eta_{\ddagger})\subset \cup_{i=1}^{m} \mathbf{R}^{\mathrm{e}}(\ell_{\mathcal{C}_*^i}^{z^i})$;

	\item  $|\cup_{i=1}^{m} \mathbf{R}^{\mathrm{e}}(\ell_{\mathcal{C}_*^i}^{z^i})\setminus \mathbf{R}^{\mathrm{e}}(\eta_{\ddagger})|\ge 2\kappa(\mathscr{G})(m-1)$.

	\item  For any $w\in \mathbf{R}^{\mathrm{v}}(\eta_{\ddagger})$, if $w$ is not free in $A$, then either $N(w)\cap  A\subset D$ or $N(w)\cap  A\subset A\setminus D$ holds.

	\end{enumerate}
\end{lemma}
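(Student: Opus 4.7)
The plan is to verify the four items in turn, drawing on the cyclic structure of surrounding loops and the rigid local configurations of interlocked sets catalogued in Remark \ref{remark_interlock}.

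For Item (1), since $\eta_{x\to y}$ ends at $\ell_{\mathcal{C}_*^m}^{z^m}(t^m)=y$ and $\eta_{y\to x}$ runs from $y$ to $x$ by symmetry, the path $\eta_\ddagger$ is a closed walk; it remains to verify that it traverses no oriented edge twice. Two sub-paths indexed by $i\neq j$ within $\eta_{x\to y}$ cannot share an oriented edge: if $\vec{e}_{u,v}\in\mathbf{R}^{\mathrm{e}}(\ell_{\mathcal{C}_*^i}^{z^i})\cap\mathbf{R}^{\mathrm{e}}(\ell_{\mathcal{C}_*^j}^{z^j})$, then by Condition (2) of Definition \ref{def_sl} the unique face $\mathcal{S}$ with $\vec{e}_{u,v}\in\vec{\mathbf{e}}(\mathcal{S})$ intersects both $\mathcal{C}_*^i$ and $\mathcal{C}_*^j$, forcing these distinct $*$-clusters of $A$ to be $*$-adjacent and hence coincide, a contradiction. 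The same reasoning applies within $\eta_{y\to x}$. Any clash between an edge of $\eta_{x\to y}$ and an edge of $\eta_{y\to x}$ must therefore lie on surrounding loops of a common cluster $\mathcal{C}_*^i$; I will argue that the arc of $\ell_{\mathcal{C}_*^i}$ used in $\eta_{x\to y}$ (running from $z^i$ clockwise until the step just before the interlocking vertex with $\mathcal{C}_*^{i+1}$) and the arc used in $\eta_{y\to x}$ (running clockwise from the analogous entry point until just before the interlocking vertex with $\mathcal{C}_*^{i-1}$) are complementary arcs of the cyclic loop, so their oriented edge ranges are disjoint.

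Item (2) is a direct consequence of the convention in the construction that surrounding loops of the same cluster starting from different vertices are cyclic shifts of one another; hence $\mathbf{R}^{\mathrm{e}}(\ell_{\mathcal{C}_*^i}^{z^i})=\mathbf{R}^{\mathrm{e}}(\ell_{\mathcal{C}_*^i}^{\tilde{z}^{m+1-i}})$, and every oriented edge used by $\eta_{x\to y}$ or by $\eta_{y\to x}$ lies in $\cup_{i=1}^m\mathbf{R}^{\mathrm{e}}(\ell_{\mathcal{C}_*^i}^{z^i})$.

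For Item (3), the quantitative input is Item (ii) of Remark \ref{remark_interlock}: each interlocked pair $\mathcal{C}_*^i\leftrightsquigarrow\mathcal{C}_*^{i+1}$ contributes at least $\kappa(\mathscr{G})$ undirected edges with both endpoints in $\partial_{\infty,*}^{\mathrm{o}}\mathcal{C}_*^i\cap\partial_{\infty,*}^{\mathrm{o}}\mathcal{C}_*^{i+1}$ and meeting the interlocking vertex set. A planar-geometry check shows that the two oriented versions of each such edge belong to the surrounding loops of $\mathcal{C}_*^i$ and $\mathcal{C}_*^{i+1}$ respectively (each via its own adjacent face). By construction, $\eta_\ddagger$ leaves $\ell_{\mathcal{C}_*^i}^{z^i}$ at the vertex immediately preceding the interlocking vertex and re-enters on $\ell_{\mathcal{C}_*^{i+1}}^{z^{i+1}}$ from the corresponding neighbour, so neither oriented version of these shared edges is included in $\mathbf{R}^{\mathrm{e}}(\eta_\ddagger)$. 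This yields $2\kappa(\mathscr{G})$ excluded oriented edges per interlocked pair; Item (iii) of Remark \ref{remark_interlock} ensures that the interlocking vertex sets of distinct pairs are disjoint, so contributions from the $m-1$ pairs do not double-count, producing the bound $2\kappa(\mathscr{G})(m-1)$.

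For Item (4), I argue by contradiction: assume $w\in\mathbf{R}^{\mathrm{v}}(\eta_\ddagger)\subset A^c$ is not free in $A$ and yet $N(w)$ meets both $D$ and $A\setminus D$, say $u\in N(w)\cap\mathcal{C}_*^{i'}\subset D$ and $v\in N(w)\cap\mathcal{C}'_*\subset A\setminus D$, where $\mathcal{C}'_*$ is a $*$-cluster of $A$ lying outside the interlocked class of $D$. Non-freeness gives $|N(w)\setminus A|\le 2$, and Lemma \ref{new7.1} applied to each $*$-cluster that $w$ touches constrains how many extra clusters can appear among $w$'s neighbours. Using the case analysis of local configurations from Remark \ref{remark_interlock}(i), one checks that the configuration at $w$ must realise either Condition (1) (the case $w\in\mathcal{I}^1(\mathcal{C}_*^{i'},\mathcal{C}'_*)$) or Condition (2) (the case of an adjacent pair in $\mathcal{I}^2(\mathcal{C}_*^{i'},\mathcal{C}'_*)$); either way $\mathcal{C}_*^{i'}\leftrightsquigarrow\mathcal{C}'_*$, forcing $\mathcal{C}'_*$ into the interlocked class of $D$, a contradiction. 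The main obstacle is precisely this last item: ruling out that $w$ might simultaneously be $*$-adjacent to several clusters of $D$ or $A\setminus D$ while failing to realise Condition (1) or (2), which requires a careful lattice-by-lattice enumeration distinguishing $\mathbb{Z}^2$, $\mathscr{T}$ and $\mathscr{H}$.
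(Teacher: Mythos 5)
Items (1)--(3) follow essentially the same route as the paper. For Item (1), you prove cross-cluster edge-disjointness by the same face argument (Condition (2) of Definition \ref{def_sl} forces two loops sharing an oriented edge to belong to $*$-adjacent, hence identical, $*$-clusters). Your extra claim that the two arcs of a common loop used in $\eta_{x\to y}$ and $\eta_{y\to x}$ are complementary is asserted without proof; the paper is equally terse on the same-cluster overlap, so this is not a meaningful divergence. Items (2)--(3) match the paper's argument up to wording, and your explicit remark about where the excluded oriented edges lie is a helpful addition.

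Item (4), however, has a real gap, and it is exactly the one you flag at the end. You try to exhibit interlocking between two specific $*$-clusters $\mathcal{C}_*^{i'}\subset D$ and $\mathcal{C}'_*\subset A\setminus D$, which forces you to control every other $*$-cluster touching $w$: Conditions (1)--(2) for $D_1\leftrightsquigarrow D_2$ require $|N(w)\setminus (D_1\cup D_2)|=2$, not merely $|N(w)\setminus A|\le 2$, and your appeal to Lemma \ref{new7.1} and a lattice-by-lattice enumeration does not close this. The paper avoids the issue entirely by applying the interlocking relation to $D_1=D$ and $D_2=A\setminus D$ as whole sets, so that $N(w)\setminus(D_1\cup D_2)=N(w)\setminus A$ automatically. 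The missing observation in your write-up is that $N(w)$ is cyclically $*$-connected on each of $\mathbb{Z}^2,\mathscr{T},\mathscr{H}$; combined with $|N(w)\setminus A|\le 2$, this forces $N(w)\cap A$ to consist of at most two $*$-connected arcs, each lying in a single $*$-cluster of $A$ and hence entirely in $D$ or entirely in $A\setminus D$. If Item (4) fails, both arcs occur, $|N(w)\setminus A|=2$, and $w$ realises Condition (1) with $D_1=D,\ D_2=A\setminus D$, contradicting that $D$ is a maximal interlocked cluster of $A$. No case distinction by lattice type is required.
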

\begin{proof}
For Item (1), it suffices to show that for any $1\le i<i'\le m$, 
	\begin{equation}
		\mathbf{R}^{\mathrm{e}}(\ell_{\mathcal{C}_*^i}^{z}) \cap \mathbf{R}^{\mathrm{e}}(\ell_{\mathcal{C}_*^{i'}}^{z'}) =\emptyset,\  \forall  z\in \partial_{\infty,*}^{\mathrm{o}}\mathcal{C}_*^i\ \text{and}\  z'\in  \partial_{\infty,*}^{\mathrm{o}}\mathcal{C}_*^{i'}. 
	\end{equation}
	We prove this by contradiction. Assume that there eixsts $\vec{e}_{w,v}\in \mathbf{R}^{\mathrm{e}}(\ell_{\mathcal{C}_*^i}^{z}) \cap \mathbf{R}^{\mathrm{e}}(\ell_{\mathcal{C}_*^{i'}}^{z'})$ for some $i,i'$ and $z,z'$. By Condition (2) for surrounding loops (see Definition \ref{def_sl}), we know that the unique face $\mathcal{S}_{w,v}$ with $\vec{e}_{w,v}\in \vec{\mathbf{e}}(\mathcal{S}_{w,v})$ must intersect both $\mathcal{C}_*^i$ and $\mathcal{C}_*^{i'}$. This indicates that $\mathcal{C}_*^i\sim_* \mathcal{C}_*^{i'}$, which is a contradiction.

	Item (2) directly follows from the construction of $\eta_{\ddagger}$.

	Now we prove Item (3). For $1\le i \le m-1$, by the construction of $\eta_{\ddagger}$, $\eta_{\ddagger}$ does not intersect $\mathcal{I}(\mathcal{C}_*^{i},\mathcal{C}_*^{i+1})$. Therefore, by Item (ii) of Remark \ref{remark_interlock}, there are at least $\kappa(\mathscr{G})$ edges that have both end points in $\partial_{\infty,*}^{\mathrm{o}}\mathcal{C}_*^i\cap \partial_{\infty,*}^{\mathrm{o}}\mathcal{C}_*^{i+1}$ and intersect $\mathcal{I}(\mathcal{C}_*^{i},\mathcal{C}_*^{i+1})$. Thus, these $\kappa(\mathscr{G})$ edges are not traversed by $\eta_{\ddagger}$. We denote by $\mathcal{Z}_i$ the collection of all oriented edges covered by these edges. Note that $|\mathcal{Z}_i|\ge 2\kappa(\mathscr{G})$. Moreover, by Item (iii) of Remark \ref{remark_interlock}, we know that $\mathcal{Z}_i$ for $1\le i\le m-1$ are disjoint from each other, and hence $|\cup_{i=1}^{m-1}\mathcal{Z}_i|\ge 2\kappa(\mathscr{G})(m-1)$. Thus, since every oriented edge in $\cup_{i=1}^{m-1}\mathcal{Z}_i$ is contained in $\cup_{i=1}^{m} \mathbf{R}^{\mathrm{e}}(\ell_{\mathcal{C}_*^i}^{z^i})\setminus \mathbf{R}^{\mathrm{e}}(\eta_{\ddagger})$, we conclude Item (3).

	We establish Item (4) using proof by contradiction. Since $w$ is not free, one has $|N(z)\setminus A|\le 2$. Therefore, by the $*$-connectivity of $N(w)$, we know that $N(w)\cap  A$ consists of at most two $*$-clusters. Thus, if we assume that the property stated in Item (4) fails, then $N(w)\cap A$ consists of exactly two $*$-clusters that intersect $D$ and $A\setminus D$ respectively, which implies $|N(w)\setminus A|=2$. As a result, $D$ and $A\setminus D$ are interlocked. However, this causes a contradiction to the maximality of the interlocked cluster $D$. Now we establish all properties of $\eta_{\ddagger}$ in this lemma.
\end{proof}

To establish a lower bound for the number of free vertices in $\mathbf{R}^{\mathrm{e}}(\eta_\ddagger)$, we define $\mathcal{C}_\ddagger:=\{[\mathbf{R}^{\mathrm{v}}(\eta_{\ddagger})]_{\infty}^c\}^c$ as the set obtained from the range of $\eta_{\ddagger}$ by filling all holes. It follows from the definition that $\partial_{\infty}^{\mathrm{i}} \mathcal{C}_\ddagger\subset \mathbf{R}^{\mathrm{v}}(\eta_{\ddagger})$. Noting that $\mathcal{C}_\ddagger$ is connected (by the connectivity of $\mathbf{R}^{\mathrm{v}}(\eta_{\ddagger})$) and applying Item (1) of Lemma \ref{lemma2.2}, we know that $\partial_{\infty}^{\mathrm{i}} \mathcal{C}_\ddagger$ is $*$-connected. Thus, since $\mathscr{G}$ is two-dimensional, we have 
\begin{equation}\label{lower_bound_C_ddagger}
	|\partial_{\infty}^{\mathrm{i}} \mathcal{C}_\ddagger|   \gtrsim\mathrm{diam}(\mathcal{C}_\ddagger)  \gtrsim    \sqrt{|\mathcal{C}_\ddagger|} \ge \sqrt{|\cup_{i=1}^{m}\mathcal{C}_*^i|},
\end{equation}
where in the last inequality we used the fact $\mathcal{C}_*^i \subset \mathcal{C}_\ddagger$ for all $1\le i\le m$ (since $\eta_{\ddagger}$ surrounds every $\mathcal{C}_*^i$). The subsequent lemma shows that $\partial_{\infty}^{\mathrm{i}}\mathcal{C}_\ddagger$ indeed contains numerous free vertices when $\cup_{i=1}^{m}\mathcal{C}_*^i$ occupies a large part of $D$.

\begin{lemma}\label{lemma_source_free}
	For any $\vec{e}_{w,v}\in \mathbf{R}^{\mathrm{e}}(\eta_\ddagger)$ with $w\in \partial_{\infty}^{\mathrm{i}} \mathcal{C}_\ddagger$, one of the following holds:
	\begin{enumerate}[(i)]
		\item  $w$ or $v$ is a free vertex in $A$;
		
		\item    there exists $z\in D\setminus \cup_{i=1}^{m}\mathcal{C}_*^i$ such that $z\sim w$. 
		
	\end{enumerate}
\end{lemma}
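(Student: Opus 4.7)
The plan is to proceed by contradiction: assume that neither (i) nor (ii) holds, and derive an impossibility from $w\in \partial_{\infty}^{\mathrm{i}}\mathcal{C}_{\ddagger}$. First I would locate the face $\mathcal{S}$ attached to the edge $\vec{e}_{w,v}$. By Item (2) of Lemma \ref{lemma_7.7}, the oriented edge $\vec{e}_{w,v}$ lies on some surrounding loop $\ell_{\mathcal{C}_*^i}^{z^i}$, so Condition (2) of Definition \ref{def_sl} supplies a face $\mathcal{S}$ with $\vec{e}_{w,v}\in \vec{\mathbf{e}}(\mathcal{S})$ and a vertex $u_*\in \mathbf{v}(\mathcal{S})\cap \mathcal{C}_*^i$. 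Since $\eta_{\ddagger}$ is composed of the surrounding loops of the $\mathcal{C}_*^j$, each such cluster is enclosed by $\eta_{\ddagger}$, so $\mathcal{C}_*^j\subset \mathcal{C}_{\ddagger}$, and moreover any face carrying a vertex of some $\mathcal{C}_*^j$ is also enclosed; in particular $\mathbf{v}(\mathcal{S})\subset \mathcal{C}_{\ddagger}$.

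Next I would extract constraints on the outside neighbor. Pick $w^\perp\sim w$ with $w^\perp \in \mathcal{C}_{\ddagger}^c$. Since $v\in \mathbf{R}^{\mathrm{v}}(\eta_{\ddagger})\subset \mathcal{C}_{\ddagger}$ and $\mathbf{v}(\mathcal{S})\subset \mathcal{C}_{\ddagger}$, I obtain $w^\perp\notin \{v\}\cup \mathbf{v}(\mathcal{S})$, so in particular $w^\perp$ is different from the unique neighbor $w_\circ$ of $w$ on $\mathcal{S}\setminus\{v\}$. Also $w^\perp\notin \cup_j \mathcal{C}_*^j$. The failure of (ii) means $N(w)\cap D\subset \cup_j \mathcal{C}_*^j$, which rules out $w^\perp\in D$; hence $w^\perp\in A\setminus D$ or $w^\perp\notin A$. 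The failure of (i) means that both $w$ and $v$ are non-free, so by Item (4) of Lemma \ref{lemma_7.7}, $N(w)\cap A$ (resp.\ $N(v)\cap A$) is either entirely in $D$ or entirely in $A\setminus D$.

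I would then split into two cases. If $w^\perp\in A\setminus D$, Item (4) applied to $w$ forces $N(w)\cap A\subset A\setminus D$, and in particular $N(w)\cap D=\emptyset$; on $\mathscr{T}$ the face vertex $u_*$ is automatically adjacent to $w$, giving the desired contradiction immediately, while on $\mathbb{Z}^2$ and $\mathscr{H}$ we are reduced to the subcase in which $u_*$ is adjacent only to $v$ (or lies at face-distance two from $w$ on the hexagon). In that subcase, applying Item (4) to $v$ yields $N(v)\cap A\subset D$; combining this with $w^\perp\sim w$, $w^\perp\in A\setminus D$, and the fact that $w$ is $*$-adjacent to $\mathcal{C}_*^i\subset D$, we obtain that the $*$-cluster of $w^\perp$ in $A\setminus D$ is interlocked with $\mathcal{C}_*^i$ (via the vertex $w$, which satisfies the requisite local configuration of Condition (1) or (2) in the definition of interlocked sets), contradicting the maximality of the interlocked cluster $D$. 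If instead $w^\perp\notin A$, then non-freeness of $w$ gives $|N(w)\setminus A|\le 2$, so $N(w)\setminus A=\{v,w^\perp\}$ and every other neighbor of $w$ lies in $A$; by Item (4) and the failure of (ii), these remaining neighbors all lie in $\cup_j \mathcal{C}_*^j$, and inspection of the cyclic arrangement of faces around $w$ together with $\mathbf{v}(\mathcal{S})\subset \mathcal{C}_{\ddagger}$ forces $w^\perp\in \mathcal{C}_{\ddagger}$, contradicting the choice of $w^\perp$.

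The main obstacle is making the two local-configuration contradictions above uniform across the three lattices; the hexagonal case is the most delicate because $u_*$ may be at graph distance two from $w$ along the face $\mathcal{S}$, and handling that configuration requires propagating the blocking from $u_*$ back to a genuine neighbor of $w$ via the $*$-connectivity of $\mathcal{C}_*^i$ and then invoking Item (4) of Lemma \ref{lemma_7.7} once more to close the argument.
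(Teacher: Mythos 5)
Your contradiction skeleton — pick a neighbor of $w$ in $(\mathcal{C}_\ddagger)^c_\infty$, note it escapes $D$ because (ii) fails, apply Item~(4) of Lemma~\ref{lemma_7.7}, and contradict the maximality of the interlocked cluster $D$ — matches the paper's, and your preliminary reductions are sound. But both branches of your case split leave genuine gaps. In the branch $w^\perp\in A\setminus D$ you conclude $N(w)\cap D=\emptyset$ and then assert that $w$ ``satisfies the requisite local configuration of Condition~(1) or~(2)'' for interlocking. It cannot by itself: Condition~(1) requires $N(x)\cap D_1\neq\emptyset$, which is violated for $x=w$ once $N(w)\cap D=\emptyset$; and Condition~(2) applied with $x_2=w$ would need $|N(w)\setminus D_2|=2$, i.e.\ $\mathrm{deg}(\mathscr{G})-2$ of $w$'s neighbors lying in the \emph{other} cluster, which you have not established — you only know $w^\perp\in N(w)\cap D_2$. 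The paper instead exhibits the pair $\{v,w\}$, uses Item~(4) applied to $v$ to place $N(v)\cap A$ inside $D$ in the relevant sub-case, and verifies \emph{both} halves of Condition~(2) with $D_1=D$, $D_2=A\setminus D$; that verification hinges on first pinning down the exact counts $|N(w)\setminus A|=|N(v)\setminus A|=2$, which never appear in your sketch.

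In the branch $w^\perp\notin A$ you invoke Item~(4) together with the failure of (ii) to put all remaining neighbors of $w$ in $\cup_j\mathcal{C}_*^j$, but Item~(4) only gives the dichotomy ``$N(w)\cap A\subset D$ \emph{or} $N(w)\cap A\subset A\setminus D$'' and you silently discard the second alternative — which is the one the paper actually proves to hold. Your closing step (``inspection of the cyclic arrangement $\ldots$ forces $w^\perp\in\mathcal{C}_\ddagger$'') is also not an argument: $\eta_\ddagger$ is an edge circuit that can revisit vertices, so ``inside/outside'' is not determined by the cyclic order of $N(w)$ alone, and the paper deliberately avoids any such planar-topology claim. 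The paper's actual route is to first establish $N(w)\cap A\subset A\setminus D$, then split on the exact values $|N(w)\setminus A|,|N(v)\setminus A|\in\{1,2\}$ (available since $w,v$ are not free), and in each case either produce a single face $\mathcal{S}$ with $\mathbf{v}(\mathcal{S})$ meeting both $A\setminus D$ and $\cup_j\mathcal{C}_*^j$ — contradicting the maximality of the $*$-clusters — or exhibit a Condition-(2) configuration for $D\leftrightsquigarrow A\setminus D$. Your auxiliary claim $\mathbf{v}(\mathcal{S})\subset\mathcal{C}_\ddagger$, while plausible, is never needed in the paper's proof and would itself require exactly the kind of topological reasoning your sketch is trying to avoid; you should adopt the finer case split on $|N(w)\setminus A|$ and $|N(v)\setminus A|$ instead.
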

\begin{proof}
	we prove this lemma by contradiction. Precisely, we assume that both of $w$ and $v$ are not free and that $D\setminus \cup_{i=1}^{m}\mathcal{C}_*^i$ is not adjacent to $w$. Then we show that these assumptions will lead to a contradiction with the maximality of the $*$-clusters in $\{\mathcal{C}_*^{i}\}_{i=1}^{m}$ or the interlocked cluster $D$.

Since $w\in \partial_{\infty}^{\mathrm{i}}\mathcal{C}_\ddagger$, there exists $z\in (\mathcal{C}_\ddagger)_\infty^c\subset (\cup_{i=1}^m\mathcal{C}_*^i)^c$ with $z\sim w$. Combined with the assumption that $D\setminus \cup_{i=1}^m\mathcal{C}_*^i$ is not adjacent to $w$, it yields that such $z$ must be in $N(w)\setminus D$. Thus, by Item (4) of Lemma \ref{lemma_7.7}, we have $N(w)\cap A \subset A\setminus D$. Moreover, by $v\in N(w)\setminus A$, $w\in N(v)\setminus A$ and the assumption that both of $w$ and $v$ are not free, we have $|N(w)\setminus A|,|N(v)\setminus A|\in \{1,2\}$. In what follows, we divide the proof into separate cases based on the cardinalities of $N(w)\setminus A$ and $N(v)\setminus A$.

	\textbf{Case (a)}: $|N(w)\setminus A|=1$. By Condition (2) for surrounding loops in Definition \ref{def_sl}, there exists a face $\mathcal{S}$ such that $\vec{e}_{w,v}\in \vec{\mathbf{e}}(\mathcal{S})$ and $ \mathbf{v}(\mathcal{S})\cap (\cup_{i=1}^m\mathcal{C}_*^i) \neq \emptyset$. However, $\mathbf{v}(\mathcal{S})$ contains two vertices in $N(w)$ and hence, it must intersect $N(w)\cap A$ (since $|N(w)\setminus A|=1$). As a result, $N(w)\cap A$ ($\subset A\setminus D$) and $\cup_{i=1}^m\mathcal{C}_*^i$ ($\subset D$) both intersect the same face $\mathcal{S}$ and thus are $*$-connected to each other, which is contradictory to the maximality of the $*$-clusters in $\{\mathcal{C}_*^{i}\}_{i=1}^{m}$.

	\textbf{Case (b)}: $|N(w)\setminus A|=2$ and $|N(v)\setminus A|=1$. In fact, it follows from $|N(v)\setminus A|=1$ that $\vec{e}_{v,w}\in \mathbf{R}^{\mathrm{e}}(\eta_{\ddagger})$ (otherwise, since $\eta_{\ddagger}$ is an edge circuit, there must be some vertex $z\notin \{w,v\}$ such that $\vec{e}_{v,z}\in \mathbf{R}^{\mathrm{e}}(\eta_{\ddagger})$, and thus $|N(v)\setminus A|\ge |\{z,w\}| =2$). We denote by $\mathcal{S}_1$ (resp. $\mathcal{S}_2$) the unique face with $\vec{e}_{w,v}\in \vec{\mathbf{e}}(\mathcal{S}_1)$ (resp. $\vec{e}_{v,w}\in \vec{\mathbf{e}}(\mathcal{S}_2)$). Let $u_1$ (resp. $u_2$) be the unique vertex in $\mathbf{v}(\mathcal{S}_1)$ (resp. $\mathbf{v}(\mathcal{S}_2)$) with $\vec{e}_{u_1,w}\in \vec{\mathbf{e}}(\mathcal{S}_1)$ (resp. $\vec{e}_{w,u_2}\in \vec{\mathbf{e}}(\mathcal{S}_2)$). Note that $u_1$, $u_2$ and $v$ are distinct to each other (by considering the coordinates in the direction perpendicular to $\vec{e}_{w,v}$) and thus $u_{j_\diamond}\in N(w)\cap A$ holds for some $j_\diamond\in \{1,2\}$ (otherwise, $|N(w)\setminus A|\ge |\{v,u_1,u_2\}|=3$). Recalling that $N(w)\cap A \subset A\setminus D$, we have $u_{j_\diamond}\in A\setminus D$. Combined with $\mathbf{v}(\mathcal{S}_{j_{\diamond}})\cap (\cup_{i=1}^{m}\mathcal{C}_*^i)\neq \emptyset$ (which follows from Condition (2) in Definition \ref{def_sl}), it yields that $A\setminus D$ is $*$-connected to $\cup_{i=1}^m\mathcal{C}_*^i$. This causes the same contradiction as in Case (a).

	\textbf{Case (c)}: $|N(w)\setminus A|=2$ and $|N(v)\setminus A|=2$. By Item (4) of Lemma \ref{lemma_7.7}, we have either $N(v)\cap  A\subset A\setminus D$ or $N(v)\cap A\subset D$. When $N(v)\cap  A\subset A\setminus D$, similar to Cases (a) and (b), using Condition (2) in Definition \ref{def_sl} we can show that $A\setminus D$ is connected $\cup_{i=1}^m\mathcal{C}_*^i$, thereby causing a contradiction. When $N(v)\cap  A\subset D$, we have $|N(v)\setminus D|=|N(v)\setminus A|=2$ and $N(v)\cap (A\setminus D)=\emptyset$. Combined with $|N(w)\setminus (A\setminus D)|=|N(w)\setminus A|=2$ and $N(w)\cap D=\emptyset$ (since $N(w)\cap A\subset A\setminus D$), it yields $D\leftrightsquigarrow  A\setminus D$ (recall the definition of interlocked sets above Remark \ref{remark_interlock}). However, this is contradictory to the maximality of the interlocked cluster $D$.

	To sum up, we complete the proof of this lemma.
\end{proof}

It directly follows from Lemma \ref{lemma_source_free} that 
\begin{equation*}\label{new_add_C_ddagger}
	\begin{split}
		|\partial_{\infty}^{\mathrm{i}} \mathcal{C}_\ddagger|   \le& 2|\{\vec{e}_{w,v}\in \mathbf{R}^{\mathrm{e}}(\eta_\ddagger): w\ \text{is free in}\ A\}|+  |\{(w,z):z\in D\setminus \cup_{i=1}^{m}\mathcal{C}_*^i, w\sim z\}|\\
		\le &2|\{\vec{e}_{w,v}\in \mathbf{R}^{\mathrm{e}}(\eta_\ddagger): w\ \text{is free in}\ A\}|+ \mathrm{deg}(\mathscr{G})   |D\setminus \cup_{i=1}^{m}\mathcal{C}_*^i|.
	\end{split}
\end{equation*}
Combined with (\ref{lower_bound_C_ddagger}), it implies that 
\begin{equation}\label{lower_free}
|\{\vec{e}_{w,v}\in \mathbf{R}^{\mathrm{e}}(\eta_\ddagger): w\ \text{is free in}\ A\}| \ge c\sqrt{|\cup_{i=1}^{m}\mathcal{C}_*^i|}- \tfrac{1}{2}\mathrm{deg}(\mathscr{G}) |D\setminus \cup_{i=1}^{m}\mathcal{C}_*^i|. 
\end{equation}

Now we are ready to bound the quantity $\mathbf{X}_A$ in (\ref{statistic_X}) for the range of $\eta_{\ddagger}$.

\begin{lemma}\label{lemma_7.8}
	There exists $\cl\label{const_eta_ddagger}(\mathscr{G})>0$ such that 
	\begin{equation}\label{add_7.11}
		\mathbf{X}_A(\mathbf{R}^{\mathrm{e}}(\eta_\ddagger))\le 2\big[\nu(\mathscr{G})|D|+\kappa(\mathscr{G})\big]- \cref{const_eta_ddagger}\sqrt{|D|}.  
	\end{equation}
\end{lemma}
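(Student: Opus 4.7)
The plan is to combine the length bound on $\eta_\ddagger$ inherited from Lemma~\ref{lemma_7.7}(2),(3) with the free-vertex count (\ref{lower_free}), and then balance these two sources of savings against the trivial upper bound $2[\nu(\mathscr{G})|D|+\kappa(\mathscr{G})]$. Set $s:=|\cup_{i=1}^m \mathcal{C}_*^i|$ and $t:=|D|-s$. By Condition~(3) of Definition~\ref{def_sl} each surrounding loop $\ell_{\mathcal{C}_*^i}^{z^i}$ has length at most $2[\nu(\mathscr{G})|\mathcal{C}_*^i|+\kappa(\mathscr{G})]$, so summing over $i$ and subtracting the $2\kappa(\mathscr{G})(m-1)$ oriented edges removed in Lemma~\ref{lemma_7.7}(3), the contribution $2m\kappa(\mathscr{G})$ telescopes to give
\[ |\mathbf{R}^{\mathrm{e}}(\eta_\ddagger)| \;\le\; 2\nu(\mathscr{G})\,s + 2\kappa(\mathscr{G}). \]

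Next I plug this into the definition of $\mathbf{X}_A$ from (\ref{statistic_X}) together with the free-vertex lower bound (\ref{lower_free}). The latter holds unconditionally: its right-hand side is only useful when positive, but since the free-vertex count is always nonnegative, the inequality remains valid when the right-hand side is negative. Using $s+t=|D|$ this gives
\[ \mathbf{X}_A(\mathbf{R}^{\mathrm{e}}(\eta_\ddagger)) \;\le\; 2\nu(\mathscr{G})|D|+2\kappa(\mathscr{G}) - c_\ddagger c \sqrt{s} - a_1\, t, \]
where $a_1(\mathscr{G}):=2\nu(\mathscr{G})-\tfrac12\, c_\ddagger\, \mathrm{deg}(\mathscr{G})$ and $c$ is the implicit constant from (\ref{lower_free}). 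The lemma thus reduces to the estimate $c_\ddagger c\sqrt{s}+a_1 t \gtrsim \sqrt{|D|}$ with a positive constant depending only on $\mathscr{G}$.

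The hard part is verifying $a_1>0$ for each $\mathscr{G}\in\{\mathbb{Z}^2,\mathscr{T},\mathscr{H}\}$, which amounts to a numerical check using the explicit formula $c_\ddagger=\log_{\widehat\lambda(\mathscr{G})}([\mathrm{deg}(\mathscr{G})]^2/([\mathrm{deg}(\mathscr{G})]^2-1))$ together with the values of $\widehat\lambda(\mathscr{G})$, $\nu(\mathscr{G})$ and $\mathrm{deg}(\mathscr{G})$ recorded in Lemmas~\ref{lemma_length_tunnel} and~\ref{lemma_move_along}; in all three cases $c_\ddagger\, \mathrm{deg}(\mathscr{G})$ turns out to be much smaller than $4\nu(\mathscr{G})$, so $a_1>0$ with room to spare. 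This is exactly where the calibration of $c_\ddagger$ in (\ref{statistic_X}) becomes essential: it is just large enough to convert one free neighbour into a unit of length savings via Lemma~\ref{lemma_move_along}, yet small enough that the penalty coming from the vertices of $D$ outside $\cup_i \mathcal{C}_*^i$ in (\ref{lower_free}) does not swallow the $2\nu(\mathscr{G})\,t$ term. Once $a_1>0$ is in hand, a straightforward case split finishes the argument: if $t\ge \sqrt{|D|}$ then $a_1 t\ge a_1\sqrt{|D|}$; otherwise $s\ge |D|-\sqrt{|D|}\ge |D|/2$ for $|D|\ge 4$, so $c_\ddagger c\sqrt{s}\ge (c_\ddagger c/\sqrt{2})\sqrt{|D|}$. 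Setting $\cref{const_eta_ddagger}:=\min\{a_1,\, c_\ddagger c/\sqrt{2}\}$ yields the lemma for $|D|\ge 4$, and small $|D|$ is handled by further decreasing $\cref{const_eta_ddagger}$ if necessary.
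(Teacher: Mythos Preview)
Your proof is correct and follows essentially the same approach as the paper: bound $|\mathbf{R}^{\mathrm{e}}(\eta_\ddagger)|$ via Lemma~\ref{lemma_7.7}(2),(3) and Definition~\ref{def_sl}(3), combine with the free-vertex estimate~(\ref{lower_free}), verify that the coefficient $a_1=2\nu(\mathscr{G})-\tfrac12 c_\ddagger\,\mathrm{deg}(\mathscr{G})$ (called $\delta(\mathscr{G})$ in the paper) is positive, and finish with a case split on the size of $|D\setminus\cup_i\mathcal{C}_*^i|$. The only cosmetic difference is the threshold in the case split---you use $t\gtrless\sqrt{|D|}$ whereas the paper uses $K\gtrless\tfrac12|D|$---but both give the desired $\sqrt{|D|}$ saving.
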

\begin{proof}
	By Items (2) and (3) of Lemma \ref{lemma_7.7} and Condition (3) for surrounding loops (see Definition \ref{def_sl}), we have   
	\begin{equation}\label{7.11}
		\begin{split}	
			|\mathbf{R}^{\mathrm{e}}(\eta_\ddagger)|\le & \sum\nolimits_{1\le i\le m}|\mathbf{R}^{\mathrm{e}}(\ell_{\mathcal{C}_*^i}^{z^i})| - |\cup_{i=1}^{m} \mathbf{R}^{\mathrm{e}}(\ell_{\mathcal{C}_*^i}^{z^i})\setminus \mathbf{R}^{\mathrm{e}}(\eta_{\ddagger})|   \\
			\le &\sum\nolimits_{1\le i\le m}  2\big[\nu(\mathscr{G})|\mathcal{C}_*^i|+ \kappa(\mathscr{G})\big] -2\kappa(\mathscr{G})(m-1)\\
			=& 2\big[\nu(\mathscr{G})|\cup_{i=1}^{m}\mathcal{C}_*^i| + \kappa(\mathscr{G})\big]. 
		\end{split}
	\end{equation} 
	Note that $\delta(\mathscr{G}):=2\nu(\mathscr{G})-\tfrac{1}{2}c_\ddagger(\mathscr{G})\mathrm{deg}(\mathscr{G})>0$ for all $\mathscr{G}\in \{\mathbb{Z}^2,\mathscr{T},\mathscr{H}\}$ (recalling $c_\ddagger$ in (\ref{statistic_X})). We denote $K:=|\cup_{i=1}^{m}\mathcal{C}_*^i|$. By (\ref{lower_free}) and (\ref{7.11}), we obtain  
	\begin{equation}\label{final_7.16}
		\begin{split}
			\mathbf{X}_A(\mathbf{R}^{\mathrm{e}}(\eta_\ddagger))\le & 2\big[\nu(\mathscr{G})|D| + \kappa(\mathscr{G})\big] - \delta(\mathscr{G})\cdot (|D|- K)-c\sqrt{K}.
		\end{split}
	\end{equation}
	In fact, we have the following inequality: 
	\begin{equation}\label{final_7.17}
		\delta(\mathscr{G})\cdot (|D|- K)+c\sqrt{K}\ge c'\sqrt{|D|}.
	\end{equation}
	To see this, if $K\ge \frac{1}{2}|D|$, one has $c\sqrt{K}\ge 2^{-1/2}c\sqrt{|D|}$; otherwise (i.e. $|D|- K\ge  \frac{1}{2}|D|$), one has $\delta(\mathscr{G})\cdot (|D|- K)\ge \frac{1}{2}\delta(\mathscr{G}) |D|\ge \frac{1}{2}\delta(\mathscr{G})\sqrt{|D|}$. Combining (\ref{final_7.16}) and (\ref{final_7.17}), we conclude this lemma.
\end{proof}

Based on Lemma \ref{lemma_7.8}, it becomes straightforward to prove Proposition \ref{lemma_length_bypass}.

\begin{proof}[Proof of Proposition \ref{lemma_length_bypass}]
	Recalling that $\eta_{\ddagger}=\eta_{x\to y} \circ \eta_{y\to x}$, we have 
	\begin{equation*}
		\begin{split}
			&\big|\big\{ \vec{e}_{w,v}\in \mathbf{R}^{\mathrm{e}}(\eta_\ddagger): w\ \text{is free in}\ A  \big\}\big|\\
			\le &\big|\big\{ \vec{e}_{w,v}\in \mathbf{R}^{\mathrm{e}}(\eta_{x\to y}): w\ \text{is free in}\ A  \big\}\big|+ \big|\big\{ \vec{e}_{w,v}\in \mathbf{R}^{\mathrm{e}}(\eta_{y\to x}): w\ \text{is free in}\ A  \big\}\big|. 
		\end{split}	 
	\end{equation*}
	Therefore, since $|\mathbf{R}^{\mathrm{e}}(\eta_\ddagger)|= |\mathbf{R}^{\mathrm{e}}(\eta_{x\to y})|+|\mathbf{R}^{\mathrm{e}}(\eta_{y\to x})|$ (by Item (1) of Lemma \ref{lemma_7.7}), 
	\begin{equation}\label{7.15}
		\mathbf{X}_A(\mathbf{R}^{\mathrm{e}}(\eta_{x\to y}))+\mathbf{X}_A(\mathbf{R}^{\mathrm{e}}(\eta_{y\to x}))	\le \mathbf{X}_A(\mathbf{R}^{\mathrm{e}}(\eta_\ddagger)). 
	\end{equation}
	Note that $\mathbf{X}_A(\mathbf{R}^{\mathrm{e}}(\cev{\eta}_{y\to x}))\le \mathbf{X}_A(\mathbf{R}^{\mathrm{e}}(\eta_{y\to x}))+ c_\ddagger$ (where $\cev{\eta}_{y\to x}$ denotes the reversed path of $\eta_{y\to x}$). Combined with (\ref{7.15}), Lemma \ref{lemma_7.8} and $\kappa(\mathscr{G})+\tfrac{1}{2}c_\ddagger(\mathscr{G}) \le 4$ for $\mathscr{G}\in \{\mathbb{Z}^2,\mathscr{T},\mathscr{H}\}$, it implies that there exists $\eta\in \{\eta_{x\to y},\cev{\eta}_{y\to x}\}$ satisfying
	\begin{equation}\label{7.16}
	\begin{split}
		\mathbf{X}_A(\mathbf{R}^{\mathrm{e}}(\eta)) \le \nu(\mathscr{G})|D|- \tfrac{1}{2}\cref{const_eta_ddagger}\sqrt{|D|}+4. 
	\end{split}
	\end{equation}
	Moreover, $\eta$ connects $x$ and $y$ within $A^c$ since both $\eta_{x\to y}$ and $\cev{\eta}_{y\to x}$ do. In conclusion, $\eta$ satisfies all conditions in Proposition \ref{lemma_length_bypass}. 
\end{proof}

\section{Quantitative removal arguments}\label{section_qra}

In this section, we establish the remaining two technical lemmas, namely, Lemmas \ref{lemma_remove_distant} and \ref{lemma_remove_interlock}. As mentioned in Section \ref{subsection_outline}, these two lemmas are both quantitative versions of the removal argument presented in \cite{psi}.

%In this section, we establish Lemmas \ref{lemma_remove_distant} and \ref{lemma_remove_interlock}. As mentioned in Section \ref{subsection_outline}, these two lemmas are both quantitative versions of the removal argument in \cite{psi}.

\subsection{Removal of a distant subset}\label{section_qra_distant}

The aim of this subsection is to establish Lemma \ref{lemma_remove_distant}. We first cite two lemmas from \cite{psi}, which together provide a general approach to estimate the increase to the harmonic measure after removing a subset. Although in \cite{psi} these two lemmas are stated for $\mathbb{Z}^d$ ($d\ge 2$), their proofs do not rely on the graph structure unique to $\mathbb{Z}^d$ and can be easily adapted to $\mathscr{T}$ and $\mathscr{H}$. Precisely, let $A\in \mathcal{A}(\mathscr{G})$ and $D\subset A\setminus \{\bm{0}\}$. We denote $\widetilde{A}:=A\setminus D$. Arbitrarily take $F_1\subset F_2\subset \mathfs{V}$ such that $F_1\cap A=F_2\cap A=D$. For $i\in \{1,2\}$, define 
\begin{equation}\label{new_4.1}
	\widehat{F}_i:= [\partial^{\mathrm{i}}_{\infty} (A\cup F_i)]\setminus \widetilde{A}\ \ \text{and}\ \ \widecheck{F}_i:= [\partial^{\mathrm{i}}_{\bm{0}} (A\cup F_i)]\setminus \widetilde{A}.
\end{equation}
\begin{lemma}[{\cite[Lemma 3.5]{psi}}] \label{lemma_psi_3.5}
	Keep the notations above. We have 
	\begin{equation}
		\frac{\mathbb{H}_{A\setminus D}(\bm{0})}{\mathbb{H}_{A}(\bm{0})} \le \max_{ v_1\in \widehat{F}_1, v_2\in \widecheck{F}_2}  \frac{G_{A\setminus D}(v_1,v_2)}{G_{A}(v_1,v_2)}. 
	\end{equation}
\end{lemma}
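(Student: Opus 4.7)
The strategy is a two-step last-exit decomposition at the barriers $\widehat F_1$ (infinity-side) and $\widecheck F_2$ ($\bm 0$-side), followed by a direct comparison with the analogous decomposition of the denominator. Throughout I would fix $x$ far from $A\cup F_2$ and let $|x|\to\infty$ at the end to recover the harmonic measures.

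First I would apply Lemma \ref{lemma_last-exit} with $A_1=\widetilde A$ and $A_2=\widetilde A\cup\widehat F_1\cup\{x\}$ (noting $\widehat F_1\cap\widetilde A=\emptyset$ by the definition of $\widehat F_1$, so $A_2\setminus A_1=\widehat F_1\cup\{x\}$). This yields
\[
\mathbb{P}_x(\tau_{\widetilde A}=\tau_{\bm 0}) = \sum_{v_1\in\widehat F_1\cup\{x\}} G_{\widetilde A}(x,v_1)\,\mathbb{P}_{v_1}\big(\tau^+_{A_2}=\tau_{\bm 0}\big).
\]
A second last-exit applied to the inner probability, now with the $\bm 0$-side barrier $\widecheck F_2$ (taking $A_1'=\widetilde A\cup\widehat F_1$ and $A_2'=A_1'\cup\widecheck F_2$), rewrites $\mathbb{P}_{v_1}(\tau^+_{A_2}=\tau_{\bm 0})$ as a sum over $v_2\in\widecheck F_2\cup\{\bm 0\}$ of an intermediate Green-function factor times a final ``exit-to-$\bm 0$'' factor. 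Substituting back displays $\mathbb{P}_x(\tau_{\widetilde A}=\tau_{\bm 0})$ as a double sum indexed by $(v_1,v_2)\in\widehat F_1\times\widecheck F_2$, plus boundary terms involving $v_1=x$ or $v_2=\bm 0$ that will vanish when $|x|\to\infty$ (using the Green-function asymptotics in Lemma \ref{lemma_green}).

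Next I would carry out verbatim the same double last-exit for $\mathbb{P}_x(\tau_A=\tau_{\bm 0})$, with $\widetilde A$ replaced by $A$ throughout. The crucial structural observation is that the two ``outer'' factors match between numerator and denominator: the factor producing $v_1\in\widehat F_1$ from $x$ depends only on walks lying in the infinite component of $(A\cup F_1)^c$, which is disjoint from $D$; and the factor producing $\bm 0$ from $v_2\in\widecheck F_2$ depends only on walks lying in the $\bm 0$-cluster of $(A\cup F_2)^c\cup\{\bm 0\}$, again disjoint from $D$. Consequently the only discrepancy between the two double sums is the replacement of the middle Green's function $G_{\widetilde A}(v_1,v_2)$ by $G_A(v_1,v_2)$; bounding this replacement termwise by its maximum over $(v_1,v_2)$ gives
\[
\frac{\mathbb{P}_x(\tau_{\widetilde A}=\tau_{\bm 0})}{\mathbb{P}_x(\tau_A=\tau_{\bm 0})}\;\le\; \max_{v_1\in\widehat F_1,\,v_2\in\widecheck F_2}\frac{G_{\widetilde A}(v_1,v_2)}{G_A(v_1,v_2)},
\]
and letting $|x|\to\infty$ converts the left-hand side to $\mathbb{H}_{\widetilde A}(\bm 0)/\mathbb{H}_A(\bm 0)$.

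The main obstacle will be justifying the matching of the outer factors rigorously, since the barriers $\widehat F_1$ and $\widecheck F_2$ are defined modulo $\widetilde A$ (not $A$) and can meet $D$; the asymmetric choice $F_1\subset F_2$ is deliberately made to guarantee that $\widehat F_1$ sits strictly outside $\widecheck F_2$, so the middle region properly sandwiches $D$ and the outer/inner factorization is exact rather than approximate. Showing that the boundary terms from $v_1=x$ and $v_2=\bm 0$ are negligible in the $|x|\to\infty$ limit is the final technical check.
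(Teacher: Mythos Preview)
The paper does not prove this lemma itself; it is quoted verbatim from \cite[Lemma~3.5]{psi}, so there is no in-paper argument to compare against. Your overall architecture---an outer factor on the infinity side, an inner factor on the $\bm 0$ side, and a middle Green's function that carries the only dependence on $D$---is exactly the mechanism behind the cited result.

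There is, however, a genuine gap in your specific decomposition. In your first step you apply last-exit with $A_2=\widetilde A\cup\widehat F_1\cup\{x\}$, which produces the outer factor $G_{\widetilde A}(x,v_1)$. This Green's function is \emph{not} restricted to the infinite component of $(A\cup F_1)^c$: the walk from $x$ may wander into $F_1$ (and in particular into $D$) before hitting $v_1$, so $G_{\widetilde A}(x,v_1)\neq G_A(x,v_1)$ in general. Your claim that this factor ``depends only on walks lying in the infinite component of $(A\cup F_1)^c$'' is false for the last-exit version. The remedy is to use a \emph{first-hitting} decomposition on the infinity side: write
\[
\mathbb{P}_x(\tau_{\widetilde A}=\tau_{\bm 0})=\sum_{v_1\in\partial^{\mathrm i}_\infty(A\cup F_1)}\mathbb{P}_x\big(\tau_{A\cup F_1}=\tau_{v_1}\big)\,\mathbb{P}_{v_1}(\tau_{\widetilde A}=\tau_{\bm 0}),
\]
where the first factor depends only on $A\cup F_1=\widetilde A\cup F_1$ and hence is literally identical for $\widetilde A$ and $A$; the terms with $v_1\in\widetilde A$ contribute $\mathbbm 1_{v_1=\bm 0}$ in both cases. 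Then apply last-exit (Lemma~\ref{lemma_last-exit}) with $A_1=\widetilde A$, $A_2=\widetilde A\cup F_2=A\cup F_2$ to the remaining probability, giving
\[
\mathbb{P}_{v_1}(\tau_{\widetilde A}=\tau_{\bm 0})=\sum_{v_2\in F_2}G_{\widetilde A}(v_1,v_2)\,\mathbb{P}_{v_2}\big(\tau^+_{A\cup F_2}=\tau_{\bm 0}\big),
\]
whose final factor again depends only on $A\cup F_2$ and is nonzero only for $v_2\in\widecheck F_2$. Now the termwise ratio bound is immediate and no limiting argument on boundary terms is needed.
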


For $v_2\in \widecheck{F}_2$, let $\mathring{v}_1=\mathring{v}_1(A,D,v_2)$ be the vertex in $\widehat{F}_1\cup \widecheck{F}_1$ maximizing $G_{A\setminus D} (\cdot, v_2)$ (we break the tie in a predetermined manner).

\begin{lemma}[{\cite[Lemma 3.6]{psi}}]\label{lemma_psi_3.6}
	Keep the notations above. We have 
	\begin{equation}\label{ineq_psi3.6_1}
		G_{A\setminus D}(v_1,v_2) \le\left[  \mathbb{P}_{\mathring{v}_1}\left(\tau_{A\setminus D} <\tau_D \right)\right]^{-1}	G_{A}(\mathring{v}_1,v_2),  
	\end{equation}
	\begin{equation}\label{ineq_psi3.6_2}
		G_{A\setminus D}(v_1,v_2) \le \left[  \mathbb{P}_{\mathring{v}_1}\left(\tau_{A\setminus D} <\tau_D \right) \mathbb{P}_{v_1}\left(\tau_{v_2}<\tau_A \right) \right]^{-1}G_A(v_1,v_2). 
	\end{equation}
\end{lemma}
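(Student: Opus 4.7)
The plan is to deduce both inequalities from a strong-Markov excursion decomposition combined with the maximality defining $\mathring v_1$. The starting point is the trivial observation that, by the very definition of $\mathring v_1$ as a maximizer of $G_{A\setminus D}(\cdot, v_2)$ on $\widehat F_1 \cup \widecheck F_1$, and since $v_1 \in \widehat F_1 \subset \widehat F_1 \cup \widecheck F_1$, one has $G_{A\setminus D}(v_1, v_2) \le G_{A\setminus D}(\mathring v_1, v_2)$. Hence it suffices to bound $G_{A\setminus D}(\mathring v_1, v_2)$ on the right by $p^{-1} G_A(\mathring v_1, v_2)$, yielding (6.1); the second inequality (6.2) will then follow by a short extra step.

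The engine for (6.1) is the identity (obtained from the strong Markov property at $\tau_A$ for the walk started at $\mathring v_1$, and splitting on whether $\tau_A = \tau_{A\setminus D}$ or $\tau_A = \tau_d$ for some $d \in D$):
\begin{equation*}
G_{A\setminus D}(\mathring v_1, v_2) = G_A(\mathring v_1, v_2) + \sum_{d\in D}\mathbb{P}_{\mathring v_1}(\tau_A = \tau_d)\, G_{A\setminus D}(d, v_2).
\end{equation*}
I would then iterate this decomposition by expanding each $G_{A\setminus D}(d,v_2)$ via successive excursions of the walk between the ``screen'' $\widehat F_1 \cup \widecheck F_1$ and the set $A$. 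Invoking the maximality of $\mathring v_1$ on this screen at each return controls the contribution of each excursion by $G_{A\setminus D}(\mathring v_1, v_2)$ times a factor bounded by $1-p$, where $p := \mathbb P_{\mathring v_1}(\tau_{A\setminus D} < \tau_D)$. Summing the resulting geometric series rearranges to $G_{A\setminus D}(\mathring v_1, v_2) \le p^{-1} G_A(\mathring v_1, v_2)$, i.e. (6.1).

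For (6.2) I combine (6.1) with the elementary last-exit identity $G_A(x, v_2) = G_A(v_2, v_2)\, \mathbb{P}_x(\tau_{v_2} < \tau_A)$ for $x \in A^c$ (itself a direct consequence of the strong Markov property at $\tau_{v_2}$). Applying this to both $x = \mathring v_1$ and $x = v_1$ and dividing yields $G_A(\mathring v_1, v_2) \le [\mathbb{P}_{v_1}(\tau_{v_2} < \tau_A)]^{-1} G_A(v_1, v_2)$; substituting this into (6.1) gives exactly (6.2).

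The main technical obstacle is the iteration step in the proof of (6.1): making rigorous that after each ``failed'' excursion (one that re-enters $D$ rather than reaching $A \setminus D$), the walk must cross $\widehat F_1 \cup \widecheck F_1$ before it can mount another attempt at $A$, so that the maximality of $\mathring v_1$ on the screen can be invoked to bound the next residual term. This requires a careful topological bookkeeping of how the walk transitions between $D$ and the bulk of the graph through the components of $(A \cup F_1)^c$; the definitions of $\widehat F_1$ (facing infinity) and $\widecheck F_1$ (facing $\bm 0$) together encompass precisely the inner-boundary vertices through which such excursions are channelled, so that the geometric-series ratio $1 - p$ emerges with the correct constant.
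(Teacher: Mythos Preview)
The paper does not give its own proof of this lemma; it is quoted verbatim from \cite[Lemma~3.6]{psi}, so there is no in-paper argument to compare against. Your proposal is essentially correct and would recover the result, with one simplification worth noting.

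Your decomposition at $\tau_A$ and your derivation of \eqref{ineq_psi3.6_2} from \eqref{ineq_psi3.6_1} via $G_A(x,v_2)=\mathbb P_x(\tau_{v_2}<\tau_A)\,G_A(v_2,v_2)$ are both fine. Where your write-up is more complicated than necessary is the ``iteration~/ geometric series'' for \eqref{ineq_psi3.6_1}: no iteration is needed. Once you know that every path from $d\in D$ to $v_2$ avoiding $A\setminus D$ must pass through the screen $\widehat F_1\cup\widecheck F_1$, a single application of the strong Markov property gives
\[
G_{A\setminus D}(d,v_2)\ \le\ \max_{w\in\widehat F_1\cup\widecheck F_1} G_{A\setminus D}(w,v_2)\ =\ G_{A\setminus D}(\mathring v_1,v_2)=:M,
\]
and plugging this once into your decomposition yields $M\le G_A(\mathring v_1,v_2)+(1-p)M$, which rearranges to \eqref{ineq_psi3.6_1}. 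The geometric fact itself is clean: since $v_2\in\widecheck F_2$ has a neighbour in $(A\cup F_2)^c_{\bm 0}\subset (A\cup F_1)^c_{\bm 0}$, either $v_2$ already lies in $\widecheck F_1$ (then $\mathring v_1=v_2$ and the bound $G_{A\setminus D}(d,v_2)\le M$ is trivial), or $v_2\in(A\cup F_1)^c_{\bm 0}$, in which case any path from $d\in D\subset A\cup F_1$ to $v_2$ crosses $\partial^{\mathrm i}_{\bm 0}(A\cup F_1)$ and, if it avoids $A\setminus D$, lands in $\widecheck F_1$. Your final paragraph gestures at this but phrases it as the walk needing to cross the screen ``before it can mount another attempt at $A$'', which is not quite the right picture: after hitting $D$ the walk is already in $A$; what matters is that it must cross the screen before it can visit $v_2$.
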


We present a quantitative version of the discrete Harnack's inequality as follows:

\begin{lemma}\label{lemma_green_2}
	For any $\epsilon>0$, there exists $\Cl\label{const_green2}(\mathscr{G},\epsilon)>0$ such that for any $K\ge \Cref{const_green2}$, $r\ge 1$, $A\subset B(r)\cup [B(Kr)]^c$, $v_1,v_1'\in \partial^{\mathrm{i}}B\big(\frac{Kr}{\ln(K)}\big)$ and $v_2\in \partial^{\mathrm{i}}B\big( \frac{2Kr}{\ln(K)}\big)$, 
	\begin{equation}
		G_A(v_1',v_2)\le (1+\epsilon)G_A(v_1,v_2).
	\end{equation}
\end{lemma}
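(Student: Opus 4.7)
Define $h(w) := G_A(w, v_2)$. Then $h$ is nonnegative, vanishes on $A$, and is harmonic on $(A \cup \{v_2\})^c$. The goal is $h(v_1') \le (1+\epsilon)\,h(v_1)$ for $v_1, v_1' \in \partial^{\mathrm{i}} B(r_K)$ with $r_K := Kr/\ln K$. The crucial geometric feature is the \emph{logarithmic thickness} of the annulus $B(Kr)\setminus B(r)$ at radius $r_K$: both $\ln(r_K/r) = \ln K - \ln\ln K$ and $\ln(Kr/r_K) = \ln\ln K$ diverge as $K\to\infty$. The plan is to exploit this thickness to show that $h$ is approximately rotationally symmetric on $\partial B(r_K)$, with multiplicative oscillation $o(1)$ as $K \to \infty$.

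The argument proceeds via two reductions. First, conditioning on $\tau_{v_2}$ gives $G_A(v,v_2) = \mathbb{P}_v(\tau_{v_2}<\tau_A)\,G_A(v_2,v_2)$, so the target ratio equals $\mathbb{P}_{v_1'}(\tau_{v_2}<\tau_A)/\mathbb{P}_{v_1}(\tau_{v_2}<\tau_A)$. Second, introduce an intermediate scale $R := r_K/\sqrt{\ln K}$, for which $B_{v_2}(R)\subset A^c$ and $R/r_K\to 0$. Decomposing the event $\{\tau_{v_2}<\tau_A\}$ on the first entry to $\partial^{\mathrm{i}} B_{v_2}(R)$:
\begin{equation*}
\mathbb{P}_v(\tau_{v_2}<\tau_A) = \sum_{w\in\partial^{\mathrm{i}} B_{v_2}(R)} \mathbb{P}_v\bigl(\tau_{\partial B_{v_2}(R)\cup A} = \tau_w\bigr)\cdot\mathbb{P}_w(\tau_{v_2}<\tau_A).
\end{equation*}
Applying Lemma \ref{lemma_green} in the ball $B_{v_2}(c\,r_K)\subset A^c$, combined with the ordinary discrete Harnack inequality, one shows that $\mathbb{P}_w(\tau_{v_2}<\tau_A)$ is constant in $w\in\partial^{\mathrm{i}} B_{v_2}(R)$ up to a factor $1+o_K(1)$. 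The problem thus reduces to bounding the ratio of the ``macroscopic'' hitting probabilities
\begin{equation*}
\psi(v) := \mathbb{P}_v\bigl(\tau_{\partial B_{v_2}(R)\cup A} = \tau_{\partial B_{v_2}(R)}\bigr),\qquad v\in\{v_1,v_1'\},
\end{equation*}
which is a bounded harmonic function on the complement of $A\cup\partial B_{v_2}(R)$, whose boundary singularity at $v_2$ has been absorbed into the near-target factor.

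To establish $\psi(v_1')/\psi(v_1) = 1 + o_K(1)$, I would exploit the thick-annulus geometry via Fourier analysis in the conformally transformed picture. Under the map $z\mapsto\ln z$, the annulus $B(Kr)\setminus B(r)$ becomes a cylinder of length $\ln K$ and circumference $2\pi$, on which positive harmonic functions have non-constant angular Fourier modes at height $t=\ln r_K$ whose sizes relative to the radial mean are controlled by the geometric factors coming from $\ln(r_K/r)$ and $\ln(Kr/r_K)$, both tending to infinity; this yields angular oscillation $O(1/\ln\ln K)$. In the discrete setting the same conclusion can be derived either via the invariance principle at scale $r_K$ (which is legitimate since $r_K\to\infty$ with $K$), or by a direct iteration of the discrete Harnack inequality combined with Lemma \ref{lemma_hit} across the log-scales of the annulus. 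Combining this with the two earlier reductions gives the lemma for $K$ sufficiently large depending on $\epsilon$.

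\textbf{Main obstacle.} The technical crux is the quantitative rotational symmetry for the bounded harmonic function $\psi$ on the logarithmically thick annulus, uniform in the geometry of $A\subset B(r)\cup [B(Kr)]^c$. The difficulty is that the Dirichlet values of $\psi$ on $A$ are generally irregular (nonzero on parts of the inner and outer circles of the annulus where $A^c$ reaches, and zero on $A$), so classical Fourier decay must be adapted to this setting. Two clean routes are available: (i) bound $\psi$ above and below by comparison functions with smoother Dirichlet data on the full inner and outer boundaries and transfer the continuum estimate to the discrete walk via the invariance principle at scale $r_K$; or (ii) perform a discrete analog of the Fourier-decay argument by iterating the discrete Harnack inequality together with Lemmas \ref{lemma_green} and \ref{lemma_hit} on the log-scales of the annulus, with multiplicative constants tracked uniformly in $A$, $v_1$, $v_1'$, and $v_2$.
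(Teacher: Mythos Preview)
Your reductions in steps 1--2 are sound, but the proposed mechanism for controlling the oscillation of $\psi$ on $\partial B(r_K)$ has a genuine gap. The function $\psi$ is harmonic only on $(A\cup B_{v_2}(R))^c$, not on the full annulus $B(Kr)\setminus B(r)$, and the excised ball $B_{v_2}(R)$ sits at radius $2r_K$, i.e.\ only $\ln 2=O(1)$ log-scales above the circle $\partial B(r_K)$. Hence the cylinder-Fourier argument, applied on the largest slab where $\psi$ is genuinely harmonic, damps nonzero angular modes only by a factor $e^{-O(1)}$ from that side, not $o(1)$: the boundary values of $\psi$ on the circle just below $B_{v_2}(R)$ swing between a value near $1$ at the angle of $v_2$ and a value bounded away from $1$ elsewhere, so the naive Fourier bound yields absolute oscillation $O(1)$ on $\partial B(r_K)$. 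The divergence of $\ln(Kr/r_K)=\ln\ln K$ that you invoke is the distance to the outer circle $\partial B(Kr)$, but once $B_{v_2}(R)$ is removed that is no longer the relevant boundary component for $\psi$ on the outer side. Your fallback routes (i)--(ii) are not spelled out enough to see how they get around this; in particular ``iterated Harnack on log-scales'' for $\psi$ runs into the same obstacle, since there are only $O(1)$ log-scales between $\partial B(r_K)$ and the puncture.

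The paper's argument is different and avoids the detour through $B_{v_2}(R)$ altogether: it couples at the \emph{source} side rather than the \emph{target} side, working directly with $h=G_A(\cdot,v_2)$. With auxiliary scales $R_3=r_K/(\ln^{(2)}K)^2\ll R_2=r_K/\ln^{(2)}K$, one first shows via Lemma~\ref{lemma_hit} that
\[
\mathbb{P}_{v_1}\bigl(\tau_{B_{v_1'}(R_3)}<\tau_A\bigr)\ \ge\ 1-O\!\Big(\tfrac{\ln^{(2)}K}{\ln K}+\tfrac{\ln^{(3)}K}{\ln^{(2)}K}\Big),
\]
using the thickness of the $A$-free annulus on \emph{both} sides of $r_K$. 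Then for any $w_1\in B_{v_1'}(R_3)$, a last-exit decomposition (Lemma~\ref{lemma_last-exit}) through $\partial B_{v_1'}(R_2)$ together with the Green's function asymptotics of Lemma~\ref{lemma_green} shows that the hitting distribution on $\partial B_{v_1'}(R_2)$, and hence $G_A(w_1,v_2)$, depends on $w_1$ only up to a factor $1+O(1/\ln^{(3)}K)$. Chaining the two steps gives $G_A(v_1,v_2)\ge(1-o(1))\,G_A(v_1',v_2)$, with no need to analyse any harmonic function carrying an interior boundary component near $v_2$.
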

\begin{proof}
	Suppose that $K>0$ is sufficiently large. We denote $\ln^{(2)}(K):=\ln\ln(K)$ and $\ln^{(3)}(K):=\ln\ln\ln(K)$. Let $R_0=Kr$, $R_1=\frac{Kr}{\ln(K)}$, $R_2=\frac{Kr}{\ln(K)\ln^{(2)}(K)}$ and $R_3=\frac{Kr}{\ln(K)[\ln^{(2)}(K)]^2}$. For any $v_1,v_1'\in \partial^{\mathrm{i}}B(R_1)$, since $A\subset B(r)\cup [B(R_0)]^c$, one has 
	\begin{equation*}
		\begin{split}
			\mathbb{P}_{v_1}\big(\tau_{B_{v_1'}(R_3)}< \tau_{A} \big)\ge& \mathbb{P}_{v_1}\big(\tau_{B_{v_1'}(R_3)}< \tau_{B(r)\cup \partial^{\mathrm{i}}B(R_0) } \big) \\
			\ge & 1 -  \mathbb{P}_{v_1} \big(\tau_{B(r)} <\tau_{\partial^{\mathrm{i}}B(R_0)} \big)- \mathbb{P}_{v_1} \big(\tau_{B_{v_1'}(R_3)} > \tau_{\partial^{\mathrm{i}}B(R_0)}\big). 
		\end{split} 
	\end{equation*}
	For the first probability on the right-hand side, by Lemma \ref{lemma_hit}, we have 
	\begin{equation*}
		\mathbb{P}_{v_1} \big(\tau_{B(r)} <\tau_{\partial^{\mathrm{i}}B(R_0)} \big)  \lesssim   \frac{\ln^{(2)}(K)}{\ln(K)}. 
	\end{equation*}
	Moreover, since $ B_{v_1'}(R_0-R_1)\subset B(R_0)$ and $|v_1-v_1'|\le 2R_1$, by Lemma \ref{lemma_hit},
	\begin{equation*}
		\begin{split}
			\mathbb{P}_{v_1} \big(\tau_{B_{v_1'}(R_3)} > \tau_{\partial^{\mathrm{i}}B(R_0)} \big)\le& \mathbb{P}_{v_1}\big(\tau_{B_{v_1'}(R_3)} > \tau_{\partial^{\mathrm{i}}B_{v_1'}(R_0-R_1)} \big)
			\lesssim  \frac{\ln^{(3)}(K)}{\ln^{(2)}(K)}. 
		\end{split}
	\end{equation*}
	Combining these three inequalities, we get 
	\begin{equation}\label{4.4}
		\mathbb{P}_{v_1}\big(\tau_{B_{v_1'}(R_3)}< \tau_{A} \big) \ge 1-C'(\mathscr{G})\bigg[\frac{\ln^{(2)}(K)}{\ln(K)}+\frac{\ln^{(3)}(K)}{\ln^{(2)}(K)} \bigg],
	\end{equation}
	which together with the strong Markov property implies that
	\begin{equation}\label{4.5}
		\begin{split}
			G_A(v_1,v_2) \ge & \mathbb{P}_{v_1}\big(\tau_{B_{v_1'}(R_3)}< \tau_{A} \big)\min_{w_1\in B_{v_1'(R_3)}} G_{A}(w_1,v_2)\\ 
			\ge & \bigg( 1-C'\bigg[\frac{\ln^{(2)}(K)}{\ln(K)}+\frac{\ln^{(3)}(K)}{\ln^{(2)}(K)} \bigg]\bigg)\min_{w_1\in B_{v_1'(R_3)}} G_{A}(w_1,v_2).
		\end{split}
	\end{equation}

	For any $w_1\in B_{v_1'(R_3)}$, by the strong Markov property and $A\subset [B_{v_1'}(R_2)]^c$, 
	\begin{equation}\label{4.6}
		G_{A}(w_1,v_2) = \sum_{w_2\in \partial^{\mathrm{i}} B_{v_1'}(R_2) } \mathbb{P}_{w_1}\big(\tau_{\partial^{\mathrm{i}} B_{v_1'}(R_2)} =\tau_{w_2} \big) G_{A}(w_2,v_2).  
	\end{equation}
	By Lemma \ref{lemma_last-exit} (taking $A_1=\partial^{\mathrm{i}}B_{v_1'}(R_2)$, $A_2=B_{v_1'}(R_3)\cup \partial^{\mathrm{i}}B_{v_1'}(R_2)$, $x=w_1$ and $y=w_2$), we know that $\mathbb{P}_{w_1}\big(\tau_{\partial^{\mathrm{i}} B_{v_1'}(R_2)} =\tau_{w_2} \big)$ equals to 
	\begin{equation}\label{4.7}
		\begin{split}
			\sum_{w_3\in \partial^{\mathrm{i}}B_{v_1'}(2R_3) } G_{\partial^{\mathrm{i}} B_{v_1'}(R_2)}(w_1,w_3) \mathbb{P}_{w_3}\big(\tau^+_{\partial^{\mathrm{i}} B_{v_1'}(R_2)\cup \partial^{\mathrm{i}} B_{v_1'}(2R_3)} =\tau_{w_2} \big). 
		\end{split}
	\end{equation} 
	In addition, for each fixed $w_3\in \partial^{\mathrm{i}}B_{v_1'}(2R_3) $ and any $w_1\in  B_{v_1'(R_3)}$, by $B_{v_1'}(R_2)\subset B_{w_3}(R_2+R_3)$, $|w_1-w_3|\ge R_3$ and Lemma \ref{lemma_green}, we have 
	\begin{equation}\label{4.8}
		G_{\partial^{\mathrm{i}} B_{v_1'}(R_2)}(w_1,w_3)\le G_{\partial^{\mathrm{i}} B_{w_3}(R_2+R_3)}(w_1,w_3)  \le \Cref{asym_green}(\mathscr{G}) \ln(\tfrac{R_2+R_3}{R_3}) +O(R_3^{-1}). 
	\end{equation}
	Similarly, noting that $B_{w_3}(R_2-R_3)\subset B_{v_1'}(R_2)$ and $|w_1-w_3|\le 3R_3$, we also have
	\begin{equation}\label{4.9}
		G_{\partial^{\mathrm{i}} B_{v_1'}(R_2)}(w_1,w_3) \ge \Cref{asym_green}(\mathscr{G}) \ln(\tfrac{R_2-R_3}{3R_3}) +O(R_3^{-1}).  
	\end{equation}
	By (\ref{4.7}), (\ref{4.8}) and (\ref{4.9}), we obtain that for any $w_2\in \partial^{\mathrm{i}} B_{v_1'}(R_2)$, 
	\begin{equation*}
		1\le 	\frac{\max_{w_1\in  B_{v_1'(R_3)}}\mathbb{P}_{w_1}\big(\tau_{\partial^{\mathrm{i}} B_{v_1'}(R_2)} =\tau_{w_2} \big)}{\min_{w_1\in  B_{v_1'(R_3)}}\mathbb{P}_{w_1}\big(\tau_{\partial^{\mathrm{i}} B_{v_1'}(R_2)} =\tau_{w_2} \big)}	 \le  1+C''(\mathscr{G})[\ln^{(3)}(K)]^{-1}. 
	\end{equation*}
	Combined with (\ref{4.6}), it yields that for any $v_2\in \partial^{\mathrm{i}}B(2R_1)$, 
	\begin{equation*}
		1\le 	\frac{\max_{w_1\in  B_{v_1'(R_3)}}G_{A}(w_1,v_2)}{\min_{w_1\in  B_{v_1'(R_3)}}G_{A}(w_1,v_2)}	 \le  1+C''[\ln^{(3)}(K)]^{-1}. 
	\end{equation*}
	Especially, since $v_1'\in B_{v_1'}(R_3)$, one has 
	\begin{equation}\label{4.10}
		G_{A}(v_1',v_2) \le 	\big\{1+C''[\ln^{(3)}(K)]^{-1}\big\}\min_{w_1\in B_{v_1'(R_3)}} G_{A}(w_1,v_2).
	\end{equation}

	Note that $\tfrac{\ln^{(2)}(K)}{\ln(K)}$, $\tfrac{\ln^{(3)}(K)}{\ln^{(2)}(K)}$ and $[\ln^{(3)}(K)]^{-1}$ all converge to $0$ as $K\to \infty$. Thus, for any $\epsilon>0$, there exists $\Cref{const_green2}(\mathscr{G},\epsilon)>0$ such that for all $K\ge \Cref{const_green2}$, 
	\begin{equation}\label{4.11}
		\bigg( 1-C'\bigg[\frac{\ln^{(2)}(K)}{\ln(K)}+\frac{\ln^{(3)}(K)}{\ln^{(2)}(K)} \bigg]\bigg)^{-1} \big\{1+C''[\ln^{(3)}(K)]^{-1}\big\} \le 1+\epsilon.
	\end{equation}
	Combining (\ref{4.5}), (\ref{4.10}) and (\ref{4.11}), we conclude this lemma.
\end{proof}

Now we are ready to prove Lemma \ref{lemma_remove_distant}.

\begin{proof}[Proof of Lemma \ref{lemma_remove_distant}]
	Let $L\ge \Cref{remove_distant}$, where $\Cref{remove_distant}\ge 10$ is a large constant that will be determined later. We arbitrarily take $x_{\diamond}\in D$ and denote $l:=\Cref{ball_1}[\mathrm{diam}(D)\vee 1]$. Note that $D\subset \mathbf{B}_{x_{\diamond}}(\mathrm{diam}(D)) \subset B_{x_{\diamond}}(l)$ by (\ref{ineq_ball}). Let  $J_0:=\mathbf{d}(x_{\diamond},A\setminus D)$. Since $\mathbf{d}(D,A\setminus D)\ge L[\mathrm{diam}(D)\vee 1]$, one has
	\begin{equation}\label{ineq_K513}
		K:= l^{-1}J_0 \ge l^{-1}\mathbf{d}(D,A\setminus D) \ge  \frac{L[\mathrm{diam}(D)\vee 1]}{\Cref{ball_1}[\mathrm{diam}(D)\vee 1]} = \Cref{ball_1}^{-1}L.  
	\end{equation} 
	 We also denote $J_1=J_0\ln^{-1}(K)$, $J_2=2J_1$ and $J_3=J_0\ln(K)$. For each $i\in \{1,2\}$, we take $F_i=B_{x_{\diamond}}(J_i)$. Since $F_i\cap (A\setminus D)=\emptyset$, one has $\widehat{F}_i=\widecheck{F}_i=\partial^{\mathrm{i}} B_{x_{\diamond}}(J_i)$ (recalling $\widehat{F}_i$ and $\widecheck{F}_i$ in (\ref{new_4.1})). Thus, by Lemma \ref{lemma_psi_3.5} and (\ref{ineq_psi3.6_1}), we have 
	\begin{equation}\label{4.13}
		\frac{\mathbb{H}_{A\setminus D}(\bm{0})}{\mathbb{H}_{A}(\bm{0})} \le \max_{v_1,v_1'\in  \partial^{\mathrm{i}}B_{x_{\diamond}}(J_1), v_2\in  \partial^{\mathrm{i}}B_{x_{\diamond}}(J_2)}  \left[  \mathbb{P}_{v_1'}\left(\tau_{A\setminus D} <\tau_D \right)\right]^{-1}  \frac{G_{A}(v_1',v_2)}{G_{A}(v_1,v_2)}.	  
	\end{equation}
	For any $\epsilon\in (0,1)$, we require that $\Cref{remove_distant}\ge  \Cref{ball_1}\Cref{const_green2}(\mathscr{G}, \frac{1}{8}\epsilon)$. Thus, by Lemma \ref{lemma_green_2} and the fact that $K\ge \Cref{ball_1}^{-1}L \ge \Cref{ball_1}^{-1}\Cref{remove_distant}  \ge  \Cref{const_green2}(\mathscr{G}, \frac{1}{8}\epsilon) $ (recalling (\ref{ineq_K513})), we have 
	\begin{equation*}
		\max_{v_1,v_1'\in \partial^{\mathrm{i}} B_{x_{\diamond}}(J_1), v_2\in \partial^{\mathrm{i}} B_{x_{\diamond}}(J_2)} \frac{G_{A}(v_1',v_2)}{G_{A}(v_1,v_2)} \le 1+ \frac{1}{8}\epsilon. 
	\end{equation*}
	Combined with (\ref{4.13}), it implies that 
	\begin{equation}\label{4.20}
			\frac{\mathbb{H}_{A\setminus D}(\bm{0})}{\mathbb{H}_{A}(\bm{0})} \le \Big(1+ \frac{1}{8}\epsilon\Big) \Big[\min_{v_1\in  \partial^{\mathrm{i}}B_{x_{\diamond}}(J_1)}   \mathbb{P}_{v_1'}\left(\tau_{A\setminus D} <\tau_D \right) \Big]^{-1} .
	\end{equation}

	In what follows, we prove a lower bound for the minimum on the right-hand side of (\ref{4.20}). Since $J_0=\mathbf{d}(x_{\diamond},A\setminus D)$, there exists $z_\dagger\in A\setminus D$ such that $\mathbf{d}(z_\dagger,x_{\diamond})=J_0$. By $z_\dagger\in A\setminus D$ and $D\subset B_{x_{\diamond}}(l)$, we have: for any $v_1'\in \partial^{\mathrm{i}} B_{x_{\diamond}}(J_1)$,
	\begin{equation}\label{4.14}
		\mathbb{P}_{v_1'}\left(\tau_{A\setminus D} <\tau_D \right) \ge \mathbb{P}_{v_1'}\big(\tau_{z_{\dagger}} <\tau_{B_{x_{\diamond}}(l)} \big) 
	\end{equation}
	Intuitively, for the random walk which starts from $v_1'$ and hits $z_\dagger$ before $B_{x_{\diamond}}(l)$, it may first escape to a faraway place (say $\partial^{\mathrm{i}} B(J_3)$), then hit $B_{z_{\dagger}}(l)$ before $B_{x_{\diamond}}(l)$ (by symmetry, the probability of this step is approximately $\tfrac{1}{2}$), and finally reach $z_{\dagger}$. To be precise, by the strong Markov property, $\mathbb{P}_{v_1'}\big(\tau_{z_{\dagger}} <\tau_{B_{x_{\diamond}}(l)} \big)$ is bounded from below by 
	\begin{equation}\label{4.15}
		\begin{split}
			&\mathbb{P}_{v_1'} \big(\tau_{\partial^{\mathrm{i}} B(J_3)}< \tau_{z_{\dagger}} \land \tau_{B_{x_{\diamond}}(l)} \big) 	\min_{w_1\in \partial^{\mathrm{i}} B(J_3)}   \mathbb{P}_{w_1} \big(\tau_{B_{z_{\dagger}}(l)} < \tau_{ B_{x_{\diamond}}(l)} \big) \\
			&\cdot \min_{w_2\in \partial^{\mathrm{i}} B_{z_{\dagger}}(l)  } \mathbb{P}_{w_2}\big(\tau_{z_{\dagger}} < \tau_{B_{x_{\diamond}}(l)} \big).
		\end{split}
	\end{equation}
	For the first probability, with the similar arguments as proving (\ref{4.4}), we have 
	\begin{equation}
		\mathbb{P}_{v_1'} \big(\tau_{\partial^{\mathrm{i}} B(J_3)}< \tau_{z_{\dagger}} \land \tau_{B_{x_{\diamond}}(l)} \big) \ge 1- O\bigg(\frac{\ln^{(2)}(K)}{\ln(K)}\bigg). 
	\end{equation}
	For the second one, by Lemma \ref{lemma_2srw_hm}, one has: for any $w_1\in \partial^{\mathrm{i}} B(J_3)$,
	\begin{equation}
		\begin{split}
			\mathbb{P}_{w_1} \big(\tau_{B_{z_{\dagger}}(l)} < \tau_{ B_{x_{\diamond}}(l)} \big)  =& \sum_{w_2\in \partial^{\mathrm{i}}B_{x_{\diamond}}(l)  } \mathbb{P}_{w_1} \big(\tau_{B_{z_{\dagger}}(l)\cup B_{x_{\diamond}}(l) } = \tau_{ w_2} \big)  \\
			=&\Big[ 1- O\big([\ln(K)]^{-1}\big)  \Big] \sum_{w_2\in \partial^{\mathrm{i}}B_{x_{\diamond}}(l)  } \mathbb{H}_{B_{z_{\dagger}}(l)\cup B_{x_{\diamond}}(l)} (w_2)\\
			=&\frac{1}{2} \Big[ 1- O\big([\ln(K)]^{-1}\big)  \Big], 
		\end{split}
	\end{equation}
	where the last equality follows from the symmetry of $\mathscr{G}$. For the third probability in (\ref{4.15}), by $B_{x_{\diamond}}(l)\subset [B_{z_\dagger}(\frac{1}{2}J_0)]^c$ and Lemma \ref{lemma_hit}, we have: for any $w_2\in \partial^{\mathrm{i}} B_{z_{\dagger}}(l)$,
	\begin{equation}\label{4.18}
		\mathbb{P}_{w_2}\big(\tau_{z_{\dagger}} < \tau_{B_{x_{\diamond}}(l)} \big) \ge \mathbb{P}_{w_2}\big(\tau_{z_{\dagger}} < \tau_{B_{z_\dagger}(\frac{1}{2}J_0)}  \big)\ge  \frac{\ln(K)- O(1) }{\ln(\frac{1}{2}J_0)}.
	\end{equation}
	Recall that $J_0=Kl= K\Cref{ball_1}[\mathrm{diam}(D)\vee 1]$. Thus, by (\ref{4.14})-(\ref{4.18}), we obtain 
	\begin{equation*}
		\begin{split}
			\min_{v_1'\in  \partial^{\mathrm{i}} B_{x_{\diamond}}(J_1) }	\mathbb{P}_{v_1'}\left(\tau_{A\setminus D} <\tau_D \right) \ge & \bigg[\frac{1}{2}- O\bigg(  \frac{\ln^{(2)}(K)}{\ln(K)}\bigg)\bigg]\cdot \frac{\ln(K\Cref{ball_1})}{\ln(K\Cref{ball_1})+ \ln(\mathrm{diam}(D)\vee 1)}.
		\end{split}	
	\end{equation*}	
	Note that $\frac{\ln^{(2)}(K)}{\ln(K)}$ converges to $0$ as $K\to \infty$, and that $\frac{\ln(K\Cref{ball_1})}{\ln(K\Cref{ball_1})+ \ln(\mathrm{diam}(D)\vee 1)}$ is increasing with respect to $K$. Therefore, for any $\epsilon\in (0,1)$, there exists a sufficiently large $\Cref{remove_distant}(\mathscr{G},\epsilon)$ such that for all $L\ge \Cref{remove_distant}$ (recalling that $K\ge \Cref{ball_1}^{-1}L$), 
	\begin{equation}\label{4.19}
		\min_{v_1'\in  \partial^{\mathrm{i}} B_{x_{\diamond}}(J_1) }	\mathbb{P}_{v_1'}\left(\tau_{A\setminus D} <\tau_D \right) \ge  \Big( \frac{1}{2}- \frac{1}{8}\epsilon\Big) \cdot \frac{\ln(L)}{\ln(L)+ \ln(\mathrm{diam}(D)\vee 1)}.
	\end{equation}
	By (\ref{4.20}), (\ref{4.19}) and $\frac{1+\frac{1}{8}\epsilon}{\frac{1}{2}-\frac{1}{8}\epsilon}\le 2+\epsilon$ for all $\epsilon \in (0,1)$, we conclude this lemma.
\end{proof}

\subsection{Removal of an interlocked cluster}\label{section_qra_interlocked}

In this subsection, we present the proof of Lemma \ref{lemma_remove_interlock}. Arbitrarily take $A\subset \mathfs{V}$ and $D\in \mathfrak{I}_A$ satisfying $\bm{0}\notin D$ and $|D|\ge \Cref{remove_interlock1}$, where $\Cref{remove_interlock1}(\mathscr{G})\ge \Cref{delicate_green}(\mathscr{G})$ is a large constant that will be determined later (recalling $\Cref{delicate_green}$ above Definition \ref{def_sparse}). Note that $\mathrm{diam}(D)\le 10 |D|$ since $D$ is an interlocked cluster. We arbitrarily take $x_{\diamond}\in D$.

We define the set $A^+$ according to the following cases:  
\begin{enumerate}
	\item  If $\left[ B_{x_{\diamond}}(400\Cref{ball_1}|D|^2)\setminus B_{x_{\diamond}}(200\Cref{ball_1}|D|^2)\right]  \cap  \left( A\setminus D\right) = \emptyset$ (recall $\Cref{ball_1}$ in (\ref{ineq_ball})), we arbitrarily take $y_{\diamond}\in \partial^{\mathrm{i}}B_{x_{\diamond}}(300\Cref{ball_1}|D|^2)$ and then set $A^+:= A\cup \{y_{\diamond}\}$.

	\item  Otherwise, set $A^+=A$.

\end{enumerate}
Note that $A^+\in \mathcal{A}(\mathscr{G})$ and $\mathbb{H}_{A^+}(\bm{0})\le \mathbb{H}_{A}(\bm{0})$ (since $A\subset A^+$). Moreover, for the same reason as proving (\ref{ineq_AD-ADD}), removing $y_{\diamond}$ from $A^+\setminus D$ (in Case (1)) increases the harmonic measure by a uniformly bounded factor. Thus, we have 
\begin{equation}\label{6.1}
	\frac{\mathbb{H}_{A\setminus D}(\bm{0})}{\mathbb{H}_{A}(\bm{0})}= \frac{\mathbb{H}_{A^+}(\bm{0})}{\mathbb{H}_{A}(\bm{0})}\cdot  \frac{\mathbb{H}_{A\setminus D}(\bm{0})}{\mathbb{H}_{A^+\setminus D}(\bm{0})} \cdot\frac{\mathbb{H}_{A^+\setminus D}(\bm{0})}{\mathbb{H}_{A^+}(\bm{0})} \lesssim \frac{\mathbb{H}_{A^+\setminus D}(\bm{0})}{\mathbb{H}_{A^+}(\bm{0})}.
\end{equation}

We define $F\subset \mathfs{V}$ as the cluster of the set $$\big\{ z\in \mathfs{V}:z\in \partial_{\infty,*}^{\mathrm{o}}D\ \text{or} \  \mathbb{P}_{z}\left(\tau_{A^+\setminus D} <\tau_D \right)\le 0.1\big\}$$ 
that contains $D$. Note that $F\cap A^+=D$ (since $A^+\cap \partial_{\infty,*}^{\mathrm{o}}D=\emptyset$, $\mathbb{P}_{z}\left(\tau_{A^+\setminus D} <\tau_D \right)=0$ for $z\in D$, and $\mathbb{P}_{z}\left(\tau_{A^+\setminus D} <\tau_D \right)=1$ for $z\in A^+\setminus D$).

We claim that there exists $C'(\mathscr{G})>0$ such that 
\begin{equation}\label{new_6.9}
	F\subset B_{x_{\diamond}}(C'|D|^2). 
\end{equation}
In fact, by the definition of $A^+$, there exists 
\begin{equation}\label{exist_z_diamond}
	z_{\diamond}\in \left[B_{x_{\diamond}}(400\Cref{ball_1}|D|^2)\setminus B_{x_{\diamond}}(200\Cref{ball_1}|D|^2)\right]  \cap  \left( A^+\setminus D\right).
\end{equation}
Notice that if we do not construct $A^+$ but just consider $A$ itself, then (\ref{exist_z_diamond}) may fail. By Lemma \ref{lemma_2srw_hm}, for a large enough $C'(\mathscr{G})>0$, one has 
\begin{equation}\label{new_6.11}
	\mathbb{P}_z\left(\tau_{z_{\diamond}}< \tau_{D} \right)  \ge 0.9 \mathbb{H}_{D\cup \{z_{\diamond}\}}(z_{\diamond}), \ \forall z\in \big[B_{x_{\diamond}}(C'|D|^2)\big]^c.
\end{equation}
Moreover, by $z\in [B_{x_{\diamond}}(200\Cref{ball_1}|D|^2)]^c$, $D\subset B_{x_{\diamond}}(10\Cref{ball_1}|D|)$ and (\ref{ineq_ball}), one has
\begin{equation*}
	\mathbf{d}(D,z_{\diamond})\ge 100|D|^2\ge [\mathrm{diam}(D)\vee \Cref{delicate_green} ]^2.
\end{equation*}
Therefore, it follows from Lemma \ref{lemma_remove_distant} (with $L=\mathrm{diam}(D)\vee \Cref{delicate_green}$) that 
\begin{equation}\label{new_6.12}
	\mathbb{H}_{D\cup \{z_{\diamond}\}}(z_{\diamond}) \ge (4.2)^{-1}. 
\end{equation}
Combining (\ref{new_6.11}) and (\ref{new_6.12}), we obtain that for any $z\in \big[B_{x_{\diamond}}(C'|D|^2)\big]^c$, 
\begin{equation}\label{new_6.13}
	\mathbb{P}_z\left(\tau_{A^+\setminus D}< \tau_{D} \right)\ge 	\mathbb{P}_z\left(\tau_{z_{\diamond}}< \tau_{D} \right)\ge 0.9\cdot (4.2)^{-1}>0.1,
\end{equation}
and hence $z\notin F$. Now we conclude the claim (\ref{new_6.9}).

Let $\widetilde{A}^+=A^+\setminus D$. Similar to (\ref{new_4.1}), we define 
\begin{equation*}
	\widehat{F}^+:= [\partial^{\mathrm{i}}_{\infty} (A^+\cup F)]\setminus \widetilde{A}^+\ \ \text{and}\ \ \widecheck{F}^+:= [\partial^{\mathrm{i}}_{\bm{0}} (A^+\cup F)]\setminus \widetilde{A}^+.
\end{equation*}
By the strong Markov property, one has: for any $v\in \widehat{F}^+\cup \widecheck{F}^+$ and $v_2\in \widecheck{F}^+$,
\begin{equation*}
	G_{A\setminus D} (v , v_2) \le 	G_{A\setminus D} (v_2, v_2).
\end{equation*}
Thus, by Lemma \ref{lemma_psi_3.5} and (\ref{ineq_psi3.6_2}) (replacing $A$ with $A^+$, and taking $F_1=F_2=F$), 
\begin{equation}\label{new_6.1}
	\frac{\mathbb{H}_{A^+\setminus D}(\bm{0})}{\mathbb{H}_{A^+}(\bm{0})} \le \max_{ v_1\in \widehat{F}^+, v_2\in \widecheck{F}^+}   \left[  \mathbb{P}_{v_2}\left(\tau_{A^+\setminus D} <\tau_D \right) \mathbb{P}_{v_1}\left(\tau_{v_2}<\tau_{A^+} \right) \right]^{-1}.
\end{equation}

For the first probability on the right-hand side of (\ref{new_6.1}), since $v_2\in \widecheck{F}^+=[\partial^{\mathrm{i}}_{\bm{0}} (A^+\cup F)]\setminus \widetilde{A}^+$, there exists $w\in (A^+\cup F)_{\bm{0}}^c$ such that $w\sim v_2$. By the definition of $F$, we know that $\mathbb{P}_{w}\left(\tau_{A^+\setminus D} <\tau_D \right)>0.1$, which implies 
\begin{equation}\label{new_6.2}
	\mathbb{P}_{v_2}\left(\tau_{A^+\setminus D} <\tau_D \right) \ge [\mathrm{deg}(\mathscr{G})]^{-1} \mathbb{P}_{w}\left(\tau_{A^+\setminus D} <\tau_D \right)>0.1[\mathrm{deg}(\mathscr{G})]^{-1}. 
\end{equation}
Let $\overline{D}:=D\cup \partial_{\infty,*}^{\mathrm{o}}D$. For the second probability, by the strong Markov property, 
\begin{equation}\label{final_6.3_1}
	\mathbb{P}_{v_1}\left(\tau_{v_2}<\tau_{A^+} \right)\ge \mathbb{P}_{v_1}\left(\tau_{A^+\setminus D}>\tau_{\overline{D}} \right) \min_{w_1\in \partial_{\infty,*}^{\mathrm{o}}D } \mathbb{P}_{w_1} \left(\tau_{v_2}<\tau_{A^+}\right). 
\end{equation}
Since $D\subset \overline{D}$ and $v_1\in \widehat{F}^+ \subset F$, we have either $\mathbb{P}_{v_1}\left(\tau_{A^+\setminus D}>\tau_{\overline{D}} \right)=1$ (when $v_1\in \partial_{\infty,*}^{\mathrm{o}}D$), or $\mathbb{P}_{v_1}\left(\tau_{A^+\setminus D}>\tau_{\overline{D}} \right) \ge \mathbb{P}_{v_1}\left(\tau_{A^+\setminus D}>\tau_{D} \right) \ge 0.9$. Moreover, for any $w_1\in \partial_{\infty,*}^{\mathrm{o}}D$, by Markov property and Lemma \ref{lemma_prob_surrounding}, 
\begin{equation}\label{final_6.3_2}
	\begin{split}
		\mathbb{P}_{w_1} \left(\tau_{v_2}<\tau_{A^+}\right) \ge &\max_{w_2\in \partial_{\infty,*}^{\mathrm{o}}D} \mathbb{P}_{w_1} \left(\tau_{w_2}<\tau_{A^+}\right)  \mathbb{P}_{w_2} \left(\tau_{v_2}<\tau_{A^+}\right)\\
		\gtrsim  &\big[\lambda(\mathscr{G})\big]^{-|D|+\cref{prob_surrounding}\sqrt{|D|}}  \max_{w_2\in \partial_{\infty,*}^{\mathrm{o}}D} \mathbb{P}_{w_2} \left(\tau_{v_2}<\tau_{A^+}\right). 
	\end{split}
\end{equation}
Combining (\ref{final_6.3_1}) and (\ref{final_6.3_2}), we get: for any $v_1\in  \widehat{F}^+$,
\begin{equation}\label{new_6.5}
	\mathbb{P}_{v_1}\left(\tau_{v_2}<\tau_{A^+} \right)\gtrsim \big[\lambda(\mathscr{G})\big]^{-|D|+\cref{prob_surrounding}\sqrt{|D|}}  \max_{w_2\in \partial_{\infty,*}^{\mathrm{o}}D} \mathbb{P}_{w_2} \left(\tau_{v_2}<\tau_{A^+}\right). 
\end{equation}
By (\ref{6.1}), (\ref{new_6.1}), (\ref{new_6.2}) and (\ref{new_6.5}), 
\begin{equation}\label{6.6}
	\frac{\mathbb{H}_{A\setminus D}(\bm{0})}{\mathbb{H}_{A}(\bm{0})} \lesssim  \big[\lambda(\mathscr{G})\big]^{|D|-\cref{prob_surrounding}\sqrt{|D|}} \max_{v_2\in \widecheck{F}^+ } \Big[ \max_{w_2\in \partial_{\infty,*}^{\mathrm{o}}D} \mathbb{P}_{w_2} \left(\tau_{v_2}<\tau_{A^+}\right)\Big]^{-1}. 
\end{equation}

In what follows, we prove that for any $v_2\in F$, 
\begin{equation}\label{new_6.6}
	\max_{w_2\in \partial_{\infty,*}^{\mathrm{o}}D}\mathbb{P}_{w_2} \left(\tau_{v_2}<\tau_{A^+}\right) \gtrsim \big[|D|\ln(|D|)\big]^{-1}. 
\end{equation}
Since $G_{A^+}(w_2,v_2)=\mathbb{P}_{w_2} \left(\tau_{v_2}<\tau_{A^+}\right)G_{A^+}(v_2,v_2)=\mathbb{P}_{v_2} \left(\tau_{w_2}<\tau_{A^+}\right)G_{A^+}(w_2,w_2)$, 
\begin{equation}\label{new_6.7}
	\begin{split}
		\max_{w_2\in \partial_{\infty,*}^{\mathrm{o}}D}	\mathbb{P}_{w_2} \left(\tau_{v_2}<\tau_{A^+}\right) =&  \max_{w_2\in \partial_{\infty,*}^{\mathrm{o}}D} \frac{G_{A^+}(w_2,w_2)}{G_{A^+}(v_2,v_2)}\cdot \mathbb{P}_{v_2} \left(\tau_{w_2}<\tau_{A^+}\right)\\
		\ge & \frac{ \max_{w_2\in \partial_{\infty,*}^{\mathrm{o}}D}\mathbb{P}_{v_2} \left(\tau_{w_2}<\tau_{A^+}\right)}{G_{A^+}(v_2,v_2)}, 
	\end{split}
\end{equation}
where in the second line we used the fact that $G_A(w_2,w_2)\ge 1$.

\textbf{For $\mathbb{P}_{v_2} \left(\tau_{w_2}<\tau_{A^+}\right)$:} Recall that $\mathbb{P}_{v_2}\left(\tau_{A^+\setminus D}>\tau_{\overline{D}} \right)\ge 0.9$ for all $v_2\in F$. Therefore, by the union bound and $|\partial_{\infty,*}^{\mathrm{o}}D|\lesssim  |D|$, we have
\begin{equation}\label{new_6.8}
	\begin{split}
		\max_{w_2\in \partial_{\infty,*}^{\mathrm{o}}D} \mathbb{P}_{v_2} \left(\tau_{w_2}<\tau_{A^+}\right)  \ge & \max_{w_2\in \partial_{\infty,*}^{\mathrm{o}}D}    \mathbb{P}_{v_2} \left(\tau_{\partial_{\infty,*}^{\mathrm{o}}D}= \tau_{w_2}<\tau_{A^+\setminus D}\right)      \\
		\ge & |\partial_{\infty,*}^{\mathrm{o}}D|^{-1} \mathbb{P}_{v_2}\big(\tau_{A^+\setminus D}>\tau_{\partial_{\infty,*}^{\mathrm{o}}D} \big) \gtrsim |D|^{-1}. 
	\end{split}
\end{equation}

\textbf{For $G_{A^+}(v_2,v_2)$:} Recall in (\ref{new_6.9}) that $F\subset B_{x_{\diamond}}(C'|D|^2)$. Thus, similar to (\ref{new_6.13}), there exists $C''(\mathscr{G})>0$ such that for any $w_3\in  \partial^{\mathrm{i}}B'':= \partial^{\mathrm{i}}B_{v_2}(C''|D|^2)$, 
\begin{equation}\label{new_6.14}
	\mathbb{P}_{w_3}\left(\tau_{\bm{0}}< \tau_{v_2} \right)>0.1.
\end{equation}
Meanwhile, by the strong Markov property, 
\begin{equation*}
	\begin{split}
		G_{\{\bm{0}\}}(v_2,v_2) \le & G_{\partial^{\mathrm{i}}B''}(v_2,v_2) + \max_{w_3\in \partial^{\mathrm{i}}B''} \mathbb{P}_{w_3}\left(\tau_{\bm{0}}> \tau_{v_2} \right)G_{\{\bm{0}\}}(v_2,v_2).
	\end{split}
\end{equation*}
Combined with (\ref{new_6.14}) and Lemma \ref{lemma_green}, it implies 
\begin{equation*}
	G_{\{\bm{0}\}}(v_2,v_2) \le 10 G_{\partial^{\mathrm{i}}B''}(v_2,v_2)  \lesssim  \ln(|D|). 
\end{equation*}
Thus, since $\bm{0}\in A^+$, we obtain 
\begin{equation}\label{new_6.15}
	G_{A^+}(v_2,v_2) \le G_{\{\bm{0}\}}(v_2,v_2) \lesssim  \ln(|D|). 
\end{equation}
Combining (\ref{new_6.7}), (\ref{new_6.8}) and (\ref{new_6.15}), we conclude (\ref{new_6.6}).

By (\ref{6.6}) and (\ref{new_6.6}), there exists a constant $C'''(\mathscr{G})>0$ such that 
\begin{equation*}
	\frac{\mathbb{H}_{A\setminus D}(\bm{0})}{\mathbb{H}_{A}(\bm{0})} \le  C'''\big[\lambda(\mathscr{G})\big]^{|D|-\cref{prob_surrounding}\sqrt{|D|}}\cdot |D|\ln(|D|). 
\end{equation*}
We require $\Cref{remove_interlock1}$ to be large enough such that $C'''|D|\ln(|D|)< \big[\lambda(\mathscr{G})\big]^{\frac{1}{2}\cref{prob_surrounding}\sqrt{|D|}}$ when $|D|\ge \Cref{remove_interlock1}$. To sum up, we complete the proof of Lemma \ref{lemma_remove_interlock} with $\cref{remove_interlock2}=\frac{1}{2}\cref{prob_surrounding}$.    \qed

\section*{Acknowledgments}

The authors would like to thank Gady Kozma for fruitful discussions.

\bibliographystyle{plain}
\bibliography{ref}

\end{document}